\newcommand{\ensemblenombre}[1]{\ensuremath{\mathbb{#1}}}
\newcommand{\N}{\ensemblenombre{N}}
\newcommand{\R}{\ensemblenombre{R}}
\newcommand{\abs}[1]{\ensuremath{\left\lvert#1\right\rvert}}
\newcommand{\norme}[1]{\ensuremath{\left\lVert#1\right\rVert}}
\newcommand{\enstq}[2]{\ensuremath{\left\{#1\mathrel{}\middle|\mathrel{}#2\right\}}}
\newcommand{\transp}[1]{\prescript{t}{}{#1}}
\newcommand{\intervalle}[4]{\ensuremath{\mathopen{#1}#2
		\mathclose{}\mathpunct{};#3
		\mathclose{#4}}}
\newcommand{\intervalleff}[2]{\intervalle{[}{#1}{#2}{]}}
\newcommand{\intervalleof}[2]{\intervalle{]}{#1}{#2}{]}}
\newcommand{\restreinta}{\ensuremath{\mathclose{}|\mathopen{}}}
\newcommand{\cc}{\mathcal{C}}
\newcommand{\RP}[1]{\ensuremath{\R\mathbf{P}^{#1}}}
\newcommand{\Sn}[1]{\mathbf{S}^{#1}}
\newcommand{\Snhat}[1]{\hat{\mathbf{S}}^{#1}}
\newcommand{\Hn}[1]{\mathbf{H}^{#1}}
\newcommand{\Diff}[2]{\mathopen{}\mathrm{D}_{#1}#2}
\newcommand{\Fitan}[1]{\ensuremath{\mathrm{T}#1}}
\newcommand{\PFitan}[1]{\ensuremath{\mathbf{P}(\Fitan{#1})}}
\newcommand{\Fiunitan}[1]{\ensuremath{{\mathrm{T}}^1{#1}}}
\newcommand{\D}{\mathcal{D}}
\newcommand{\GL}[1]{\ensuremath{\mathrm{GL}_{#1}\mathopen{(}\R\mathclose{)}}}
\newcommand{\PGL}[1]{\ensuremath{\mathrm{PGL}_{#1}\mathopen{(}\R\mathclose{)}}}
\newcommand{\SL}[1]{\ensuremath{\mathrm{SL}_{#1}\mathopen{(}\R\mathclose{)}}}
\newcommand{\PSL}[1]{\ensuremath{\mathrm{PSL}_{#1}\mathopen{(}\R\mathclose{)}}}
\newcommand{\slR}[1]{\ensuremath{\mathfrak{sl}_{#1}}}
\newcommand{\SO}[1]{\ensuremath{\mathrm{SO}\mathopen{(}#1\mathclose{)}}}
\newcommand{\PO}[1]{\ensuremath{\mathrm{PO}\mathopen{(}#1\mathclose{)}}}
\newcommand{\Heis}[1]{\ensuremath{\mathrm{Heis}\mathopen{(}#1\mathclose{)}}}
\DeclareMathOperator{\Ker}{Ker}
\DeclareMathOperator{\Vect}{Vect}
\DeclareMathOperator{\Aut}{Aut}
\DeclareMathOperator{\Int}{Int}
\DeclareMathOperator{\id}{id}
\DeclareMathOperator{\Stab}{Stab}
\DeclareMathOperator{\Diag}{Diag}
\DeclareMathOperator{\Cl}{Cl}
\newcommand{\Pmin}{\mathbf{P}_{min}}
\newcommand{\X}{\mathbf{X}}
\newcommand{\Xhat}{\hat{\mathbf{X}}}
\newcommand{\Exalpha}{\mathcal{E}_\alpha}
\newcommand{\Extildealpha}{\hat{\mathcal{E}}_\alpha}
\newcommand{\Exbeta}{\mathcal{E}_\beta}
\newcommand{\Extildebeta}{\hat{\mathcal{E}}_\beta}
\newcommand{\Sm}{\mathcal{S}}
\newcommand{\Lm}{\mathcal{L}}
\newcommand{\Lx}{\mathcal{L}_\X}
\newcommand{\Calpha}{\mathcal{C}_\alpha}
\newcommand{\Cbeta}{\mathcal{C}_\beta}
\newcommand{\Chatalpha}{\hat{\mathcal{C}}_\alpha}
\newcommand{\Chatbeta}{\hat{\mathcal{C}}_\beta}
\newcommand{\Salphabeta}{\mathcal{S}_{\alpha,\beta}}
\newcommand{\Shatalphabeta}{\hat{\mathcal{T}}_{\alpha,\beta}}
\newcommand{\Sbetaalpha}{\mathcal{S}_{\beta,\alpha}}
\newcommand{\Shatbetaalpha}{\hat{\mathcal{T}}_{\beta,\alpha}}
\newcommand{\Falphamoins}{\mathcal{C}^-}
\newcommand{\Halphabetamoins}{\mathcal{T}^-}
\newcommand{\Hbetaalphaplus}{\mathcal{T}^+}
\newcommand{\Fbetaplus}{\mathcal{C}^+}
\newcommand{\e}{\mathrm{e}}
\newcommand{\Bplus}{B_{\alpha\beta}^+}
\newcommand{\Bmoins}{B_{\alpha\beta}^-}
\numberwithin{equation}{section}
\numberwithin{figure}{section}
\theoremstyle{definition}
\newtheorem{definition}{Definition}[section]
\newtheorem{propositiondefinition}[definition]{Proposition-Definition}
\theoremstyle{plain}
\newtheorem{theoremintro}{Theorem}
\newtheorem{corollary}[definition]{Corollary}
\newtheorem{lemma}[definition]{Lemma}
\newtheorem{proposition}[definition]{Proposition}
\newtheorem{fact}[definition]{Fact}
\theoremstyle{remark}
\newtheorem{example}[definition]{Example}
\newtheorem{remark}[definition]{Remark}
\newtheorem{questionintro}{Question}
\title[Geometrical compactifications of geodesic flows
and path structures]
{Geometrical compactifications of geodesic flows
and path structures}
\author{Martin Mion-Mouton}
\thanks{The author is supported by a
postdoctoral fellowship of the Technion.
This paper was partly written during a stay at the
Institut Math\'ematiques de Jussieu, that
the author would like to thank.}
\date{\today}
\begin{document}
\address{
Martin Mion-Mouton,
Department of Mathematics,
Technion,
Haifa 32000,
Israel.
}
\email{martinm@campus.technion.ac.il}
\urladdr{https://martinm.webgr.technion.ac.il/}

\maketitle

\begin{abstract}
 In this paper, we construct a geometrical compactification 
 of the geodesic flow
 of non-compact complete hyperbolic surfaces $\Sigma$
 without cusps having finitely generated fundamental group.
 We study the dynamical properties of the compactified flow,
 for which we show the existence of attractive circles at infinity.
 The geometric structure of $\Fiunitan{\Sigma}$ for which this compactification is realized is the pair of one-dimensional distributions 
 tangent to the stable and unstable horocyles of $\Fiunitan{\Sigma}$.
 This is a Kleinian path structure,
 that is a quotient of an open subset 
 of the flag space by a discrete subgroup $\Gamma$ of $\PGL{3}$.
 Our study relies on 
 a detailed description of the dynamics of $\PGL{3}$ on the flag space,
 and on the construction of an explicit
 fundamental domain for the action of $\Gamma$ 
 on its maximal open subset of discontinuity in the flag space.
\end{abstract}

\section{Introduction}
The geodesic flow $(g^t)$
of a compact hyperbolic surface $\Sigma$ has a very nice and well-studied dynamical property:
it is an \emph{Anosov flow} of its
unitary tangent bundle $\Fiunitan{\Sigma}$.
This means that $\Diff{}{g^t}$ preserves a splitting
\[
\Fitan{(\Fiunitan{\Sigma})}=
E^s\oplus \R\frac{dg^t}{dt} \oplus E^u
\]
of the tangent bundle of $\Fiunitan{\Sigma}$,
where $\R\frac{dg^t}{dt}$ is the direction of the flow,
and $E^s$, $E^u$ are two one-dimensional distributions 
of $\Fiunitan{\Sigma}$
(respectively called the \emph{stable} and \emph{unstable} distributions of $(g^t)$)
that are respectively \emph{uniformly contracted} and 
\emph{uniformly expanded} by $\Diff{}{g^t}$.
More precisely, for any Riemannian metric on $\Fiunitan{\Sigma}$,
there exists two constants $C>0$ and $\lambda<1$ such that for any $x\in\Fiunitan{\Sigma}$ and $t>0$:
\begin{equation}\label{equationdefanosov}
 \norme{\Diff{x}{g^t}\restreinta_{E^s}}\leq C\lambda^t
 \text{~and~}
 \norme{\Diff{x}{g^{-t}}\restreinta_{E^u}}\leq C\lambda^t.
\end{equation}
Geodesic flows of compact hyperbolic surfaces are 
very specific among Anosov flows,
since their stable and unstable distributions are smooth (that is, $\cc^\infty$).
The sum $E^s\oplus E^u$ moreover happens to be a \emph{contact distribution} --
we say in this case that the flow is \emph{contact-Anosov}.
We recall that a $\cc^1$ plane distribution of a three-dimensional manifold
is called contact if it is locally the kernel of a one-form $\theta$ which is \emph{contact} ($\theta\wedge d\theta$ nowhere vanishes).
A pair $\Lm=(E^\alpha,E^\beta)$
of smooth one-dimensional
distributions whose sum is a contact distribution
defines on a three-dimensional manifold
a geometric structure called a \emph{path structure}.
Path structures, whose study goes back to \'Elie Cartan in \cite{cartan_sur_1924},
are \emph{rigid} geometries whose interplay with smooth dynamics has shown
to be very rich 
(see Paragraph \ref{soussectionpathstructuresnoncompact}
for more details about path structures and their rigidity,
and Paragraph 
\ref{soussoussectionautrescompactifications} for examples
explaining the geometrical origin of the terminology).
From a geometrical point of view,
we may thus look at the geodesic flow of a compact hyperbolic surface $\Sigma$
as a flow of automorphisms of the path structure
$\Lm_{\Sigma}=(E^s,E^u)$ on $\Fiunitan{\Sigma}$.
This point of view allowed for instance Ghys to classify in \cite{ghys} the
three-dimensional contact-Anosov flows having smooth stable and unstable distributions (see Paragraph \ref{soussectionpathstructuresnoncompact}
for more details).
\par One can ask what remains of this beautiful geometrico-dynamical picture
for a \emph{non-compact} complete hyperbolic surface $\Sigma$.
The $(g^t)$-invariant path structure $\Lm_\Sigma$ persists
in this case, and one of the motivation of this paper
is to provide with a geometrico-dynamical compactification of both
the structure $\Lm_\Sigma$ and the flow $(g^t)$ on $\Fiunitan{\Sigma}$,
and to describe the dynamics of the compactified geodesic flow obtained in this way.

\subsection{Geometrical compactification of the geodesic flow}
The geometric picture that we described is actually independent of the compactness
of $\Sigma$.
Indeed for any complete hyperbolic surface $\Sigma$,
there exists on $\Fiunitan{\Sigma}$ a natural path structure 
$\Lm_\Sigma=(E^s,E^u)$ invariant by the geodesic flow $(g^t)$ --
and equal to the pair of stable and unstable distributions of $(g^t)$
if $\Sigma$ is compact
(see Paragraph \ref{soussectionstructureLCaudessussurfaceshyperboliques}
for a proper definition of $\Lm_\Sigma$).
The hyperbolic metric of $\Sigma$ induces on $\Fiunitan{\Sigma}$
a ``most natural'' Riemannian metric (invariant by the lifts of isometries of $\Sigma$)
called the \emph{Sasaki metric},
with respect to which 
$(g^t)$ indeed satisfies the Anosov conditions \eqref{equationdefanosov}
on the distributions $E^s$ and $E^u$ of the path structure $\Lm_\Sigma$
-- wether $\Sigma$ is compact or not.
In this regard,
one may thus say that $(g^t)$ is ``\emph{Anosov for the Sasaki metric}'',
and that the path structure $\Lm_\Sigma$ that we are studying
is the pair of stable and unstable distributions of $(g^t)$.
An important distinction to be made
is however that in the non-compact case,
the existence of the inequalities \eqref{equationdefanosov} 
will critically depend on the chosen Riemannian metric,
since $\Fiunitan{\Sigma}$ is non-compact
(whereas only constants will differ if $\Sigma$ is compact).
The Anosov property is thus in the non-compact case
not an intrinsic property of the \emph{flow} $(g^t)$ itself
but only of the \emph{pair} ($(g^t)$, Sasaki metric).
For this reason, we would like to study $(g^t)$ as acting on
an open subset of a closed three-manifold.
More precisely,
we would like to find a \emph{closed} three-manifold $M$ together
with a flow $(\varphi^t)$ on $M$,
containing a $(\varphi^t)$-invariant open subset $N\subset M$
such that $(\varphi^t\restreinta_N)$ 
is conjugated to $(g^t)$. 
In this case, we will say that $(M,\varphi^t)$ is a 
(dynamical) \emph{compactification} of $(\Fiunitan{\Sigma},g^t)$.
In general, it is not clear if a given flow acting on an open manifold 
can be compactified in that way;
but if it does, then there are certainly a lot of possible compactifications,
and one would like to choose one that 
has interesting properties with respect to the flow.
To begin with, we would like to preserve --
as far as possible --
any information that we already have about this flow.
\par In our case we do have an additional geometrical information,
the $(g^t)$-invariant path structure $\Lm_\Sigma$ on $\Fiunitan{\Sigma}$,
that we would like to preserve
in order to stay as close as possible to the Anosov behaviour.
In other words, what we want is a path structure $\Lm=(E^\alpha,E^\beta)$ on $M$
and an open subset $N\subset M$,
such that $(N,\Lm\restreinta_N)$ is isomorphic to $(\Fiunitan{\Sigma},\Lm_\Sigma)$
-- an isomorphism of path structures 
being simply a diffeomorphism
sending $E^\alpha$ (respectively $E^\beta$) on $E^s$ (resp. $E^u$).
In this case, we will say that $(M,\Lm)$ is a
(path structure) \emph{compactification} of $(\Fiunitan{\Sigma},\Lm_\Sigma)$.
If we moreover ask the dynamics and the geometry to be compatible,
then we look for 
a \emph{closed} three-manifold $M$ endowed with a path structure $\Lm$
and with a flow $(\varphi^t)$ of automorphisms of $\Lm$,
such that there exists a $(\varphi^t)$-invariant open subset $N\subset M$
and an isomorphism $\Phi$ from $(\Fiunitan{\Sigma},\Lm_\Sigma)$ to $(N,\Lm\restreinta_N)$
conjugating $(g^t)$ and $(\varphi^t\restreinta_N)$.
In this case, we will say that the points 
$x\in M$ and $\Phi(x)\in N$ are \emph{equivalent},
that $(N,\Lm_N,\varphi^t\restreinta_N)$ is \emph{a copy of}
$(\Fiunitan{\Sigma},\Lm_\Sigma,g^t)$,
and that $(M,\Lm,\varphi^t)$ is a \emph{geometrico-dynamical compactification} 
of $(\Fiunitan{\Sigma},\Lm_\Sigma,g^t)$.
\par The following result applies 
to any hyperbolic surface which is \emph{uniformized by a Schottky
subgroup with sufficiently large generators} in the following meaning:
for any hyperbolic elements $h_1,\dots,h_d$ of $\PSL{2}$ having pairwise distincts
fixed points on the boundary of the hyperbolic plane $\Hn{2}$,
and for any sufficiently large $r_i>0$,
the statement applies to the quotient of $\Hn{2}$
by the discrete subgroup of $\PSL{2}$ generated by $h_1^{r_1},\dots,h_d^{r_d}$.
\begin{theoremintro}\label{theoremintrosurfacehyperboliquedynamiqueflotgeod}
For any hyperbolic surface $\Sigma$ uniformized by a Schottky
subgroup of $\PSL{2}$ with sufficiently large generators, we have the following.
 \begin{enumerate}
  \item $(\Fiunitan{\Sigma},\Lm_\Sigma,g^t)$ admits a 
  geometrico-dynamical compactification
  $(M,\Lm,\varphi^t)$, containing four disjoint 
  copies $\{N_i\}_{i=1}^4$ of $(\Fiunitan{\Sigma},\Lm_\Sigma,g^t)$
  and such that $M\setminus\cup_{i=1}^4 N_i$ is a finite union of tori.
  \item The set of fixed points of $(\varphi^t)$ can be decomposed as a disjoint union
  $\Falphamoins\cup\Delta\cup\Fbetaplus$, 
  each of these subsets
  being a finite union of circles.
  The subset $\mathcal{W}^+$ (respectively $\mathcal{W}^-$)
  of points of $\Fiunitan{\Sigma}$ whose positive (resp. negative)
  $g^t$-orbit goes to infinity (that is, escapes from any compact subset of $\Fiunitan{\Sigma}$)
  is open and dense in $\Fiunitan{\Sigma}$.
  Furthermore for any $x\in\mathcal{W}^+$ (resp. $x\in\mathcal{W}^-$),
  with $x_i$ the equivalent point in any of the copies $N_i$,
   $\varphi^t(x_i)$ converges to a point of $\Fbetaplus$ 
  (resp. $\varphi^{-t}(x_i)$ converges to a point of $\Falphamoins$) when $t\to+\infty$.
  More precisely, compact subsets of $\mathcal{W}^\pm$ are attracted to
  $\mathcal{C}^\pm$ under $\varphi^{\pm t}$, and $(\varphi^t)$ is non-conservative.
\item Denoting by $E^c$ the direction of $(\varphi^t)$ in $M\setminus\cup_{i=1}^4 N_i$
and by $\Lm=(E^\alpha,E^\beta)$ the path structure,
$\varphi^t$ has exponential growth rates respectively equal to $-\frac{1}{2}$, $-\frac{1}{2}$ and $0$
in the directions $E^c$, $E^\alpha$ and $E^\beta$ along any positive orbit in $\mathcal{W}^+$
(resp. equal to $\frac{1}{2}$, $0$ and $\frac{1}{2}$ along any negative orbit in $\mathcal{W}^-$).
 \end{enumerate}
\end{theoremintro}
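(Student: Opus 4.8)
The plan is to work entirely inside the flag space $\X$ of $\R^3$, equipped with its $\PGL{3}$-invariant path structure $\Lx$, and to exploit that $(\Fiunitan{\Sigma},\Lm_\Sigma,g^t)$ is \emph{Kleinian}. Concretely I fix the irreducible embedding $\iota\colon\PSL{2}\hookrightarrow\PGL{3}$, the invariant conic $C\subset\RP{2}$ (the Veronese image of $\partial\Hn{2}$), and the Klein model with $\Hn{2}$ the interior of $C$; I identify $\Fiunitan{\Hn{2}}\cong\PSL{2}$ and embed it as the open $\PSL{2}$-orbit $\Phi(g)=\iota(g)\cdot F_0$ of a regular flag $F_0$, so that the deck group $\Gamma\subset\PSL{2}$ acts by left translation. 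The computation on which everything rests is that the geodesic flow becomes the \emph{spray} $\psi^t(\iota(g)F_0)=\iota(g)\,a^t F_0$, obtained by \emph{inserting on the right} the Cartan one-parameter subgroup $a^t=\iota(\mathrm{diag}(\e^{t/2},\e^{-t/2}))$: this $\psi^t$ commutes with the left $\Gamma$-action and therefore descends to any quotient $\Gamma\backslash\Omega$, while for a regular $F_0$ its velocity is transverse to $\Lx$, so $\psi^t$ is a flow of automorphisms of $\Lx$ with flow direction $E^c$ transverse to the pair $(E^\alpha,E^\beta)$. The entire statement then reduces to understanding the $\Gamma$-action and the flow $\psi^t$ near the boundary of the open orbits.

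For item~(1) I would determine the maximal open $\Gamma$-invariant subset of discontinuity $\Omega\subset\X$ and set $M=\Gamma\backslash\Omega$. This is, I expect, the technical core: the hypothesis that $\Gamma$ is Schottky \emph{with sufficiently large generators} should be used to run a ping-pong argument for the generators $h_i^{r_i}$ acting on $\X$---whose north--south type dynamics on the flag space is governed by the earlier description of the $\PGL{3}$-action---and to assemble from it an \emph{explicit fundamental domain}, the largeness of the $r_i$ guaranteeing disjointness of the excluded regions and cocompactness of the quotient, hence that $M$ is closed. The four copies $\{N_i\}$ arise as the quotients by $\Gamma$ of the four open $\PSL{2}$-orbits of $\X$ (distinguished by the position, relative to $C$, of the point of the flag and of the pole of its line) met with $\Omega$, each equivariantly isomorphic to $(\Fiunitan{\Hn{2}},\Lm_{\Hn{2}},g^t)$; and $M\setminus\bigcup_i N_i$ is the quotient of the lower-dimensional boundary orbits, which are circle bundles over $C\cong\RP{1}$ and so yield, after passing to $\Gamma\backslash\,\cdot$, a finite union of tori.

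For item~(2) I would first identify the fixed locus of $\psi^t$ on $\X$ as the flags $(p,\ell)$ whose point $p$ lies on $C$ (equivalently $p$ is an endpoint of, or a point of tangency of $\ell$ with, the conic), and sort these boundary flags into three $\PGL{3}$-invariant families according to their dynamical type; intersecting with $\Omega$ and quotienting then produces the finite unions of circles $\Falphamoins$ (repelling), $\Delta$ (neutral) and $\Fbetaplus$ (attracting). The sets $\mathcal{W}^\pm$ are described by a soft dynamical criterion: a forward $g^t$-orbit escapes every compact set of $\Fiunitan{\Sigma}$ precisely when the forward endpoint of its geodesic avoids the limit set $\Lambda$ of $\Gamma$, and since $\Gamma$ is Schottky $\Lambda$ is a Cantor set, closed, nowhere dense and of measure zero, whence $\mathcal{W}^+$ (and symmetrically $\mathcal{W}^-$) is open and dense. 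Convergence of forward orbits of $\mathcal{W}^+$ to $\Fbetaplus$, and the uniform attraction of compact subsets, follow from the contraction recorded in item~(3), and non-conservativity is then immediate from the strict overall volume contraction computed there.

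For item~(3) the growth rates are the Lyapunov exponents, for a fixed smooth auxiliary metric on $M$, of the compactified flow at the fixed circle that the orbit approaches; I obtain them by linearizing the spray $\psi^t$ at an attracting flag $F_+\in\Fbetaplus$ in an affine chart of $\X$. Two points carry the computation: first, moving $F_+$ \emph{along} the fixed circle $\Fbetaplus$ is a $\psi^t$-fixed motion, so the corresponding direction---which is the unstable direction $E^\beta$---has vanishing exponent; second, the flow direction $E^c$ and the stable direction $E^\alpha$ are both transverse to $\Fbetaplus$ and are contracted at one and the same rate, the fraction $-\tfrac12$ reflecting the order of contact of the escaping orbit with the boundary (the two endpoints of its geodesic collide at $F_+$, a parabolic-type degeneration that halves the hyperbolic contraction rate). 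This yields the triple $(-\tfrac12,-\tfrac12,0)$ in the directions $(E^c,E^\alpha,E^\beta)$; the involution $t\mapsto -t$ exchanges $\Fbetaplus$ with $\Falphamoins$ and the roles of $E^\alpha$ and $E^\beta$, giving $(\tfrac12,0,\tfrac12)$ along negative orbits. Note that these exponents sum to $-1\neq 0$, which re-proves the non-conservativity asserted in item~(2). The principal difficulties are thus concentrated in the construction of the fundamental domain for item~(1) and in the clean linearization of the spray at the degenerate boundary flags for item~(3).
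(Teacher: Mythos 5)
Your plan fails at its very first step, and the failure propagates through everything else: the irreducible embedding $\iota\colon\PSL{2}\hookrightarrow\PGL{3}$ (the adjoint representation, preserving the conic $C$) is the \emph{wrong} embedding for this theorem. Under $\iota$, every flag lying in an open $\iota(\PSL{2})$-orbit of $\X$ has stabilizer of order two, generated by the half-turn $s_0$ about the relevant point of $\Hn{2}$ (one checks this directly: an elliptic $k_\theta$ acts on the pencil of lines through its fixed point by rotation of angle $2\theta$, so $k_{\pi/2}$ fixes the flag). Consequently there are exactly \emph{three} open orbits, not four, each isomorphic to $\PSL{2}/\{1,s_0\}\cong\PFitan{\Hn{2}}$, so your map $\Phi(g)=\iota(g)\cdot F_0$ is $2$-to-$1$ and does not embed $\Fiunitan{\Hn{2}}$. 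Worse, your spray is not well defined: well-definedness of $\iota(g)F_0\mapsto\iota(g)a^tF_0$ requires $a^{-t}s_0a^t\in\{1,s_0\}$, but the centralizer of the Cartan flow in $\PSL{2}$ is the Cartan itself, which has no $2$-torsion, so $s_0a^ts_0^{-1}\neq a^t$ always — this is just the algebraic shadow of the fact that the flip $v\mapsto -v$ conjugates $g^t$ to $g^{-t}$, so the geodesic flow does not descend to $\PFitan{\Hn{2}}$. Relatedly, the pullback of $\Lx$ to your open orbit is the structure the paper calls $\Lm_\Sigma^{proj}$ (fibers of $\pi$ and lifts of geodesics), which the introduction explicitly distinguishes from $\Lm_\Sigma$ precisely because it is \emph{not} $(g^t)$-invariant. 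Finally, since $\iota$ is irreducible, the centralizer of $\iota(\PSL{2})$ in $\PGL{3}$ is trivial, so your right-multiplication flow cannot be realized by any one-parameter subgroup of $\PGL{3}$ acting on all of $\X$; it lives only on the open orbit, and all of your boundary analysis (the fixed circles, the attraction statements, the linearization giving the exponents) has nothing to stand on. Even granting an extension, the Cartan flow is balanced/loxodromic in the adjoint picture (eigenvalues $\e^t,1,\e^{-t}$), whose invariant flags form six isolated points — irreconcilable with the circles of fixed points asserted in item (2).

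The paper avoids all of this with the \emph{reducible} embedding $j\colon A\in\SL{2}\mapsto\left[\begin{smallmatrix}A&0\\0&1\end{smallmatrix}\right]$: then $j(\SL{2})$ acts \emph{simply transitively} on its unique open orbit $Y\subset\X$, the pullback of $\Lx$ is the left-invariant nilpotent pair $\Lm_{\SL{2}}=(\R E,\R F)$ — the genuinely flow-invariant structure — and, at the base point $o'$, right translation by the Cartan coincides with the \emph{left} action of $\psi^t=j(\e^t\id)=\Diag(\e^t,\e^t,1)$, a globally defined flow of unbalanced type $\beta$ lying in the center of $j(\GL{2})$, hence commuting with $\Gamma$ and extending to the whole compactification. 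Its repeated eigenvalue is exactly what produces the circles of fixed points $\Falphamoins$, $\Delta$, $\Fbetaplus$ and the exponents $(-1,-1,0)$ (your heuristic ``parabolic halving'' is a red herring: the $\tfrac12$ comes merely from $\varphi^t$ being conjugate to $g^{2t}$). The four copies also have a different origin than you propose: since $g_0=j(-\id)$ has fixed points inside $\Omega\subset\X$, the paper passes to the four-sheeted cover $\Xhat=\mathbf{P}^+(\Fitan{\Sn{2}})$ of oriented flags, where $\hat\Gamma=\Gamma\cup g_0\Gamma$ acts freely, and the four components of $\pi^{-1}(Y)$ yield the four copies $N_i$. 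Your remaining ingredients — the ping-pong fundamental domain with large generators for item (1), the characterization of $\mathcal{W}^{\pm}$ by the endpoint avoiding the limit set, and the vanishing exponent along the fixed circle — do match the paper's strategy in spirit, but they cannot be salvaged on top of the irreducible embedding.
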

We refer to Paragraph \ref{soussoussectiondynamiqueflotcompactifie}
for precisions about the notion of exponential growth rates that we use here.
\begin{remark}\label{remarkquellessurfaces}
All the hyperbolic surfaces considered in
Theorem \ref{theoremintrosurfacehyperboliquedynamiqueflotgeod}
are non-compact complete hyperbolic surfaces of infinite volume
without cusps ($\Sigma$ only has funnels),
and with finitely generated fundamental group.
Moreover,
for any connected non-compact topological surface $S$ 
with finitely generated fundamental group, the set of hyperbolic
metrics $g$ on $S$ for which Theorem \ref{theoremintrosurfacehyperboliquedynamiqueflotgeod}
applies to $\Sigma=(S,g)$ is open and non-empty.
\end{remark}
Theorem \ref{theoremintrosurfacehyperboliquedynamiqueflotgeod}
will be proved in section \ref{sectionTheoremeB}.
More precisely, refined versions of the three claims are respectively proved
in Propositions \ref{propositioncompactificationetflot},
\ref{propositionproprietesprolongationflotgeod} 
(and \ref{corollarydynamique} for the non-conservative behaviour), 
\ref{propositionLyapunovcompactification}.

\subsubsection{About unicity of compactifications}
\label{subssubsectionsurpisingproperties}
It may seem surprising that the compactification $(M,\Lm,\varphi^t)$
of the geodesic flow given by 
Theorem \ref{theoremintrosurfacehyperboliquedynamiqueflotgeod}
contains \emph{four} copies of $(\Fiunitan{\Sigma},\Lm_\Sigma, g^t)$,
and one may ask if there exists a ``smaller'' compactification.
In particular, it is natural to ask:
\begin{questionintro}\label{questionpascompactificationavecimagedense}
Does there exist a geometrico-dynamical compactification $(M,\Lm,\varphi^t)$
containing a dense copy of $(\Fiunitan{\Sigma},\Lm_{\Sigma},g^t)$ ?
Or a path structure compactification $(M,\Lm)$
containing a dense copy of $(\Fiunitan{\Sigma},\Lm_{\Sigma})$ ?
\end{questionintro}

We will discuss in Paragraph \ref{soussectioncompactificationsKleinian} below
a partial answer to this question when restricted to \emph{Kleinian}
compactifications.
Another surprising property of the compactification given by
Theorem \ref{theoremintrosurfacehyperboliquedynamiqueflotgeod}
is the existence of circles of fixed points for the compactified 
geodesic flow $(\varphi^t)$.
This raises the following second question, intimately linked to the previous one.
\begin{questionintro}\label{questionpointsfixesflotcompactifie}
 Does there exist a geometrico-dynamical compactification $(M,\Lm,\varphi^t)$
of $(\Fiunitan{\Sigma},\Lm_{\Sigma},g^t)$ where $(\varphi^t)$ has no fixed point ?
Or, at least, where all its fixed points are isolated ?
\end{questionintro}

\subsubsection{Other geometrical compactifications of $\Fiunitan{\Sigma}$}
\label{soussoussectionautrescompactifications}
The unitary tangent bundle of a hyperbolic surface $\Sigma$
actually bears different geometric structures,
and it is interesting to compare the compactification obtained in Theorem
\ref{theoremintrosurfacehyperboliquedynamiqueflotgeod} for the path structure $\Lm_\Sigma$
with those obtained for other structures.
First of all, a path structure is associated to any Riemannian surface $S$ 
in the following way.
The set of geodesics of $S$
defines on $\Fiunitan{S}$ a one-dimensional distribution $E^\beta$
tangent to the lifts of the geodesics in $\Fiunitan{S}$.
Denoting by $E^\alpha$ the tangent direction 
the fibers of the canonical projection $\pi\colon\Fiunitan{S}\to S$, 
$E^\alpha\oplus E^\beta$ is then a contact distribution.
In other words the unitary tangent bundle $\Fiunitan{S}$ of any Riemannian surface
is naturally endowed with
a path structure $\Lm_S^{proj}=(E^\alpha,E^\beta)$,
wether $S$ is hyperbolic or not.
These classical examples explain the geometrical origin of the terminology
\emph{path structure}.
\par In the specific case of a complete hyperbolic surface $\Sigma$,
we thus have \emph{two different} path structures $\Lm_\Sigma$ and $\Lm_\Sigma^{proj}$
on $\Fiunitan{\Sigma}$.
The main difference between those two structures is that 
\emph{$\Lm_\Sigma^{proj}$ is not invariant by the geodesic flow $(g^t)$}.
Indeed, $E^\beta$ is $(g^t)$-invariant by definition
(as it is tangent to the orbits of $(g^t)$),
but a fiber of $\pi$ is not sent by $g^t$ to another fiber
and $E^\alpha$ is thus not $(g^t)$-invariant.
We will explain in Paragraph \ref{soussectionautrescompactifications}
how the work of Choi-Goldman in \cite{choi_topological_2017}
gives a compactification of $\Lm_\Sigma^{proj}$,
and we will describe the conformal compactification given by
\cite{frances_sur_2005} of a $g^t$-invariant Lorentzian metric
on $\Fiunitan{\Sigma}$.

\subsection{Path structures with non-compact automorphism groups
and partially hyperbolic diffeomorphisms}
\label{soussectionpathstructuresnoncompact}
The initial motivation of \'Elie Cartan for the study of path structures 
in \cite{cartan_sur_1924}
was to find a geometrical object that parametrizes the space of local solutions
of second-order scalar ordinary differential equations
(see for instance \cite[\S 8.6]{ivey_cartan_2016} for an explanation of this link),
and to describe the local invariants of such an ODE
through a notion of \emph{curvature} of path structures.
Path structures are nowadays studied
in a geometric setting called \emph{parabolic Cartan geometries}
and are sometimes
called \emph{Lagrangian-contact structures} in this context
(see for instance \cite[\S 4.2.3]{capslovak}, \cite{takeuchi}).
The author actually used the denomination 
Lagrangian-contact structure
in \cite{mionmouton}
before deciding
to stick to the name \emph{path structure}
which seems more geometrically meaningful
and closer to the initial motivation of Cartan.
We apologize in advance
for any confusion that this change 
of name 
could lead to.
\par Apart from the intrinsic interest of compactification of geodesic flows,
a second important motivation of this paper 
was to deduce from Theorem \ref{theoremintrosurfacehyperboliquedynamiqueflotgeod}
new and rich examples of path structures
having non-compact automorphism groups for the compact-open topology. 
The interest of such examples appears in contrast with former
rigidity results for path structures
that we now describe.

\subsubsection{Hierarchy of path structures}
\label{soussousectionrigidite}
Ghys used the path structures 
to prove in \cite{ghys} that 
the geodesic flows of compact hyperbolic surfaces are the only
three-dimensional contact-Anosov flows having
smooth stable and unstable distributions,
up to finite coverings and smooth orbit-equivalence
(path structures appear in \cite{ghys} through the point of view
of second order ODE mentionned previously).
In fact such a contact-Anosov flow $(\varphi^t)$ preserves more than
the path structure $(E^s,E^u)$ defined by its stable and unstable distributions.
The contact form $\theta$ defined by
$\theta(\frac{d \varphi^t}{dt})\equiv 1$ and $\theta\restreinta_{E^s\oplus E^u}\equiv 0$
is indeed $(\varphi^t)$-invariant,
and $(\varphi^t)$ preserves thus the triplet $(E^s,E^u,\theta)$.
This is a special instance of a 
\emph{strict path structure} $\mathcal{T}=(E^\alpha,E^\beta,\theta)$,
with $(E^\alpha,E^\beta)$ a path structure and $\theta$ a contact form of kernel 
$E^\alpha\oplus E^\beta$.
From a geometrical point of view, Ghys result corresponds thus to classify the compact
strict path structures whose Reeb flow is Anosov.
In \cite{falbel_cartan_2021} we generalize this result
with Elisha Falbel and Jose Miguel Veloso
by considering the three-dimensional
compact strict path structures $(M,\mathcal{T})$
having a non-compact automorphism group
and a dense $\Aut^{loc}$-orbit. 
We prove in this setting that up to finite coverings,
$(M,\mathcal{T})$ is either the structure preserved by
the geodesic flow of a compact hyperbolic surface,
or a left-invariant structure
on a compact quotient of the Heisenberg group $\Heis{3}$.
\par One can now forget about the contact form $\theta$ to keep only a 
third direction $E^c$ transverse to the contact distribution $E^\alpha\oplus E^\beta$
and consider the triplet
$\Sm=(E^\alpha,E^\beta,E^c)$ that we will call an \emph{enhanced path structure}
(this can be considered as the ``conformal version'' of a strict path structure,
the Reeb vector field of the contact form being weakened to a line field).
We can then ask the same question:
what are the three-dimensional compact enhanced path structures $(M,\Sm)$
having a non-compact automorphism group ?
In \cite{mionmouton}
we obtain a first classification result in this direction,
assuming that
an automorphism $f$ of $\Sm$ without wandering points uniformly contracts or expands both
$E^\alpha$ and $E^\beta$.
In this case we prove that $(M,\Sm)$
still belongs to one of the two families of algebraic examples previously mentionned
(geodesic flows or compact quotients of $\Heis{3}$)
which yields a rigidity result about
partially hyperbolic diffeomorphisms, see \cite[Theorem A]{mionmouton}.
In particular, there exists \emph{a posteriori}
a contact form $\theta$ such that
any automorphism of the enhanced path structure $\Sm$ is in fact an automorphism
of the \emph{strict} path structure $(E^\alpha,E^\beta,\theta)$.
\par The next step would be to
forget the transverse direction $E^c$ and to investigate the
three-dimensional compact path structures $\Lm=(E^\alpha,E^\beta)$ 
having a non-compact automorphism group.
Until now, we saw two kinds of examples of
automorphisms of path structures generating a non-compact subgroup,
which we will call \emph{non-equicontinuous automorphisms}: 
the first are time-one maps of geodesic flows of compact hyperbolic surfaces,
and the second are automorphisms of compact quotients of $\Heis{3}$.
In particular, all of these examples are partially hyperbolic diffeomorphisms
(see \cite[Theorem A]{mionmouton})
and are \emph{conservative} (they preserve a volume form).
Other examples are easily constructed in the following way.
Take $\varphi$ an automorphism of $\Heis{3}$ which is diagonal and expands
the three directions in an affine chart of $\Heis{3}$.
Then in the same way as classical Hopf tori, 
the quotient $\langle \varphi \rangle\backslash\Heis{3}$ is compact
and bears a path structure $\Lm$ having non-conservative automorphisms
(see \cite[p.24]{alexandre_nilclosed_2021} for more details, 
and for links with completeness results about flat strict path structures).
These examples have a simple geometrical and topological description,
all of them are homeomorphic to $\Sn{1}\times\Sn{2}$.
Moreover, 
all their automorphisms preserve not only the path structure $\Lm$
but also a direction $E^c$ transverse to $\Lm$.

\subsubsection{New essential path structures}
These last examples 
suggest that, in order to find ``more complicated'' examples of path structures 
$\Lm$, one should look for automorphisms of $\Lm$ that do not
preserve any smooth one-dimensional distribution transverse
to the contact distribution of $\Lm$ --
or in other words, that do not preserve any 
enhanced path structure compatible with $\Lm$.
We will 
say that an automorphism flow is
\emph{strongly essential} if it does not preserve any
continuous distribution transverse to $\Lm$.
This notion is reminiscent of the
\emph{essential Lorentzian conformal
structures}, whose conformal automorphism group
is the isometry group of no metric in the conformal class.
These structures are studied in \cite{frances_sur_2005}, where  
essential Lorentzian conformal structures distinct from
the Einstein universe are constructed.
One of the motivation of this paper
is to provide with the following large family of new examples of
compact path structures
with 
essential automorphisms. 
\begin{theoremintro}\label{corollaryessential}
Let $(M,\Lm,\varphi^t)$ be the compactification
of $(\Sigma,\Lm_\Sigma,g^t)$ described in Theorem
\ref{theoremintrosurfacehyperboliquedynamiqueflotgeod}.
Then for any $t\neq 0$, $\varphi^t$ is non-equicontinuous,
non-conservative and not partially hyperbolic.
Furthermore, $(\varphi^t)$ is a strongly essential automorphism flow.
\end{theoremintro}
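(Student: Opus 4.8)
The plan is to read off all four properties from the dynamical picture established in Theorem~\ref{theoremintrosurfacehyperboliquedynamiqueflotgeod}, using essentially only its three items: the three mutually disjoint circles of fixed points $\Falphamoins\cup\Delta\cup\Fbetaplus$, the open dense wandering sets $\mathcal{W}^{+}$ and $\mathcal{W}^{-}$, and the forward/backward exponential growth rates. For the first three assertions I fix $t\neq0$ and treat the case $t>0$, the case $t<0$ being symmetric. \emph{Non-conservativity} is already part of item~(2); for the single map $\varphi^{t}$ it also follows directly, since $\mathcal{W}^{+}$ is open, hence of positive Lebesgue measure, while its compact subsets are attracted under $\varphi^{t}$ to the measure-zero set $\Fbetaplus$, so that no $\varphi^{t}$-invariant volume can exist. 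For \emph{non-equicontinuity}, I use that $\Falphamoins$ and $\Fbetaplus$ are disjoint compact sets, hence at positive distance: were $\varphi^{t}$ equicontinuous, Arzelà--Ascoli would make the closure of $\{\varphi^{nt}\}_{n\in\Z}$ a compact group of homeomorphisms of $M$, which is incompatible with the existence of the proper attractor $\Fbetaplus$ having the nonempty open basin $\mathcal{W}^{+}$.

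For \emph{non-partial-hyperbolicity}, suppose $\varphi^{t}$ ($t>0$) admits a continuous invariant partially hyperbolic splitting $TM=E^{s}\oplus E^{c}\oplus E^{u}$ and consider the expanded bundle $E^{u}$. Letting orbits of $\mathcal{W}^{+}$ accumulate on a point $q\in\Fbetaplus$, the growth rates $-\tfrac12,-\tfrac12,0$ of item~(3) pass to the limit and show that $\Diff{q}{\varphi^{t}}$ has eigenvalues $e^{-t/2},e^{-t/2},1$, all of modulus $\le1$; hence no direction is expanded at $q$, so $E^{u}(q)=0$. If $E^{u}\equiv0$ then $E^{s}\neq0$ by nontriviality of the splitting, but the repelling circle $\Falphamoins$ symmetrically forces $E^{s}=0$ on it. This contradiction shows that $\varphi^{t}$ is not partially hyperbolic.

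The delicate point is \emph{strong essentiality}. Suppose, for contradiction, that $(\varphi^{t})$ preserves a continuous line field $F$ transverse to the contact distribution $\Ealpha\oplus\Ebeta$. On $\mathcal{W}^{+}$ I frame $TM$ by $(\Ec,\Ealpha,\Ebeta)$, whose forward rates are $-\tfrac12,-\tfrac12,0$, so that $\Ebeta$ is the strictly dominant forward direction. If $F(x)$ had a nonzero $\Ebeta$-component at some $x\in\mathcal{W}^{+}$, then by invariance $F(\varphi^{s}x)=\Diff{x}{\varphi^{s}}F(x)$ would align with $\Ebeta$ as $s\to+\infty$; since $\varphi^{s}x\to q\in\Fbetaplus$ and both $F$ and $\Ebeta$ are continuous, this forces $F(q)=\Ebeta(q)\subset(\Ealpha\oplus\Ebeta)(q)$, contradicting transversality. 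Hence $F\subset\Ec\oplus\Ealpha$ on $\mathcal{W}^{+}$. Applying the same argument to the geodesic flip $v\mapsto-v$, which conjugates $g^{t}$ to $g^{-t}$, exchanges $\Ealpha\leftrightarrow\Ebeta$ and $\mathcal{W}^{+}\leftrightarrow\mathcal{W}^{-}$, and which extends to a symmetry of the compactification $(M,\Lm)$ reversing $(\varphi^{t})$, yields $F\subset\Ec\oplus\Ebeta$ on $\mathcal{W}^{-}$. On the dense set $\mathcal{W}^{+}\cap\mathcal{W}^{-}$ these two inclusions force $F=\Ec$, so that $F$ would be a continuous transverse extension of the flow direction to all of $M$.

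It remains to rule this out, and this is where I expect the main obstacle to lie. The flow vector field vanishes along the fixed circles, so the candidate line $\Ec(x)=\R\frac{d}{ds}\varphi^{s}(x)|_{s=0}$ must be analysed through the linearization of $\varphi^{t}$ at a point $q\in\Fbetaplus$: writing the generator as $X(x)\simeq A_{q}(x-q)$ with $A_{q}$ of eigenvalue $0$ on $\Ebeta$ and $-\tfrac12$ on the normal plane $\Ealpha(q)\oplus\Ec(q)$, the limiting direction of $\Ec(x)$ as $\mathcal{W}^{+}\ni x\to q$ is that of the normal projection of $x-q$. The plan is to use the explicit $\PGL{3}$-model of $(M,\Lm,\varphi^{t})$ near the fixed circles to show that this limit is never a transverse line: either $A_{q}$ acts as a homothety on the normal plane, so the limiting direction depends on the orbit and $\Ec$ has no continuous extension at $q$, or its unique invariant normal line is $\Ealpha(q)\subset\Ealpha\oplus\Ebeta$, so that the extension is non-transverse. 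In either case no continuous transverse $F$ can exist, which proves that $(\varphi^{t})$ is strongly essential. The two negative properties feed back here: non-equicontinuity supplies the domination used above, and non-partial-hyperbolicity is precisely the degeneracy of the hyperbolic bundles along the attracting and repelling circles that obstructs a transverse extension of the flow direction.
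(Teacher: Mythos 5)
Your proposal is correct in outline, and for the first three properties it essentially matches the paper's Proposition \ref{corollarydynamique}: non-conservativity and non-equicontinuity are read off, as you do, from the attraction of compact subsets of $\mathcal{W}^+$ to the measure-zero circles $\Fbetaplus$ (Proposition \ref{propositionproprietesprolongationflotgeod}). For non-partial-hyperbolicity the paper argues at a single explicit fixed point: in the chart \eqref{equationchart1} the flow is \emph{exactly} linear, $\Psi_1^t=\Diag(1,\e^{-t},\e^{-t})$, and this matrix admits no partially hyperbolic splitting. The one soft spot in your version is the phrase ``the growth rates pass to the limit and show that $\Diff{q}{\varphi^{t}}$ has eigenvalues $\e^{-t/2},\e^{-t/2},1$'': Lyapunov rates along orbits in $\mathcal{W}^+$ do not by themselves determine the spectrum at the limiting fixed point, so you should replace this step by the exact linearization above, which the model hands you for free. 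Granted that, your two-circle argument ($E^u$ trivial along $\Fbetaplus$, $E^s$ trivial along $\Falphamoins$) is fine and even slightly more robust than the paper's single-point claim.

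For strong essentiality the two proofs genuinely diverge. The paper (Proposition \ref{propositionLmessentielle}) never reduces the invariant field $F$ to the flow direction: it classifies \emph{all} invariant continuous transverse line fields near the attracting circle -- invariance under $\Psi_1^t$ forces $E^c_1(x,y,z)=\R(0,1,\lambda(x))$ -- normalizes $\lambda(0)\neq1$ by conjugating in the stabilizer, and transports this form through the transition map to the chart \eqref{equationchart2} at the \emph{repulsive} circle, where along the sequence $(x,x^2,1)$ it degenerates onto the $\alpha$-direction, contradicting transversality. You instead use item (3) of Theorem \ref{theoremintrosurfacehyperboliquedynamiqueflotgeod} to get $F\subset \Ec\oplus\Ealpha$ on $\mathcal{W}^+$ and $F\subset\Ec\oplus\Ebeta$ on $\mathcal{W}^-$, hence $F=\Ec$ on a dense set, and then seek a contradiction at the attracting circle alone. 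This route works, with three caveats. (i) The flip symmetry you invoke is asserted, not constructed; it does exist (the dual involution $\kappa$ composed with $j$ of a quarter-turn is $\Gamma$-equivariant and reverses $(\psi^t)$), but you do not need it: the backward rates $\tfrac12,0,\tfrac12$ in item (3) give $F\subset\Ec\oplus\Ebeta$ on $\mathcal{W}^-$ directly. (ii) Converting dominated coefficients into convergence of directions requires the frame $(\Ec,\Ealpha,\Ebeta)$ to stay uniformly transverse along forward orbits converging to $\Fbetaplus$; this holds (in the chart the flow direction along such an orbit is the constant line $\R(0,y_0,z_0)$ with $y_0\neq0$), but it must be checked, since $\Ec$ really does collapse onto $\Ealpha$ along the tori $\Hbetaalphaplus$. (iii) Your last step is explicitly a plan, but it resolves correctly and in the first branch of your dichotomy: $\Psi_1^t$ acts as the homothety $\e^{-t}\,\mathrm{id}$ on the normal $(y,z)$-plane, so $E^c(x,y,z)=\R(0,y,z)$ and two sequences in the dense set $\mathcal{W}^+\cap\mathcal{W}^-$ approaching $q\in\Fbetaplus$ with different ratios $(y:z)$ force two different values of the continuous line $F(q)$ -- a contradiction, with no need to visit the repulsive circle. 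Once (i)--(iii) are written out (each is a short computation in the paper's chart \eqref{equationchart1}), your argument is complete; what it buys over the paper's is a conceptual explanation -- domination forces any invariant transverse field to be the flow direction, and the flow direction cannot extend across the fixed circles precisely because the normal linearization there is a homothety -- whereas the paper's proof is a self-contained two-chart computation that handles arbitrary invariant transverse fields without appealing to the Lyapunov rates.
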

These claims are proved in
Propositions \ref{corollarydynamique}
and \ref{propositionLmessentielle}.

\subsection{Compactifications of Kleinian path structures}
\label{soussectioncompactificationsKleinian}
So far, we considered a non-compact complete hyperbolic surface $\Sigma$
together with the path stucture $\Lm_\Sigma$ of $\Fiunitan{\Sigma}$
invariant by its geodesic flow $(g^t)$,
and we described the dynamical properties of 
the compactified geodesic flow with respect to $\Lm_\Sigma$.
We now explain the geometric origin of this compactification.
Recall from Theorem \ref{theoremintrosurfacehyperboliquedynamiqueflotgeod}
that we consider a surface $\Sigma$ which
is the quotient of $\Hn{2}$ by a discrete free subgroup
$\overline{\Gamma}_0$ of $\PSL{2}$ generated by $d$ hyperbolic elements $\overline{h}_i$
of $\PSL{2}$ having pairwise distinct fixed points on $\partial\Hn{2}$.
We choose for each $i$ a lift $h_i\in\SL{2}$ of $\overline{h}_i$ with positive eigenvalues,
and we consider the subgroup 
\[
 \Gamma_0=\langle h_1,\dots,h_d \rangle
\]
of $\SL{2}$ generated by the $h_i$.
It is known that $\Fiunitan{\Sigma}$ is identified with 
$\overline{\Gamma}_0\backslash\PSL{2}$, and 
we thus have a two-sheeted covering
from $\Gamma_0\backslash\SL{2}$ to $\Fiunitan{\Sigma}$.
As we will see in Lemma \ref{lemmaLSigmainvarianteflotgeodesique},
the pullback of the path structure $\Lm_\Sigma$ of $\Fiunitan{\Sigma}$
by this covering comes
from a left-invariant path structure $\Lm_{\SL{2}}$ on $\SL{2}$. 
It turns out that $(\SL{2},\Lm_{\SL{2}})$ can be embedded in a global
\emph{homogeneous model space}, central in the study of path structures.
\par This is the \emph{flag space} of dimension three that we denote by $\X$,
defined as 
\[
 \X=\enstq{(p,D)\in\RP{2}\times\RP{2}_*}{p\in D}
\]
where $\RP{2}$ (respectively $\RP{2}_*$) denotes the projective plane
(resp. the space of projective lines of $\RP{2}$).
$\X$ admits two natural projections $\pi_\alpha$ and $\pi_\beta$,
respectively on $\RP{2}$ and $\RP{2}_*$, which are the restrictions
to $\X$ of the first and second coordinate projections, and
whose fibers 
define two transverse 
foliations of $\X$ by circles that we respectively call 
$\alpha$ and \emph{$\beta$-circles}.
Denoting respectively by $\Exalpha$ and $\Exbeta$
the distributions tangent to these foliations,
$\Exalpha\oplus\Exbeta$ is a contact distribution and $\Lx=(\Exalpha,\Exbeta)$
is thus a path structure on $\X$.
Note that there is a natural identification of $\X$ with $\PFitan{\RP{2}}$,
for which
$\Lx$ is simply the structure $\Lm_{\RP{2}}^{proj}$
that we defined in Paragraph \ref{soussoussectionsecondepathstructure},
considering the projective lines as the geodesics of $\RP{2}$.
\par The 
natural action of $\PGL{3}$ on $\X$ 
does not only preserve $\Lx$, but is equal to its whole
automorphism group.
Note that the action of $\PGL{3}$ is transitive and identifies thus $\X$ with the homogeneous space $\PGL{3}/\Pmin$,
with $\Pmin=\Stab([e_1],[e_1,e_2])$ the subgroup of upper-triangular
matrices -- which is a (minimal) parabolic subgroup of $\PGL{3}$.
When embedded in $\PGL{3}$ through
\begin{equation*}\label{equationembeddingj}
 j\colon
 A\in\SL{2}\mapsto
 \begin{bmatrix}
  A & 0 \\
  0 & 1
 \end{bmatrix}\in\PGL{3},
\end{equation*}
$\SL{2}$ has an unique open orbit on $\X$ that we denote by $Y$,
for which $j(\Gamma_0)\backslash Y$ is isomorphic to $\Gamma_0\backslash\SL{2}$
(see Lemma \ref{lemmaLSigmainvarianteflotgeodesique}).
Here the second quotient is endowed with the path structure induced by $\Lm_{\SL{2}}$,
and $j(\Gamma_0)\backslash Y$ with the one induced by $\Lx$ 
-- this is a special instance of \emph{Kleinian} path structures, that are
quotients of open subsets of $\X$ by discrete subgroups of $\PGL{3}$.
\begin{theoremintro}\label{corollarycompactificationtopologie}
Up to replacing each generator $h_i$ by a large enough finite iterate
$h_i^{r_i}$,
the Kleinian structure $j(\Gamma_0)\backslash Y$ 
admits a Kleinian compactification $\Gamma\backslash\Omega$
where it embedds as an open and dense subset.
Moreover, $\Gamma\backslash\Omega$ is homeomorphic 
to a closed three-manifold obtained from the flag space $\X$ 
after performing $d$ times a topological surgery
described by the folllowing two operations.
 \begin{enumerate}[label=(\Alph*)]
  \item Remove the interior of two disjoint embedded genus two
  handlebodies $H^-$ and $H^+$.
  \item Glue the two boundary components 
  $\partial H^-$ and $\partial H^+$ of the resulting
  three-manifold with boundary,
  by some diffeomorphism between $\partial H^-$ and $\partial H^+$.
 \end{enumerate}
\end{theoremintro}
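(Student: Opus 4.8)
The plan is to reduce the statement to a ping-pong construction governed by the dynamics of a single generator on $\X$, and then to read off the topology of the quotient from an explicitly described fundamental domain.

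\emph{Step 1 — dynamics of one generator.} I would first analyze $g_i=j(h_i^{r_i})$, which is conjugate in $\PGL{3}$ to $\Diag(\mu_i,\mu_i^{-1},1)$ with $\mu_i=\lambda_i^{r_i}>1$; working in an eigenbasis $(e_1,e_2,e_3)$ of $g_i$ with $e_1,e_2$ spanning the $\SL{2}$-invariant plane, this element is $\R$-split with three distinct eigenvalues and acts on $\X$ with exactly six fixed flags $([e_k],\Vect(e_k,e_l))$. Computing the weights of $\Diag(\mu_i,\mu_i^{-1},1)$ on the tangent spaces, I would single out a unique attracting flag $x_i^+=([e_1],\Vect(e_1,e_3))$, a unique repelling flag $x_i^-=([e_2],\Vect(e_2,e_3))$, and four saddle flags each with a one-dimensional unstable manifold. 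The crucial point, furnished by the detailed description of the $\PGL{3}$-dynamics on $\X$, is that the closure of the attracting set of $g_i$ is the figure-eight $L_i^+=\pi_\alpha^{-1}([e_1])\cup\pi_\beta^{-1}(\Vect(e_1,e_3))$, the union of the $\alpha$-circle and the $\beta$-circle through $x_i^+$, on each branch of which $g_i$ acts with north--south dynamics (sink $x_i^+$, a saddle as source); symmetrically, the attracting set of $g_i^{-1}$ closes up to $L_i^-=\pi_\alpha^{-1}([e_2])\cup\pi_\beta^{-1}(\Vect(e_2,e_3))$ through $x_i^-$.

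\emph{Step 2 — ping-pong and domain of discontinuity.} I would take $H_i^+$ and $H_i^-$ to be small closed regular neighborhoods of $L_i^+$ and $L_i^-$; being neighborhoods of a wedge of two circles, they are genus-two handlebodies. The attracting point $\pi_\alpha(x_i^+)$ and repelling point $\pi_\alpha(x_i^-)$ are the two fixed points of $\overline h_i$ on the $\SL{2}$-invariant projective line $\partial\Hn{2}$, so the hypothesis that the $\overline h_i$ have pairwise distinct fixed points forces the $2d$ cores $L_i^\pm$ to be pairwise disjoint, and the neighborhoods may be chosen pairwise disjoint as well. Since $H_i^+$ is an attracting block and $H_i^-$ a repelling block for the gradient-like dynamics of $g_i$, the attractor--repeller (Conley) decomposition gives, for $r_i$ large enough, $g_i(\X\setminus\mathring H_i^-)\subset\mathring H_i^+$. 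The ping-pong lemma would then show that $\Gamma=\langle g_1,\dots,g_d\rangle$ is discrete and free of rank $d$, that $F=\X\setminus\bigcup_i(\mathring H_i^+\cup\mathring H_i^-)$ is a fundamental domain for the $\Gamma$-action on its maximal open subset of discontinuity $\Omega$, and that the limit set $\X\setminus\Omega$ is contained in $\bigcup_i(H_i^+\cup H_i^-)$. As the cores $L_i^\pm$ lie in $\X\setminus Y$, the open orbit $Y$ is contained in $\Omega$, so by Lemma~\ref{lemmaLSigmainvarianteflotgeodesique} the Kleinian structure $j(\Gamma_0)\backslash Y\cong\Gamma\backslash Y$ embeds in $\Gamma\backslash\Omega$, densely because $Y$ is dense in $\X$.

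\emph{Step 3 — topology of the quotient.} Finally I would identify the topology: $F$ is a compact three-manifold whose boundary is the disjoint union $\bigsqcup_i(\partial H_i^+\sqcup\partial H_i^-)$ of genus-two surfaces, and $\Gamma\backslash\Omega$ is recovered from $F$ by gluing these faces in pairs, the generator $g_i$ carrying $\partial H_i^-$ onto a surface isotopic to $\partial H_i^+$ inside $\mathring H_i^+$ and thereby identifying the two faces $\partial H_i^-$ and $\partial H_i^+$ of $F$. Equivalently, starting from the closed manifold $\X$, for each $i$ one removes the interiors of the two disjoint genus-two handlebodies $H_i^+$ and $H_i^-$ and glues the two resulting boundary surfaces by a diffeomorphism, which is exactly operations (A) and (B) performed $d$ times. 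Since every boundary component of $F$ is glued to another and $F$ is compact, $\Gamma\backslash\Omega$ is a closed three-manifold, which proves the claim. The main obstacle will be Step~2: establishing the single uniform inclusion $g_i(\X\setminus\mathring H_i^-)\subset\mathring H_i^+$ on the flag variety — the explicit fundamental domain announced in the abstract. Unlike the purely north--south dynamics on $\RP{2}$ and $\RP{2}_*$, the dynamics on $\X$ has four saddle flags, and one must check that a flag leaving $\mathring H_i^-$ is pushed by $g_i^{r_i}$ into $\mathring H_i^+$ without lingering near the saddles, which is precisely what forces the passage to large powers $r_i$, while simultaneously keeping the $2d$ handlebodies disjoint so that the ping-pong alternative genuinely produces the claimed fundamental domain.
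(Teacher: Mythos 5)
Your overall architecture --- handlebody blocks around the attractive and repulsive bouquets of each generator, ping-pong for the group, and reading the surgery off a fundamental domain --- is the same as the paper's (Lemma \ref{lemmevoisinagetubulairesurfacegenredeux}, Propositions \ref{propdefSchottkydansX} and \ref{propositiondomainefondamentalSchottky}, Corollary \ref{corollarySchottky}, Proposition \ref{propositioncompactificationSL2surGamma}), and your Step 1 correctly identifies the flags $x_i^\pm$ and the bouquets of Example \ref{examplebouquetloxodromic}. The genuine gap is in Step 2, at the one point that makes this a compactification of $j(\Gamma_0)\backslash Y$ rather than of an abstract quotient: the inference ``the cores $L_i^\pm$ lie in $\X\setminus Y$, hence $Y\subset\Omega$''. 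Ping-pong only gives $\X\setminus\Omega\subset\bigcup_i(H_i^-\cup H_i^+)$, and more precisely that $\X\setminus\Omega$ is a union of nested intersections $\bigcap_n K(\gamma_n)$ of translates of the handlebodies; but the $H_i^\pm$ are genuine neighbourhoods of the cores, so they meet the dense orbit $Y$ in nonempty open sets, and nothing you have said prevents such a nested intersection from being strictly larger than a bouquet and meeting $Y$. What closes this in the paper is Proposition \ref{propositiondescriptionensemblelimite}: every connected component of $\partial\Omega$ \emph{equals} an attractive bouquet of circles $\Bplus(\gamma_\infty)$, $\gamma_\infty\in\partial_{\infty}\Gamma$, and since $\Gamma\subset j(\SL{2})$ these have the special form \eqref{equationdescriptionBplusgammainfty}, namely $\Calpha(p_+(\gamma_\infty))\cup\Cbeta[e_3,p_+(\gamma_\infty)]$ with $p_+(\gamma_\infty)\in[e_1,e_2]$, hence lie in $\Sbetaalpha[e_1,e_2]\cup\Salphabeta[e_3]=\X\setminus Y$. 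Proving $\lim K(\gamma_n)=\Bplus(\gamma_\infty)$ is the substantive dynamical step your proposal omits: in general it requires the balanced-type Fact \ref{faitsuitealinfinimixte} (for $\Gamma\subset j(\SL{2})$ balancedness is automatic, since the middle singular value of $j(h)$ is pinned at $1$, but the identification of the Hausdorff limit with the bouquet via Lemmas \ref{lemmecasmixteX} and \ref{lemmeconsequenceconvergencecompacts} still has to be carried out). In the same vein, you locate the ``main obstacle'' in the single-generator inclusion, which is in fact the easy part (it follows from Lemmas \ref{lemmecasmixteX} and \ref{lemmeconsequenceconvergencecompacts} exactly as you sketch); the real difficulty is the control of infinite reduced words just described, and it also affects your unproved parenthetical claim that $\Omega$ is the \emph{maximal} domain of discontinuity (though maximality is not needed for the statement).

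A second, fixable, slip concerns the fundamental domain itself. With the strict inclusion $g_i(\X\setminus\Int H_i^-)\subset\Int H_i^+$ that you extract from the attractor--repeller picture, the translates $\gamma(\bar{F})$ are pairwise disjoint and do \emph{not} tile: the region $\Int H_i^+\cap g_i(\Int H_i^-)$ between $\partial H_i^+$ and $g_i(\partial H_i^-)$ is covered by no translate of $\bar{F}$, so your $F$ is not a fundamental domain and $g_i$ does not identify the two faces $\partial H_i^-$ and $\partial H_i^+$ --- your phrase ``isotopic to $\partial H_i^+$ inside $\Int H_i^+$'' is precisely where this is being papered over. The paper's normalization $H_i^+\coloneqq\X\setminus\Int(g_i(H_i^-))$ (Definition \ref{definitionSchottky}) forces $g_i(\partial H_i^-)=\partial H_i^+$ exactly; with that adjustment your Step 3 and the surgery description (A)--(B) go through as in Corollary \ref{corollarySchottky}.
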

We emphasize that the gluing diffeomorphisms that appear are extremely specific
(they arise from elements of $\PGL{3}$),
and that we actually expect the topology of these surgeries to be
highly constrained (it is likely that this topology does only depend 
on the number of generators of $\Gamma_0$).
After this result and regarding the above Question 
\ref{questionpascompactificationavecimagedense}
in Paragraph \ref{subssubsectionsurpisingproperties}, 
it seems even more surprising
that the compactification of $(\Fiunitan{\Sigma},\Lm_\Sigma)$ in Theorem
\ref{theoremintrosurfacehyperboliquedynamiqueflotgeod} contains four copies
of $(\Fiunitan{\Sigma},\Lm_\Sigma)$,
while its two-sheeted covering $\Gamma_0\backslash\SL{2}$
admits a compactification with a dense copy.
One can actually prove that
the answer to Question 
\ref{questionpascompactificationavecimagedense} is negative if
the path structure compactification $(M,\Lm)$ is assumed to be Kleinian.
However there is \emph{a priori} no reason for
a path structure compactification of $(\Fiunitan{\Sigma},\Lm_{\Sigma})$
to be Kleinian, and 
obtaining a complete answer to Question \ref{questionpascompactificationavecimagedense}
is thus much more difficult than handling the specific case of Kleinian compactifications.
\par Theorem \ref{corollarycompactificationtopologie} is proved
in Proposition \ref{propositioncompactificationSL2surGamma},
and will be a direct consequence of a more general result
that we now present.

\subsubsection{Fundamental domains for Schottky subgroups}
We will call \emph{loxodromic} any diagonalizable element
of $\PGL{3}$ having three eigenvalues of distinct absolute values.
A loxodromic element $g\in\PGL{3}$ acts particularly nicely on the flag space:
there exists a repulsive bouquet of two circles $\Bmoins(g)\subset\X$ and
an attractive bouquet of two circles $\Bplus(g)\subset\X$
with respect to which the dynamics of $(g^n)$ are of ``north-south type'',
meaning that any compact subset of $\X\setminus \Bmoins(g)$ converges to $\Bplus(g)$ 
under the action of $(g^n)$
(see Example \ref{examplebouquetloxodromic} for more details).
For any $g$, $B_{\alpha\beta}^\pm(g)$ is the bouquet
of $\alpha$ and $\beta$-circles of a point $x^\pm\in\X$, 
and these $g$-invariant bouquets of circles play for $g$
the role of the attractive and repulsive fixed points in $\partial\Hn{2}$
of an hyperbolic element of $\PSL{2}$.
A natural analog to the classical definition of Schottky subgroups of $\PSL{2}$ is then the following.
\begin{definition}\label{definitionSchottky}
 The group $\Gamma$ generated by loxodromic elements 
$g_1,\dots,g_d\in\PGL{3}$ 
is a \emph{Schottky subgroup}
if there exists a \emph{set of separating handlebodies} 
$\{H^-_i,H^+_i\}_{i=1}^d$
for the $g_i$, where the
$H_i^\pm$ are pairwise disjoint compact neighbourhoods
of the $B_{\alpha\beta}^\pm(g_i)$ in $\X$ that are genus two handlebodies,
such that 
$H^+_i=\X\setminus\Int(g_i(H^-_i))$ for any $i$.
\end{definition}
In particular
Schottky subgroups of $\PGL{3}$ are free groups,
and as in $\PSL{2}$
we will see in Proposition \ref{propdefSchottkydansX} that
for any loxodromic elements $g_1,\dots,g_d\in\PGL{3}$ 
\emph{in general position}, that is 
whose bouquet of circles $B_{\alpha\beta}^\pm(g_i)$ are pairwise disjoint,
$\Gamma=\langle g_1,\dots,g_d \rangle$ is a Schottky subgroup
up to replacing each $g_i$ by $g_i^{r_i}$ for $r_i>0$ large enough.
Any sequence of $\PGL{3}$ eventually escaping from any compact
has a subsequence going \emph{simply} to infinity
in a sense defined in Definition \ref{definitionsimplyinfinity}.
Among those, the sequences $(\gamma_n)$ of \emph{balanced type} 
(see Definition \ref{definitiontypesdynamiques})
have on $\X$ the same kind of north-south dynamics
than the iterates of a loxodromic element,
with respect to a repulsive and an attractive bouquet of two circles 
$B_{\alpha\beta}^\pm(\gamma_n)$
(see Lemma \ref{lemmecasmixteX} for more details).
\begin{theoremintro}\label{theoremintropositiongenerale}
 Let $\Gamma$ be a Schottky subgroup of $\PGL{3}$ with $d$ generators.
 \begin{enumerate}
  \item Any sequence $\gamma_n\in\Gamma$ going simply to infinity is of balanced type,
  and $\Gamma$ acts freely, properly and cocompactly on the open 
  subset:
  \[
  \Omega(\Gamma)=\X\setminus\underset{
  \underset{\gamma_n\underset{simply}{\longrightarrow}\infty}{\gamma_n\in\Gamma}
  }{\bigcup}
  \Bplus(\gamma_n).
  \]
  \item With $\{H^-_i,H^+_i\}_{i=1,\dots,d}$ a set of separating handlebodies for the $g_i$,
  $\X\setminus\bigcup_i(H^-_i\cup H^+_i)$ is a fundamental domain
  for the action of $\Gamma$ on $\Omega(\Gamma)$.
  \item The topology of $\Gamma\backslash\Omega(\Gamma)$ 
  is obtained from the flag space $\X$ after performing $d$
  times the surgery described in Theorem \ref{corollarycompactificationtopologie}.
   \item In the case of $d=1$ loxodromic element with positive
   eigenvalues,
   $\Omega(\Gamma)$ is the complement in $\X$ of two disjoint
   bouquet of circles, and
   $\Gamma\backslash\Omega(\Gamma)$ 
  is homeomorphic to the product of the circle
  with the closed connected and orientable surface of genus two.
 \end{enumerate}
\end{theoremintro}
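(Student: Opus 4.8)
The plan is to prove the four claims of Theorem~\ref{theoremintropositiongenerale} in sequence, with the first two being the technical core and the last two following more easily once the fundamental domain is understood.

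For claim~(1), I would begin by analyzing the possible dynamical behaviors of a sequence $\gamma_n\in\Gamma$ going simply to infinity. The key input is the Schottky structure: a reduced word $\gamma_n=g_{i_1}^{\pm}\cdots g_{i_k}^{\pm}$ of large length, and the ping-pong configuration given by a set of separating handlebodies $\{H_i^-,H_i^+\}$. Since $H_i^+=\X\setminus\Int(g_i(H_i^-))$, the generators play ping-pong between their handlebodies, and I would show by a nesting argument that as the word length grows, the image $\gamma_n(\X\setminus H)$ (where $H$ is the union of all handlebodies) gets concentrated near an attractive bouquet $\Bplus(\gamma_n)$, while the repulsive set $\Bmoins(\gamma_n)$ lies inside the first handlebody of the word. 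This is what forces $(\gamma_n)$ to be of \emph{balanced type} (using the dynamical classification from Definition~\ref{definitiontypesdynamiques} and the north-south description of Lemma~\ref{lemmecasmixteX}). The definition of $\Omega(\Gamma)$ as the complement of all attractive bouquets is then designed precisely so that $\Gamma$ acts without accumulation inside $\Omega(\Gamma)$; freeness follows from $\Gamma$ being free (Proposition~\ref{propdefSchottkydansX}), and properness plus cocompactness should follow from the existence of the compact fundamental domain in claim~(2).

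For claim~(2), the natural approach is a ping-pong / Poincaré fundamental domain argument. Set $F=\X\setminus\bigcup_i(H_i^-\cup H_i^+)$. I would verify the two defining properties of a fundamental domain. First, $\gamma(\Int F)\cap\Int F=\emptyset$ for every nontrivial $\gamma\in\Gamma$: this is the ping-pong lemma applied to the handlebody configuration, where the condition $H_i^+=\X\setminus\Int(g_i(H_i^-))$ guarantees that $g_i$ maps the complement of $H_i^-$ into $H_i^+$, so any nontrivial reduced word pushes $F$ entirely inside one of the handlebodies and hence off $\Int F$. Second, every $\Gamma$-orbit in $\Omega(\Gamma)$ meets $\overline F$: given $x\in\Omega(\Gamma)$, I would run the ping-pong dynamics to find a word $\gamma$ such that $\gamma(x)$ lands in $F$, using that $x$ avoids all attractive bouquets so its backward orbit cannot be trapped in any handlebody forever. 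Establishing this ``every orbit returns'' statement cleanly is where I expect the main obstacle to lie, since one must control how the nested handlebody images exhaust $\Omega(\Gamma)$ and rule out orbits escaping to the removed bouquets.

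For claim~(3), I would identify the quotient $\Gamma\backslash\Omega(\Gamma)$ topologically from the fundamental domain $F$. Removing the $2d$ handlebodies from $\X$ and then recording the gluings $\partial H_i^-\to\partial H_i^+$ induced by $g_i$ (since $x$ and $g_i(x)$ are identified) reproduces exactly operations~(A) and~(B) of Theorem~\ref{corollarycompactificationtopologie}, performed $d$ times; this is essentially a bookkeeping translation of the fundamental-domain picture into the surgery description. Finally, for claim~(4) with a single loxodromic $g_1$ with positive eigenvalues, $\Omega(\Gamma)$ is the complement of the two bouquets $\Bmoins(g_1)$ and $\Bplus(g_1)$, and I would compute the quotient directly: the action of the cyclic group $\langle g_1\rangle$ on this complement is free, properly discontinuous and cocompact with fundamental domain a handlebody shell, and an explicit analysis of the north-south flow between the two bouquets (together with the $\alpha$- and $\beta$-circle foliations of $\X$) identifies the quotient as a mapping torus that is homeomorphic to $\Sn{1}$ times the closed orientable genus-two surface. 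Here the concrete structure of the flag space and the loxodromic dynamics of Example~\ref{examplebouquetloxodromic} make the computation explicit rather than abstract.
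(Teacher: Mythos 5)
Your overall architecture mirrors the paper's (ping-pong, explicit fundamental domain, surgery bookkeeping, flow-transverse handlebody for $d=1$), but there is a genuine gap at the heart of claim~(1): you assert that the nesting of handlebody images ``forces $(\gamma_n)$ to be of balanced type'', and this is exactly the point the paper has to work hardest to establish (Fact~\ref{faitsuitealinfinimixte} inside the proof of Proposition~\ref{propositiondescriptionensemblelimite}). The nesting argument by itself only shows that $\gamma_n(H_b^{\varepsilon_b})$ Hausdorff-converges to a connected component $K$ of the limit set; it does not exclude that $(\gamma_n)$ has unbalanced type $\alpha$ or $\beta$, since an unbalanced-type sequence concentrates compact sets just as well: by Lemma~\ref{lemmeattractifX}, every point off the repulsive surface has a one-point dynamic set on the attractive circle $\mathcal{C}^+$. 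Nor can you argue at this stage that ``the limit is a bouquet of two circles, so the type must be balanced'': that $K=\Bplus(\gamma_\infty)$ is a bouquet of circles is itself a \emph{consequence} of balancedness, and for an unbalanced sequence the notation $\Bplus(\gamma_n)$ appearing in the definition of $\Omega(\Gamma)$ is not even defined. The paper's actual mechanism is a case analysis on how the repulsive circle $\mathcal{C}^-$ and surface $\mathcal{S}^-$ of a putative unbalanced subsequence meet $H_b^{\varepsilon_b}$: incidence properties of the flag space (every $\alpha$-$\beta$ surface meets every $\alpha$-circle, every $\beta$-$\alpha$ surface meets every $\beta$-circle) force intersections, and a small perturbation of the separating handlebodies (Fact~\ref{faitchangementsdomainefondamentaux}, which shows $\Omega$ is insensitive to such modifications) then produces two distinct, hence disjoint, connected components of $\Lambda(\Gamma)$ both containing $\bigcup_{x\in B_b^{\varepsilon_b}}\mathcal{D}_{(\gamma_n)}(x)$ --- a contradiction. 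Nothing in your sketch plays the role of this argument, and without it claim~(1) is unproved.

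A second, related gap is the surjectivity step of claim~(2), which you honestly flag but leave unresolved. The paper sidesteps it by reversing the logical order: it first \emph{defines} $\Omega\coloneqq\bigcup_{\gamma\in\Gamma}\gamma(\bar{U})$, so that every orbit meets $\bar{U}$ by construction, and gets openness, freeness, properness and cocompactness from Klein's combination theorem (Proposition~\ref{propositiondomainefondamentalSchottky}, following \cite{franceslorentziankleinian}); only afterwards does it identify $\partial\Omega$ with the union of the attractive bouquets, via the homeomorphism $\gamma_\infty\mapsto\Bplus(\gamma_\infty)$ from $\partial_{\infty}\Gamma$ onto the set of connected components of $\Lambda(\Gamma)$, together with the balanced-type analysis above --- and it must also handle \emph{arbitrary} sequences of $\Gamma$ going simply to infinity, not just subwords of a single boundary point, by showing they converge in $\Gamma\cup\partial_{\infty}\Gamma$ and inherit the attractive bouquet of the limit word. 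If you insist on your order (define $\Omega(\Gamma)$ first, then prove orbits return to the fundamental domain), you will need the same limit-set description anyway, so nothing is saved. Your claims~(3) and~(4) are essentially correct and match the paper: (3) is the inductive bookkeeping of Corollary~\ref{corollarySchottky}, and for (4) the paper makes your ``mapping torus'' identification immediate by choosing $\partial H^-$ transverse to the one-parameter flow $(g^t)$, so that $(x,t)\mapsto g^t(x)$ is a diffeomorphism from $\partial H^-\times\R$ onto $\Omega$ and the quotient is literally $\partial H^-\times\Sn{1}$ (Lemma~\ref{lemmevoisinagetubulairesurfacegenredeux} and Proposition~\ref{propositionSchottkyungenerateur}).
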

The existence of the open subset $\Omega(\Gamma)\subset\X$ 
with proper and cocompact action
of $\Gamma$ is a consequence of general theories independently
developped by \cite{guichard_anosov_2012} and \cite{kapovich_dynamics_2017}
for Anosov representations and CEA subgroups,
as we will explain in the next paragraph.
The interpretation that we give of the boundary $\partial\Omega(\Gamma)$
as the union of the attractive bouquets of two circles
of the sequences of $\Gamma$ going simply to infinity
does not appear in this form in these works,
though being closely related to the descriptions given therein
(see respectively \cite[\S 10.2.6]{guichard_anosov_2012} and
\cite[\S 6]{kapovich_dynamics_2017}).
We will give in 
Paragraph \ref{sectiondomainesdiscontinuiteespacedrapeaux}
of this paper an independent proof of Theorem \ref{theoremintropositiongenerale}.
More precisely, we prove the existence of $\Omega(\Gamma)$
and describe $\partial\Omega(\Gamma)$ 
for Schottky subgroups of $\PGL{3}$
in Propositions \ref{propositiondomainefondamentalSchottky}
and \ref{propositiondescriptionensemblelimite}.
Our proof relies on a simple ping-pong argument using
an explicit description of a fundamental domain
for the action of $\Gamma$ on $\Omega(\Gamma)$
(see Corollary \ref{corollarySchottky}),
which allows us to describe the topology
of $\Gamma\backslash\Omega(\Gamma)$ by a surgery
(see also Proposition \ref{propositionSchottkyungenerateur}
for the case of one generator).

\subsubsection{Relations with Anosov representations}
It follows from \cite[Theorem 5.9]{bochi_anosov_2019}
and independently from the serie of papers \cite{kapovich_morse_2014,kapovich_recent_2016,kapovich_morse_2018}
(see also related criteria in 
\cite{guichard})
that if 
$\Gamma=\langle g_1,\dots,g_d \rangle$ is a Schottky subgroup
of $\PGL{3}$ in the sense of Definition \ref{definitionSchottky},
then the induced representation of the
free group with $d$ generators into $\PGL{3}$
is \emph{Anosov}.
The notion of Anosov representation,
originally introduced by Labourie in \cite{labourie_anosov_2006},
has been intensively studied in the past years.
In particular, for Anosov representations of 
finitely generated word-hyperbolic groups
in a semi-simple Lie group $G$,
\cite{guichard_anosov_2012} proves the existence of a $\Gamma$-invariant 
open subset $\Omega$ of an homogeneous space $G/P$
where the action of $\Gamma$ is properly discontinuous and cocompact.
Independently, \cite{kapovich_dynamics_2017} proves analog results
for \emph{CEA} subgroups of $G$ -- a notion closely linked to the one of
Anosov representations.
Schottky subgroups of $\PGL{3}$
fall into both  settings
(see \cite[Remark 1.6]{kapovich_dynamics_2017})
and the existence of the open subset of discontinuity 
in $\X=\PGL{3}/\Pmin$
with cocompact action of $\Gamma$ appearing in Theorem
\ref{theoremintropositiongenerale}
is a particular case of the results
\cite[Theorem 1.11]{guichard_anosov_2012} and 
\cite[Theorem 1.8]{kapovich_dynamics_2017}.
We also point out 
\cite{stecker_domains_2018},
where the authors 
extend some of these results 
to the setting of 
\emph{purely hyperbolic generalized Schottky subgroups} 
of $\PSL{2n+1}$ acting on
spaces of oriented flags.
\par In \cite{barbot_flag_2001,barbot}, Barbot studies
the case of Anosov representations of fundamental groups 
of closed higher genus surfaces into $\PGL{3}$
-- actually, the work \cite{barbot_flag_2001} precedes
the definition of Anosov representations. 
In the same way,
the paper \cite{franceslorentziankleinian}
studies the action of conformal Schottky subgroups on the Einstein universe
before the general investigation of 
Anosov representations.
Both of these works were 
important inspirations for 
the point of view adopted in Paragraph 
\ref{sectiondomainesdiscontinuiteespacedrapeaux} of this paper 
on Schottky subgroups of $\PGL{3}$.

\subsection{Dynamics of $\PGL{3}$ on the flag space}
The central tool of this paper, 
for the dynamical results proved in section \ref{sectionTheoremeB}
and for the construction of fundamental domains for Schottky subgroups
in section \ref{sectiondomainesdiscontinuiteespacedrapeaux},
is a detailed description of the dynamics of $\PGL{3}$ on the flag space $\X$.
This is the content of section \ref{sectiondynamiqueespacedrapeaux},
which is of independent interest.
For any sequence $(g_n)$ going simply to infinity in $\PGL{3}$,
we describe two ``dual'' filtrations 
by natural geometric objects of $\X$,
that are pairwise repulsive and attractive objects 
for the action of $(g_n)$.
This description is achieved in Paragraph \ref{soussectionconsequencesdynamiquesX}
by a very hands-on approach
based on the description 
of the dynamics of $\PGL{3}$ on $\RP{2}$ in Paragraph 
\ref{soussectiondynamiqueRP2}.
\par These results are related to those obtained in
\cite[\S 6]{kapovich_dynamics_2017} in the general setting
of \emph{regular} discrete subgroups of semi-simple Lie groups --
the specificity of the case that we consider in this paper
allowing us to obtain a refined geometrical description.
\par We also point out \cite{falbel} where the authors construct
a $(\PGL{3},\X)$-structure (called a \emph{flag structure} in their paper)
on an open hyperbolic three-manifold.
The existence of such a structure on a \emph{closed} hyperbolic
three-manifold is an open question,
and a natural way to address it would be to compactify the flag structure
of \cite{falbel} by using the dynamics of $\PGL{3}$
on $\X$.

\subsection*{Acknowledgments}
While working on this paper,
I had the chance to have many inspiring discussions with several people,
offering me different points of view on this subject.
I want here to take the chance to thank them very gratefully.
To begin with, I would like to thank
Charles Frances and Elisha Falbel
for encouraging me to give to this paper a definite form.
I thank Olivier Guichard for taking the time to
explain to me the links of this subject 
with works on Anosov representations.
Finally, I thank Rapha\"el Alexandre,
Julien March\'e, Tali Pinsky and Nir Lazarovich
for enlightening discussions about different aspects of this paper.

\subsection*{Notations and conventions}
We denote by $\Diag(\alpha_1,\dots,\alpha_n)$
the diagonal $(n\times n)$-matrix 
whose entries are the $\alpha_i\in\R$
(or its projection in $\PGL{n}$,
the context avoiding any confusion),
and by $[g]$ the class in $\PGL{n}$ of an element $g\in\GL{n}$.
We denote $[x_1:\dots:x_n]=\R(x_1,\dots,x_n)\in\RP{n-1}$
for any $(x_1,\dots,x_n)\neq(0,\dots,0)$ and
$[P]$ denotes the projection in $\RP{n-1}$ of
$\Vect(P)$ for any any $P\subset\R^n$.
The standard basis of $\R^n$ will always be denoted by
$(e_1,\dots,e_n)$.
\par Any differential geometric object will
be assumed to be smooth (that is, $\cc^\infty$) 
if not otherwise specified, and all manifolds will
be assumed to be boundaryless.

\section{Dynamics in the flag space}\label{sectiondynamiqueespacedrapeaux}
The \emph{flag space} is defined by
\begin{equation}\label{equationdefinitionX}
 \X=\enstq{(p,D)}{p\in D}\subset\RP{2}\times\RP{2}_*,
\end{equation}
where $\RP{2}$ denotes the projective space and $\RP{2}_*$
the space of projective lines of $\RP{2}$.
The natural action of $\PGL{3}$ on $\X$ 
defined by $g\cdot(p,D)=(g(p),g(D))$ is transitive
and induces thus an identification of $\X$ with the homogeneous
space $\PGL{3}/\Pmin$, with $\Pmin$ the subgroup of upper-triangular
matrices of $\PGL{3}$ --
which is the stabilizer of $([e_1],[e_1,e_2])\in\X$.
Our goal in this first section is to describe the dynamics
of sequences of elements of $\PGL{3}$ on $\X$,
which will be achieved 
in Paragraph \ref{soussectionconsequencesdynamiquesX}.

\subsection{Dynamic sets}
Our main technical tool to precisely describe
the dynamics
of a sequence of diffeomorphisms $(g_n)$ of a compact manifold $M$, 
will be the \emph{dynamic sets}
of the points $x\in M$, defined as:
\begin{equation}\label{equationdefDx}
 \D_{(g_n)}(x)=\enstq{\text{accumulation points of~}
 (g_n(x_n))}{(x_n)\in M^\N,\lim x_n=x}.
\end{equation}
It is not difficult to check that these are closed
and thus compact subsets of $M$,
which are always non-empty since $M$ is compact.
The first important utility of dynamic sets is
to determine the properness of group actions on open sets
of $M$.
For $\Gamma$ a topological group acting on $M$ and $x\in M$, 
we denote by $\mathcal{D}_{\Gamma}(x)$ the union of the
$\mathcal{D}_{(g_n)}(x)$, $(g_n)$ being any sequence of $\Gamma$
\emph{going to infinity}
(that is escaping from any compact subset of $\Gamma$).
We will say that two points $x$ and $y$ of $M$ 
are \emph{dynamically related} for the action of $\Gamma$
if $y\in\mathcal{D}_{\Gamma}(x)$ or $x\in\mathcal{D}_{\Gamma}(y)$.
The following Lemma is then a straightforward translation
of the classical definition of proper actions
using dynamic sets.
\begin{lemma}\label{lemmepropretedynamiquementrelie}
 $\Gamma$ acts properly on an open set $\Omega$ of $M$, 
 if and only if no pair of points of $\Omega$
 is dynamically related for the action of $\Gamma$.
\end{lemma}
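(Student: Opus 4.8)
The plan is to prove both contrapositives, working from the characterization of properness best suited to a compact ambient space: since $\Omega$ is an open subset of the compact manifold $M$, it is locally compact Hausdorff, and the action of $\Gamma$ on $\Omega$ is proper if and only if for every compact $K\subset\Omega$ the transporter $\Gamma_K=\{g\in\Gamma\mid g(K)\cap K\neq\emptyset\}$ is relatively compact in $\Gamma$. Replacing a pair of compacts $K,L$ by $K\cup L$ shows this single-compact version is equivalent to the usual two-compact formulation. Throughout I use that $\Gamma$, being in our applications a subgroup of the Lie group $\PGL{3}$, is metrizable, so that a subset which is not relatively compact contains a sequence going to infinity, and conversely.

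First I would show that if some pair $x,y\in\Omega$ is dynamically related then the action is not proper. Up to exchanging $x$ and $y$ we may assume $y\in\mathcal{D}_\Gamma(x)$, so there is a sequence $(g_n)$ of $\Gamma$ going to infinity and a sequence $x_n\to x$ such that $y$ is an accumulation point of $(g_n(x_n))$; passing to a subsequence we may assume $g_n(x_n)\to y$. Choose a compact neighbourhood $K\subset\Omega$ of the two-point set $\{x,y\}$. For $n$ large we have $x_n\in K$ and $g_n(x_n)\in K$, whence $g_n(x_n)\in g_n(K)\cap K$ and $g_n\in\Gamma_K$. As $(g_n)$ goes to infinity, $\Gamma_K$ is not relatively compact, and the action fails to be proper.

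Conversely I would show that if the action is not proper then some pair of points of $\Omega$ is dynamically related. By the characterization above there is a compact $K\subset\Omega$ for which $\Gamma_K$ is not relatively compact, hence a sequence $(g_n)$ in $\Gamma_K$ going to infinity. For each $n$ pick $x_n\in K$ with $g_n(x_n)\in K$. Since $K$ is compact we may, after extraction, assume $x_n\to x$ and $g_n(x_n)\to y$ with $x,y\in K\subset\Omega$. Then by definition $y\in\mathcal{D}_{(g_n)}(x)\subset\mathcal{D}_\Gamma(x)$, so $x$ and $y$ are dynamically related and both lie in $\Omega$. This establishes the equivalence.

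The argument is essentially an unwinding of definitions, so I do not expect a deep obstacle; the only points requiring care---which I would state explicitly rather than leave implicit---are the two passages between ``relatively compact'' and sequences, namely extracting from a non-relatively-compact $\Gamma_K$ a sequence escaping every compact subset of $\Gamma$, and its converse. Both rest on the metrizability (indeed local compactness) of $\Gamma$ in the situation considered here. The remaining subtleties are purely bookkeeping: one must keep the limit points inside $\Omega$, which is guaranteed because the relevant points $x_n$ and $g_n(x_n)$ are confined to a compact subset $K$ of $\Omega$, so their limits along a subsequence again belong to $K\subset\Omega$; and one must remember that an accumulation point of $(g_n(x_n))$ becomes a genuine limit only after passing to a subsequence, which is harmless since a subsequence of a sequence going to infinity still goes to infinity.
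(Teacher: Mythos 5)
Your proposal is correct and takes exactly the route the paper intends: the paper states Lemma \ref{lemmepropretedynamiquementrelie} without proof, calling it a ``straightforward translation'' of the classical definition of properness, and your argument via the transporter sets $\enstq{g\in\Gamma}{g(K)\cap K\neq\varnothing}$ for compact $K\subset\Omega$ is precisely that translation, with the two passages between relative compactness and sequences going to infinity correctly justified by the metrizability and local compactness of $\Gamma\subset\PGL{3}$. Nothing is missing.
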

See \cite{franceslorentziankleinian,kapovich_dynamics_2017},
where the notions of dynamic sets and dynamical relations
are used to prove the properness of different group actions.
\par Dynamic sets are also an efficient way
to determine the limit of a sequence of compact subsets of $M$.
Endowing $M$ with a Riemannian metric and its associated distance $d$, 
we recall that the set $\mathcal{K}(M)$ of compact subsets of $M$ is endowed
with the classical \emph{Hausdorff distance}
\[
d_H(K,L)=\inf\enstq{r>0}{K\subset L_r\text{~and~}L\subset K_r}
\]
for any $K$ and $L$ in $\mathcal{K}(M)$,
where $K_r=\enstq{x\in M}{d(x,K)\leq r}$.
The topology induced by this distance on $\mathcal{K}(M)$
is actually independent of the Riemannian metric originally chosen
on $M$, since two such metrics induce bi-Lipschitz equivalent
distances by compacity of $M$.
We will always implicitly endow $\mathcal{K}(M)$
with the topology induced in this way by the Hausdorff distance,
that we call the \emph{Hausdorff topology},
and for which $\mathcal{K}(M)$ is compact.
We denote by $\Int P$ the interior of a subset $P$,
and by $\Cl P$ its closure.
\begin{lemma}\label{lemmeconsequenceconvergencecompacts}
Let $(g_n)$ be a sequence of diffeomorphisms of $M$.
\begin{enumerate}
 \item Let $K$ be a compact subset of $M$, 
 such that $g_n(K)$ converges to a compact $K_\infty$ 
 for the Hausdorff topology.
 Then $\bigcup_{x\in\Int(K)}\D_{(g_n)}(x) \subset K_\infty \subset \bigcup_{x\in K}\D_{(g_n)}(x)$.
 \item Let $K$ be a compact subset of $M$ of non-empty interior, 
 and $K_\infty\in\mathcal{K}(M)$ such that
 \[
 \bigcup_{x\in K}\D_{(g_n)}(x) \subset K_\infty
 \subset\Cl\left(\bigcup_{x\in \Int(K)}\D_{(g_n)}(x)\right).
 \]
 Then $g_n(K)$ converges to $K_\infty$ for the Hausdorff topology. 
\end{enumerate}
\end{lemma}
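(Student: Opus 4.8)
The plan is to recast Hausdorff convergence in $\mathcal{K}(M)$ through the classical Kuratowski upper and lower limits. Since $M$ is compact, for compact sets $L_n$ and $L$ one has $L_n\to L$ for the Hausdorff topology if and only if $\liminf_n L_n=\limsup_n L_n=L$, where $\limsup_n L_n$ is the set of accumulation points of sequences $y_n\in L_n$ and $\liminf_n L_n$ the set of genuine limits of such sequences. The whole game is then to relate these two limit sets of $L_n=g_n(K)$ to the dynamic sets in the statement. First I would establish, for an arbitrary sequence $(g_n)$, the elementary two-sided inclusion
\[
\bigcup_{x\in\Int(K)}\D_{(g_n)}(x)\ \subseteq\ \limsup_n g_n(K)\ \subseteq\ \bigcup_{x\in K}\D_{(g_n)}(x).
\]
For the right inclusion, given $y=\lim_k g_{n_k}(w_k)$ with $w_k\in K$, compactness of $K$ lets me assume $w_k\to x\in K$; completing $(w_k)$ into a full sequence $z_n\to x$ (setting $z_n=x$ off the indices $n_k$) exhibits $y$ as an accumulation point of $(g_n(z_n))$, hence $y\in\D_{(g_n)}(x)$. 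The left inclusion is the same construction read backwards, the hypothesis $x\in\Int(K)$ guaranteeing $z_n\in K$ for $n$ large, so that $g_{n_k}(z_{n_k})\in g_{n_k}(K)$. Granting this, part (1) is immediate: if $g_n(K)\to K_\infty$ then $\limsup_n g_n(K)=K_\infty$, and the displayed inclusions give exactly $\bigcup_{\Int(K)}\D_{(g_n)}(x)\subseteq K_\infty\subseteq\bigcup_{K}\D_{(g_n)}(x)$.

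For part (2) I would argue by compactness of $\mathcal{K}(M)$: it suffices to show that every Hausdorff-convergent subsequence of $(g_n(K))$ has limit $K_\infty$, since in a compact metric space a sequence all of whose subsequential limits agree converges to their common value. Let $L=\lim_k g_{n_k}(K)$. Applying part (1) to the subsequence $(g_{n_k})_k$ sandwiches $L$ between the analogous unions of its dynamic sets $\D_{(g_{n_k})_k}(x)$. Combining the upper bound with the monotonicity $\D_{(g_{n_k})_k}(x)\subseteq\D_{(g_n)}(x)$ (again obtained by completing a subsequence into a full sequence) and with the hypothesis $\bigcup_K\D_{(g_n)}(x)\subseteq K_\infty$ yields $L\subseteq K_\infty$.

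The reverse inclusion $K_\infty\subseteq L$ is the delicate point. Since $L$ is closed and $K_\infty\subseteq\Cl\!\left(\bigcup_{\Int(K)}\D_{(g_n)}(x)\right)$, it suffices to show $\bigcup_{\Int(K)}\D_{(g_n)}(x)\subseteq L$. The subtlety is that a point $y$ of such a dynamic set is a priori only an accumulation point of $(g_n(z_n))$ along \emph{some} subsequence, which need not be compatible with the subsequence $(n_k)$ defining $L$; the work consists in using the non-empty interior of $K$—so that $z_n\in K$ eventually and $g_n(z_n)\in g_n(K)$—together with the structure of the sequences considered to guarantee that these accumulations are in fact realized inside $L$. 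I expect this reconciliation of the two subsequences, namely the passage from the one-sided $\limsup$ estimate to genuine two-sided Hausdorff convergence (equivalently, the control of the lower Kuratowski limit $K_\infty\subseteq\liminf_n g_n(K)$), to be the main obstacle, and to be precisely the reason the statement insists both on the interior of $K$ and on taking the closure in the lower bound.
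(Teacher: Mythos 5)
Your part (1) is correct and is, modulo the Kuratowski dressing, the paper's own argument (the paper proves the two inclusions directly via $r$-neighbourhoods and a diagonal extraction, which is exactly your $\limsup$ computation); likewise the first half of part (2): the reduction by compactness of $\mathcal{K}(M)$ to showing that every subsequential limit $L=\lim_k g_{n_k}(K)$ equals $K_\infty$, and the inclusion $L\subseteq K_\infty$ via the monotonicity $\D_{(g_{n_k})}(x)\subseteq\D_{(g_n)}(x)$, are fine. But at the reverse inclusion $K_\infty\subseteq L$ you stop: you name the difficulty --- the subsequences realizing points of $\D_{(g_n)}(x)$ need not be compatible with the subsequence $(n_k)$ defining $L$ --- and then only express the expectation that ``the structure of the sequences considered'' resolves it. No argument is given, so the proposal is incomplete at precisely the step that carries the content of part (2).

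Moreover, the step you left open genuinely fails at the stated level of generality, so no amount of massaging the interior hypothesis could have closed it: on $M=\Sn{1}$ take $K$ a closed arc with $K=\Cl(\Int(K))$, and let $g_{2n}=\id$ and $g_{2n+1}=b$ a rotation with $b(K)\cap K=\varnothing$. Then $\D_{(g_n)}(x)=\{x,b(x)\}$ for every $x$, so $K_\infty=K\cup b(K)$ satisfies both hypotheses of part (2), while $g_n(K)$ alternates between $K$ and $b(K)$ and does not converge. The missing ingredient is stability of dynamic sets under passage to subsequences, $\D_{(g_{n_k})}(x)=\D_{(g_n)}(x)$; this holds for every sequence to which the paper actually applies the lemma (sequences going simply to infinity, whose dynamic sets are determined by their asymptotic type and limit objects, both inherited by all subsequences, cf.\ Remark \ref{remarksubsequencessimply} and Lemmas \ref{lemmeattractifRP2}--\ref{lemmecasmixteX}), and under it your scheme closes at once: part (1) applied to $(g_{n_k})$ gives $\bigcup_{x\in\Int(K)}\D_{(g_n)}(x)=\bigcup_{x\in\Int(K)}\D_{(g_{n_k})}(x)\subseteq L$, and since $L$ is closed, $K_\infty\subseteq\Cl\bigl(\bigcup_{x\in\Int(K)}\D_{(g_n)}(x)\bigr)\subseteq L$. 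For what it is worth, the paper's own proof of (2) is the one-line assertion that part (1) makes $K_\infty$ the unique accumulation point of $(g_n(K))$ in the compact metrizable space $\mathcal{K}(M)$, which elides the same subsequence issue; so your diagnosis of where the delicacy sits is accurate, but diagnosing an obstacle is not the same as overcoming it, and here it can only be overcome by adding (or invoking) the subsequence-stability of the dynamic sets.
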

\begin{proof}
1. Let $y\in\D_{(g_n)}(x)$ with $x\in\Int(K)$, 
say $y=\lim g_n(x_n)$ with $x_n$ converging to $x$.
 Then for $n$ large enough $g_n(x_n)\in g_n(K)$, 
 and thus $d(g_n(x_n),K_\infty)\leq d_H(g_n(K),K_\infty)$.
 Hence $d(y,K_\infty)=\lim d(g_n(x_n),K_\infty)=0$
 since $\lim d_H(g_n(K),K_\infty)=0$ by hypothesis,
 implying $y\in K_\infty$ as the latter is closed. 
 For the reverse inclusion, let $y\in K_\infty$. 
 Then for any $r>0$, $y\in (g_n(K))_r$ for $n$ large enough. 
 There exists thus $n$ as large as we want
 and $x_n\in K$, such that $d(y,g_n(x_n))\leq r$. 
 Passing to a subsequence, there exists finally a sequence $x_n\in K$ 
 such that $g_n(x_n)$ converge vers $y$.
 Possibly taking a further subsequence, 
 we can assume that $(x_n)$ 
 converges to some point $x\in K$, and then $y\in\D_{(g_n)}(x)$,
 finishing the proof. \\
 2. According to the first claim of the Lemma, 
 if both inclusions hold
 "then $K_\infty$ is the unique accumulation point of $(g_n(K))$.
 Since $\mathcal{K}(M)$ is a compact metrizable space, 
 this forces $(g_n(K))$ to converge to $K_\infty$.
\end{proof}

\subsection{Dynamical types in $\PGL{3}$}
\label{soussectiontypesdynamiques}
We now come back to the setting that we will be interested with,
and describe the three possible asymptotical types of a sequence
of $\PGL{3}$ going to infinity.

\subsubsection{Cartan decomposition and projection}
Compact perturbations of a sequence $(g_n)$
do not change the nature of its dynamic,
but only shift its dynamic sets.
More precisely, if two sequences $(g_n)$ and $(a_n)$ of $\PGL{3}$ satisfy
$g_n=k_na_nl_n$, where $(k_n)$ and $(l_n)$ respectively converge 
to $k_\infty$ and $l_\infty$ in $\PGL{3}$, then
\begin{equation}\label{equationlienensemblesD}
\D_{(g_n)}(x)=k_\infty\D_{(a_n)}(l_\infty(x)).
\end{equation}
This relation is a good motivation to reduce the description
of the dynamics in $\PGL{3}$ to the study of a particularly simple
types of elements: diagonal matrices.
To this end, $\PGL{3}$ enjoys the useful \emph{Cartan decomposition}
$\PGL{3}=KAK$, with $K\coloneqq\PO{3}$ the orthogonal group
and
\begin{equation}\label{equationdefA}
 A\coloneqq
 \enstq{
 \Diag(\alpha,\beta,\gamma)=
 \begin{bmatrix}
  \alpha & 0 & 0 \\
  0 & \beta & 0 \\
  0 & 0 & \gamma
 \end{bmatrix}
 }{\alpha,\beta,\gamma>0} \subset \PGL{3}
\end{equation}
the subgroup of diagonal elements of $\PGL{3}$ 
\emph{having positive entries}
(note that the $KAK$ decomposition is in this setting a simple consequence
of the polar decomposition).
We emphasize that in a decomposition
$g=kal$ with $(k,l)\in K^2$ and $a\in A$,
the pair $(k,l)$ is non-unique
but $a$ is unique up to permutation of its diagonal entries
which are the singular values of $g$
(that is the squared roots of the eigenvalues of $\transp{g}g$).
Any $a\in A$ is thus conjugated to an 
unique \emph{standard element} $\Diag(\alpha,\beta,\gamma)\in A$
such that $\alpha\geq\beta\geq\gamma$.
The set of standard elements of $A$ will be denoted by  $A^+$
(this is only a semi-subgroup of $A$).
Any $g\in\PGL{3}$ enjoys thus a \emph{standard decomposition}
\begin{equation}\label{equationstandarddecomposition}
 g=kal, \text{~with~} (k,l)\in K^2 \text{~and~} a=a(g)\in A^+,
\end{equation}
in which $a(g)\in A^+$ is unique 
and called the \emph{Cartan projection}
of $g$.

\subsubsection{Asymptotic directions in $\PGL{3}$}
The standard decomposition of elements of $\PGL{3}$ allows us,
with the help of relation \eqref{equationlienensemblesD}, 
to reduce the investigation of dynamic sets
of sequences of $\PGL{3}$
to the specific case of sequences of $A^+$.
Therefore, we now focus on sequences of $A^+$
going to infinity.
\begin{definition}\label{definitionsimplyinfinity}
A sequence 
$a_n=\Diag(\alpha_n,\beta_n,\gamma_n)\in A^+$
goes \emph{simply to infinity} if it goes to infinity with 
the three sequences
$\frac{\alpha_n}{\beta_n}\geq1$, $\frac{\alpha_n}{\gamma_n}\geq1$,
$\frac{\beta_n}{\gamma_n}\geq1$ having a limit in 
$\intervalleff{1}{+\infty}$. 
A sequence $g_n\in\PGL{3}$ goes
\emph{simply to infinity} if there exists a standard decomposition
$g_n=k_na_nl_n$ whose factors $k_n$ and $l_n$ in $K$ converge
and whose Cartan projection $a_n=a(g_n)\in A^+$ goes simply to infinity.
\end{definition}

It is easy to check that for any sequence
$a_n=\Diag(\alpha_n,\beta_n,\gamma_n)\in A^+$ 
going simply to infinity, we have
$\lim \frac{\alpha_n}{\gamma_n}=+\infty$.
The dynamics of $(a_n)$ does thus only depend on the ratios
$\lim\frac{\alpha_n}{\beta_n}$ and $\lim\frac{\beta_n}{\gamma_n}$,
which ends up in three distinct \emph{asymptotic types} in $\PGL{3}$.
\begin{definition}\label{definitiontypesdynamiques}
Let $(g_n)$ be a sequence of $\PGL{3}$ 
going simply to infinity, 
and $\Diag(\alpha_n,\beta_n,\gamma_n)\in A^+$ 
be its Cartan projection.
\begin{itemize}
 \item If $\lim \frac{\alpha_n}{\beta_n}=+\infty$ 
 and $\lim \frac{\beta_n}{\gamma_n}<+\infty$,
 we will say that $(g_n)$ is of \emph{unbalanced type $\alpha$}.
 \item If $\lim \frac{\alpha_n}{\beta_n}<+\infty$ et $\lim \frac{\beta_n}{\gamma_n}=+\infty$,
 we will say that $(g_n)$ is of \emph{unbalanced type $\beta$}.
 \item If $\lim \frac{\alpha_n}{\beta_n}=\lim \frac{\beta_n}{\gamma_n}=+\infty$,
 we will say that $(g_n)$ is of \emph{balanced type}.
\end{itemize}
\end{definition}
The reader could recognize in those definitions
a very down-to-earth formulation of the notion of Weyl chambers
in the subgroup $A$.

\begin{remark}\label{remarksubsequencessimply}
 By compacity of $K$, any sequence $(g_n)$
 going to infinity in $\PGL{3}$
has a subsequence going simply to infinity.
Furthermore, the possible asymptotic types of the subsequences
of $(g_n)$ going simply to infinity
do only depend on those of the subsequences
of its Cartan projection $(a(g_n))$.
In particular, all the subsequences of a sequence $(g_n)$ going simply to infinity
have the same asymptotic type than $(g_n)$.
Furthermore, one easily checks that
if $(g_n)$ is a sequence going simply to infinity in $\PGL{3}$
and $(k_n)$, $(l_n)$ are relatively compact sequences in $\PGL{3}$,
then any subsequence of $(k_ng_nl_n)$ going simply to infinity
has the same asymptotic type than $(g_n)$.
\end{remark}

Among sequences of $\PGL{3}$,
iterates of a fixed element of $\PGL{3}$ are 
particularly important examples.
We will say (by a slight misuse of language)
that $g\in\PGL{3}$ is \emph{diagonalizable}
if it has a representative $g_0\in\GL{3}$
which is diagonalizable \emph{on $\R$}.
The following claim is then a straightforward 
application
of Definition \ref{definitiontypesdynamiques}.
\begin{lemma}\label{lemmasequencesiterates}
Let $g\in\PGL{3}$ be a diagonalizable element.
Then: 
\begin{itemize}
 \item $(g^n)$ goes to infinity in $\PGL{3}$ if, and only if
 we do not have $a=b=c$ with $a\geq b\geq c>0$
the absolute values 
of the eigenvalues of $g$ counted with multiplicity;
\item if $(g^n)$ goes to infinity,
then it goes simply to infinity if, and only if 
its three eigenvalues have the same sign.
\end{itemize}
Furthermore, if $(g^n)$ goes to infinity,
then its subsequences going simply to infinity all have the same asymptotical type:
\begin{itemize}
 \item unbalanced type $\alpha$ if $a>b=c$; 
 \item unbalanced type $\beta$ si $a=b>c$; 
 \item balanced type if $a>b>c$,
in which case we will say that $g$ is \emph{loxodromic}.
\end{itemize}
\end{lemma}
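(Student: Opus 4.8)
The plan is to reduce the three claims to the asymptotics of the singular values of $g^n$ (i.e.\ of its Cartan projection), and then to treat the sign condition separately by analysing the orthogonal factors. Fix a representative $g_0\in\GL{3}$ diagonalizable over $\R$ and write $g_0=PDP^{-1}$ with $D=\Diag(\lambda_1,\lambda_2,\lambda_3)$ and $a=\abs{\lambda_1}\geq b=\abs{\lambda_2}\geq c=\abs{\lambda_3}>0$, so $g_0^n=PD^nP^{-1}$. Let $\alpha_n\geq\beta_n\geq\gamma_n$ be the singular values of $g_0^n$ (the entries of its Cartan projection). The first step is two-sided multiplicative bounds: there is $C\geq 1$, independent of $n$, with $C^{-1}a^n\leq\alpha_n\leq Ca^n$, $C^{-1}b^n\leq\beta_n\leq Cb^n$ and $C^{-1}c^n\leq\gamma_n\leq Cc^n$. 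Indeed $\alpha_n=\norme{g_0^n}$ and $\gamma_n=\norme{g_0^{-n}}^{-1}$, and submultiplicativity of the operator norm applied to $g_0^n=PD^nP^{-1}$ and to its inverse, together with $\norme{D^n}=a^n$ and $\norme{D^{-n}}=c^{-n}$, yields the bounds on $\alpha_n$ and $\gamma_n$; the bound on $\beta_n$ then follows from $\alpha_n\beta_n\gamma_n=\abs{\det g_0^n}=(abc)^n$.

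These bounds give the first and third claims at once. Since $K=\PO{3}$ is compact, $(g^n)$ goes to infinity in $\PGL{3}$ if and only if its Cartan projection does, i.e.\ if and only if $\alpha_n/\gamma_n$ is unbounded; being comparable to $(a/c)^n$, this happens exactly when $a>c$, that is (the moduli being ordered) exactly when we do not have $a=b=c$. Conversely, if $a=b=c$ then $D$ is $a$ times a diagonal sign matrix, so $g$ has order one or two in $\PGL{3}$ and $(g^n)$ is finite. For the third claim, assume $(g^n)$ goes to infinity; along any subsequence going simply to infinity the ratios $\alpha_n/\beta_n$ and $\beta_n/\gamma_n$ converge in $\intervalleff{1}{+\infty}$ by Definition \ref{definitionsimplyinfinity}, and being comparable to $(a/b)^n$ and $(b/c)^n$ their limits equal $+\infty$ exactly when $a>b$, resp.\ $b>c$, and are finite otherwise. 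By Definition \ref{definitiontypesdynamiques} this is unbalanced type $\alpha$ when $a>b=c$, unbalanced type $\beta$ when $a=b>c$, and balanced type when $a>b>c$; the type is the same for all such subsequences by Remark \ref{remarksubsequencessimply}.

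It remains to prove the second claim, which by \eqref{equationstandarddecomposition} concerns the convergence of the orthogonal factors. Using a fixed polar decomposition $P=O_PH_P$ with $O_P$ orthogonal and $H_P$ symmetric positive definite, relation \eqref{equationlienensemblesD} reduces the existence of convergent $K$-factors for $g_0^n$ to the convergence of the ordered singular frames of $H_PD^nH_P^{-1}$. Suppose first that the three eigenvalues have the same sign; replacing $g_0$ by $-g_0$ (which leaves its class in $\PGL{3}$ unchanged) I may assume $\lambda_i>0$, so that $D^n\in A^+$. The right singular directions of $g_0^n$ are the eigenlines of $\transp{g_0^n}g_0^n=(\transp{P})^{-1}D^n(\transp{P}P)D^nP^{-1}$; normalizing this symmetric matrix by $a^{-2n}$, its $(i,j)$-entry is $(\lambda_i\lambda_j/a^2)^n(\transp{P}P)_{ij}$, which converges (all $\lambda_i>0$) to a matrix whose top eigenline is fixed. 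Iterating this power-method argument on the successive orthogonal complements shows that the whole right singular frame converges, and symmetrically for the left frame; one also reads off that the singular-value ratios converge in $\intervalleff{1}{+\infty}$. Hence $g_0^n$ admits a standard decomposition with convergent $K$-factors and $(g^n)$ goes simply to infinity.

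For the converse, the obstruction lies not in the limiting flags (which may still converge) but in the factorization itself, which is transparent in the diagonal model $g_0=D$: writing $a_n$ for the Cartan projection and $\Sigma=\Diag(\mathrm{sgn}\,\lambda_i)$ for the (reordered) matrix of signs, one has $D^n=\Sigma^n a_n$ with $\Sigma^2=I$ and $\Sigma\neq\pm I$ in $\PGL{3}$ when the signs are not all equal. Any standard decomposition $D^n=k_na_nl_n$ then gives $(\Sigma^{-n}k_n,l_n^{-1})$ in the compact stabilizer of $a_n$ under the $K\times K$-action. If $k_n\to k$ and $l_n\to l$, taking limits along even and odd $n$ places both $(k,l^{-1})$ and $(\Sigma k,l^{-1})$ in the limiting stabilizer; as the ratios of $a_n$ tend to $1$ or $+\infty$, this stabilizer collapses onto pairs of \emph{equal} block-diagonal orthogonal matrices (diagonal sign matrices in the balanced case, with one $\Orth{2}$-block in the unbalanced cases), so its subgroup structure forces $(k^{-1}\Sigma k,I)$, hence $\Sigma$, to be trivial, a contradiction. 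Therefore no convergent $(k_n,l_n)$ exists and $(g^n)$ does not go simply to infinity; the general case follows through the polar decomposition of $P$. The step I expect to require the most care is precisely this rigidity of the Cartan-projection stabilizer — in particular ruling out that the reflection available in the $\Orth{2}$-block of the unbalanced cases could absorb the alternating sign factor $\Sigma^n$, which is excluded by the fact that the two orthogonal factors of the limiting stabilizer must coincide.
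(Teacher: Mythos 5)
Your handling of the first and third bullets is correct and complete: the two-sided comparisons $\alpha_n\asymp a^n$, $\gamma_n\asymp c^n$ (via $\norme{g_0^{\pm n}}$ and submultiplicativity) and $\beta_n\asymp b^n$ (via the determinant) immediately give the escape criterion $\alpha_n/\gamma_n\asymp(a/c)^n$ and the trichotomy of types. Note that the paper offers no written proof here (it presents the lemma as a straightforward application of Definitions \ref{definitionsimplyinfinity} and \ref{definitiontypesdynamiques}), so the real question is only whether your argument for the second bullet is airtight. Its diagonal-model core is sound: from $u\,a_n=a_n v$ with $u,v$ orthogonal one gets $\transp{v}a_n^2v=a_n^2$, hence $v$ preserves the eigenspaces of $a_n^2$ and $u=v$ exactly, at \emph{every} $n$ (what matters is the exact coincidences among the entries of $a_n$, not the limits of their ratios, so your phrasing ``collapses as the ratios tend to $1$ or $+\infty$'' is a harmless misdescription of a cleaner fact); the even/odd limits then force $\Sigma k=\pm k$, i.e.\ $\Sigma=\pm\id$, as you say.

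There are, however, two genuine gaps. First, in the converse direction the sentence ``the general case follows through the polar decomposition of $P$'' is not a reduction. Writing $P=O_PH_P$ you may absorb $O_P$ into the $K$-factors, but the remaining conjugation $g_0^n=H_PD^nH_P^{-1}$ is by a positive-definite, non-orthogonal matrix, which does not intertwine standard decompositions: the identity $D^n=\Sigma^n a(D^n)$ has no analogue for $H_PD^nH_P^{-1}$, because $H_P\Sigma H_P^{-1}$ is not orthogonal and multiplying by it changes the Cartan projection. As written, your contradiction covers only the case of an orthogonal eigenbasis. One way to close this without case analysis: if $g^n=k_na_nl_n$ with $k_n\to k$, $l_n\to l$, then $g^{n+1}=g\cdot g^n$ gives $a_{n+1}(l_{n+1}l_n^{-1})a_n^{-1}=k_{n+1}^{-1}\,g\,k_n\to k^{-1}gk$; since $l_{n+1}l_n^{-1}\to\id$ and your bounds give $\alpha^{(n+1)}_i/\alpha^{(n)}_j$ bounded for $i\geq j$ and $\alpha^{(n)}_i$ of exact exponential rate $\abs{\lambda_{(i)}}$, the limit is upper triangular with diagonal $(a,b,c)$ up to a positive scalar, so the eigenvalues of $g$ all have the same sign — and this argument needs no normal form for $P$ at all.

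Second, in the direct direction the step ``iterating this power-method argument on the successive orthogonal complements'' fails precisely in the unbalanced cases. When $a=b$ (or $b=c$), the normalized limit Gram matrix can have a repeated positive eigenvalue; since for $\alpha_n>\beta_n$ strict the rows of $l_n$ are forced up to sign (there is no decomposition freedom), you genuinely need the singular frames to converge, and spectral continuity of eigenvectors fails at an eigenvalue collision in the limit — a priori the frames could rotate inside the converging $2$-plane. What rescues the statement is that the sequence lies on a one-parameter analytic curve: with all eigenvalues positive, $g_0^n$ is, up to a scalar, $P\Diag(1,1,\epsilon_n)P^{-1}$ (case $a=b>c$, $\epsilon_n=(c/a)^n$) or $P\Diag(1,\epsilon_n,\epsilon_n)P^{-1}$ (case $a>b=c$, $\epsilon_n=(b/a)^n$), so Rellich's theorem applied to the symmetric analytic family $\epsilon\mapsto\transp{N(\epsilon)}N(\epsilon)$, $N(\epsilon)=P\Diag(1,1,\epsilon)P^{-1}$, yields eigenframes analytic in $\epsilon$, hence convergent along $\epsilon_n\to0$. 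In the balanced case $a>b>c$ your power-method argument is fine as stated, since all the limiting spectral gaps are genuine.
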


We conclude the description of asymptotic directions in $\PGL{3}$
with the following duality. 
\begin{lemma}\label{lemmesymetriestypesdynamiques}
 Let $(g_n)$ be a sequence of $\PGL{3}$ 
 going simply to infinity.
 Then $(g_n^{-1})$ goes simply to infinity, and if
 $(g_n)$ is of unbalanced type $\alpha$ 
 (respectively unbalanced type $\beta$, resp. balanced type),
 then $(g_n^{-1})$ is of unbalanced type $\beta$ 
 (resp. unbalanced type $\alpha$, resp. balanced type).
\end{lemma}
\begin{proof}
Thanks to standard decomposition \ref{equationstandarddecomposition}
and relation \ref{equationlienensemblesD},
it is sufficient to prove this for a sequence
$a_n=\Diag(\alpha_n,\beta_n,\gamma_n)\in A^+$ going simply to infinity,
and we moreover assume $(a_n)$ of unbalanced type $\alpha$,
the argument being similar in the two other cases.
Then 
$a_n^*\coloneqq \Diag(\gamma_n^{-1},\beta_n^{-1},\alpha_n^{-1})\in A^+$ 
goes simply to infinity with unbalanced type $\beta$, and with
\begin{equation}\label{equationdefinitiong13}
 I
= \begin{bmatrix}
   & & 1 \\
    & 1 & \\
    1 & & \\
 \end{bmatrix}
 =I^{-1}\in K
\end{equation}
we have $a_n^{-1}=Ia_n^*I^{-1}$, showing that 
$(a_n^{-1})$ is also of unbalanced type $\beta$.
\end{proof}

\subsection{Dynamics in the projective plane}
\label{soussectiondynamiqueRP2}
In this section
we give a systematic description of the dynamic sets of 
points of $\RP{2}$ for the action of 
sequences of $\PGL{3}$ escaping to infinity,
depending on the three possible asymptotic behaviours previously described.
Some of these results are likely to be already known
(see for instance 
\cite[\S 3.3]{goldman_projective_1987} 
for related material),
but we give here precise statements and complete proofs
for the convenience of the reader.
\par For $p\in\RP{2}$ we define the
\emph{dual projective line of $p$} as 
$p^*=\enstq{D\in\RP{2}_*}{D\ni p}$. 
We also recall that for $P\subset\R^3$,
$[P]$ denotes the projection of $\Vect P$ in $\RP{2}$,
and that $(e_1,e_2,e_3)$ denotes the standard basis of $\R^3$.
\begin{lemma}\label{lemmeattractifRP2}
 Let $g_n\in\PGL{3}$ be a sequence going simply to infinity 
 with unbalanced type $\alpha$. 
 Then there exists a projective line $D_-$ and a point $p_+$
 in $\RP{2}$, respectively called the
 \emph{repulsive line} and the \emph{attractive point} of $(g_n)$,
 as well as a diffeomorphism 
 $\hat{g}_\infty\colon D_-\to(p_+)^*$,
 satisfying the following.
\begin{enumerate} 
  \item For any $p\in\RP{2}\setminus D_-$, $\D_{(g_n)}(p)=p_+$.
  \item For any $p\in D_-$, 
  $\D_{(g_n)}(p)=\hat{g}_\infty(p)$.
 \item $\hat{g}_\infty$ is equivariant for a morphism
 $\rho_\infty\colon\Stab(D_-)\to\Stab(D_-)\cap\Stab(p_+)$.
 \end{enumerate}
 If moreover $g_n\in A^+$, then 
 $p_+=[e_1]$ and $D_-=[e_2,e_3]$.
\end{lemma}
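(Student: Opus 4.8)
The plan is to reduce everything to a diagonal sequence by means of the conjugacy relation \eqref{equationlienensemblesD}, to treat that model case by a hands-on computation in homogeneous coordinates, and to transport the result back. Writing a standard decomposition $g_n=k_na_nl_n$ with $k_n\to k_\infty$ and $l_n\to l_\infty$ in $K=\PO{3}$ and with Cartan projection $a_n=\Diag(\alpha_n,\beta_n,\gamma_n)\in A^{+}$ going simply to infinity of unbalanced type $\alpha$, relation \eqref{equationlienensemblesD} gives $\D_{(g_n)}(p)=k_\infty\D_{(a_n)}(l_\infty(p))$ for every $p$. It therefore suffices to compute $\D_{(a_n)}$ and then to set $p_+=k_\infty([e_1])$, $D_-=l_\infty^{-1}([e_2,e_3])$ and $\hat g_\infty=k_\infty\circ\hat a_\infty\circ l_\infty\restreinta_{D_-}$, where $\hat a_\infty$ is the diffeomorphism produced in the diagonal case. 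The final assertion of the statement is then immediate, since $k_\infty=l_\infty=\id$ when $g_n\in A^{+}$.

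For the diagonal model one has $a_n\cdot[x:y:z]=[\alpha_nx:\beta_ny:\gamma_nz]$, and type $\alpha$ means $\alpha_n/\beta_n\to+\infty$ with $\beta_n/\gamma_n\to L\in\intervallefo{1}{+\infty}$, simplicity forcing $\alpha_n/\gamma_n\to+\infty$. If $x\neq0$, normalizing the first coordinate to $1$ shows $a_n\cdot p_n\to[1:0:0]$ for every $p_n\to[x:y:z]$, since $\beta_n/\alpha_n$ and $\gamma_n/\alpha_n$ tend to $0$; hence $\D_{(a_n)}(p)=\{[e_1]\}$ for $p\notin[e_2,e_3]$, which is claim (1) with $p_+=[e_1]$ and $D_-=\{x=0\}=[e_2,e_3]$. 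The substance is the behaviour on $D_-$: fixing $p=[0:y:z]$ and $p_n=[x_n:y_n:z_n]\to p$ normalized on the unit sphere, division by $\beta_n$ gives $a_n\cdot p_n=[(\alpha_n/\beta_n)x_n:y_n:(\gamma_n/\beta_n)z_n]$, whose last two coordinates converge to $y$ and $z/L$ while the first is a free parameter. Taking $x_n=t\,\beta_n/\alpha_n$ realizes any $[t:y:z/L]$ with $t\in\R$, and $t\to+\infty$ recovers $[e_1]$, whereas no other accumulation point can occur since the last two coordinates are pinned down. Hence $\D_{(a_n)}(p)$ is exactly the projective line through $[e_1]$ and $[0:y:z/L]$, and defining $\hat a_\infty(p)$ to be this line gives, under the identification of $([e_1])^{*}$ with $[e_2,e_3]$ by intersection, the projective map $[0:y:z]\mapsto[0:y:z/L]$ -- a diffeomorphism onto $([e_1])^{*}$. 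This proves (2) and the regularity of $\hat a_\infty$, hence of $\hat g_\infty$.

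For the equivariance (3), I would define $\rho^a_\infty(\sigma)=\lim_na_n\sigma a_n^{-1}$ for $\sigma\in\Stab([e_2,e_3])$. Conjugation by $a_n$ multiplies the $(i,j)$ entry by $\lambda_i/\lambda_j$ with $\lambda=(\alpha_n,\beta_n,\gamma_n)$; since $\sigma$ is block lower-triangular its $(1,2)$ and $(1,3)$ entries vanish, so the only surviving ratios in the limit are $1$, $\beta_n/\gamma_n\to L$ and $\gamma_n/\beta_n\to L^{-1}$. The limit therefore exists and equals the block-diagonal element whose lower-right block is $MQM^{-1}$, where $Q$ is the lower-right $2\times2$ block of $\sigma$ and $M=\Diag(1,L^{-1})$. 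One checks directly that $\rho^a_\infty$ is a group morphism into $\Stab([e_2,e_3])\cap\Stab([e_1])$ (the block $\sigma\mapsto Q$ is multiplicative on the parabolic $\Stab([e_2,e_3])$, and conjugation by the fixed $M$ preserves this), and that it intertwines the actions, $\hat a_\infty(\sigma\cdot p)=\rho^a_\infty(\sigma)\cdot\hat a_\infty(p)$, by comparing both sides with the coordinate formula for $\hat a_\infty$. Conjugating $\rho^a_\infty$ by $k_\infty$ and precomposing with $l_\infty\restreinta_{D_-}$ then transports the whole picture to $g_n$, producing the morphism $\rho_\infty$ on $\Stab(D_-)$ for which $\hat g_\infty$ is equivariant.

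The only genuinely delicate step is the analysis on the repulsive line: one must treat the indeterminate product $(\alpha_n/\beta_n)x_n$ carefully to see that the dynamic set is a full projective line and that $p\mapsto\hat a_\infty(p)$ is a diffeomorphism rather than merely a continuous surjection. The remaining verifications are bookkeeping, the one point deserving attention being the existence of the limit $\lim_na_n\sigma a_n^{-1}$ and the placement of its image in $\Stab(D_-)\cap\Stab(p_+)$; this is precisely where the type-$\alpha$ hypothesis enters, through the finiteness of $L=\lim\beta_n/\gamma_n$, which keeps the relevant entries of the conjugates bounded.
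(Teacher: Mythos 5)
Your proposal is correct and takes essentially the same route as the paper's proof: reduce to a diagonal sequence $a_n\in A^+$ via the standard decomposition and relation \eqref{equationlienensemblesD}, compute the dynamic sets in homogeneous coordinates (with the same "pinned last two coordinates, free first coordinate" analysis on the repulsive line yielding the full projective line through $[e_1]$), and transport everything back by $k_\infty$ and $l_\infty$. The only cosmetic difference is that you derive $\rho_\infty$ as the limit $\lim_n a_n\sigma a_n^{-1}$, whereas the paper writes down the same block-diagonal formula $\begin{pmatrix}1&0\\ *&g\end{pmatrix}\mapsto\begin{pmatrix}1&0\\0&b_\infty g b_\infty^{-1}\end{pmatrix}$ directly; your $M=\Diag(1,L^{-1})$ is exactly the paper's $b_\infty=\Diag(1,\lambda_\infty)$.
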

\begin{proof}
Let us assume that $g_n=k_na_nl_n$ is a standard decomposition 
of $g_n$ as defined in \eqref{equationstandarddecomposition}, with
$\lim k_n=k_\infty$, $\lim l_n=l_\infty$
and $(a_n)$ going simply to infinity with unbalanced type $\alpha$.
Then if we assume that we already proved the Lemma for 
sequences of $A^+$,
it is easy to check, using the relation 
\eqref{equationlienensemblesD} between dynamic sets,
that $(g_n)$ verifies the claims
of the Lemma with $D_-(g_n)=l_\infty^{-1}(D_-(a_n))$,
$p_+(g_n)=k_\infty(p_+(a_n))$ and
$\hat{g}_\infty=k_\infty\circ\hat{a}_\infty\circ l_\infty$.
We thus only have to prove the Lemma for sequences
\[
 a_n=
 \begin{bmatrix}
  1 & & \\
   & \beta_n & \\
    & & \gamma_n
 \end{bmatrix} \in A^+,
 \]
going simply to infinity with unbalanced type $\alpha$.
Therefore $\lim \beta_n=\lim \gamma_n=0$
and $\lim \frac{\gamma_n}{\beta_n}=\lambda_\infty\in\intervalleof{0}{1}$.
We define $p_+=[e_1]$ and $D_-=[e_2,e_3]$.\\
1. Since $\RP{2}$ is compact,
$\D_{(a_n)}(p)$ is non-empty and
it is sufficient to show the direct inclusion.
If $p_n\in\RP{2}$ converges to $[e_1]$,
then for $n$ large enough there exists
$(x_n,y_n)\in\R^2$ converging to $(0,0)$
such that $p_n=[1:x_n:y_n]$.
Hence $a_n(p_n)=[1:\beta_n x_n:\gamma_n y_n]$ converges to $[e_1]$,
showing that $\D_{(a_n)}(p)\subset[e_1]$ as claimed. \\
2. We define $a_\infty=\Diag(1,1,\lambda_\infty)$ and
$\hat{a}_\infty(p)=[p_+,a_\infty(p)]$.
For convenience in the notations, we assume that
for some $y\in\R$,
$p=[0:1:y]\in[e_2,e_3]\setminus\{[e_3]\}$
(the proof being similar if $p\neq[e_2]$).
\par We first show that $\D_{(a_n)}(p)\subset\hat{a}_\infty(p)$.
Since $a_n\restreinta_{[e_2,e_3]}$ uniformly converges 
to $a_\infty\restreinta_{[e_2,e_3]}$, for any sequence
$p_n\in[e_2,e_3]$ converging to $p$ 
we readily have $\lim a_n(p_n)=a_\infty(p)\in\hat{a}_\infty(p)$.
We thus assume that $p_n=[1:x_n:y_n]\in\RP{2}\setminus[e_2,e_3]$ converges to $p$, implying $\lim \frac{1}{x_n}=0$ and $\lim \frac{y_n}{x_n}=y$.
Passing to a subsequence, we can assume that 
$a_n(p_n)=[1:\beta_n x_n : \gamma_n y_n]$ converges to
$q\in\RP{2}$ and we want to prove that $q\in\hat{a}_\infty(p)$.
If $q\in [e_2,e_3]$ then $\lim\norme{(\beta_n x_n,\gamma_n y_n)}=+\infty$,
and since 
$\lim\abs{\frac{\gamma_n y_n}{\beta_n x_n}}=\lambda_\infty y<+\infty$
this prevents $(\beta_n x_n)$ to be bounded.
Passing to a subsequence we thus have $\lim \abs{\beta_n x_n}=+\infty$, 
and 
$q=\lim [\frac{1}{\beta_n x_n}:1:\frac{\gamma_n y_n}{\beta_n x_n}]
=[0:1:\lambda_\infty y]=a_\infty(p)\in\hat{a}_\infty(p)$.
If $q=[1:x_\infty:y_\infty]\in\RP{2}\setminus [e_2,e_3]$
then $\frac{y_\infty}{x_\infty}=\lim \frac{\gamma_n y_n}{\beta_n x_n}=\lambda_\infty y$, which exactly means that
 $q\in[e_1,(0,1,\lambda_\infty y)]=\hat{a}_\infty(p)$.
\par We now show the reverse inclusion 
$\hat{a}_\infty(p)\subset\D_{(a_n)}(p)$.
For $t\in\R^*$, the sequence
 $p_n\coloneqq[1:\frac{t}{\beta_n}:\frac{yt}{\beta_n}]
 =[\frac{\beta_n}{t}:1:y]$ 
 converges to $p$ while $a_n(p_n)$ converges to 
 $q=[1:t:\lambda_\infty yt]\in[e_1,a_\infty(p)]\cap(\RP{2}\setminus [e_2,e_3])$.
 This shows that 
 $\hat{a}_\infty(p)\setminus\{e_1,a_\infty(p)\}\subset\D_{(a_n)}(p)$
 which implies $\hat{a}_\infty(p)\subset\D_{(a_n)}(p)$ 
 since the latter is closed. \\
 3. Let us denote by $b_\infty=\Diag(1,\lambda_\infty)\in\PGL{2}$ the restriction of $a_\infty$ to $D^-$.
Then $\hat{a}_\infty$ is equivariant for the following morphism:
 \[
 \rho_\infty\colon
 \begin{pmatrix}
 1 & 0 \\
 * & g
 \end{pmatrix}
 \mapsto 
 \begin{pmatrix}
 1 & 0 \\
 0 & b_\infty g b_\infty^{-1}
 \end{pmatrix}.
 \qedhere
 \]
\end{proof}
\begin{lemma}\label{lemmerepulsifRP2}
 Let $g_n\in\PGL{3}$ be a sequence going simply to infinity 
 with unbalanced type $\beta$. 
 Then there exists a point $p_-$ and a projective line $D_+$
 in $\RP{2}$, respectively called the
 \emph{repulsive point} and the \emph{attractive line} 
 of $(g_n)$,
 as well as a $\R$-bundle
 $\bar{g}_\infty\colon\RP{2}\setminus\{p_-\}\to D_+$,
 satisfying the following.
 \begin{enumerate}
  \item  For any $p\in\RP{2}\setminus\{p_-\}$, 
$\D_{(g_n)}(p)=\bar{g}_\infty(p)$.
  \item  $\D_{(g_n)}(p_-)=\RP{2}$.
 \item $\bar{g}_\infty$ is equivariant
 for a morphism
  $\rho_\infty\colon\Stab(p_-)\to\Stab(D_+)\cap\Stab(p_-)$.
 \end{enumerate}
 If moreover $g_n\in A^+$, then 
 $p_-=[e_3]$ and $D_+=[e_1,e_2]$.
\end{lemma}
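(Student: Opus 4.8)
This statement is the point--line dual of Lemma \ref{lemmeattractifRP2}: in type $\beta$ the roles of the attractor and repeller are exchanged, the attractor becoming a line $D_+$ and the repeller a single point $p_-$. Although this duality explains the shape of the statement, I would prove it by the direct computation mirroring the proof of Lemma \ref{lemmeattractifRP2} rather than by a formal transfer, since the dynamic sets involved concern the action on \emph{points} (not on lines). First I reduce to a diagonal sequence: writing a standard decomposition $g_n=k_na_nl_n$ as in \eqref{equationstandarddecomposition} with $k_n\to k_\infty$, $l_n\to l_\infty$ and $a_n\in A^+$ of unbalanced type $\beta$, relation \eqref{equationlienensemblesD} transports everything from $a_n$ to $g_n$ via $p_-(g_n)=l_\infty^{-1}(p_-(a_n))$, $D_+(g_n)=k_\infty(D_+(a_n))$ and $\bar g_\infty=k_\infty\circ\bar a_\infty\circ l_\infty$. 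For $a_n=\Diag(\alpha_n,\beta_n,\gamma_n)\in A^+$, type $\beta$ means (Definition \ref{definitiontypesdynamiques}) that $\mu_\infty:=\lim\frac{\alpha_n}{\beta_n}\in\intervallefo{1}{+\infty}$ is finite while $\lim\frac{\beta_n}{\gamma_n}=+\infty$, so that $\frac{\gamma_n}{\alpha_n}\to 0$ and $\frac{\gamma_n}{\beta_n}\to 0$. I then set $p_-=[e_3]$, $D_+=[e_1,e_2]$, and define $\bar a_\infty$ to be the central projection $[x:y:z]\mapsto[x:y:0]$ from $p_-$ onto $D_+$, post-composed with the projective limit $b_\infty:=\Diag(\mu_\infty,1)\in\PGL{2}$ of $a_n\restreinta_{D_+}$; concretely $\bar a_\infty([x:y:z])=[\mu_\infty x:y:0]$.

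The heart of the argument is claim (1). For $p\neq p_-$, any converging sequence $p_n\to p$ lies eventually in one of the affine charts $\{x\neq0\}$ or $\{y\neq0\}$ of $\RP{2}\setminus\{[e_3]\}$, and I compute $a_n(p_n)$ coordinatewise after dividing by the dominant coordinate. The decisive point -- and the reason the single repulsive object is here a \emph{point} and not a line -- is that $\frac{\alpha_n}{\beta_n}$ stays \emph{bounded} in type $\beta$: this forbids any coordinate from blowing up away from $D_+$, so every accumulation value of $(a_n(p_n))$ equals $\bar a_\infty(p)$, giving $\D_{(a_n)}(p)=\bar a_\infty(p)$. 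For claim (2), given any $q=[u:v:w]\in\RP{2}\setminus D_+$ (so $w\neq0$) I take $p_n:=a_n^{-1}(q)=[\tfrac{\gamma_n}{\alpha_n}u:\tfrac{\gamma_n}{\beta_n}v:w]$: then $a_n(p_n)\equiv q$ while $p_n\to[e_3]=p_-$ because $\frac{\gamma_n}{\alpha_n},\frac{\gamma_n}{\beta_n}\to0$. Hence $\D_{(a_n)}(p_-)$ contains the dense set $\RP{2}\setminus D_+$, and equals $\RP{2}$ since dynamic sets are closed.

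It remains to record the bundle structure and the equivariance. Central projection from $p_-$ onto a disjoint line $D_+$ exhibits $\RP{2}\setminus\{p_-\}$ as the total space of an $\R$-bundle over $D_+\cong\RP{1}$ -- the fibre over $q$ being the projective line $[p_-,q]$ with $p_-$ deleted -- and composing with the diffeomorphism $b_\infty$ of $D_+$ preserves this structure, which gives claim (1). For the equivariance (3), $\Stab(p_-)=\Stab([e_3])$ consists of the matrices $\left(\begin{smallmatrix}A&0\\v&\lambda\end{smallmatrix}\right)$ with $A\in\GL{2}$, $v$ a row and $\lambda\neq0$; such an element acts on the pencil of lines through $p_-$, hence on $D_+$, through $[A]$, so $\bar a_\infty$ intertwines its action with that of its image under
\[
\rho_\infty\colon
\begin{pmatrix}A&0\\v&\lambda\end{pmatrix}
\longmapsto
\begin{pmatrix}b_\infty A b_\infty^{-1}&0\\0&\lambda\end{pmatrix}
\in\Stab(D_+)\cap\Stab(p_-),
\]
the row $v$ being annihilated precisely because it points in the collapsed $p_-$-direction; a direct check shows $\rho_\infty$ is a morphism. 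The only delicate point is the dichotomy in the blow-up analysis: one must confirm that the bounded ratio $\frac{\alpha_n}{\beta_n}$ excludes every accumulation off $\bar a_\infty(p)$ for $p\neq p_-$, while the unbounded ratio $\frac{\beta_n}{\gamma_n}$ forces the full blow-up $\D_{(a_n)}(p_-)=\RP{2}$; the remaining verifications are the dual transcription of Lemma \ref{lemmeattractifRP2}.
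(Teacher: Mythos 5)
Your proof is correct, and the genuine difference from the paper lies exactly where you announced it: the treatment of claim (1). After the same reduction to a diagonal sequence via the standard decomposition \eqref{equationstandarddecomposition} and relation \eqref{equationlienensemblesD}, the paper does \emph{not} redo the chart computation of Lemma \ref{lemmeattractifRP2}; it plays the sequence against its inverse. Writing $a_n^{-1}$ as a conjugate by the permutation exchanging $e_1$ and $e_3$ of a type-$\alpha$ sequence of $A^+$ (Lemma \ref{lemmesymetriestypesdynamiques}), it applies Lemma \ref{lemmeattractifRP2} to $(a_n^{-1})$, whose attractive point is $[e_3]$ and repulsive line $[e_1,e_2]$: given $q=\lim a_n(p_n)$ with $p_n\to p\neq[e_3]$, it first forces $q\in[e_1,e_2]$ (otherwise $a_n^{-1}(a_n(p_n))$ would have to converge to $[e_3]$), then invokes claim (2) of Lemma \ref{lemmeattractifRP2} for $(a_n^{-1})$ to place $p$ on the fiber $[e_3,a_\infty^{-1}(q)]$, whence $q=\bar{a}_\infty(p)$. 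Your direct computation --- dividing by $\beta_n$ and using that $\alpha_n/\beta_n\to\mu_\infty<+\infty$ while $\gamma_n/\beta_n\to0$, so that the image vector converges in direction to $(\mu_\infty x,y,0)\neq0$ whenever $(x,y)\neq(0,0)$ --- is equally valid and in fact yields a slightly stronger conclusion: $a_n\to\bar{a}_\infty$ locally uniformly on $\RP{2}\setminus\{p_-\}$, not merely an identification of accumulation sets, and it removes any reverse-inclusion bookkeeping since every accumulation point is pinned down and non-emptiness follows from compactness. What the paper's route buys is economy and a template: the same inverse/duality transfer is reused one level up, where the flag-space Lemma \ref{lemmerepulsifX} is deduced wholesale from Lemma \ref{lemmeattractifX} via the involution $\kappa$ of \eqref{equationdefinitionkappa}, so proving the type-$\beta$ statement by transfer here is consistent with the paper's overall strategy. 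Your claims (2) and (3) coincide with the paper's: the same explicit sequence $p_n=a_n^{-1}(q)\to p_-$ with constant image proves (2), and your $\rho_\infty$ is the morphism \eqref{equationrelationdefinitionrhoinfinirepulsif} up to projective normalization, since $\Diag(\mu_\infty,1)=\Diag(1,\lambda_\infty)$ in $\PGL{2}$ and the scalar $\lambda$ can be normalized to $1$; likewise your description $\bar{a}_\infty=b_\infty\circ(\text{central projection from } p_-)$ agrees with the paper's formula $\bar{a}_\infty(p)=a_\infty([p_-,p]\cap D_+)$.
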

\begin{proof}
As we saw in the proof of Lemma \ref{lemmeattractifRP2},
we only have to prove the claims for a sequence
 \[
 a_n=
 \begin{bmatrix}
  \alpha_n & & \\
   & \beta_n & \\
    & & 1
 \end{bmatrix}\in A^+
 \]
going simply to infinity with unbalanced type $\beta$,
therefore 
$\lim \alpha_n=\lim \beta_n=+\infty$ 
and $\lim \frac{\beta_n}{\alpha_n}=
\lambda_\infty\in\intervalleof{0}{1}$. 
We define $p_-=e_3$ and $D_+=[e_1,e_2]$. \\
1. With $a_\infty=\Diag(1,\lambda_\infty,1)$, the fibration is
$\bar{a}_\infty(p)=a_\infty([p_-,p]\cap D_+)$.
 According to Lemma \ref{lemmesymetriestypesdynamiques}
 $(a_n^{-1})$ goes simply to infinity with unbalanced type $\alpha$,
 and since $a_n^{-1}=g_0b_ng_0^{-1}$ with 
 $b_n=\Diag(1,\beta_n^{-1},\alpha_n^{-1})\in A^+$ and
 $g_0=\left[\begin{smallmatrix}
   & & 1 \\
    & 1 & \\
    1 & & \\
 \end{smallmatrix}\right]$,
 the attractive point and repulsive line of $(a_n^{-1})$
 are respectively $[e_3]$ and $[e_1,e_2]$
 and $\widehat{a^{-1}}_\infty(p)=[e_3,a_\infty^{-1}(p)]$
 for any $p\in[e_1,e_2]$.
 With $p\neq p_-$,
we only have to prove that 
$\mathcal{D}_{(a_n)}(p)\subset \{\bar{a}_\infty(p)\}$
since $\D_{(a_n)}(p)$ is non-empty.
We take $q\in\mathcal{D}_{(a_n)}$ and passing to a subsequence of $(a_n)$
we can assume that
$q=\lim a_n(p_n)$ with $p_n\in\RP{2}$ such that $\lim p_n=p$.
Denoting $q_n=a_n(p_n)$, since $\lim a_n^{-1}(q_n)=p\neq[e_3]$ 
 is not the attractive point of $(a_n^{-1})$,
 $q=\lim q_n$ belong to its repulsive line $[e_1,e_2]$
 according to Lemma \ref{lemmeattractifRP2}.
 We thus have $p\in [e_3,a_\infty^{-1}(q)]$
 and thus $a_\infty^{-1}(q)\in[e_3,p]$.
 Therefore $q=[e_3,a_\infty(p)]\cap[e_1,e_2]=\bar{a}_\infty(p)$
 as claimed. \\
2.  For any $q=[x:y:1]\in\RP{2}\setminus D_+$,
$p_n\coloneqq[\frac{x}{\alpha_n}:\frac{y}{\beta_n}:1]$ converges 
to $p_-$ and $a_n(p_n)=q$ converges to $q$.
Hence $\RP{2}\setminus D_+\subset\D_{(a_n)}(p_-)$, 
showing the claim since $\D_{(a_n)}(p_-)$ is closed. \\
3. Denoting $b_\infty=\Diag(1,\lambda_\infty)\in\PGL{2}$,
$\bar{a}_\infty$ is equivariant for
the following morphism:
 \begin{equation}\label{equationrelationdefinitionrhoinfinirepulsif}
 \rho_\infty\colon
 \begin{pmatrix}
 g & 0 \\
 * & 1
 \end{pmatrix}
 \mapsto 
 \begin{pmatrix}
 b_\infty g b_\infty^{-1} & 0 \\
 0 & 1
 \end{pmatrix}.
 \end{equation}
\end{proof}
We will say that two flags $(p,D)$ and $(p',D')$ in $\X$ are
\emph{in general position} if $p\notin D'$ and $p'\notin D$.
\begin{lemma}\label{lemmemixteRP2}
Let $g_n\in\PGL{3}$ be a sequence going simply to infinity 
 with balanced type. 
 Then there exists two projective lines $D_-$, $D_+$ 
 and two points $p_-\in D_-$, $p_+\in D_+$ of $\RP{2}$,
 respectively called the \emph{repulsive and attractive lines}
 and \emph{points} of $(g_n)$,
 satisfying the following.
\begin{enumerate}
 \item For any $p\in\RP{2}\setminus D_-$, $\D_{(g_n)}(p)=\{p_+\}$.
 \item For any $p\in D_-\setminus\{p_-\}$, $\D_{(g_n)}(p)=D_+$.
 \item $\D_{(g_n)}(p_-)=\RP{2}$.
\end{enumerate}
If $x_-=(p_-,D_-)$ and $x_+=(p_+,D_+)$ are in general position
then $p_\pm\coloneqq D_-\cap D_+\in\RP{2}$
is called the \emph{saddle-point} of $(g_n)$.
Moreover if $g_n\in A^+$, then 
 $p_+=[e_1]$, $p_{\pm}=[e_2]$ and $p_-=[e_3]$.
\end{lemma}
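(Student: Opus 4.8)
The plan is to mimic the structure of the proofs of Lemmas \ref{lemmeattractifRP2} and \ref{lemmerepulsifRP2}, reducing first to $A^+$ and then computing in homogeneous coordinates. Writing a standard decomposition $g_n=k_na_nl_n$ as in \eqref{equationstandarddecomposition} with $k_n\to k_\infty$, $l_n\to l_\infty$ and $a_n\in A^+$ of balanced type, the relation \eqref{equationlienensemblesD} transports every claim from $(a_n)$ to $(g_n)$, upon setting the \emph{attractive} objects $p_+(g_n)=k_\infty(p_+(a_n))$, $D_+(g_n)=k_\infty(D_+(a_n))$ and the \emph{repulsive} objects $p_-(g_n)=l_\infty^{-1}(p_-(a_n))$, $D_-(g_n)=l_\infty^{-1}(D_-(a_n))$. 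Both pairs remain flags since $k_\infty$ and $l_\infty$ preserve incidence. Normalizing by the middle singular value, it then suffices to treat $a_n=\Diag(\alpha_n,1,\gamma_n)\in A^+$ with $\lim\alpha_n=+\infty$ and $\lim\gamma_n=0$, for which I take $p_+=[e_1]$, $p_\pm=[e_2]$, $p_-=[e_3]$, $D_+=[e_1,e_2]$ and $D_-=[e_2,e_3]$.

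For claim 1, if $p\notin D_-=[e_2,e_3]$ then $p$ has nonzero first coordinate, so any sequence $p_n\to p$ is eventually of the form $p_n=[1:x_n:y_n]$ with $(x_n,y_n)\to(x,y)$; then $a_n(p_n)=[1:x_n/\alpha_n:\gamma_n y_n/\alpha_n]\to[e_1]=p_+$, giving $\D_{(a_n)}(p)=\{p_+\}$. Claim 2 is the crux. For $p=[0:1:y]\in D_-\setminus\{p_-\}$, a sequence $p_n\to p$ is eventually $p_n=[x_n:1:y_n]$ with $x_n\to 0$, $y_n\to y$, and $a_n(p_n)=[\alpha_n x_n:1:\gamma_n y_n]$. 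As $\gamma_n y_n\to 0$, every accumulation point has vanishing third coordinate, and I would split on the first coordinate: if $(\alpha_n x_n)$ is bounded along a subsequence the limit is $[c:1:0]\in D_+$, whereas if $\lvert\alpha_n x_n\rvert\to+\infty$ the limit is $[1:0:0]=[e_1]\in D_+$; hence $\D_{(a_n)}(p)\subset D_+$. For the reverse inclusion, given $q=[s:1:0]$, the sequence $p_n=[s/\alpha_n:1:y]$ converges to $p$ while $a_n(p_n)=[s:1:\gamma_n y]\to q$, so $\D_{(a_n)}(p)$ contains every $[s:1:0]$ and, being closed, all of $D_+$.

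For claim 3, given an arbitrary $q=[u:v:w]$ with $w\neq 0$, the sequence $p_n=[u/\alpha_n:v:w/\gamma_n]$ converges to $[e_3]=p_-$ (normalize by its dominant last coordinate) while $a_n(p_n)=[u:v:w]=q$; thus $\RP{2}\setminus D_+\subset\D_{(a_n)}(p_-)$, and closedness of the dynamic set forces $\D_{(a_n)}(p_-)=\RP{2}$. Finally, under the general position hypothesis $p_-\notin D_+$ one has $D_-\neq D_+$ (otherwise $p_-\in D_-=D_+$), so the two projective lines meet in exactly one point, which well-defines the saddle point $p_\pm=D_-\cap D_+$; in the $A^+$ model $D_-\cap D_+=[e_2]=p_\pm$ directly, matching the stated values.

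The main obstacle is claim 2: in contrast with the unbalanced cases, where the dynamic set is produced by a single clean limit, here the accumulation set is an entire projective line governed by the competition between $\alpha_n\to+\infty$ and $x_n\to 0$. The bounded-versus-unbounded dichotomy for $(\alpha_n x_n)$ must be handled with care so as to recover \emph{exactly} $D_+$ — neither missing its point $[e_1]$ (recovered by closedness) nor producing spurious accumulation outside $D_+$.
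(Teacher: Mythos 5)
Your proof is correct and follows essentially the same route as the paper's: reduce to $A^+$ via the standard decomposition $g_n=k_na_nl_n$ and relation \eqref{equationlienensemblesD}, then compute limits explicitly in homogeneous coordinates, with the dichotomy on $(\alpha_n x_n)$ in claim 2 playing exactly the role of the paper's unboundedness argument for $(\beta_n x_n)$. Your only deviation is cosmetic but pleasant: normalizing the \emph{middle} singular value to $1$ (rather than the top one, as the paper does with $\Diag(1,\beta_n,\gamma_n)$) lets the single chart $p_n=[x_n:1:y_n]$ absorb the paper's case split between $p_n\in D_-$ and $p_n\notin D_-$ in claim 2, and lets your claim 3 sequence hit all of $\RP{2}\setminus D_+$ directly instead of the smaller dense set used in the paper.
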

\begin{proof}
As we saw in the proof of Lemma \ref{lemmeattractifRP2},
we only have to prove the claims for a sequence
\[
 a_n=
 \begin{bmatrix}
  1 & & \\
   & \beta_n & \\
    & & \gamma_n
 \end{bmatrix}\in A^+
 \]
going simply to infinity with balanced type,
therefore 
$\lim \beta_n=\lim \gamma_n=\lim \frac{\gamma_n}{\beta_n}=0$.
We define $p_-=[e_3]\in D_-=[e_2,e_3]$,
$p_+=[e_1]\in D_+=[e_1,e_2]$ and $p_\pm=[e_2]$. \\
1. The proof of this first claim is very similar to
the one of the first claim of Lemma \ref{lemmeattractifRP2}. \\
2. Let $p=[0:1:y]\in D_-\setminus\{p_-\}$ with $y\in\R$,
and let $p_n\in\RP{2}$ be a sequence converging to $p$. 
Passing to a subsequence we can assume that $a_n(p_n)$ converges
to a point $p\in\RP{2}$ and we now show that $q\in D_+$.
If some subsequence of $(p_n)$ is contained in $D_-$,
then for $n$ large enough
$p_n=[0:1:y_n]$ for some $y_n$ converging to $y$
and $a_n(p_n)=[0:1:\frac{\gamma_n}{\beta_n}y_n]$ converges thus
to $e_2=p_\pm$.
If not, there exists a sequence $(x_n,y_n)\in\R^2$
such that $p_n=[1:x_n:y_n]$ for $n$ large enough, and
we thus have
$\lim\frac{1}{x_n}=0$ and $\lim\frac{y_n}{x_n}=y$
since $(p_n)$ converges to $p$.
We first assume that $a_n(p_n)=[1:\beta_n x_n:\gamma_n y_n]$ converges
to $q\in [e_2,e_3]$.
If $(\beta_n x_n)$ was bounded then $\lim\gamma_n y_n=0$
since $\lim\frac{\gamma_n y_n}{\beta_n x_n}=0$,
and thus $(\beta_n x_n,\gamma_n y_n)$ would be bounded
which contradicts $\lim [1:\beta_n x_n:\gamma_n y_n]\in[e_2,e_3]$.
Passing to a subsequence, we thus have
$\lim\abs{\beta_n x_n}=+\infty$ 
and $q=[e_2]\in D_+$.
We now assume that $q=[1:x_\infty:y_\infty]\notin [e_2,e_3]$. 
Then $y_\infty=0$ since $\lim\frac{\gamma_n y_n}{\beta_n x_n}=0$,
hence $q\in D_+$ again which concludes the proof of the inclusion
$\D_{(a_n)}(p)\subset D_+$.
\par Conversely for any $t\in\R^*$, 
$p_n=[1:\frac{t}{\beta_n}:\frac{ty}{\beta_n}]$ converges to
$p=[0:1:y]$ while $a_n(p_n)=[1:t:ty\frac{\gamma_n}{\beta_n}]$ 
converges to $[1:t:0]$.
This shows that $D_+\setminus\{p_+,p_-\}\subset \D_{(a_n)}(p)$, 
hence $D_+\subset\D_{(a_n)}(p)$
since $\D_{(a_n)}(p)$ is closed. \\
3. For any $(x,y)\in\R^2$ with $y\neq0$,
$p_n=[1:\frac{x}{\beta_n}:\frac{y}{\gamma_n}]$ converges to 
$p_-$ while $a_n(p_n)$ converges to $[1:x:y]$. 
This shows that 
$\RP{2}\setminus ([e_2,e_3]\cup[e_1,e_2])\subset\D_{(a_n)}(p_-)$,
hence $\RP{2}=\D_{(a_n)}(p_-)$ since the latter is closed.
\end{proof}
 
\begin{example}\label{exemplemixteRP2dz}
Let $g\in\PGL{3}$ be a loxodromic element.
We already know from Lemma \ref{lemmasequencesiterates}
that $(g^n)$ goes to infinity, and that any subsequence
of $(g^n)$ going simply to infinity has balanced type.
The proof of Lemma \ref{lemmemixteRP2} furthermore learns us that
the repulsive, saddle and attractive points
of any such subsequence are the three eigenlines
$p_-$, $p_\pm$ and $p_+$
of a representative of $g$,
arranged in the ascending order of 
the absolute value of their associated eigenvalues.
\end{example}

\begin{remark}
We saw in the proof of the three previous Lemmas
that for sequences of $A^+$,
the dynamical objects are disjoints
($p_+\notin D_-$ for the unbalanced type $\alpha$,
$p_-\notin D_+$ for the unbalanced type $\beta$,
$(p_-,D_-)$ and $(p_+,D_+)$ are in general position for the balanced type).
Note that for generic sequences of $\PGL{3}$, 
there is \emph{a priori} 
no longer any reason for this to be true.
For instance, one can check that with
\[
 g=
 \begin{bmatrix}
  1 & 1 & 0 \\
  0 & 1 & 0 \\
  0 & 0 & 1
 \end{bmatrix},
\]
any subsequence of $(g^n)$ going simply to infinity
has balanced type,
with dynamical objects
$p_-=p_+=[e_1]$ and $D_-=D_+=[e_1,e_3]$.
\end{remark}

\begin{remark}\label{remarqueliengngninverse}
We saw in Lemma \ref{lemmesymetriestypesdynamiques} 
the following duality of asymptotic directions in $\PGL{3}$:
if $g_n\in\PGL{3}$ goes simply to infinity with unbalanced type 
$\alpha$ (respectively $\beta$, respectively balanced type),
then $(g_n^{-1})$ goes simply to infinity with unbalanced type
$\beta$ (resp. $\alpha$, resp. balanced type).
As one could expect,
this duality also applies in the naive way
to dynamical objects:
repulsive objects of $(g_n^{-1})$ are the corresponding attractive
objects of $(g_n)$. 
For instance if $(g_n)$ has balanced type,
then $p_-(g_n^{-1})=p_+(g_n)$ and $D_-(g_n^{-1})=D_+(g_n)$,
and conversely.
These relations are easily verified for a sequence $a_n\in A^+$
which readily implies the general case by
using standard decomposition \eqref{equationstandarddecomposition}
and relation \eqref{equationlienensemblesD} between
dynamic sets.
\end{remark} 

\subsection{Dynamics in the dual projective plane}
\label{soussectiondynamiqueRP2star}
Denoting by $P^\bot$ the orthogonal subspace of $P\subset\R^3$
for the standard euclidean scalar product,
the diffeomorphism 
 \begin{equation}\label{equationdefinitiontau}
  \tau\colon m\in\RP{2}\mapsto [m^\bot]\in\RP{2}_*
 \end{equation}
between the projective space and its dual
is equivariant with respect to the involutive morphism
\begin{equation}\label{equationdefinvolutionCartanG}
 \Theta\colon g\in\PGL{3}\mapsto \transp{g}^{-1}\in\PGL{3}.
\end{equation}
The dynamics on $\RP{2}_*$
of a sequence $(g_n)$ of unbalanced type $\alpha$
(respectively unbalanced type $\beta$, resp. balanced type)
will thus be conjugated by $\tau$ to the dynamics on $\RP{2}$
of the sequence $(\transp{g_n}^{-1})$ of unbalanced type $\beta$
(resp. unbalanced type $\alpha$, resp. balanced type).
Furthermore, dynamical objects of $(g_n)$ acting on $\RP{2}_*$ 
are directly deduced from its dynamical objects in $\RP{2}$
as described in the following Lemmas.
\begin{lemma}\label{lemmeattractifRP2star}
Let $g_n\in\PGL{3}$ going simply to infinity
of unbalanced type $\alpha$,
  with repulsive projective line $D_-$
  and attractive point $p_+$ in $\RP{2}$.
  Denoting by $\hat{g}_\infty\colon D_-\to p_+^*$
  the diffeomorphism of Lemma \ref{lemmeattractifRP2},
  $(g_n)$ has on $\RP{2}_*$ the following dynamics:
  \begin{enumerate}
   \item for $D\in \RP{2}_*\setminus\{D_-\}$, 
   $\D_{(g_n)}(D)=\hat{g}_\infty(D\cap D_-)\in (p_+)^*$;
   \item $\D_{(g_n)}(D_-)=\RP{2}_*$.
  \end{enumerate}
\end{lemma}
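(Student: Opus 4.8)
The plan is to deduce the dynamics of $(g_n)$ on $\RP{2}_*$ from its dynamics on $\RP{2}$ through the duality $\tau$. Recall from \eqref{equationdefinitiontau} and \eqref{equationdefinvolutionCartanG} that $\tau\colon\RP{2}\to\RP{2}_*$ is equivariant for $\Theta\colon g\mapsto\transp{g}^{-1}$, which means that $g\circ\tau=\tau\circ\Theta(g)$ as maps $\RP{2}\to\RP{2}_*$ for every $g\in\PGL{3}$. Writing $h_n\coloneqq\Theta(g_n)=\transp{g_n}^{-1}$, the action of $g_n$ on $\RP{2}_*$ is thus conjugated by the diffeomorphism $\tau$ to the action of $h_n$ on $\RP{2}$, namely $g_n\restreinta_{\RP{2}_*}=\tau\circ h_n\restreinta_{\RP{2}}\circ\tau^{-1}$. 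Since dynamic sets are preserved under conjugation by a fixed diffeomorphism, this gives $\D_{(g_n)}(D)=\tau\bigl(\D_{(h_n)}(\tau^{-1}(D))\bigr)$ for every $D\in\RP{2}_*$. Finally, as explained before the statement, $(h_n)$ goes simply to infinity with unbalanced type $\beta$: the singular values of $h_n$ are the inverses of those of $g_n$, so the Cartan projection of $h_n$ is obtained from that of $g_n$ by inverting and reordering the diagonal entries, which exchanges the two unbalanced types.

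We may therefore apply Lemma \ref{lemmerepulsifRP2} to $(h_n)$, which provides a repulsive point $p_-(h_n)$, an attractive line $D_+(h_n)$ and a fibration $\bar{h}_\infty\colon\RP{2}\setminus\{p_-(h_n)\}\to D_+(h_n)$. The next step is to identify these objects under $\tau$, namely to check that $\tau(p_-(h_n))=D_-$ and $\tau(D_+(h_n))=(p_+)^*$, where $D_-$ and $p_+$ are the repulsive line and attractive point of $(g_n)$ given by Lemma \ref{lemmeattractifRP2}. Both identities are verified by a direct computation in the model case $g_n=a_n\in A^+$: here $h_n=a_n^{-1}$ is conjugated to a standard sequence by the permutation $I$ of \eqref{equationdefinitiong13}, so Lemma \ref{lemmerepulsifRP2} together with \eqref{equationlienensemblesD} gives $p_-(h_n)=[e_1]$ and $D_+(h_n)=[e_2,e_3]$, whence $\tau([e_1])=[e_2,e_3]=D_-$ and $\tau([e_2,e_3])=[e_1]^*=(p_+)^*$ since $e_1\perp e_2,e_3$. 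The general case follows from the standard decomposition $g_n=k_na_nl_n$ of \eqref{equationstandarddecomposition}, using \eqref{equationlienensemblesD} and the $\PO{3}$-equivariance of $\tau$ (orthogonal maps preserving orthogonality).

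Granting these identifications, the two claims follow. For claim (2), $\tau^{-1}(D_-)=p_-(h_n)$ and Lemma \ref{lemmerepulsifRP2} yields $\D_{(h_n)}(p_-(h_n))=\RP{2}$, so $\D_{(g_n)}(D_-)=\tau(\RP{2})=\RP{2}_*$. For claim (1), any $D\neq D_-$ satisfies $\tau^{-1}(D)\neq p_-(h_n)$, hence $\D_{(h_n)}(\tau^{-1}(D))=\bar{h}_\infty(\tau^{-1}(D))$ is a single point of $D_+(h_n)$, and transporting by $\tau$ yields a single point of $(p_+)^*$. It then remains to recognize this point as $\hat{g}_\infty(D\cap D_-)$, that is to prove the equality $\tau\circ\bar{h}_\infty\circ\tau^{-1}=\hat{g}_\infty(\,\cdot\,\cap D_-)$ of maps $\RP{2}_*\setminus\{D_-\}\to(p_+)^*$.

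The main obstacle is exactly this last matching of the two fibrations, which I would again settle in the model case $g_n=a_n\in A^+$ and then propagate by equivariance. Concretely, $\bar{h}_\infty$ is by Lemma \ref{lemmerepulsifRP2} the central projection from $p_-(h_n)$ onto $D_+(h_n)$ followed by a limiting diagonal map, whereas $\hat{g}_\infty$ sends a point $p\in D_-$ to the projective line joining $p_+$ to $a_\infty(p)$; the content of the computation is that under $\tau$ the operation \emph{intersect the line $D$ with $D_-$} is dual to \emph{project a point from $p_-(h_n)$ onto $D_+(h_n)$}, and that the two limiting diagonal maps correspond via the $\PGL{2}$-conjugation appearing in the equivariance statements of Lemmas \ref{lemmeattractifRP2} and \ref{lemmerepulsifRP2}. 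Some care is needed to keep track of the duality between incidence of a point with a line and of a line with a point, but once the model case is checked the general statement follows from the $\PO{3}$-equivariance of $\tau$ and relation \eqref{equationlienensemblesD}.
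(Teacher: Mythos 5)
Your proposal is correct and follows essentially the same route as the paper, which proves this lemma exactly by conjugating the action on $\RP{2}_*$ via $\tau$ to the action on $\RP{2}$ of the type-$\beta$ sequence $(\transp{g_n}^{-1})$, verifying the identification of the dynamical objects and fibrations for sequences of $A^+$, and propagating to the general case through the standard decomposition \eqref{equationstandarddecomposition} and relation \eqref{equationlienensemblesD}. Your model-case matching $\tau\circ\bar{h}_\infty\circ\tau^{-1}=\hat{g}_\infty(\,\cdot\,\cap D_-)$ does check out by direct computation (both maps send the line with normal $(w_1,w_2,w_3)$ to the line with normal $(0,\lambda_\infty w_2,w_3)$), so you have in fact supplied more detail than the paper, which simply asserts the $A^+$ case is an easy verification.
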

There exists of course a corresponding result
for sequences of unbalanced type $\beta$,
although we will not state it here
as it will not be used in this paper.
\begin{lemma}\label{lemmemixteRP2star}
Let $g_n\in\PGL{3}$ going simply to infinity of balanced type, 
with repulsive and attractive points and projective lines
$p_-\in D_-$ and $p_+\in D_+$
in $\RP{2}$.
Then $(g_n)$ has on $\RP{2}_*$ the following dynamics:
 \begin{enumerate}
  \item for $D\in\RP{2}_*\setminus (p_-)^*$, $\D_{(g_n)}(D)=D_+$;
  \item for $D\in (p_-)^*\setminus\{D_-\}$, $\D_{(g_n)}(D)=(p_+)^*$;  
  \item $\D_{(g_n)}(D_-)=\RP{2}_*$.
 \end{enumerate}
\end{lemma}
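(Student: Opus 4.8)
The plan is to deduce the statement from its counterpart on $\RP{2}$, Lemma~\ref{lemmemixteRP2}, by pushing the dynamics forward through the equivariant diffeomorphism $\tau$ of \eqref{equationdefinitiontau}. Since $\tau$ intertwines the action of $g$ on $\RP{2}$ with the action of $\Theta(g)=\transp{g}^{-1}$ on $\RP{2}_*$, it conjugates the $(\transp{g_n}^{-1})$-dynamics on $\RP{2}$ to the $(g_n)$-dynamics on $\RP{2}_*$, exactly as recorded in Paragraph~\ref{soussectiondynamiqueRP2star}. Concretely, for every line $D$ this gives
\[
\D_{(g_n)}(D)=\tau\!\left(\D_{(\transp{g_n}^{-1})}\bigl([D^\bot]\bigr)\right),
\qquad [D^\bot]=\tau^{-1}(D).
\]
As noted in that same paragraph, $\Theta$ preserves the balanced type, so $(\transp{g_n}^{-1})$ is again of balanced type and Lemma~\ref{lemmemixteRP2} applies to it, producing repulsive and attractive points and lines $\tilde p_-\in\tilde D_-$ and $\tilde p_+\in\tilde D_+$ in $\RP{2}$.

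The core step is to establish, for general $(g_n)$, the polarity dictionary
\[
\tau(\tilde p_\pm)=D_\pm
\qquad\text{and}\qquad
\tau(\tilde D_\pm)=(p_\pm)^{*}.
\]
Both members are covariant under $K=\PO{3}$: the left-hand sides because $\tau$ is $K$-equivariant (from $\Theta\restreinta_K=\id$ one gets $\tau\circ k=k\circ\tau$ for every $k\in K$), and the right-hand sides because the objects $p_\pm,D_\pm$ transform through the $K$-factors of the standard decomposition, as in the opening of the proof of Lemma~\ref{lemmeattractifRP2}. By \eqref{equationlienensemblesD} it therefore suffices to verify the four identities for a sequence $a_n\in A^+$. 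For such a sequence $\transp{a_n}^{-1}=a_n^{-1}$ is diagonal and becomes standard after the reordering $a_n^{-1}=Ia_n^{*}I$ by the permutation $I$ of \eqref{equationdefinitiong13} already used in Lemma~\ref{lemmesymetriestypesdynamiques}; Lemma~\ref{lemmemixteRP2} then yields the objects of $a_n^{*}$, hence of $a_n^{-1}$, explicitly in the standard basis. Comparing them with the coordinate objects $p_-=[e_3]\in D_-=[e_2,e_3]$ and $p_+=[e_1]\in D_+=[e_1,e_2]$ of $(a_n)$, and using the elementary identities $\tau([e_i])=[e_j,e_k]$ and $\tau([e_j,e_k])=([e_i])^{*}$, settles the dictionary.

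Granting this dictionary, the three cases fall out of the conjugation formula. Because $\tau$ is a bijection, $[D^\bot]\in\tilde D_-=\tau^{-1}\bigl((p_-)^{*}\bigr)$ is equivalent to $D\in(p_-)^{*}$, and $[D^\bot]=\tilde p_-=\tau^{-1}(D_-)$ is equivalent to $D=D_-$. For $D\notin(p_-)^{*}$ Lemma~\ref{lemmemixteRP2} gives $\D_{(\transp{g_n}^{-1})}([D^\bot])=\{\tilde p_+\}$, whence $\D_{(g_n)}(D)=\{\tau(\tilde p_+)\}=D_+$; for $D\in(p_-)^{*}\setminus\{D_-\}$ it gives $\tilde D_+$, whence $\D_{(g_n)}(D)=\tau(\tilde D_+)=(p_+)^{*}$; and for $D=D_-$ it gives all of $\RP{2}$, whence $\D_{(g_n)}(D)=\tau(\RP{2})=\RP{2}_*$. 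The point requiring genuine care is the polarity dictionary of the second paragraph: one must check that $\tau$ carries the attractive (respectively repulsive) point and line of $(\transp{g_n}^{-1})$ precisely onto the attractive (resp. repulsive) line and point of $(g_n)$, with the $\pm$ labels matching and points and lines interchanged. This hinges on the fact that the factors $k_\infty,l_\infty\in\PO{3}$ of the standard decomposition act orthogonally and therefore commute with the orthogonal-complement operation defining $\tau$; without this the positions of the dual objects and of the original objects would not be linked by polarity, and the labels could appear to be swapped rather than preserved.
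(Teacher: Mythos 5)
Your proposal is correct and follows essentially the same route as the paper: the paper's treatment of Lemma \ref{lemmemixteRP2star} consists precisely of the $\tau$-conjugation to the $\RP{2}$-dynamics of $(\transp{g_n}^{-1})$ set up at the start of Paragraph \ref{soussectiondynamiqueRP2star}, together with verification for sequences of $A^+$ (where $a_n^{-1}=Ia_n^*I$ reorders into standard form) and extension by the standard decomposition \eqref{equationstandarddecomposition} and relation \eqref{equationlienensemblesD}. Your explicit polarity dictionary $\tau(\tilde p_\pm)=D_\pm$, $\tau(\tilde D_\pm)=(p_\pm)^*$, with the $K$-covariance argument, simply spells out the identification of dual dynamical objects that the paper leaves implicit, and it is accurate.
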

As before, these results are easily proved in the case of sequences
of $A^+$, which yields the general case by 
using standard decomposition \eqref{equationstandarddecomposition}.

\subsection{Dynamics in the flag space}\label{soussectionconsequencesdynamiquesX}
From our description of the dynamics of $\PGL{3}$ on $\RP{2}$
and $\RP{2}_*$, 
we now deduce a precise description of its dynamics 
on the flag space $\X$
(see \eqref{equationdefinitionX}).
We refer to \cite[\S 6]{kapovich_dynamics_2017} 
for results related to those of this paragraph, in the general setting
of \emph{regular} discrete subgroups of semi-simple Lie groups.
\subsubsection{Geometry of the flag space}
\label{soussoussectionobjetsgeomX}
The dynamical repulsive and attractive objects
of a sequence of $\PGL{3}$ acting on $\X$
are natural geometric objects of $\X$ that we now define.
First of all, $\X$ bears a path structure
$\Lx=(\Exalpha,\Exbeta)$ whose associated one-dimensional 
$\alpha$ and $\beta$-leaves are
the respective fibers of the two following 
projections:
\begin{equation}\label{equationdefinitionpialphaetbeta}
 \pi_\alpha\colon(p,D)\in\X\mapsto p\in\RP{2}
 \text{~and~}
 \pi_\beta\colon(p,D)\in\X\mapsto D\in\RP{2}_*.
\end{equation}
In other words, $\Exalpha$ and $\Exbeta$ are respectively tangent
at $x=(p,D)\in\X$ to the 
circles 
\begin{equation}\label{equationcerclesalphabeta}
 \Calpha(x)=\enstq{(p,D')}{D'\ni p}=\pi_{\alpha}^{-1}(p)
 \text{~and~}
 \Cbeta(x)=\enstq{(p',D)}{p'\in D}=\pi_{\beta}^{-1}(D)
\end{equation}
that we respectively call the \emph{$\alpha$-circle}
and the \emph{$\beta$-circle} of $x$.
We will also denote $\Calpha(x)=\Calpha(p)$
and $\Cbeta(x)=\Cbeta(D)$. 
The fact that $\Exalpha\oplus\Exbeta$ is a contact distribution
and that $\Lx$ is thus a path structure
on $\X$
is classical and
can be verified by a calculation in a chart of $\X$.
Moreover
$\pi_\alpha$ and $\pi_\beta$ are $\PGL{3}$-equivariant
for the natural action of $\PGL{3}$ on $\X$
and $\Lx$ is thus $\PGL{3}$-invariant.
Actually, $\PGL{3}$ is the whole automorphism group
of $(\X,\Lx)$ 
(see for instance \cite[Lemma 2.2]{mionmouton}), 
where an \emph{automorphism} of a path structure 
$\Lm=(E^\alpha,E^\beta)$ 
on a manifold $M$ is a diffeomorphism $f$ of $M$
such that $f^*E^\alpha=E^\alpha$ and $f^*E^\beta=E^\beta$.
\par One obtains natural surfaces of $\X$ by defining
\begin{equation}\label{equationdefinitionSalphabetaalphabeta}
 \Salphabeta(x)=\bigcup_{y\in\Calpha(x)}\Cbeta(y)
 \text{~and~}
 \Sbetaalpha(x)=\bigcup_{y\in\Cbeta(x)}\Calpha(y)
\end{equation}
that we respectively call the \emph{$\alpha$-$\beta$} and the 
\emph{$\beta$-$\alpha$ surfaces} of $x=(p,D)$.
As before, we will also denote 
$\Salphabeta(x)=\Salphabeta(p)=\pi_\beta^{-1}(p^*)$
and $\Sbetaalpha(x)=\Sbetaalpha(D)=\pi_\alpha^{-1}(D)$
if $x=(p,D)$.
Note that $\alpha$-$\beta$ and $\beta$-$\alpha$
surfaces are compact and connected surfaces
of Euler characteristic zero according to Poincar\'e-Hopf Theorem
(because each of these surfaces bears a one-dimensional distribution
which is the restriction of $\Exbeta$ or $\Exalpha$).
One moreover checks that these surfaces are non-orientable,
and that $\alpha$-$\beta$ and $\beta$-$\alpha$ surfaces 
are thus Klein bottles embedded in $\X$.
\par To finish this short geometric introduction to the flag space,
we introduce the following
natural involutive diffeomorphism of $\X$:
\begin{equation}\label{equationdefinitionkappa}
 \kappa\colon (m,D)\in\X\mapsto (D^\bot,m^\bot)\in\X,
\end{equation}
called the \emph{dual involution of $\X$},
which is equivariant for the automorphism
$\Theta\colon g\mapsto \transp{g}^{-1}$ of $\PGL{3}$.
Note that $\kappa$ does not preserve $\Lx$
but switches its $\alpha$ and $\beta$-distributions:
$\kappa^*\Exalpha=\Exbeta$.
From this, one easily verifies the relations
\begin{equation*}\label{equationrelationskappa}
 \kappa(\Calpha(x))=\Cbeta(\kappa(x)),
\kappa(\Cbeta(x))=\Calpha(\kappa(x)),
\kappa(\Salphabeta(x))=\Sbetaalpha(\kappa(x))
\text{~and~} 
\kappa(\Sbetaalpha(x))=\Salphabeta(\kappa(x)).
\end{equation*}

\subsubsection{Unbalanced type $\alpha$}
For $g_n\in\PGL{3}$ a sequence going simply to infinity 
of unbalanced type $\alpha$,
with repulsive projective line $D_-$ 
and attractive point $p_+$ in $\RP{2}$,
we define $\Cbeta^-=\Cbeta(D_-)$, 
$\Sbetaalpha^-=\Sbetaalpha(D_-)$,
$\Calpha^+=\Calpha(p_+)$ and
$\Salphabeta^+=\Salphabeta(p_+)$.
\begin{lemma}\label{lemmeattractifX}
There exists a surjective application $\phi\colon\X\to\Calpha^+$
such that:
 \begin{enumerate}
\item $\phi\restreinta_{\X\setminus\Cbeta^-}$ is a (smooth)
$(\Sn{1}\times\R)$-fiber bundle
whose fibers are the $\Salphabeta(x)\setminus\Cbeta^-$ for $x\in\Cbeta^-$,
but $\phi$ is not continuous on $\Cbeta^-$;
 \item for $x\in\X\setminus\Sbetaalpha^-$, $\D_{(g_n)}(x)=\phi(x)$;
 \item for 
 $x\in\Sbetaalpha^-\setminus\Cbeta^-$, $\D_{(g_n)}(x)=\Cbeta(\phi(x))$;
 \item for $x\in\Cbeta^-$, $\D_{(g_n)}(x)=\Sbetaalpha(\phi(x))$;
 \item $\phi$ is equivariant for the morphism
 $\rho_\infty\colon\Stab(D_-)\to \Stab(p_+)\cap\Stab(D_-)$
 of Lemma \ref{lemmeattractifRP2}.
 \end{enumerate}
\end{lemma}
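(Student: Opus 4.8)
The plan is to write down $\phi$ explicitly and then, for each of the four regions of $\X$ appearing in the statement, to pin down $\D_{(g_n)}(x)$ by trapping it between an easy upper bound coming from the product structure of $\X\subset\RP{2}\times\RP{2}_*$ and a lower bound produced by hand-made approximating sequences. I would set $\phi(x)=(p_+,\hat{g}_\infty(\pi_\beta(x)\cap D_-))$ when $x\notin\Cbeta^-$ (then $\pi_\beta(x)\neq D_-$, so $\pi_\beta(x)\cap D_-$ is a single point of $D_-$) and $\phi(x)=(p_+,\hat{g}_\infty(\pi_\alpha(x)))$ when $x\in\Cbeta^-$ (then $\pi_\alpha(x)\in D_-$). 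As $\hat{g}_\infty$ takes values in $(p_+)^*$, in both cases $\phi(x)$ is a flag with point $p_+$, i.e.\ $\phi(x)\in\Calpha^+$; surjectivity follows from bijectivity of $\hat{g}_\infty\colon D_-\to(p_+)^*$ together with the fact that every point of $D_-$ equals $D\cap D_-$ for some $D\neq D_-$. Since the $\RP{2}$- and $\RP{2}_*$-dynamics of Lemmas \ref{lemmeattractifRP2} and \ref{lemmeattractifRP2star} are stated for arbitrary sequences of unbalanced type $\alpha$, no reduction to $A^+$ is needed.

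The elementary but crucial observation is that, the action on $\X$ being the restriction of the product action, any accumulation point $(p_\infty,D_\infty)$ of $g_n(x_n)$ with $x_n\to x$ satisfies $p_\infty\in\D_{(g_n)}(\pi_\alpha x)$ and $D_\infty\in\D_{(g_n)}(\pi_\beta x)$, whence the upper bound
\[
\D_{(g_n)}(x)\subset\bigl(\D_{(g_n)}(\pi_\alpha x)\times\D_{(g_n)}(\pi_\beta x)\bigr)\cap\X .
\]
Feeding in Lemma \ref{lemmeattractifRP2} for $\pi_\alpha x$ and Lemma \ref{lemmeattractifRP2star} for $\pi_\beta x$ gives the three regimes at once. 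If $x\notin\Sbetaalpha^-$ then $\pi_\alpha x\notin D_-$ and $\pi_\beta x\neq D_-$, both factors are singletons, and the right-hand side is already the single flag $\phi(x)$; as $\D_{(g_n)}(x)$ is non-empty this proves claim (2) with no further argument. If $x\in\Sbetaalpha^-\setminus\Cbeta^-$ then $\pi_\alpha x\in D_-$, $\pi_\beta x\neq D_-$ and $\pi_\beta x\cap D_-=\pi_\alpha x$, so the line-factor is the single line $L:=\hat{g}_\infty(\pi_\alpha x)$ while the point-factor is the whole of $L$, and the right-hand side equals $\Cbeta(L)=\Cbeta(\phi(x))$. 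If $x\in\Cbeta^-$ then $\pi_\beta x=D_-$, the line-factor is all of $\RP{2}_*$ and the point-factor is the line $\hat{g}_\infty(\pi_\alpha x)$, so the right-hand side is $\pi_\alpha^{-1}(\hat{g}_\infty(\pi_\alpha x))=\Sbetaalpha(\phi(x))$. These same computations also yield the structural claims: off $\Cbeta^-$ the map $\phi$ depends only on $\pi_\beta(x)$ through the smooth intersection map $\RP{2}_*\setminus\{D_-\}\to D_-$ and the diffeomorphism $\hat{g}_\infty$, so it is smooth there with fiber over $(p_+,L)$ equal to $\Salphabeta(q_0)\setminus\Cbeta^-$ where $q_0=\hat{g}_\infty^{-1}(L)\in D_-$, a circle bundle (in the $\beta$-direction) over $q_0^*\setminus\{D_-\}\cong\R$ and hence diffeomorphic to $\Sn{1}\times\R$; discontinuity along $\Cbeta^-$ comes from letting the line of a flag tend to $D_-$ along different directions, so that $D\cap D_-$ accumulates on different points of $D_-$. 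Equivariance (claim (5)) is then immediate from $(hD)\cap D_-=h(D\cap D_-)$, $\rho_\infty(h)p_+=p_+$ and the equivariance of $\hat{g}_\infty$ in Lemma \ref{lemmeattractifRP2}.

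The real work is the reverse inclusions in claims (3) and (4): one must show that every flag of $\Cbeta(\phi(x))$, respectively $\Sbetaalpha(\phi(x))$, is genuinely attained in the limit. The difficulty, which is the main obstacle of the whole lemma, is that a point of $\D_{(g_n)}(x)$ must be reached along a single sequence of honest flags $x_n=(p_n,D_n)$ with $p_n\in D_n$, so the point-limit and the line-limit are coupled and cannot be prescribed independently from the two $\RP{2}$-lemmas. The device I would use to decouple them is to realize $D_n$ as the projective line through $p_n$ and a well-chosen auxiliary point. For claim (3), with $x=(p,D)$, $p\in D_-$, $D\neq D_-$, and target $(p',L)$, $p'\in L=\hat{g}_\infty(p)$: I use the reverse inclusion of Lemma \ref{lemmeattractifRP2}(2) to pick $p_n\to p$ with $g_n(p_n)\to p'$, fix $w\in D\setminus\{p\}$ (which lies off $D_-$ since $D\cap D_-=\{p\}$, so $g_n(w)\to p_+$ by Lemma \ref{lemmeattractifRP2}(1)), and set $D_n=[p_n,w]$; then $x_n\to(p,D)$ and $g_n(x_n)\to(p',[p',p_+])=(p',L)$ whenever $p'\neq p_+$, reaching a dense subset of $\Cbeta(L)$. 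For claim (4), with $x=(q,D_-)\in\Cbeta^-$ and target $(p',D')$, $p'\in\hat{g}_\infty(q)$: I choose $r'\in D'$ off the line $\hat{g}_\infty(q)$ (possible as soon as $D'\neq\hat{g}_\infty(q)$) and the unique $q'\in D_-$ with $r'\in\hat{g}_\infty(q')$, necessarily $q'\neq q$, then select via Lemma \ref{lemmeattractifRP2}(2) sequences $p_n\to q$, $r_n\to q'$ with $g_n(p_n)\to p'$ and $g_n(r_n)\to r'$, and set $D_n=[p_n,r_n]$; since $q,q'\in D_-$ are distinct, $D_n\to D_-$ while $g_n(D_n)=[g_n(p_n),g_n(r_n)]\to[p',r']=D'$. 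In both cases the construction attains a dense subset of the claimed limit set, and the closedness of $\D_{(g_n)}(x)$ upgrades this to the full equality, the finitely many degenerate targets (where the auxiliary point would collide with $p'$ or $p_+$) being absorbed by this density argument.
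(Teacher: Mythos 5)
Your proposal is correct, and its skeleton coincides with the paper's: the same explicit formula for $\phi$ (via $\hat{g}_\infty(D\cap D_-)$ off $\Cbeta^-$ and $\hat{g}_\infty(p)$ on it), the same upper bound $\D_{(g_n)}(x)\subset\bigl(\D_{(g_n)}(\pi_\alpha x)\times\D_{(g_n)}(\pi_\beta x)\bigr)\cap\X$ fed by Lemmas \ref{lemmeattractifRP2} and \ref{lemmeattractifRP2star} (the paper uses this bound implicitly, claim by claim), the same two-point construction $D_n=[p_n,w]$ with a fixed auxiliary $w\in D\setminus\{p\}$ for claim (3) (the paper takes $w=p^\bot\cap D$), and the same density-plus-closedness finish; and, like the paper's proof of this lemma, you rightly work directly from the general $\RP{2}$ and $\RP{2}_*$ statements with no reduction to $A^+$. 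The one genuine divergence is the reverse inclusion of claim (4), the hardest step. For a target $(p',D')$ over $x=(q,D_-)$, the paper picks $w\in D'\setminus\{p_+,p'\}$ and \emph{pulls it back}: it sets $D_n=[p_n,g_n^{-1}(w)]$, invokes Remark \ref{remarqueliengngninverse} to see that $(g_n^{-1})$ has unbalanced type $\beta$ with attractive line $D_-$, so $g_n^{-1}(w)$ subconverges into $D_-$, and then needs a small extra argument to rule out $\lim g_n^{-1}(w)=q$ (otherwise $w=g_n(g_n^{-1}(w))$ would force $w\in\hat{g}_\infty(q)$, contradicting its choice). You instead \emph{push forward} a second point: choosing $r'\in D'\setminus\hat{g}_\infty(q)$ and the unique $q'=\hat{g}_\infty^{-1}([p_+,r'])\in D_-\setminus\{q\}$, you approximate $q$ and $q'$ simultaneously, so that $D_n=[p_n,r_n]\to[q,q']=D_-$ while $g_n(D_n)\to[p',r']=D'$. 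This buys two things: inverse dynamics never enter (no appeal to the type of $(g_n^{-1})$), and the non-degeneracy $p'\neq r'$ is automatic from $p'\in\hat{g}_\infty(q)\not\ni r'$, replacing the paper's exclusion argument. The only point you should make explicit: Lemma \ref{lemmeattractifRP2}(2) produces accumulation points, not convergent sequences, so getting $g_n(p_n)\to p'$ and $g_n(r_n)\to r'$ along a \emph{common} sequence of indices requires a nested subsequence extraction — harmless, because subsequences of $(g_n)$ keep the same asymptotic type and the same limit map $\hat{g}_\infty$ (Remark \ref{remarksubsequencessimply}), and a limit along a subsequence is still an accumulation point of the full sequence, hence lies in $\D_{(g_n)}(x)$.
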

\begin{proof}
Denoting $x=(p,D)$,
we define 
$\phi(x)=(p_+,\hat{g}_\infty(D\cap D_-))$
if $x\notin\Cbeta^-$
and $\phi(x)=(p_+,\hat{g}_\infty(p))$ 
if $x\in\Cbeta^-$,
with $\hat{g}_\infty$ the application introduced in Lemma \ref{lemmeattractifRP2}. \\
1. These claims are immediate consequences of the definition of $\phi$. \\ 
2. If $x\notin\Sbetaalpha^-$ then
$p\notin D_-$ hence $\mathcal{D}_{(g_n)}(p)=\{p_+\}$ 
according to Lemma \ref{lemmeattractifRP2}.
Moreover $D\neq D_-$, hence
$\mathcal{D}_{(g_n)}(D)=\{\hat{g}_\infty(D\cap D_-)\}$
according to Lemma \ref{lemmeattractifRP2star}.
This proves the claim. \\
3. If $x\notin\Cbeta^-$ then $D\neq D_-$
and thus $\mathcal{D}_{(g_n)}(D)=\{\hat{g}_\infty(D\cap D_-)\}$,
proving the direct inclusion.
For the reverse inclusion, let $p_\infty\in\hat{g}_\infty(D\cap D_-)$.
Since $D\cap D_-=p\in D_-$, 
according to Lemma \ref{lemmeattractifRP2} 
there exists $p_n\in\RP{2}$ converging to $p$
such that 
$g_n(p_n)$ converges to $p_\infty$.
For $n$ large enough $p_n\notin p^\bot$ and thus
$D_n=[p_n,p^\bot\cap D]$ is a projective line
converging to $D$.
Hence $x_n=(p_n,D_n)$ converges to $x$, with $g_n(x_n)$ converging
to $(p_\infty,\hat{g}_\infty(D\cap D_-))$.
This proves the equality. \\
4. If $x=(p,D_-)\in\Cbeta^-$ then $\mathcal{D}_{(g_n)}(p)=\hat{g}_\infty(p)$
according to Lemma \ref{lemmeattractifRP2},
proving the direct inclusion.
For the reverse one, let $x_\infty=(p_\infty,D_\infty)\in
\Sbetaalpha(\hat{g}_\infty(p))\setminus\Cbeta(\hat{g}_\infty(p))$,
that is $p_\infty\in\hat{g}_\infty(p)$ and
$D_\infty\neq\hat{g}_\infty(p)$, 
hence $D_\infty\cap\hat{g}_\infty(p)=\{p_\infty\}$.
Lemma \ref{lemmeattractifRP2} gives a sequence $p_n\in\RP{2}$ 
converging to $p$ such that
$g_n(p_n)$ converges to $p_\infty$.
With $q\in D_\infty\setminus\{p_+,p_\infty\}$, for $n$ large enough
$[g_n(p_n),q]$ is a projective live converging to $D_\infty$.
According to 
Remark \ref{remarqueliengngninverse}, 
$(g_n^{-1})$ is a sequence 
of unbalanced type $\beta$, repulsive point $p_+$ and atttractive circle $D_-$.
Passing to a subsequence,
$g_n^{-1}(q)$ converges thus to a point $q_\infty\in D_-$.
Furthermore $q_\infty=p$ is impossible
because $g_n(g_n^{-1}(q))=q$ would converge to a point
of $\hat{g}_\infty(p)$,
which would imply $q\in D_\infty\cap\hat{g}_\infty(p)=\{p_\infty\}$ since $q\in D_\infty$,
contradicting our hypothesis on $q$.
Hence $D_n=[p_n,g_n^{-1}(q)]$ is for $n$ large enough a projective line
converging to $D_-$.
Finally $x_n=(p_n,D_n)\in\X$ converges to $x$ with 
$g_n(x_n)$ converging to $x_\infty$, so $x_\infty\in\D_{(g_n)}(x)$.
This shows
$\Sbetaalpha(\hat{g}_\infty(p))\setminus\Cbeta(\hat{g}_\infty(p))
\subset\D_{(g_n)}(x)$
which concludes the proof of the equality
since $\D_{(g_n)}(x)$ is closed. \\
5. This is a direct consequence of the 
$\rho_\infty$-equivariance of $\hat{g}_\infty$ proved in Lemma \ref{lemmeattractifRP2}.
\end{proof}

\subsubsection{Unbalanced type $\beta$}
For $g_n\in\PGL{3}$ a sequence going simply to infinity 
of unbalanced type $\beta$,
with repulsive point $p_-$ 
and attractive projective line $D_+$ in $\RP{2}$,
we define $\Calpha^-=\Cbeta(p_-)$, 
$\Salphabeta^-=\Salphabeta(p_-)$,
$\Cbeta^+=\Cbeta(D_+)$ and
$\Sbetaalpha^+=\Sbetaalpha(D_+)$.
\begin{lemma}\label{lemmerepulsifX}
There exists a surjective application $\phi\colon\X\to\Cbeta^+$
such that:
 \begin{enumerate}
\item $\phi\restreinta_{\X\setminus\Calpha^-}$ is a (smooth)
$(\Sn{1}\times\R)$-fiber bundle
whose fibers are the $\Sbetaalpha(x)\setminus\Calpha^-$ for $x\in\Calpha^-$,
but $\phi$ is not continuous on $\Calpha^-$;
 \item for $x\in\X\setminus\Salphabeta^-$, $\D_{(g_n)}(x)=\phi(x)$;
 \item for $x\in\Salphabeta^-\setminus\Calpha^-$, 
 $\D_{(g_n)}(x)=\Calpha(\phi(x))$;
 \item for $x\in\Calpha^-$, $\D_{(g_n)}(x)=\Salphabeta(\phi(x))$;
 \item $\phi$ is equivariant for the morphism
 $\rho_\infty\colon\Stab(p_-)\to \Stab(p_-)\cap\Stab(D_+)$
 of Lemma \ref{lemmerepulsifRP2}.
 \end{enumerate}
\end{lemma}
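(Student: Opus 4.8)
The statement is exactly the $\alpha$--$\beta$ dual of Lemma \ref{lemmeattractifX}, so my plan is to deduce it from that lemma through the dual involution $\kappa$ of \eqref{equationdefinitionkappa}, rather than to redo the direct analysis: a direct argument would require the dynamics on $\RP{2}_*$ of type $\beta$ sequences, which is deliberately not established in the excerpt. Set $h_n = \Theta(g_n) = \transp{g_n}^{-1}$. Since transposition preserves singular values, $h_n$ has the same Cartan projection as $g_n^{-1}$; by Lemma \ref{lemmesymetriestypesdynamiques} the latter goes simply to infinity with unbalanced type $\alpha$, hence so does $(h_n)$. Moreover $\kappa$ is $\Theta$-equivariant, so $\kappa\circ g_n = h_n\circ\kappa$ on $\X$; as $\kappa$ is a homeomorphism this gives, straight from the definition of dynamic sets, $\kappa(\D_{(g_n)}(x)) = \D_{(h_n)}(\kappa(x))$, that is $\D_{(g_n)}(x) = \kappa(\D_{(h_n)}(\kappa(x)))$ for every $x\in\X$.

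I would then apply Lemma \ref{lemmeattractifX} to $(h_n)$, obtaining a surjection $\psi\colon\X\to\Calpha^+$ onto the attractive $\alpha$-circle of $(h_n)$ satisfying the five listed properties relative to the objects $\Cbeta^-,\Sbetaalpha^-,\Calpha^+,\Salphabeta^+$ of $(h_n)$, and set $\phi = \kappa\circ\psi\circ\kappa$. A direct computation from the definition of $\kappa$ gives $\kappa(\Calpha(p)) = \Cbeta(p^\bot)$ and $\kappa(\Cbeta(D)) = \Calpha(D^\bot)$, together with the relations on $\alpha$-$\beta$ and $\beta$-$\alpha$ surfaces recalled after \eqref{equationdefinitionkappa}; combined with the facts $p_-^\bot = D_-(h_n)$ and $D_+^\bot = p_+(h_n)$ (which follow from the equivariance of $\tau$ in \eqref{equationdefinitiontau} and the uniqueness of the repulsive and attractive objects), this yields the object correspondence $\kappa(\Calpha^-)=\Cbeta^-$, $\kappa(\Salphabeta^-)=\Sbetaalpha^-$, $\kappa(\Cbeta^+)=\Calpha^+$ and $\kappa(\Sbetaalpha^+)=\Salphabeta^+$ (right-hand sides being the $h_n$-objects). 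In particular $\phi$ is a surjection onto $\kappa(\Calpha^+)=\Cbeta^+$, and since $\kappa$ is a diffeomorphism exchanging the two foliations and the two families of surfaces, property (1) transports verbatim. Properties (2)--(4) then follow by applying $\kappa$ to the corresponding equalities of Lemma \ref{lemmeattractifX} and using $\D_{(g_n)}(x) = \kappa(\D_{(h_n)}(\kappa(x)))$: for instance, for $x\in\Salphabeta^-\setminus\Calpha^-$ one has $\kappa(x)\in\Sbetaalpha^-\setminus\Cbeta^-$, whence $\D_{(g_n)}(x) = \kappa(\Cbeta(\psi(\kappa(x)))) = \Calpha(\phi(x))$, which is (3); the arguments for (2) and (4) are identical.

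The one point requiring genuine care is property (5). Conjugating the morphism furnished by Lemma \ref{lemmeattractifRP2} for $(h_n)$ by $\Theta$ produces an equivariance of $\phi$ under a morphism $\Theta\circ\rho_\infty^{h}\circ\Theta\colon\Stab(p_-)\to\Stab(p_-)\cap\Stab(D_+)$ (one checks that $\Theta(\Stab(p_-)) = \Stab(D_-(h_n))$ using $(g\cdot m)^\bot = \Theta(g)\cdot m^\bot$), and the main task is to verify that this morphism coincides with the $\rho_\infty$ of Lemma \ref{lemmerepulsifRP2}. After conjugating everything into the model position $p_-=[e_3]$, $D_+=[e_1,e_2]$ by a relatively compact factor (legitimate by \eqref{equationlienensemblesD} and Remark \ref{remarksubsequencessimply}), I expect this to reduce to matching the two explicit block formulas for $\rho_\infty$, the contracting factor $b_\infty=\Diag(1,\lambda_\infty)$ being the same on both sides. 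This is the step where the bookkeeping of which eigenvalue ratio survives is easiest to get wrong, but it involves no new analysis beyond Lemma \ref{lemmeattractifX}.
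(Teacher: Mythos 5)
Your proposal is correct and takes essentially the same route as the paper: the paper likewise reduces Lemma \ref{lemmerepulsifX} to Lemma \ref{lemmeattractifX} through the dual involution $\kappa$, setting $\phi=\kappa\circ\psi\circ\kappa^{-1}$, the only cosmetic difference being that it first normalizes $g_n\in A^+$ via the standard decomposition, where your $h_n=\Theta(g_n)$ is literally $g_n^{-1}$. The verification you defer in the last step does go through: in the model position both morphisms act on the $\GL{2}$-block by $g\mapsto \Diag(\lambda_\infty^{-1},1)\,g\,\Diag(\lambda_\infty,1)$, which equals $b_\infty g b_\infty^{-1}$ with $b_\infty=\Diag(1,\lambda_\infty)$ since the two conjugating matrices differ by the scalar $\lambda_\infty$, so your $\Theta\circ\rho_\infty^{h}\circ\Theta$ coincides with the $\rho_\infty$ of Lemma \ref{lemmerepulsifRP2} as you expected.
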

\begin{proof}
The standard decomposition \eqref{equationstandarddecomposition}
and the relation \eqref{equationlienensemblesD}
allow us to assume that $g_n\in A^+$
to prove these assumptions.
Thus $D_+=[e_1,e_2]$ and $p_-=[e_3]$
according to Lemma \ref{lemmeattractifRP2}.
The dual application 
$\kappa$ of $\X$ 
being equivariant for the morphism
$g\mapsto\transp{g}^{-1}$ (see \eqref{equationdefinitionkappa}), 
we have
$g_n=\kappa\circ g_n^{-1}\circ\kappa^{-1}$ and thus
\begin{equation}\label{equationrelationkappaD}
\D_{(g_n)}(x)=\kappa(\D_{(g_n^{-1})}(\kappa^{-1}(x)))
\end{equation}
for any $x\in\X$.
Now $(g_n^{-1})$ goes simply to infinity with unbalanced type $\alpha$,
and $p_+(g_n^{-1})=[e_3]$, 
$D_-(g_n^{-1})=[e_1,e_2]$.
Denoting by
$\psi\colon\X\to\Calpha[e_3]$ the application 
associated to $(g_n^{-1})$ in Lemma
\ref{lemmeattractifX} we define 
$\phi=\kappa\circ\psi\circ\kappa^{-1}$,
and all the claims are now a direct consequence of 
the corresponding statements in Lemma \ref{lemmeattractifX},
thanks to relation \eqref{equationrelationkappaD}.
With $\bar{g}_\infty\colon\RP{2}\setminus\{p_-\}\to D_+$
the application introduced in Lemma \ref{lemmerepulsifRP2}
and denoting $x=(p,D)$,
a straightforward calculation
in the case of $g_n\in A^+$ furthermore shows that:
\begin{itemize}
 \item $\phi(x)=(\bar{g}_\infty(p),D_+)$
if $x\notin\Calpha^-$,
\item and $\phi(x)=(\bar{g}_\infty(D\cap p_-^\bot),D_+)$ if $x\in\Calpha^-$. \qedhere
\end{itemize}
\end{proof}

\begin{remark}\label{remarkgeometricinterpretation}
 There is a simple geometric interpretation of the fibration $\phi$ associated to a sequence $(g_n)$ of unbalanced type $\beta$.
For any $p\in\RP{2}$, $\RP{2}\setminus\{p\}$ is foliated by the intervals 
$D\setminus\{p\}$ for $D\in(p)^*$,
and $\X\setminus\Calpha(p)$ is thus foliated by
the $\Sbetaalpha(D)\setminus\Calpha(p)$ for $D\in(p)^*$.
In other words,
$\X\setminus\Calpha(p)$ is foliated 
by the $\Sbetaalpha(x)\setminus\Calpha(p)$ for
$x=(p,D)\in\Calpha(p)$ (these are cylinders of $\X$).
These leaves are precisely the fibers of the fibration defined in 
Lemma \ref{lemmerepulsifX} with $p$ the repulsive point $p_-$
of the sequence $(g_n)$.
Moreover for any $D\in\RP{2}_*$ that does not contain $p$,
each of these leaves 
intersects $\Cbeta(D)$
in one point.
If the attractive line $D_+$ of $(g_n)$ does not contain $p_-$
(which is the case if $g_n\in A^+$)
then $\phi(x)=\phi^{-1}(x)\cap\Cbeta^+$ for any $x\in\X\setminus\Calpha^-$.
It is easy to deduce from this the corresponding description
for the fibration associated to a sequence of unbalanced type $\alpha$.
\end{remark}

\subsubsection{Balanced type}
\label{soussoussectiondynamiquebalancedtype}
For $g_n\in\PGL{3}$ a sequence going simply to infinity 
of balanced type,
with repulsive and attractive
points and projective lines
$p_-\in D_-$ and $p_+\in D_+$ in $\RP{2}$,
we define $x^-=(p_-,D_-)$, $x^+=(p_+,D_+)$,
$\Calpha^-=\Calpha(p_-)$, $\Cbeta^-=\Cbeta(D_-)$,
$\Salphabeta^-=\Salphabeta(p_-)$, $\Sbetaalpha^-=\Sbetaalpha(D_-)$,
$\Calpha^+=\Calpha(p_+)$, $\Cbeta^+=\Cbeta(D_+)$,
$\Salphabeta^+=\Salphabeta(p_+)$, $\Sbetaalpha^+=\Sbetaalpha(D_+)$,
$\Bmoins=\Calpha^-\cup\Cbeta^-$ and
$\Bplus=\Calpha^+\cup\Cbeta^+$.
\begin{remark}
 Note that $\Salphabeta^-\cap\Sbetaalpha^-=\Bmoins$
 and $\Salphabeta^+\cap\Sbetaalpha^+=\Bplus$.
\end{remark}
\begin{lemma}\label{lemmecasmixteX}
For $x\in\X\setminus \Bmoins$, $\mathcal{D}_{(g_n)}(x)\subset \Bplus$.
More precisely:
 \begin{enumerate}
  \item For $x\in\X\setminus(\mathcal{S}_{\beta,\alpha}^-\cup\mathcal{S}_{\alpha,\beta}^-)$, $\D_{(g_n)}(x)=x^+$.
  \item For 
  $x\in\mathcal{S}_{\alpha,\beta}^-\setminus\mathcal{S}_{\beta,\alpha}^-
  =\Salphabeta^-\setminus(\Calpha^-\cup\Cbeta^-)$, 
  $\D_{(g_n)}(x)=\mathcal{C}_\alpha^+$.
  \item For 
  $x\in\mathcal{S}_{\beta,\alpha}^-\setminus\mathcal{S}_{\alpha,\beta}^-
  =\Sbetaalpha^-\setminus(\Calpha^-\cup\Cbeta^-)$, 
  $\D_{(g_n)}(x)=\mathcal{C}_\beta^+$.
  \item For $x\in\mathcal{C}_\alpha^-\setminus\{x^-\}$, $\D_{(g_n)}(x)=\mathcal{S}_{\alpha,\beta}^+$.
  \item For $x\in\mathcal{C}_\beta^-\setminus\{x^-\}$, $\D_{(g_n)}(x)=\mathcal{S}_{\beta,\alpha}^+$.
  \item $\D_{(g_n)}(x^-)=\X$. 
 \end{enumerate}
\end{lemma}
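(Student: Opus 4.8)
The plan is to follow exactly the scheme of the proof of Lemma~\ref{lemmeattractifX}: first reduce to a diagonal sequence, then extract all the inclusions ``$\subset$'' from the two projections $\pi_\alpha,\pi_\beta$, and finally obtain the reverse inclusions by building explicit incident approximating pairs. By the standard decomposition~\eqref{equationstandarddecomposition} together with relation~\eqref{equationlienensemblesD}, which conjugates dynamic sets by the limits $k_\infty,l_\infty$ and transports all the dynamical objects of the statement accordingly, it suffices to treat a sequence $a_n=\Diag(1,\beta_n,\gamma_n)\in A^+$ going simply to infinity of balanced type, for which Lemma~\ref{lemmemixteRP2} gives $p_+=[e_1]$, $p_\pm=[e_2]$, $p_-=[e_3]$, $D_-=[e_2,e_3]$ and $D_+=[e_1,e_2]$.

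For the inclusions ``$\subset$'' I would use that $\pi_\alpha$ and $\pi_\beta$ are continuous and $\PGL{3}$-equivariant, so that for $x=(p,D)$ one has $\pi_\alpha(\D_{(g_n)}(x))\subset\D_{(g_n)}(p)$ and $\pi_\beta(\D_{(g_n)}(x))\subset\D_{(g_n)}(D)$. Reading off the two factors from Lemmas~\ref{lemmemixteRP2} and~\ref{lemmemixteRP2star} according to the position of $p$ relative to $D_-$ and of $D$ relative to $(p_-)^*$, and imposing the incidence $q\in E$ satisfied by every flag $(q,E)\in\X$, yields at once the global containment $\D_{(g_n)}(x)\subset\Bplus$ for $x\notin\Bmoins$ together with the inclusion ``$\subset$'' in each of the cases (1)--(6). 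In case (1) both factors are pinned to the single values $p_+$ and $D_+$, hence $\D_{(g_n)}(x)\subset\{x^+\}$; since dynamic sets are non-empty, this already settles (1) with no further argument.

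For the reverse inclusions I would argue directly with the diagonal sequence, the recurring idea being to steer the point and the line independently while preserving incidence by writing each approximating line as a secant $D_n=[p_n,t_n]$. In case~(2), the point-factor is pinned to $p_+$ (as $p\notin D_-$) while Lemma~\ref{lemmemixteRP2star} makes the line-factor sweep all of $(p_+)^*$, so after choosing $D_n\to D$ realizing a prescribed $E\in(p_+)^*$ I take $p_n\in D_n$ with $p_n\to p$ and get $a_n(p_n)\to p_+$ automatically. In case~(4), where $x=(p_-,D)$ with $D\ni p_-$ and $D\neq D_-$, I would fix an auxiliary point $t_\infty\in D\setminus\{p_-\}$: it lies off $D_-$, so $a_n(t_\infty)\to p_+$, and choosing $p_n\to p_-$ with $a_n(p_n)\to q$ (possible since $\D_{(a_n)}(p_-)=\RP{2}$) the secant $D_n=[p_n,t_\infty]$ converges to $D$ while $a_n(D_n)\to[q,p_+]=E$, sweeping $\Salphabeta^+$. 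Case~(6), for $x^-=(p_-,D_-)$, is handled uniformly by instead taking $t_\infty\in D_-\setminus\{p_-\}$, so that $D_n=[p_n,t_n]\to D_-$ automatically, and using $\D_{(a_n)}(t_\infty)=D_+$ to drive $a_n(t_n)$ to the point $E\cap D_+$, whence $a_n(D_n)\to[q,E\cap D_+]=E$ realizes an arbitrary target flag. The symmetric cases~(3) and~(5) I would deduce from~(2) and~(4) through the dual involution $\kappa$, using its equivariance, the relations $\kappa(\Calpha(x))=\Cbeta(\kappa(x))$ and $\kappa(\Salphabeta(x))=\Sbetaalpha(\kappa(x))$, and relation~\eqref{equationrelationkappaD} together with the inversion of dynamical objects of Remark~\ref{remarqueliengngninverse}.

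The main obstacle is purely the coordinated, incidence-preserving nature of these constructions: one must realize the two prescribed limits $a_n(p_n)\to q$ and $a_n(D_n)\to E$ by a single flag $x_n=(p_n,D_n)$, and the choice of auxiliary point $t_\infty$ (on $D$ in case~(4), on $D_-$ in case~(6)) is exactly what decouples the two limits while forcing $D_n$ to converge to the correct line. The remaining delicate points are the degenerate target positions --- such as $q=p_+$ in cases~(2) and~(4), or $q\in D_+$ (and $E=D_+$) in case~(6), where the two image points collide and the secant $[a_n(p_n),a_n(t_n)]$ degenerates --- which I expect to dispatch by density of the non-degenerate targets in the claimed limit set together with the closedness of $\D_{(g_n)}(x)$.
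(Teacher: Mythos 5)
Your proposal is correct, and its skeleton --- the inclusions ``$\subset$'' read off from Lemmas \ref{lemmemixteRP2} and \ref{lemmemixteRP2star} through the equivariant projections $\pi_\alpha,\pi_\beta$ plus the incidence constraint, then explicit approximating flags for a dense set of targets and closedness of $\D_{(g_n)}(x)$ for the reverse inclusions --- is exactly the paper's. Where you genuinely diverge is in the mechanics. First, the paper does \emph{not} reduce to $A^+$ in this proof: it argues intrinsically with a general balanced sequence, whereas you conjugate to $a_n=\Diag(1,\beta_n,\gamma_n)$ via \eqref{equationstandarddecomposition} and \eqref{equationlienensemblesD}; both are legitimate, and your reduction is the move the paper itself makes in the $\RP{2}$ lemmas. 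Second, in cases (4)--(6) the paper steers the missing coordinate through the \emph{inverse} sequence: in case (4) it sets $q_n=g_n(D_n)\cap[p_\infty,D_\infty^\bot]$ and pulls back $p_n=g_n^{-1}(q_n)$, using that $p_-$ is the attractive point of $(g_n^{-1})$, and in case (6) it pulls back lines $D_n=g_n^{-1}(L_n)$ via the dual dynamics of $(g_n^{-1})$ on $\RP{2}_*$. Your anchor-and-secant device avoids the inverse dynamics entirely and is if anything more elementary: your key observation in case (4) that $t_\infty\in D\setminus\{p_-\}$ lies off $D_-$ is right, since $D\neq D_-$ both contain $p_-$ so $D\cap D_-=\{p_-\}$, and then $a_n(t_\infty)\to p_+$ along the full sequence because $\D_{(a_n)}(t_\infty)=\{p_+\}$ and $\X$ is compact. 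The only cost appears in case (6), where the two realizations $a_n(p_n)\to q$ and $a_n(t_n)\to E\cap D_+$ must hold along a \emph{common} subsequence; since membership in a dynamic set only requires accumulation along a subsequence (and all subsequences of $(a_n)$ are balanced with the same objects), a two-step extraction fixes this, but you should say so. Third, deducing (3) and (5) from (2) and (4) via $\kappa$ is sound --- after your diagonal reduction $\Theta(a_n)=a_n^{-1}$, relation \eqref{equationrelationkappaD} applies, and $\kappa$ carries the case-(3) region for $(a_n)$ into the case-(2) region for $(a_n^{-1})$, with $\kappa(\Calpha([e_3]))=\Cbeta([e_1,e_2])=\Cbeta^+(a_n)$ --- and it mirrors how the paper derives Lemma \ref{lemmerepulsifX} from Lemma \ref{lemmeattractifX}, though in Lemma \ref{lemmecasmixteX} itself the paper treats (3) and (5) directly by the symmetric secant construction; your route halves the casework at the price of the conjugation bookkeeping. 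One citation slip: the inversion of \emph{flag-space} dynamical objects you need is Remark \ref{remarqueobjetsdynamiquessuitinverse}, not Remark \ref{remarqueliengngninverse}, which concerns the objects in $\RP{2}$. Your treatment of degenerate targets by density plus closedness matches the paper's (which invokes the empty interior of $\Calpha^+\cup\Cbeta^+$, resp.\ of $\Salphabeta^+$), so no gap there.
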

\begin{proof}
The direct inclusions of these claims are direct consequences
of Lemmas \ref{lemmemixteRP2} and \ref{lemmemixteRP2star}.
We thus only prove the reverse inclusions, denoting $x=(p,D)$. \\
1. Since $\D_{(g_n)}(x)$ is non-empty, nothing remains to be proved. \\
2. If $p_-\in D$, Lemma \ref{lemmemixteRP2star}
gives for any $D_\infty\in(p_+)^*$
a sequence $D_n$ converging to $D$ such that 
$\lim g_n(D_n)=D_\infty$.
For $n$ large enough, $p_n=D_n\cap[p,D^\bot]$ is a sequence
of $\RP{2}$ converging to $p$,
hence $x_n=(p_n,D_n)\in\X$ converges to $x$
and verifies $\lim g_n(x_n)=(p_+,D_\infty)$.
This shows $(p_+,D_\infty)\in\mathcal{D}_{(g_n)}(x)$
and thus $\Calpha^+\subset\mathcal{D}_{(g_n)}(x)$,
finishing the proof of the equality. \\
3. If $p\in D_-$, 
Lemma \ref{lemmemixteRP2} gives for any $p_\infty\in D_+$
a sequence 
$p_n$ converging to $p$ such that $\lim g_n(p_n)=p_\infty$.
Then with $D_n=[p_n,p^\bot\cap D]$, 
$x_n=(p_n,D_n)$ converges to $x$ 
and verifies $\lim g_n(x_n)=(p_\infty,D_+)$.
Hence $\Cbeta^+\subset\mathcal{D}_{(g_n)}(x)$,
which concludes the proof. \\
4. We have $x=(p_-,D)$.
We choose 
$x_\infty=(p_\infty,D_\infty)\in
\Salphabeta^+\setminus\Sbetaalpha^+=
\Salphabeta^+\setminus(\Calpha^+\cup\Cbeta^+)$.
According to Lemma \ref{lemmemixteRP2star},
there exists $D_n\in\RP{2}_*$ converging to $D$
and such that $\lim g_n(D_n)=D_\infty$.
Then for $n$ large enough, $q_n=g_n(D_n)\cap [p_\infty,D_\infty^\bot]$
is a sequence of $\RP{2}$ converging to $p_\infty$.
Since $D_+$ is the repulsive line of $(g_n^{-1})$
according to Remark \ref{remarqueliengngninverse},
and $p_\infty\notin D_+$, $p_n=g_n^{-1}(q_n)$ converges
to the attractive point of $(g_n^{-1})$, that is $p_-$.
Finally $x_n=(p_n,D_n))$ converges to $x$
and $\lim g_n(x_n)=x_\infty\in\mathcal{D}_{(g_n)}(x)$.
This shows that $\mathcal{D}_{(g_n)}(x)$ contains
$\Salphabeta^+\setminus\Sbetaalpha^+$ and thus $\Salphabeta^+$,
since $\mathcal{D}_{(g_n)}(x)$ is closed
and $\Calpha^+\cup\Cbeta^+$ has empty interior. \\
5. In this case $x=(p,D_-)$.
As before we choose $x_\infty=(p_\infty,D_\infty)\in
\Sbetaalpha^+\setminus\Salphabeta^+=
\Sbetaalpha^+\setminus(\Cbeta^+\cup\Calpha^+)$.
According to Lemma \ref{lemmemixteRP2}
there exists a sequence $p_n$ converging to $p$ such that 
$\lim g_n(p_n)=p_\infty$,
and we define $L_n=[g_n(p_n),p_\infty^\bot\cap D_\infty]$, converging to $D_\infty$.
According to Lemma \ref{lemmemixteRP2star},
$(p_+)^*$ is the repulsive dual projective line
of $(g_n^{-1})$ acting on $\RP{2}_*$. 
Since $p_+\notin D_\infty$,
$D_n\coloneqq g_n^{-1}(L_n)$ converges thus to the attractive line 
of $(g_n^{-1})$, equal to $D_-$ 
as we saw in Remark \ref{remarqueliengngninverse}.
Hence $x_n=(p_n,D_n)$ converges to $x$ and 
$\lim g_n(x_n)=x_\infty\in\mathcal{D}_{(g_n)}(x)$.
As before, this concludes the proof of the claim since
$\mathcal{D}_{(g_n)}(x)$ is closed. \\
6. Let $D_\infty\notin(p_+)^*$ 
and $p_\infty\in D_\infty$.
According to Lemma \ref{lemmemixteRP2} there exists $(p_n)$ converging 
to $p_-$ such that $\lim g_n(p_n)=p_\infty$.
For $n$ large enough, 
$L_n=[g_n(p_n),p_\infty^\bot\cap D_\infty]$ is a projective line
converging to $D_\infty$. 
Since $D_\infty\notin(p_+)^*$, 
$D_n=g_n^{-1}(L_n)$ converges to $D_-$
and $x_n=(p_n,D_n)$ converges vers $x_-$ with 
$\lim g_n(x_n)=(p_\infty,D_\infty)$.
This proves that $X\setminus\Salphabeta^+\subset\D_{(g_n)}(x_-)$, 
proving our claim since $\D_{(g_n)}(x^-)$ is closed
and $\Salphabeta^+$ has empty interior.
\end{proof}

\begin{example}\label{examplebouquetloxodromic}
 Let $g\in\PGL{3}$ be a loxodromic element
 and $p_-$, $p_\pm$, $p_+$ be its repulsive, saddle
 and attractive points
 (see Example \ref{exemplemixteRP2dz}).
 Then $x^-=(p_-,[p_-,p_\pm])$ and $x^+=(p_+,[p_\pm,p_+])$ will respectively be called the \emph{repulsive}
 and \emph{attractive flags} of $g$, and
 \[
  \Bmoins(g)=\Calpha(p_-)\cup\Cbeta[p_-,p_\pm],
 \Bplus(g)=\Calpha(p_+)\cup\Cbeta[p_\pm,p_+]
 \]
 its \emph{repulsive} and \emph{attractive bouquets of circles}.
 Those are indeed the repulsive and attractive bouquets of circles
 of any subsequence of $(g^n)$ going simply to infinity.
\end{example}

\begin{remark}\label{remarqueobjetsdynamiquessuitinverse}
If $(g_n)$ is of unbalanced type $\alpha$ (respectively $\beta$), 
then we saw in
Lemma \ref{lemmesymetriestypesdynamiques} that $(g_n^{-1})$ is
of unbalanced type $\beta$ (resp. $\alpha$).
Actually, dynamics of $(g_n^{-1})$ and $(g_n)$
are directly related through the following relations between
their dynamical objects:
$\Salphabeta^-(g_n^{-1})=\Salphabeta^+(g_n)$,
$\Calpha^-(g_n^{-1})=\Calpha^+(g_n)$,
$\Sbetaalpha^+(g_n^{-1})=\Sbetaalpha^-(g_n)$,
$\Cbeta^+(g_n^{-1})=\Cbeta^-(g_n)$.
If $(g_n)$ is of balanced type, then $(g_n^{-1})$
is also of balanced type
according to
Lemma \ref{lemmesymetriestypesdynamiques},
and in this case any attractive (respectively repulsive) 
object of $(g_n)$ is the corresponding repulsive (resp. attractive)
object of $(g_n^{-1})$.
For instance
$\Calpha^-(g_n^{-1})=\Calpha^+(g_n)$, 
$\Salphabeta^-(g_n^{-1})=\Salphabeta^+(g_n)$.
\end{remark}

\section{Fundamental domains in the flag space}
\label{sectiondomainesdiscontinuiteespacedrapeaux}
In this section we introduce a natural
notion of Schottky subgroups of $\PGL{3}$,
for which we describe fundamental domains and limit sets
in the flag space.

\subsection{Fundamental domain for a loxodromic element}
Let $g$ be a loxodromic element of $\PGL{3}$
having positive eigenvalues,
whose attractive (respectively repulsive) bouquet of circles
is denoted by $\Bplus$ (resp. $\Bmoins$),
and let $(g^t)$ be the one-parameter loxodromic subgroup of $\PGL{3}$
for which $g=g^1$.
We denote by $\Gamma$ the subgroup generated by $g$
and we introduce the open set 
\[
\Omega\coloneqq \X\setminus(\Bmoins\cup \Bplus)
\]
of $\X$.
We will say that an open set $U\subset\Omega$ is a \emph{fundamental
domain for the action of $\Gamma$ on $\Omega$}, if
\begin{enumerate}[label=(\alph*)]
 \item for any $x\neq y\in U$, $y\notin\Gamma\cdot x$;
 \item $\underset{\gamma\in\Gamma}{\bigcup}\gamma(\bar{U})=\Omega$.
\end{enumerate}
\begin{lemma}\label{lemmevoisinagetubulairesurfacegenredeux}
There exists a compact neighbourhood $H^-$ of $\Bmoins$, 
as close to $\Bmoins$ as we want, 
disjoint from $\Bplus$,
and satisfying the following properties.
\begin{enumerate}
 \item $H^-$ is a genus two handlebody, whose boundary is
 transverse to the orbits of $(g^t)$.
 \item Denoting $H^+\coloneqq\X\setminus\Int(g(H^-))$,
 $(g^{-n}(H^-))$ and $(g^n(H^+))$ respectively 
 converge to $\Bmoins$ and $\Bplus$
 for the Hausdorff topology.
 \item $\Phi\colon(x,t)\in\partial H^-\times\R\mapsto g^t(x)\in\Omega$ 
 is a diffeomorphism. 
 \item $U\coloneqq\X\setminus(H^-\cup H^+)$ 
 is a fundamental domain for the action of $\Gamma$ on $\Omega$.
\end{enumerate}
\end{lemma}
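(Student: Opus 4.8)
The plan is to reduce to a diagonal $g$ and to build $H^-$ as a thin regular neighbourhood of the bouquet $\Bmoins$ whose boundary is crossed strictly outward by the flow; once such an $H^-$ is in hand, all four assertions follow almost formally, the real work being its construction. First I would conjugate so that $g=\Diag(a,b,c)$ with $a>b>c>0$: conjugation carries handlebodies to handlebodies, orbits to orbits, Hausdorff limits to Hausdorff limits and fundamental domains to fundamental domains, so nothing is lost. Then $\Bmoins=\Calpha([e_3])\cup\Cbeta([e_2,e_3])$ and $\Bplus=\Calpha([e_1])\cup\Cbeta([e_1,e_2])$ are bouquets of two circles, meeting respectively at the eigenflags $x^-=([e_3],[e_2,e_3])$ and $x^+=([e_1],[e_1,e_2])$ (Example \ref{examplebouquetloxodromic}), and $\Omega$ is $(g^t)$-invariant since both bouquets are.

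A computation in an affine chart of $\X$ centred at $x^-$ shows that $g$ acts there as a linear source with three distinct expanding eigenvalues, the two circles of $\Bmoins$ being tangent to two of the eigenaxes; at an interior point of either circle the fixed flag has a single contracting direction, tangent to the circle, and two expanding transverse directions. Thus $\Bmoins$ is normally repelling everywhere. I would encode this by a Lyapunov function: a smooth $V\ge 0$ defined near $\Bmoins$ with $V^{-1}(0)=\Bmoins$ and $X\cdot V>0$ off $\Bmoins$, where $X$ generates $(g^t)$, obtained by patching the sum of squares of the transverse coordinates near the source $x^-$ with the squared transverse distance along the two circle-arcs. For a small regular value $\varepsilon$, $H^-=\{V\le\varepsilon\}$ is then a regular neighbourhood of the figure-eight $\Bmoins$, hence a genus two handlebody disjoint from $\Bplus$, and $\partial H^-=\{V=\varepsilon\}$ is transverse to $(g^t)$ with the flow pointing strictly outward. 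This is assertion (1), and it forces $H^-\subset\Int g(H^-)$.

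Outward transversality means no orbit re-enters $H^-$, so each orbit of $(g^t)$ meets $\partial H^-$ in at most one point. On the other hand $(g^n)$ and $(g^{-n})$ go simply to infinity of balanced type, with attractive and repulsive bouquets $\Bplus$ and $\Bmoins$ (Lemma \ref{lemmasequencesiterates}), so Lemma \ref{lemmecasmixteX} gives, for every $x\in\Omega$, that $g^{-n}(x)$ accumulates only on $\Bmoins\subset\Int H^-$ and $g^n(x)$ only on $\Bplus$, which lies outside $H^-$: every orbit therefore enters $H^-$ in the far past and leaves it in the far future, crossing $\partial H^-$ exactly once. Hence $\Phi\colon(x,t)\mapsto g^t(x)$ is a bijection of $\partial H^-\times\R$ onto $\Omega$ (there is no periodic orbit in $\Omega$ since $g^n(x)\to\Bplus$), and the flow-box theorem makes it a local diffeomorphism, proving (3). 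Since $g^n\Phi(x,s)=\Phi(x,s+n)$, the group $\Gamma=\langle g\rangle$ acts on $\partial H^-\times\R$ by integer translation of the second factor; as $U=\X\setminus(H^-\cup H^+)=\Int g(H^-)\setminus H^-=\Phi(\partial H^-\times(0,1))$, the two defining properties of a fundamental domain are immediate, giving (4).

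For (2) I would invoke Lemma \ref{lemmeconsequenceconvergencecompacts}(2). As $H^-$ is disjoint from $\Bplus$, Lemma \ref{lemmecasmixteX} applied to $(g^{-n})$ yields $\bigcup_{x\in H^-}\D_{(g^{-n})}(x)\subset\Bmoins$, while the dynamic sets of interior points chosen on the strata $\Calpha^-$, $\Cbeta^-$ and at $x^-$ cover $\Bmoins$, so $\Bmoins\subset\Cl\big(\bigcup_{x\in\Int H^-}\D_{(g^{-n})}(x)\big)$ and therefore $g^{-n}(H^-)\to\Bmoins$. The same reasoning applied to $H^+=\X\setminus\Int g(H^-)$, which is disjoint from $\Bmoins$ and has $\Bplus$ in its interior, with the sequence $(g^n)$, gives $g^n(H^+)\to\Bplus$ (using Remark \ref{remarqueobjetsdynamiquessuitinverse} to pass between the objects of $g$ and $g^{-1}$). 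The main obstacle is assertion (1): making the neighbourhood of the bouquet simultaneously a genus two handlebody and transverse to the flow and positively exited by it, the delicate region being the wedge $x^-$ where the two circles and the two Klein-bottle surfaces $\Salphabeta^-$, $\Sbetaalpha^-$ all meet. The global cross-section property (no re-entry) is the second point needing care, and it rests precisely on combining this outward transversality with the north-south dynamics of Lemma \ref{lemmecasmixteX}.
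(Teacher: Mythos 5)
Your treatment of claims (2)--(4) is essentially the paper's: claim (2) by sandwiching dynamic sets via Lemma \ref{lemmeconsequenceconvergencecompacts}, claim (3) by combining outward transversality of $\partial H^-$ with the north--south dynamics of Lemma \ref{lemmecasmixteX} (transversality plus escape gives a local diffeomorphism and at-most-one crossing, the dynamics gives at-least-one), and claim (4) as a formal consequence. Where you genuinely diverge is the construction of $H^-$ in claim (1). The paper builds it by hand: a small ball around $x^-$ plus two explicit solid-torus handles, one obtained from a disk transverse to the planar source flow in the chart where $(g^t)$ becomes the linear flow $\Diag(\e^{\alpha t},\e^{\beta t},\e^{(\alpha-\beta)t})$, the other by applying the dual involution $\kappa$ of \eqref{equationdefinitionkappa} to the analogous construction for $(g^{-t})$. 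You instead take a sublevel set $\{V\le\varepsilon\}$ of a Lyapunov function for the normally repelling bouquet. Your route buys outward transversality on all of $\partial H^-$ at once (any level set of a strict Lyapunov function is automatically regular and transverse to the flow), whereas the paper checks it handle by handle; the price is the patching, which you correctly identify as the delicate point.

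A few spots in your construction need tightening. First, near the wedge your local model ``sum of squares of the transverse coordinates'' vanishes only at $x^-$, while $V$ must vanish on the two local branches of $\Bmoins$; in the chart above, where the axes are the local pieces of $\Cbeta^-$ and $\Calpha^-$, something like $V_0=x^2+y^2z^2$ works, with $X\cdot V_0=2\alpha x^2+2\alpha y^2z^2>0$ off the axes (also, your ``three distinct expanding eigenvalues'' can fail, e.g.\ when $\alpha=2\beta$, but distinctness is never used). Second, in a patching $V=\rho_1V_1+\rho_2V_2$ the cutoff terms $(X\cdot\rho_i)V_i$ have the same quadratic order as the good terms $\rho_iX\cdot V_i$, so strict positivity of $X\cdot V$ is not free; either control the ratio of the local models or quote a converse-Lyapunov theorem for the repeller $\Bmoins$ -- but in the latter case the identification of $\{V\le\varepsilon\}$ as a regular neighbourhood, hence a handlebody, is no longer visible and must be re-derived, which is exactly what your explicit local models (and the paper's explicit handles) provide. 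Third, ``regular neighbourhood of a figure-eight, hence a genus two handlebody'' silently assumes the two tubes are solid tori rather than twisted $D^2$-bundles; this follows from your local product structure but deserves a word, since $\X$ does contain embedded Klein bottles. Finally, in claim (2) the interior points whose dynamic sets cover $\Bmoins$ must be chosen correctly: a generic point $x$ of $\Calpha^-$ or $\Cbeta^-$ falls in case (1) of Lemma \ref{lemmecasmixteX} applied to $(g^{-n})$ and has $\D_{(g^{-n})}(x)=\{x^-\}$ only. The right choices are the two non-wedge fixed points $x_1=(p_-,[p_-,p_+])$ and $x_2=(p_\pm,[p_\pm,p_-])$ used in the paper, which lie in $\Salphabeta^+\setminus\Sbetaalpha^+$ and $\Sbetaalpha^+\setminus\Salphabeta^+$ respectively, so that $\D_{(g^{-n})}(x_1)=\Calpha^-$ and $\D_{(g^{-n})}(x_2)=\Cbeta^-$; with that reading, your argument for claim (2) coincides with the paper's.
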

We recall that a \emph{genus two handlebody} is a
(unique up to homeomorphism)
connected compact and orientable three-manifold with boundary,
obtained from the three-ball after adding two $1$-handles.
The boundary of a genus two handlebody is homeomorphic to the 
closed connected and orientable surface of genus two.
\begin{proof}[Proof of Lemma \ref{lemmevoisinagetubulairesurfacegenredeux}]
1. Up to conjugation in $\PGL{3}$, we can assume that 
\[
 g^t=
 \begin{bmatrix}
  \e^{\alpha t} & 0 & 0 \\
   0 & \e^{\beta t} & 0 \\
   0 & 0 & 1
 \end{bmatrix}
\]
with $\alpha>\beta>0$,
so that the repulsive circles of $g$ are
$\Calpha^-=\Calpha[e_3]$ and $\Cbeta^-=\Cbeta[e_2,e_3]$.
We introduce $x^-=([e_3],[e_2,e_3])=\Calpha^-\cap\Cbeta^-$
and the open $g^t$-invariant set
$\mathcal{U}=\X\setminus(\Sbetaalpha[e_1,e_2]\cup\Salphabeta[e_1])$.
In the chart 
$\psi\colon([x,y,1],[(x,y,1),(z,1,0)])\in\mathcal{U}\mapsto(x,y,z)\in\R^3$
of $\mathcal{U}$, 
a straightforward calculation shows that
$(g^t)$ is conjugated to the diagonal flow
\[
 a^t\coloneqq\Diag(\e^{\alpha t},\e^{\beta t},\e^{(\alpha-\beta)t})
 =\psi\circ g^t \circ\psi^{-1}.
\]
\par We first build a neighbourhood of $\Calpha^-$ transverse to $(g^t)$.
Let $D$ be a closed disk of $\R^2$ centered at the origin
whose boundary is transverse 
to the diagonal flow $\Diag(\e^{\alpha t},\e^{\beta t})$.
Then $A_0=\enstq{\psi^{-1}(x,y,z)}{(x,y)\in D,z\in\R}$ is transverse to
$(g^t)$ and is a neighbourhood of 
$\Calpha^-\cap \mathcal{U}=\psi^{-1}(\{0\}^2\times\R)$.
The solid torus
$A=\enstq{([x:y:1],[(x,y,1),q])}{(x,y)\in D,q\in[e_1,e_2]}$
is a neighbourhood of $\Calpha^-$,
it is the closure of $A_0$.
Since
$\partial(A\setminus A_0)=
\enstq{([x:y:1],[(x,y,1),e_1])}{(x,y)\in\partial D}$
is transverse to the orbits of $(g^t)$
and $(g^t)$ preserves $\mathcal{U}$,
$A$ is transverse to $(g^t)$.
Choosing at the beginning the disk $D$ as little as we want, 
$A$ is as close to $\Calpha^-$ as we want.
Since $\Calpha[e_1]$ is the repulsive circle of $(g^{-t})$,
this discussion applied to $(g^{-t})$ 
provides us with a solid torus $C$ transverse to $(g^{-t})$ which is a
neighbourhood of $\Calpha[e_1]$.
Its image by the dual application of $\X$
introduced in \eqref{equationdefinitionkappa}
is thus a solid torus
$C'=\kappa(C)$
which is a neighbourhood of $\Cbeta[e_2,e_3]$,
transverse to $(g^t)$ by equivariance of $\kappa$ 
(see \eqref{equationdefinvolutionCartanG}).
We can moreover choose $C'$ as close to $\Cbeta[e_2,e_3]$ as we want.
We now only need to choose a little closed ball $B$ centered at $x^-$ 
and transverse to $(g^t)$,
and to glue to $B$ the handles $A$ and $C'$, 
to obtain a neighbourhood $H^-$ of $\Bmoins$ transverse to $(g^t)$.
By construction this neighbourhood is homeomorphic to
a genus two handlebody, which concludes the proof of the claim. \\
2. Let $p_-,p_{\pm},p_+$ be the attractive, saddle and repulsive points of $g$ in $\RP{2}$.
Since 
$x_1=(p_-,[p_-,p_+])\in\Salphabeta^+(g)\setminus\Sbetaalpha^+(g)$, 
$\D_{(g^{-n})}(x_1)=\Calpha^-$ 
according to Lemma \ref{lemmecasmixteX}.
Since 
$x_2=(p_{\pm},[p_{\pm},p_-])\in\Sbetaalpha^+(g)\setminus\Salphabeta^+(g)$,
$\D_{(g^{-n})}(x_2)=\Cbeta^-$. 
Since $x_1\in\Calpha^-\subset\Int H^-$ and $x_2\in\Cbeta^-\subset\Int H^-$,
we thus obtain 
$\Bmoins\subset\cup_{x\in\Int H^-}\mathcal{D}_{(g^{-n})}(x)$. 
Applying Lemma \ref{lemmecasmixteX} and Remark \ref{remarqueobjetsdynamiquessuitinverse}
to $(g^{-n})$, we also have $\cup_{x\in H^-}\mathcal{D}_{(g^{-n})}(x)\subset \Bmoins$
since $H^-\subset \X\setminus \Bplus$. 
According to Lemma \ref{lemmeconsequenceconvergencecompacts}, this implies
that $(g^{-n}(H^-))$ converges to $\Bmoins$.
We show on the same way that $(g^n(H^+))$ converges to $\Bplus$. \\ 
3. Since the orbits of $(g^t)$ are transverse to $\partial H^-$ 
and escape out from $H^-$, 
$\Phi$ is a local diffeomorphism.
Moreover a $g^t$-orbit cannot cross $\partial H^-$ more than once, 
hence $\Phi$ is injective.
The description of the dynamics of $(g^n)$ in the previous claim
shows its surjectivity, finishing the proof of the claim. \\
4. This is a direct consequence of the previous claim.
\end{proof}

\begin{proposition}\label{propositionSchottkyungenerateur}
1. $\Gamma$ acts freely, properly and cocompactly
on $\Omega$. \\
2. Furthermore
$\Gamma\backslash\Omega$ 
is diffeomorphic to the product of the circle 
with the closed connected and orientable surface of genus two.
\end{proposition}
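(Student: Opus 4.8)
The plan is to read off the whole proposition from the $g^t$-equivariant product structure of $\Omega$ encoded in the diffeomorphism $\Phi\colon\partial H^-\times\R\to\Omega$ of Lemma~\ref{lemmevoisinagetubulairesurfacegenredeux}(3). The key preliminary observation, which I would establish first, is that $\Phi$ intertwines the $\Gamma$-action with a translation: for $(x,t)\in\partial H^-\times\R$ one computes $g\cdot\Phi(x,t)=g(g^t(x))=g^{t+1}(x)=\Phi(x,t+1)$, so through $\Phi$ the generator $g$ becomes the unit translation $(x,t)\mapsto(x,t+1)$, and $\Gamma=\langle g\rangle$ becomes the action of $\Z$ on $\partial H^-\times\R$ by $(x,t)\mapsto(x,t+n)$. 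Since $\partial H^-$ is the boundary of a genus two handlebody, it is the closed connected orientable surface $F$ of genus two, and the problem reduces to the elementary model of $\Z$ translating the second factor of $F\times\R$.

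From this model, claim~1 follows at once. The action is \emph{free} because $\Phi$ is injective and $g^n\cdot\Phi(x,t)=\Phi(x,t+n)$ equals $\Phi(x,t)$ only for $n=0$. It is \emph{proper} because any compact $K\subset F\times\R$ lies in some $F\times\intervalleff{-R}{R}$, and $g^n(K)\cap K\neq\emptyset$ forces $\abs{n}\leq 2R$, so only finitely many elements of $\Gamma$ translate $K$ into itself (one could alternatively invoke Lemma~\ref{lemmepropretedynamiquementrelie}). For claim~2 and cocompactness, I would note that the $\Z$-action only affects the $\R$-factor, whence $\Gamma\backslash\Omega$ is diffeomorphic to $F\times(\R/\Z)=F\times\Sn{1}$; this is exactly claim~2 and simultaneously shows $\Gamma\backslash\Omega$ is compact.

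If one prefers to argue cocompactness directly from the fundamental domain rather than from the product model, Lemma~\ref{lemmevoisinagetubulairesurfacegenredeux}(4) provides $U=\X\setminus(H^-\cup H^+)$: its closure $\Cl U=\X\setminus(\Int H^-\cup\Int H^+)$ is compact as a closed subset of $\X$, and it is contained in $\Omega$ because $\Bmoins\subset\Int H^-$ while $\Bplus\subset\Int H^+=\X\setminus g(H^-)$ (the latter since $H^-$, hence $g(H^-)$, is disjoint from $\Bplus=g(\Bplus)$); combined with $\bigcup_{\gamma\in\Gamma}\gamma(\Cl U)=\Omega$ this yields compactness of $\Gamma\backslash\Omega$. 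I do not expect a genuine obstacle here, as Lemma~\ref{lemmevoisinagetubulairesurfacegenredeux} already carries all the geometric content; the only points requiring attention are the equivariance identity for $\Phi$ and the verification that $\Cl U\subset\Omega$.
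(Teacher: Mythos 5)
Your proof is correct, but it reaches claim~1 by a different route than the paper. For claim~2 you and the paper do the same thing: both read the quotient off the product structure $\Phi\colon\partial H^-\times\R\to\Omega$ of Lemma~\ref{lemmevoisinagetubulairesurfacegenredeux}(3), getting $\partial H^-\times\Sn{1}$. For claim~1, however, the paper argues dynamically: properness comes from Lemma~\ref{lemmepropretedynamiquementrelie} together with the fact (Lemma~\ref{lemmecasmixteX}) that no two points of $\Omega$ are dynamically related, freeness from the observation that every non-trivial power of $g$ has all its fixed points on $\Bmoins\cup\Bplus$, and cocompactness from the relatively compact fundamental domain $U$. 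You instead derive all three properties at once from the equivariance identity $g\cdot\Phi(x,t)=\Phi(x,t+1)$, which conjugates the $\Gamma$-action to $\Z$ translating the second factor of $F\times\R$; this identity is correct since $(g^t)$ is a one-parameter group with $g=g^1$, and once it is granted, freeness, properness and cocompactness are immediate in the model. Your approach buys a unified and completely elementary verification, but note what each costs: your argument leans entirely on Lemma~\ref{lemmevoisinagetubulairesurfacegenredeux}(3), whose proof already encodes the north--south dynamics, and it is special to one generator, whereas the paper's dynamical-relations argument for properness is precisely the one that scales to the multi-generator case in Proposition~\ref{propositiondomainefondamentalSchottky} (via the Klein combination theorem), which is why the paper phrases it that way. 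Two small points of care: your parenthetical ``one could alternatively invoke Lemma~\ref{lemmepropretedynamiquementrelie}'' is slightly loose, since that lemma only reduces properness to the absence of dynamical relations, which must still be checked via Lemma~\ref{lemmecasmixteX}; and in your alternative cocompactness argument the inclusion $\Cl U\subset\Omega$ does hold as you say, because $\Bmoins\subset\Int H^-$ and $\Bplus$, being $g$-invariant and disjoint from $H^-$, lies in the open set $\X\setminus g(H^-)\subset\Int H^+$ --- this is the same mechanism the paper uses implicitly when it calls $U$ a relatively compact fundamental domain in $\Omega$.
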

\begin{proof}
1. No pair of points of $\Omega$ being dynamically related according
to Lemma \ref{lemmecasmixteX}, Lemma \ref{lemmepropretedynamiquementrelie}
implies that the action of $\Gamma$ on $\Omega$ is proper.
This action is free since any non-trivial element 
of $\Gamma$ has all 
its fixed points 
on $\Bmoins\cup \Bplus$.
Finally, this action is cocompact since we found a relatively compact 
fundamental domain $U\subset\Omega$ for the action of $\Gamma$. \\
2. According to the third claim of Lemma 
\ref{lemmevoisinagetubulairesurfacegenredeux},
$\Gamma\backslash\Omega$ is indeed diffeomorphic to
the quotient of
$\partial H^-\times\intervalleff{0}{1}$
by the equivalence relation $(x,0)\sim(x,1)$,
and thus to $\partial H^-\times\Sn{1}$.
\end{proof}

\subsection{Fundamental domains for Schottky subgroups}
We now introduce a notion of Schottky subgroups in $\PGL{3}$. 
Let us first recall that two flags $(p,D)$ and $(p',D')$ in $\X$
are \emph{in general position} if $p\notin D'$ and $p'\notin D$.
\begin{definition}
 We will say that $d\geq1$ loxodromic elements 
 $g_1,\dots,g_d$ of $\PGL{3}$
 are \emph{in general position} 
 if their attractive and repulsive flags $\{x_i^{\pm}\}$ are pairwise in general position.
\end{definition}
Note that the bouquets 
of circles of loxodromic elements in general position 
are pairwise disjoint.

\begin{propositiondefinition}\label{propdefSchottkydansX}
Let $g_1,\dots,g_d$ be loxodromic elements of $\PGL{3}$ in general position,
whose repulsive and attractive 
bouquet of circles are denoted by $B_1^\pm,\dots,B_d^\pm$.
Then up to replacing each $g_i$ by $g_i^{r_i}$
for $r_i>0$ large enough, 
$g_1,\dots,g_d$ satisfy the following:
there exists $2d$ pairwise disjoint compact genus two handelbodies
$\{H_1^-,H_1^+,\dots,H_d^-,H_d^+\}$ in $\X$,
such that each $H_i^\pm$ is a neighbourhood of $B_i^\pm$
and $H_i^+=\X\setminus\Int g_i(H_i^-)$.
We will say in this case that $\Gamma=\langle g_1,\dots,g_d \rangle$
is a \emph{Schottky subgroup} of $\PGL{3}$,
and that $\{H_i^\pm\}_{i=1}^d$ 
is a \emph{set of separating handlebodies} for the $g_i$.
\end{propositiondefinition}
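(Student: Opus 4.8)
The plan is to reduce the statement to the single-generator construction of Lemma~\ref{lemmevoisinagetubulairesurfacegenredeux}, applied to each $g_i$ separately, and then to exploit the balanced-type dynamics of Lemma~\ref{lemmecasmixteX} to shrink the handlebodies, by taking high powers, until they become pairwise disjoint. Since raising a loxodromic element to a positive power preserves its eigenlines, it leaves the bouquets $B_i^\pm$ unchanged; moreover $g_i^2$ has positive eigenvalues, so after replacing each $g_i$ by $g_i^2$ I may assume every $g_i$ has positive eigenvalues and fix the one-parameter loxodromic subgroup $(g_i^t)$ with $g_i=g_i^1$. For each $i$, the first claim of Lemma~\ref{lemmevoisinagetubulairesurfacegenredeux} then produces a genus two handlebody $H_i^-$, a neighbourhood of $B_i^-$ as close to $B_i^-$ as I wish, whose boundary is transverse to $(g_i^t)$; I set $H_i^+\coloneqq\X\setminus\Int(g_i^{r_i}(H_i^-))$, with $r_i>0$ to be chosen.

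First I would control the size of $H_i^+$. Writing $K_i\coloneqq\X\setminus\Int(H_i^-)=\Cl(\X\setminus H_i^-)$, one has $H_i^+=g_i^{r_i}(K_i)$, and $K_i$ is a compact set disjoint from $B_i^-$ (as $B_i^-\subset\Int H_i^-$). Since $(g_i^n)$ goes simply to infinity of balanced type, Lemma~\ref{lemmecasmixteX} gives $\D_{(g_i^n)}(x)\subset B_i^+$ for every $x\in K_i$, while choosing $H_i^-$ small guarantees that $\Int K_i$ meets both $\Salphabeta^-\setminus B_i^-$ and $\Sbetaalpha^-\setminus B_i^-$, so that $\bigcup_{x\in\Int K_i}\D_{(g_i^n)}(x)$ contains $\Calpha^+\cup\Cbeta^+=B_i^+$. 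The second claim of Lemma~\ref{lemmeconsequenceconvergencecompacts} then yields $g_i^n(K_i)\to B_i^+$ for the Hausdorff topology as $n\to+\infty$. Hence for $r_i$ large, $H_i^+$ is contained in any prescribed neighbourhood of $B_i^+$; and $B_i^+\subset\Int H_i^+$ always holds, because the compact set $g_i^{r_i}(H_i^-)$ is disjoint from the $(g_i^t)$-invariant set $B_i^+$. Thus $H_i^+$ is a neighbourhood of $B_i^+$ shrinking to it as $r_i\to+\infty$.

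The main obstacle is to check that the complement $H_i^+$ is again a genus two handlebody, since this is a statement about how $g_i^{r_i}(H_i^-)$ sits inside the flag manifold. Here I would use the flow-box diffeomorphism $\Phi\colon\partial H_i^-\times\R\to\Omega=\X\setminus(B_i^-\cup B_i^+)$ of the third claim of Lemma~\ref{lemmevoisinagetubulairesurfacegenredeux}, in which $g_i^t$ acts by translation of the $\R$-factor. In these coordinates $H_i^-$ is the region $\{t\le0\}$ together with $B_i^-$, whence $H_i^+=\Phi(\partial H_i^-\times[r_i,+\infty))\cup B_i^+$. Applying Lemma~\ref{lemmevoisinagetubulairesurfacegenredeux} to $g_i^{-1}$, whose repulsive bouquet is $B_i^+$ by Remark~\ref{remarqueobjetsdynamiquessuitinverse}, produces a genuine genus two handlebody $\tilde H_i$ neighbourhood of $B_i^+$, transverse to the same flow; every forward orbit enters it exactly once, so it has the form $\Phi(\{(x,t):t\ge\sigma(x)\})\cup B_i^+$ for some continuous $\sigma\colon\partial H_i^-\to\R$. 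Choosing $r_i>\max\sigma$, one has $H_i^+\subset\tilde H_i$ and the shell $\tilde H_i\setminus\Int H_i^+=\Phi(\{(x,t):\sigma(x)\le t\le r_i\})$ is a region between two graphs, hence homeomorphic to $\partial H_i^-\times[0,1]$. Therefore $\tilde H_i$ is obtained from $H_i^+$ by attaching an external collar along its boundary, so $H_i^+$ is homeomorphic to $\tilde H_i$, a genus two handlebody.

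Finally I would arrange disjointness. By the general position hypothesis the $2d$ bouquets $\{B_i^\pm\}$ are pairwise disjoint compact subsets of $\X$, so I may fix pairwise disjoint compact neighbourhoods $V_i^\pm$ of the $B_i^\pm$. Taking each $H_i^-$ small enough that $H_i^-\subset V_i^-$, and then each $r_i$ large enough that $H_i^+\subset V_i^+$ (possible by the second paragraph, compatibly with the condition $r_i>\max\sigma$ of the third paragraph), the $2d$ handlebodies $\{H_i^-,H_i^+\}_{i=1}^d$ become pairwise disjoint. Since $H_i^+=\X\setminus\Int(g_i^{r_i}(H_i^-))$ holds by construction, this exhibits $\{H_i^\pm\}_{i=1}^d$ as a set of separating handlebodies for the elements $g_i^{r_i}$, proving that $\langle g_1^{r_1},\dots,g_d^{r_d}\rangle$ is a Schottky subgroup of $\PGL{3}$.
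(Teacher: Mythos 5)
Your proof is correct and follows essentially the same route as the paper's: reduce to the single-generator construction of Lemma~\ref{lemmevoisinagetubulairesurfacegenredeux} (after passing to even powers to get positive eigenvalues), use the Hausdorff convergence $g_i^{n}\left(\X\setminus\Int H_i^-\right)\to B_i^+$ to make $H_i^+$ as close to $B_i^+$ as desired by taking $r_i$ large, and conclude pairwise disjointness from the general position of the bouquets. The one genuine addition is your third paragraph: the paper simply cites Lemma~\ref{lemmevoisinagetubulairesurfacegenredeux} for the fact that $H_i^+=\X\setminus\Int(g_i^{r_i}(H_i^-))$ is a genus two handlebody, although that lemma only states this for $H_i^-$, and your collar argument --- comparing $H_i^+$, written in the flow-box coordinates of the diffeomorphism $\Phi$, with the handlebody obtained by applying the lemma to $g_i^{-1}$ --- supplies a complete justification of this implicit point.
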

\begin{proof}
Since the statement is claimed modulo finite iterates of the $g_i$,
we can assume that each of them has positive eigenvalues.
For any $i$, the compact genus two handlebody
neighbourhood $H^-_i$ of $B_i^-$ built in Lemma 
\ref{lemmevoisinagetubulairesurfacegenredeux}
can be chosen as close to $B_i^-$ 
as we want, possibly replacing
$g_i$ by an iterate $g_i^{r_i}$.
According to Lemma \ref{lemmevoisinagetubulairesurfacegenredeux},
$H^+_i\coloneqq\X\setminus\Int(g_i(H^-_i))$ is a compact
genus two handlebody,
neighbourhood of
the attractive bouquet of circles $B_i^+$ of $g_i$, such that
$g^n(H^+_i)$ converges to $B_i^+$.
We can thus choose $H^+_i$ as close to $B_i^+$ as we want,
possibly replacing again $g_i$ by an iterate.
Since the $B_i^{\pm}$ are disjoint,
the $H_i^\pm$ are also pairwise disjoint if they are
sufficiently close to the $B_i^\pm$.
\end{proof}
\begin{remark}\label{remarkrelationsSchottky}
Note that Lemma \ref{lemmecasmixteX} shows that
the repulsive and attractive bouquet of circles of a loxodromic element $g$
are the only geometric objects with respect to which $(g^n)$
has a North-South dynamics 
with repulsive and attractive sets \emph{of equal dimensions}.
The notion of Schottky subgroups defined previously
is in this sense imposed by 
the dynamics in $\X$ of loxodromic elements of $\PGL{3}$.
Indeed, Definition \ref{propdefSchottkydansX}
is the natural translation of
the classical definition of a Schottky subgroup $\Gamma_0$ of $\PSL{2}$,
where the half-planes of $\overline{\Hn{2}}$ containing the
repulsive and attractive points in $\partial\Hn{2}$ of the loxodromic generators of $\Gamma_0$
are here replaced by the handlebodies
containing the repulsive and attractive bouquet of circles of the generators of $\Gamma$.
\end{remark}
By construction,
these Schottky subgroups satisfy the classical ``ping-pong'' Lemma.
\begin{proposition}\label{propositiondomainefondamentalSchottky}
Let $\Gamma=\langle g_1,\dots,g_d \rangle$ be a Schottky subgroup of $\PGL{3}$
and ${H_i^{\pm}}$ be a set of separating handlebodies for the $g_i$.
 \begin{enumerate}
  \item $\Gamma=\langle g_1,\dots,g_d \rangle$ 
  is a discrete subgroup of $\PGL{3}$ freely generated by
  $g_1,\dots,g_d$.
  \item With $U=\bigcap_{i=1}^d (\X\setminus(H^-_i\cup H^+_i))$,
  $\Omega=\bigcup_{\gamma\in\Gamma}\gamma(\bar{U})$ 
  is an open set of $\X$ where $\Gamma$ acts freely, properly and cocompactly.
 \end{enumerate}
\end{proposition}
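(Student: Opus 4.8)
The plan is to run the classical ping-pong (table-tennis) argument, the only inputs being the defining relation $H_i^+=\X\setminus\Int(g_i(H_i^-))$ of Proposition-Definition \ref{propdefSchottkydansX} and the pairwise disjointness of the $2d$ handlebodies. First I would record the two fundamental inclusions from which everything follows. Since $g_i$ is a homeomorphism, $H_i^+=\X\setminus\Int(g_i(H_i^-))$ rewrites as $g_i(\X\setminus\Int H_i^-)=H_i^+$, and because $g_i(\X\setminus H_i^-)$ is open and contained in $H_i^+$ it lies in $\Int H_i^+$; as $H_i^+\subset\X\setminus H_i^-$ by disjointness, one iterates to obtain, for every $n\geq 1$,
\[
g_i^{n}(\X\setminus H_i^-)\subset\Int H_i^+ \quad\text{and symmetrically}\quad g_i^{-n}(\X\setminus H_i^+)\subset\Int H_i^-.
\]
These are exactly the hypotheses of a Schottky/ping-pong configuration with the $2d$ pairwise disjoint open sets $\Int H_i^{\pm}$.

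For claim (1) I would fix a base point $p$ in the nonempty open set $\Int U=\X\setminus\bigcup_i(H_i^-\cup H_i^+)$ (nonempty since the handlebodies can be taken as close to the bouquets as we want). Given a nontrivial reduced word $\gamma=g_{i_1}^{n_1}\cdots g_{i_k}^{n_k}$ with $i_j\neq i_{j+1}$ and $n_j\neq 0$, write $\epsilon_j$ for the sign of $n_j$. Applying the letters from the right and using the inclusions above, the rightmost letter sends $p$ into $\Int H_{i_k}^{\epsilon_k}$; since consecutive indices differ, each $\Int H_{i_{j+1}}^{\epsilon_{j+1}}$ is disjoint from $H_{i_j}^{-\epsilon_j}$, so the image stays inside the open handlebodies and one gets $\gamma(p)\in\Int H_{i_1}^{\epsilon_1}$. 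As this set is disjoint from $p$, we get $\gamma\neq\id$, proving that $\Gamma$ is free on $g_1,\dots,g_d$. Discreteness is then immediate: every nontrivial $\gamma$ displaces $p$ by at least $d(p,\bigcup_i(H_i^-\cup H_i^+))>0$, so the identity is isolated in $\Gamma$.

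For claim (2) the same computation applied to the whole open tile $U$ (rather than to $p$) gives $\gamma(U)\cap U=\emptyset$ for every $\gamma\neq\id$, so the open tiles $\{\gamma(\Int U)\}_{\gamma\in\Gamma}$ are pairwise disjoint. The crucial refinement is an \emph{adjacency lemma}: $\bar U\cap\gamma(\bar U)\neq\emptyset$ only for $\gamma\in\{\id,g_1^{\pm1},\dots,g_d^{\pm1}\}$, and then the two tiles meet exactly along a shared boundary face. Indeed, for a reduced word of length $\geq 2$, or of length $1$ with $|n_1|\geq 2$, the inclusions above improve to $\gamma(\bar U)\subset\Int H_{i_1}^{\epsilon_1}$ strictly, hence disjoint from $\bar U$; whereas $g_i(\bar U)\cap\bar U=\partial H_i^+$, using $g_i(\partial H_i^-)=\partial H_i^+$ and $\partial H_i^{\pm}\subset\bar U$. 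This is the exact analog, for the tree of tiles, of the face-pairing $g(\partial H^-)=\partial H^+$ of the one-generator Lemma \ref{lemmevoisinagetubulairesurfacegenredeux}. From the adjacency lemma I would read off the remaining assertions. The tiling is locally finite (around any point of $\Omega$ only the finitely many tiles $\gamma_0\{\id,g_i^{\pm1}\}\bar U$ appear), so $\Omega=\bigcup_\gamma\gamma(\bar U)$ is open — each face being filled on both sides by exactly two tiles, the $2d$ boundary surfaces being pairwise disjoint so that no corners occur — and the action is proper (each compact of $\Omega$ meets finitely many tiles; equivalently, no two points of $\Omega$ are dynamically related, via Lemma \ref{lemmepropretedynamiquementrelie}). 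The action is free because a nontrivial $\gamma$ fixing $x\in\Omega$ could, after translating $x$ into $\bar U$, be conjugated to some $g_i^{\pm1}$ fixing a point of a face $\partial H_i^{\mp}$, contradicting that all fixed flags of $g_i$ lie in $B_i^+\cup B_i^-\subset\Int H_i^+\cup\Int H_i^-$ (Example \ref{examplebouquetloxodromic}). Finally cocompactness is automatic: $\bar U$ is compact and surjects onto the Hausdorff quotient $\Gamma\backslash\Omega$.

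The routine part is the bookkeeping with signs and indices in the ping-pong; the one place that genuinely needs care — and which I regard as the main obstacle — is the adjacency lemma together with the ensuing claim that $\Omega$ is open with a locally finite tiling. This is exactly the step upgrading ``the $\gamma(U)$ are pairwise disjoint'' to a genuine fundamental-domain and manifold structure, and it rests on scrupulous tracking of interiors versus closures (sending complements of closed handlebodies strictly into open ones, and powers $|n|\geq 2$ strictly inside), together with the disjointness of the boundary faces guaranteeing that tiles meet two at a time without corners.
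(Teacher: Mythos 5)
Your proof is correct, and it takes a genuinely more self-contained route than the paper. The paper disposes of this proposition in a few lines: it quotes the one-generator case (Proposition \ref{propositionSchottkyungenerateur}, whose properness rests on the dynamical analysis of Lemma \ref{lemmecasmixteX} via Lemma \ref{lemmepropretedynamiquementrelie}) and then invokes the version of Klein's combination theorem proved by Frances in \cite[Theorem 5]{franceslorentziankleinian} to combine the $d$ factors. What you have done is unwind that cited combination theorem into an explicit argument: the two inclusions $g_i^n(\X\setminus H_i^-)\subset\Int H_i^+$ and $g_i^{-n}(\X\setminus H_i^+)\subset\Int H_i^-$ extracted from the single defining relation $H_i^+=\X\setminus\Int(g_i(H_i^-))$, the ping-pong for freeness and discreteness, and --- the real content --- the adjacency lemma ($\bar U\cap\gamma(\bar U)\neq\varnothing$ only for $\gamma\in\{\id,g_i^{\pm1}\}$, with $g_i(\bar U)\cap\bar U=\partial H_i^+$ via $g_i(\partial H_i^-)=\partial H_i^+$), from which openness of $\Omega$, local finiteness, properness, freeness and cocompactness all follow by the tiling argument you describe. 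You correctly identify this adjacency step as the crux, and your interior-versus-closure bookkeeping there is sound; note that $\partial H_i^\pm\subset\bar U$ and $\partial(\Int K)=\partial K$ do use that the $H_i^\pm$ are embedded compact submanifolds with boundary (so $\overline{\Int H_i^\pm}=H_i^\pm$) and pairwise disjoint, both guaranteed by Proposition-Definition \ref{propdefSchottkydansX}, and that your freeness argument has a harmless sign slip (the fixed point of a conjugate equal to $g_i$ would lie on $\partial H_i^+$, not $\partial H_i^-$), immaterial since all fixed flags of $g_i$ lie in $B_i^-\cup B_i^+\subset\Int H_i^-\cup\Int H_i^+$. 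Comparing the two routes: the paper's citation is shorter and stays consistent with its dynamical toolkit, whereas your version needs no dynamical input beyond the location of fixed flags, replaces the dynamical-relations properness argument by a purely combinatorial one, and makes explicit the face-pairing structure of the tiling --- which the paper in any case needs later, in Corollary \ref{corollarySchottky}, for the surgery description of $\Gamma\backslash\Omega(\Gamma)$.
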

\begin{proof}
According to Proposition \ref{propositionSchottkyungenerateur},
the subgroups $\langle g_i \rangle$ acts freely, properly and
cocompactly on $\X\setminus(B_i^-\cup B_i^+)$, 
with $U_i=\X\setminus(H_i^-\cup H_i^+)$ as a fundamental domain.
These properties allow us to apply the 
classical ping-pong Lemma to the action of
$\Gamma$ on $\Omega$. 
More precisely, the version of \emph{Klein’s combination Theorem}
proved by Frances in \cite[Theorem 5]{franceslorentziankleinian}
(see also \cite{maskit_kleinian_1988})
allows us to conclude:
$\Omega$ is open and $\Gamma$ is a discrete free subgroup 
acting freely, properly and cocompactly on $\Omega$.
\end{proof}

\subsection{Limit sets of Schottky subgroups}
In this paragraph, we would like to give the following intrisinc dynamical
meaning to the open set $\Omega$ 
found in Proposition \ref{propositiondomainefondamentalSchottky}:
$\Lambda(\Omega)=\partial\Omega$ is the limit set in $\X$ of
the Schottky subgroup $\Gamma=\langle g_1,\dots,g_d \rangle$.
\par To this end, we will use 
the classical notion of \emph{boundary of
$\Gamma$}, denoted by $\partial_{\infty}\Gamma$,
that we will see as the set of non-empty right-infinite words
$g_{i_1}^{\varepsilon_1}\dots g_{i_n}^{\varepsilon_n}\dots$
on the alphabet $\mathcal{A}=\{g_1,g_1^{-1},\dots,g_d,g_d^{-d}\}$
(with $i_k\in\{1,\dots,d\}$ and $\varepsilon_i\in\{\pm1\}$)
that are reduced, that is
$\varepsilon_k i_k\neq -\varepsilon_{k+1} i_{k+1}$.
We will say that a reduced word 
$g_{i_1}^{\varepsilon_1}\dots g_{i_n}^{\varepsilon_n}\in\Gamma$
has \emph{length $n=\abs{\gamma}$}.
The length of words $\abs{\gamma}$ defines the \emph{word metric} 
$d(\gamma,\delta)=\abs{\gamma\delta^{-1}}$ on $\Gamma$.
The natural embedding of $\partial_{\infty}\Gamma$ in $\mathcal{A}^{\N}$ 
defined by 
identifying $g_{i_1}^{\varepsilon_1}\dots g_{i_n}^{\varepsilon_n}\dots$
to the sequence $(g_{i_n}^{\varepsilon_n})$
endows $\partial_{\infty}\Gamma$ with the restriction of the product topology 
of $\mathcal{A}^{\N}$,
for which $\partial_{\infty}\Gamma$ is a Cantor space 
(in particular, $\partial_\infty\Gamma$ is compact).
The disjoint union $\Gamma\cup\partial_{\infty}\Gamma$
is endowed with a (metrizable) topology extending
those of $\Gamma$ and $\partial_{\infty}\Gamma$
and for which $\Gamma\cup\partial_{\infty}\Gamma$ is a compact space.
For this topology,
a sequence $\gamma_k\in\Gamma$ 
such that $\abs{\gamma_k}\to+\infty$
converges to a point $\delta_\infty=
g_1^{\varepsilon_1}\dots g_n^{\varepsilon_n}\dots
\in\partial_{\infty}\Gamma$
if, and only if 
there exists a non-decreasing sequence of non-negative integers 
$n_k\to+\infty$
and a sequence $\mu_k\in\Gamma$
such that $\gamma_k=\delta_{n_k}\mu_k$,
where $\delta_n=g_1^{\varepsilon_1}\dots g_n^{\varepsilon_n}$ denotes 
the sequence of finite subwords of $\delta_\infty$.
Our 
use of $\partial_{\infty}\Gamma$ being very elementary,
we introduce these notions
in a simple and naive way to avoid a technicality which 
would be useless here.
We refer to \cite{ghysdlH} for 
more details about the notion of boundary
of a Gromov-hyperbolic group.
\par Let $\{H_i^\pm\}_{i=1}^d$ 
be a set of separating handlebodies for the $g_i$ as defined in
Definition \ref{propdefSchottkydansX}.
We associate to any 
$\gamma=g_{i_1}^{\varepsilon_1}\dots g_{i_n}^{\varepsilon_n}\in\Gamma$
the compact $K(\gamma)\coloneqq 
g_{i_1}^{\varepsilon_1}\dots g_{i_{n-1}}^{\varepsilon_{n-1}}(H^{\varepsilon_n}_{i_n})$
and to any 
$\gamma_\infty\in\partial_{\infty}\Gamma$
the sequence of compacts
$K(\gamma_n)$ with
$\gamma_n$ the finite subwords of $\gamma_\infty$.
Since $g_i^{\varepsilon}(H_j^{\delta})\subset \Int(H_i^\varepsilon)$
for any $(j,\delta)\neq(i,-\varepsilon)$,
we have $g_{i_n}^{\varepsilon_n}(H_{i_{n+1}}^{\varepsilon_{n+1}})\subset 
H_{i_n}^{\varepsilon_n}$ for any $n$, 
and the sequence of compacts $K(\gamma_n)$ associated to $\gamma_\infty$
is thus decreasing.
Therefore, the intersection
\begin{equation}\label{equationdefinitionK}
\Bplus(\gamma_\infty)\coloneqq\bigcap_{n\in\N}K(\gamma_n)
\end{equation}
is a non-empty connected compact subset, which is the limit of
$(K(\gamma_n))$ for the Hausdorff topology.
\par The following statement and some parts of its proof
were inspired by analog results proved in 
\cite[Lemma 7 and 8]{franceslorentziankleinian}
for Schottky conformal subgroups acting on the Einstein universe.
We also refer to \cite[\S 6]{kapovich_dynamics_2017} for related
results in the general setting of regular discrete subgroups of semi-simple Lie groups.
\begin{proposition}\label{propositiondescriptionensemblelimite}
Let $\Gamma=\langle g_1,\dots,g_d \rangle$ be a Schottky subgroup of $\PGL{3}$,
$\{H_i^\pm\}_{i=1}^d$ 
be a set of separating handlebodies for the $g_i$
and 
$\Omega$ be 
the open set of discontinuity of Proposition 
\ref{propositiondomainefondamentalSchottky}
defined by the $H_i^\pm$.
 \begin{enumerate}
  \item The application $\Bplus$ defined in \eqref{equationdefinitionK}
  is an homeomorphism from $\partial_{\infty}\Gamma$ to the set 
  of connected components
  of $\Lambda(\Gamma)\coloneqq\partial\Omega$
  endowed with the Hausdorff topology.
  \item Let $\gamma_\infty\in\partial_{\infty}\Gamma$ and $(\gamma_n)$ denote its sequence of finite subwords.
  Then any subsequence of $(\gamma_n)$ going simply to infinity
  is of balanced type with attractive bouquet of circles 
  $\Bplus(\gamma_\infty)$.
  \item Let 
$(\gamma_n)$ be a sequence of $\Gamma$ going simply to infinity
in $\PGL{3}$.
Then $(\gamma_n)$ converges in $\Gamma\cup\partial_{\infty}\Gamma$
to a point $\gamma_\infty\in\partial_{\infty}\Gamma$,
$(\gamma_n)$ is of balanced type
and 
$\Bplus(\gamma_n)=\Bplus(\gamma_\infty)$.
  \item $\Omega$ is equal to 
  the \emph{maximal open subset of discontinuity} of $\Gamma$ defined as:
  \[
  \Omega(\Gamma)\coloneqq\X\setminus\bigcup_{
\underset{\gamma_n\underset{simply}{\longrightarrow}\infty}{\gamma_n\in\Gamma}}\Bplus(\gamma_n).
  \]
  \end{enumerate}
\end{proposition}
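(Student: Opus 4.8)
The backbone of the proof is the \emph{ping-pong coding} of points and sequences by the separating handlebodies, together with the nesting $K(\gamma_n)\supset K(\gamma_{n+1})$ and the convergence $K(\gamma_n)\to\Bplus(\gamma_\infty)$ for the Hausdorff topology recorded just before \eqref{equationdefinitionK}. The first step I would isolate is the elementary consequence of Proposition-Definition \ref{propdefSchottkydansX}: the relation $g_i(\X\setminus\Int H_i^-)=H_i^+$ (and its inverse) yields, for every reduced word $\gamma=g_{i_1}^{\varepsilon_1}\cdots g_{i_n}^{\varepsilon_n}$ and every $x\notin\Int H_{i_n}^{-\varepsilon_n}$, the inclusion $\gamma(x)\in K(\gamma)$. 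Applying this to a sequence $(\gamma_{n_k})$ of finite subwords of a fixed $\gamma_\infty\in\partial_{\infty}\Gamma$ and using $K(\gamma_{n_k})\to\Bplus(\gamma_\infty)$, I would deduce the \emph{attracting estimate}: for every $x$ lying outside the (after passing to a subsequence, fixed and arbitrarily small) repulsive handlebody $H^{\mathrm{rep}}=H_{i_{n_k}}^{-\varepsilon_{n_k}}$, one has $\D_{(\gamma_{n_k})}(x)\subset\Bplus(\gamma_\infty)\subset H_{i_1}^{\varepsilon_1}$.

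The heart of the argument, and the step I expect to be the main obstacle, is to show that such a subsequence is necessarily of \emph{balanced type} and that its attractive bouquet equals $\Bplus(\gamma_\infty)$ (claim (2)). I would argue by contradiction using a topological obstruction. Suppose $(\gamma_{n_k})$ had unbalanced type $\alpha$; by Lemma \ref{lemmeattractifX}, over the points of its repulsive $\beta$-$\alpha$ surface $\Sbetaalpha^-$ the dynamic sets are the $\beta$-circles $\Cbeta(\phi(x))$, and as $x$ ranges over $\Sbetaalpha^-$ these sweep out the whole $\alpha$-$\beta$ surface $\Salphabeta^+$, which is a Klein bottle (Paragraph \ref{soussoussectionobjetsgeomX}). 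Since a Klein bottle does not embed in a handlebody (a handlebody embeds in $\Sn{3}$), $\Sbetaalpha^-$ cannot be contained in the small handlebody $H^{\mathrm{rep}}$, so a portion of $\Sbetaalpha^-$ whose image under $\phi$ is dense in $\Calpha^+$ lies outside $H^{\mathrm{rep}}$; the attracting estimate then forces $\Salphabeta^+\subset\Bplus(\gamma_\infty)\subset H_{i_1}^{\varepsilon_1}$, a Klein bottle inside a handlebody, a contradiction. The unbalanced type $\beta$ is excluded dually (via $\gamma_{n_k}^{-1}$ and Lemmas \ref{lemmesymetriestypesdynamiques} and \ref{lemmerepulsifX}). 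Once balancedness is known, I would feed the convergence $\gamma_{n_k}(H)\to\Bplus(\gamma_\infty)$, for a fixed handlebody $H$ occurring as a later letter, into Lemma \ref{lemmeconsequenceconvergencecompacts} and combine it with the balanced dynamics of Lemma \ref{lemmecasmixteX} to identify $\bigcup_{x\in\Int H}\D_{(\gamma_{n_k})}(x)$ with the attractive bouquet, giving $\Bplus(\gamma_{n_k})=\Bplus(\gamma_\infty)$. The delicate points to nail down here are the density of $\phi(\Sbetaalpha^-\setminus H^{\mathrm{rep}})$ in $\Calpha^+$ (for which the freedom to shrink the handlebodies in Proposition-Definition \ref{propdefSchottkydansX} is used) and the bookkeeping of which surfaces meet $\Int H$.

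For claim (3) I would first note that discreteness of $\Gamma$ (Proposition \ref{propositiondomainefondamentalSchottky}) forces $\abs{\gamma_n}\to+\infty$, so by compactness of $\Gamma\cup\partial_{\infty}\Gamma$ every subsequence accumulates on some $\gamma_\infty\in\partial_{\infty}\Gamma$; the ping-pong inclusion $K(\gamma_n)\subset K(\delta_{m_n})$ for the common prefix $\delta_{m_n}$ of $\gamma_n$ and $\gamma_\infty$ gives the same attracting estimate, whence the attractive bouquet $\Bplus(\gamma_n)$ (well defined and subsequence-independent by Remark \ref{remarksubsequencessimply}) is contained in each accumulation $\Bplus(\gamma_\infty)$. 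Injectivity of $\Bplus$ then forces the limit to be unique, so $(\gamma_n)$ converges and, by the previous paragraph applied to $(\gamma_n)$, is balanced with $\Bplus(\gamma_n)=\Bplus(\gamma_\infty)$. For claim (1), injectivity of $\Bplus$ holds because two distinct boundary words share only a finite prefix, after which their subwords land in disjoint handlebodies, so $\Bplus(\gamma_\infty)\cap\Bplus(\gamma_\infty')=\emptyset$; continuity holds because words with a long common prefix $\delta_m$ have their bouquets inside the small common compact $K(\delta_m)$. A continuous injection from the compact Cantor set $\partial_{\infty}\Gamma$ is a homeomorphism onto its image, and it remains to see that this image is exactly the set of connected components of $\Lambda(\Gamma)=\partial\Omega$, which follows from claim (4).

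Finally, claim (4) I would obtain by a coding argument that also closes claim (1). For $z\in\X\setminus\Omega$ one has $z\notin\gamma(\overline U)$ for all $\gamma$, i.e. $\gamma(z)\in\bigcup_i\Int(H_i^-\cup H_i^+)$ for all $\gamma\in\Gamma$; reading off at each stage which interior $z$ falls into, and using $g_i(H_i^-)=\X\setminus\Int H_i^+$ to guarantee reducedness, I would build a reduced infinite word $\gamma_\infty$ with $z\in\Int K(\gamma_n)$ for every $n$, hence $z\in\bigcap_n K(\gamma_n)=\Bplus(\gamma_\infty)$. Conversely each point of $\Bplus(\gamma_\infty)$ is dynamically related to a point of $\X$ and thus lies outside $\Omega$ by Lemma \ref{lemmepropretedynamiquementrelie}. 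Therefore $\X\setminus\Omega=\bigcup_{\gamma_\infty\in\partial_{\infty}\Gamma}\Bplus(\gamma_\infty)$, which by claims (2) and (3) coincides with $\bigcup_{\gamma_n\to\infty\text{ simply}}\Bplus(\gamma_n)$; this is precisely the identity $\Omega=\Omega(\Gamma)$. Since the limit set $\X\setminus\Omega$ is nowhere dense, $\partial\Omega=\X\setminus\Omega$ is the disjoint union of the connected sets $\Bplus(\gamma_\infty)$, which are therefore its connected components, finishing claim (1).
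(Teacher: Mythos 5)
Your overall architecture coincides with the paper's (nested compacts $K(\gamma_n)$, homeomorphism onto components, exclusion of unbalanced types by contradiction, coding for claim (4)), but there is a genuine gap at exactly the step you flag as delicate: the density of $\phi(\Sbetaalpha^-\setminus H^{\mathrm{rep}})$ in $\Calpha^+$. Your proposed fix --- shrink the repulsive handlebody at will --- is not available: for a \emph{fixed} Schottky group the separating constraint $H_i^+=\X\setminus\Int(g_i(H_i^-))$ couples the two handlebodies of each generator, so shrinking $H_a^{-\varepsilon_a}$ towards its bouquet inflates its partner $H_a^{\varepsilon_a}$ towards the complement of a small neighbourhood of $B_a^{-\varepsilon_a}$, destroying disjointness from the other $H_j^{\pm}$; Proposition-Definition \ref{propdefSchottkydansX} only produces arbitrarily small handlebodies after replacing the generators by powers $g_i^{r_i}$, i.e.\ for a \emph{different} group (different $\partial_\infty\Gamma$, different words, different $\Bplus$). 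And the density failure is not hypothetical: the handle $A$ built in the proof of Lemma \ref{lemmevoisinagetubulairesurfacegenredeux} is a union of \emph{entire} $\alpha$-circles $\Calpha(p)$ over a whole disk of points $p$, so a separating handlebody can perfectly well contain $\pi_\alpha^{-1}(I)$ for a nondegenerate arc $I$ of the (unknown, uncontrolled) repulsive line $D_-$; then $\phi(\Sbetaalpha^-\setminus H^{\mathrm{rep}})$ omits an arc of $\Calpha^+$ and your Klein-bottle obstruction never fires. The obstruction itself is correct and does close the sub-case where some point of $\Cbeta^-$ escapes $H^{\mathrm{rep}}$ (there $\D_{(\gamma_{n_k})}(x)=\Sbetaalpha(\phi(x))$ is already a Klein bottle, which cannot sit inside the handlebody $K(\gamma_1)$), but the main sub-case remains open.

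The paper closes claim (2) by a different mechanism: instead of shrinking, it perturbs one handlebody \emph{slightly} (Fact \ref{faitchangementsdomainefondamentaux}: the fundamental-domain property and $\Omega$ survive small modifications of a single $H_b^{\varepsilon_b}$, where $g_b^{\varepsilon_b}$ is a \emph{following} letter of $\gamma_\infty$, not the inverse of the last one), and then invokes the already-proved claim (1): the perturbed handlebody produces a second connected component $K'$ of $\Lambda(\Gamma)$ which by construction differs from $\Bplus(\gamma_\infty)$ yet shares with it the dynamic sets over $B_b^{\varepsilon_b}$ --- impossible for two distinct components; the extreme case $\mathcal{C}^-\subset\Int(H_b^{\varepsilon_b})$ is excluded because the sweep of dynamic sets would then be all of $\X$ while $\Bplus(\gamma_\infty)\subset\Lambda(\Gamma)\subsetneq\X$. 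Two smaller soft spots in your write-up: in claim (3), ``by the previous paragraph applied to $(\gamma_n)$'' is circular, since $(\gamma_n)$ is not a subword sequence of any boundary point --- the paper excludes unbalanced types there by a dimension count ($K_\infty$ would contain a topological disc, whereas by the now-established claim (2) it sits inside a bouquet of circles), an argument your attracting estimate does support; and in claim (1), ``pairwise disjoint connected pieces are therefore the components'' is too quick (a continuum can be partitioned into uncountably many disjoint compacta) --- one needs that $\Lambda(\Gamma)\cap K(\gamma_n)$ is relatively clopen in $\Lambda(\Gamma)$, which the finitely many disjoint compacts $K(\gamma)$ at each word length supply, and which is what the paper's $\Lambda_n$-component argument encodes.
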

\begin{proof}
1. The proof of this first claim is formally
the same than \cite[Lemma 7]{franceslorentziankleinian}
but we repeat it here for the convenience of the reader.
We define $U=\X\setminus\bigcup_i(H^-_i\cup H^+_i)$ as in Proposition 
\ref{propositiondomainefondamentalSchottky},
and we introduce
\[
\Omega_n=\cup_{\abs{\gamma}\leq n}\gamma(\bar{U}) \text{~and~} \Lambda_n=\X\setminus\Omega_n,
\]
so that $\Lambda(\Gamma)=\cap_n\Lambda_n$.
Note that for any $\gamma_\infty\in\partial_{\infty}\Gamma$,
$K(\gamma_n)\subset\Lambda_n$ for any $n$, 
with $\gamma_n$ the finite subwords of $\gamma_\infty$,
showing that $\Bplus(\gamma_\infty)\subset\Lambda(\Gamma)$.
Let $x\in\Lambda(\Gamma)$, 
and $C_n$, $C$ respectively
denote the connected components of $x$ in $\Lambda_n$ and $\Lambda(\Gamma)$.
Any connected component of $\Lambda_n$ is of the form $K(\gamma)$ for 
some $\gamma\in\Gamma$ of length $n$. Moreover $C_n$ is decreasing,
hence $C_n=K(\gamma_n)$ with $\gamma_n$
the finite subwords of some $\gamma_\infty\in\partial_{\infty}\Gamma$.
Since $C\subset C_n$ for any $n$, we have 
$C\subset \Bplus(\gamma_\infty)=\cap_n C_n$ and by maximality of the connected
component this inclusion is an equality.
This shows that $\Bplus(\gamma_\infty)$ is always a connected component
of $\Lambda(\Gamma)$ and that $\Bplus$ is surjective onto the
set of connected components of $\Lambda(\Gamma)$.
To prove the injectivity, 
take $\gamma_\infty\neq\gamma_\infty'$ in $\partial_{\infty}\Gamma$
of finite subwords $(\gamma_n)$ and $(\gamma'_n)$.
There exists $k$
such that $g_{i_k}^{\varepsilon_k}\neq g_{i'_k}^{\varepsilon'_k}$
and for any $n\geq k$
$K(g_{i_k}^{\varepsilon_k}\dots g_{i_n}^{\varepsilon_n})
\subset H_{i_k}^{\varepsilon_k}$
and $K(g_{i'_k}^{\varepsilon'_k}\dots g_{i_n}^{\varepsilon_n})
\subset H_{i'_k}^{\varepsilon'_k}$
are thus disjoints.
This implies that $K(\gamma_n)$ and $K(\gamma'_n)$ are disjoints for $n$
large enough and thus that 
$\Bplus(\gamma_\infty)\neq \Bplus(\gamma'_\infty)$.
\par It only remains 
to show that $\Bplus$ is continuous and the conclusion will follow
by compacity of $\partial_{\infty}\Gamma$.
Let $\gamma_\infty^{(n)}\in\partial_{\infty}\Gamma$ converge to 
$\gamma_\infty$
with $(\gamma^{(n)}_k)$ and $(\gamma_k)$
the respective sequences of finite subwords of
$\gamma_\infty^{(n)}$ and $\gamma_\infty$.
There exists then a strictly increasing sequence
$k_n\to+\infty$ such that $\gamma_{k_n}^{(n)}=\gamma_{k_n}$
for any $n$, and thus
$K(\gamma^{(n)}_{k_n})=K(\gamma_{k_n})$
with $K(\gamma_{k_n})$ converging to 
$\Bplus(\gamma_\infty)$.
Since $K(\gamma^{(n)}_\infty)\subset K(\gamma^{(n)}_{k_n})$,
$\Bplus(\gamma_\infty)$ is thus the only
accumulation point of $(K(\gamma_\infty^{(n)}))$.
By compacity of the space of compacts for 
the Hausdorff topology, this shows that
$K(\gamma_\infty^{(n)})$ converges to $\Bplus(\gamma_\infty)$
and concludes the proof of the claim. \\
2. The proof of this claim was inspired by the one of
\cite[Lemma 8]{franceslorentziankleinian}.
We will make a repeated use of the following argument to prove our second claim. 
\begin{fact}\label{faitchangementsdomainefondamentaux}
 For $1\leq i \leq g$, 
 let ${H'}_i^-$ be a compact neighbourhood of $B_i^-$
 close enough to $H^-_i$ for the Hausdorff topology,
 and let define ${H'}_i^+\coloneqq\X\setminus g_i(\Int({H'}_i^-))$.
 Then $U'_i=\X\setminus({H'}_i^-\cup {H'}_i^+)$ remains a fundamental
 domain for the action of 
 $\langle g_i \rangle$ on $\X\setminus(B_i^-\cup B_i^+)$,
 and $U'\coloneqq U'_i\cap_{j\neq i}U_j$ remains a fundamental domain
 for the action of $\Gamma$ on $\Omega$:
 $\cup_{\gamma\in\Gamma}\gamma(\overline{U'})=\cup_{\gamma\in\Gamma}\gamma(\overline{U})=\Omega$.
\end{fact}
\begin{proof}
The same proof as in Lemma \ref{lemmevoisinagetubulairesurfacegenredeux} shows that 
$U'_i$ is a fundamental domain for the action of
$\langle g_i \rangle$ on $\X\setminus(B_i^-\cup B_i^+)$.
If ${H'}_i^-$ is close enough to $H^-_i$, then ${H'}_i^-$ et ${H'}_i^+$
remain disjoints from the other $H_j^{\pm}$ and the proof of Proposition
\ref{propositiondomainefondamentalSchottky} applies thus to the neighbourhoods
$\{{H'}_i^{\pm};H_j^{\pm},j\neq i\}$,
showing that 
$\Omega'\coloneqq\cup_{\gamma\in\Gamma}\gamma(\overline{U'})$ is open.
Moreover, if ${H'}_i^-$ is close enough to $H^-_i$ then
$\overline{U'}$ is close enough to $\overline{U}$ to be contained
in its neighbourhood $\Omega$,
and then $\Omega'\subset\Omega$
since $\Omega$ is $\Gamma$-invariant.
Likewise, $\overline{U}$ is in this case contained in the neighbourhood
$\Omega'$ of $\overline{U'}$, showing that $\Omega\subset\Omega'$
and finishing the proof.
\end{proof}
In other words, slight modifications of the neighbourhoods $H_i^{\pm}$ 
are authorized.
Let $(\gamma_n)$ be a subsequence of the finite subwords of a point 
$\gamma_\infty\in\partial_{\infty}\Gamma$,
going simply to infinity in $\PGL{3}$.
Passing to a subsequence of $(\gamma_n)$, 
we can assume that there exists two letters $g_a^{\varepsilon_a}$
and $g_b^{\varepsilon_b}\neq g_a^{-\varepsilon_a}$
such that the reduced word $\gamma_n$
ends by $g_a^{\varepsilon_a}$ for any $n$
and that $\gamma_n g_b^{\varepsilon_b}$
is a subsequence of the finite subwords of $\gamma_\infty$.
Therefore,
$\gamma_n(H_b^{\varepsilon_b})=K(\gamma_n g_b^{\varepsilon_b})$ 
converges to $\Bplus(\gamma_\infty)$.
\begin{fact}\label{faitsuitealinfinimixte}
$(\gamma_n)$ is of balanced type.
\end{fact}
\begin{proof}
We assume by contradiction that this is not the case.
Possibly replacing $(\gamma_n)$ by $(\gamma_n^{-1})$,
we can thus assume that $(\gamma_n)$ is of unbalanced type $\alpha$
according to Lemma \ref{lemmesymetriestypesdynamiques}.
We denote by $\mathcal{C}^-\subset \mathcal{S}^-$ and
$\mathcal{C}^+\subset\mathcal{S}^+$ 
its repulsive and attractive circles and surfaces in $\X$
and by $\phi\colon\X\to\mathcal{C}^+$ the fibration introduced in 
Lemma \ref{lemmeattractifX}.
\par We first assume by contradiction that 
$\mathcal{C}^-\subset\Int(H_b^{\varepsilon_b})$.
According to Lemma \ref{lemmeconsequenceconvergencecompacts},
$\Bplus(\gamma_\infty)=\lim \gamma_n(H_b^{\varepsilon_b})$ would then 
contain $\bigcup_{x\in\mathcal{C}^-}\mathcal{D}_{(\gamma_n)}(x)$
which is equal to $\bigcup_{y\in\mathcal{C}^+}\Sbetaalpha(y)$
according to Lemma \ref{lemmeattractifX}
and thus to $\X$.
This is impossible since $\Bplus(\gamma_\infty)\subset\Lambda(\Gamma)$
is a strict subset of $\X$.
Therefore $\mathcal{C}^-\cap\Int(H_b^{\varepsilon_b})$ is a strict subset
of $\mathcal{C}^-$ that we assume to be non-empty by contradiction.
We can then slightly modify $H_b^{\varepsilon_b}$ to a neighbourhood
${H'}_b^{\varepsilon_b}$ of $B_b^{\varepsilon_b}$,
in such a way that
$\mathcal{C}^-\cap H_b^{\varepsilon_b}
\subsetneq\mathcal{C}^-\cap\Int({H'}_b^{\varepsilon_b})$,
and the new open set $U'$ defined by 
${H'}_b^{\varepsilon_b}$ as in Fact \ref{faitchangementsdomainefondamentaux}
remains a fundamental domain of $\Omega$.
The first claim of the proposition
applies then to ${H'}_b^{\varepsilon_b}$
and $(\gamma_n({H'}_b^{\varepsilon_b}))$ 
converges thus to a connected component
$K'$ of $\Lambda(\Gamma)$.
According to Lemmas \ref{lemmeconsequenceconvergencecompacts}
and \ref{lemmeattractifX},
$K'\supset\bigcup_{x\in\mathcal{C}^-\cap\Int({H'}_b^{\varepsilon_b})}
\Sbetaalpha(\phi(x))$ and
$\Bplus(\gamma_\infty)\subset\mathcal{S}^+\cup
\bigcup_{x\in\mathcal{C}^-\cap H_b^{\varepsilon_b}}\Sbetaalpha(\phi(x))$.
By hypothesis on ${H'}_b^{\varepsilon_b}$, 
$K'$ and $\Bplus(\gamma_\infty)$
are thus distincts and therefore disjoints as they
both are connected components 
of $\Lambda(\Gamma)$.
This contradicts the fact that they both contain
$\bigcup_{x\in B_b^{\varepsilon_b}}\mathcal{D}_{(\gamma_n)}(x)$.
\par Finally $\mathcal{C}^-\cap\Int(H_b^{\varepsilon_b})=\varnothing$,
and possibly shrinking $H_b^{\varepsilon_b}$ thanks to 
Fact \ref{faitchangementsdomainefondamentaux} we can assume that
$\mathcal{C}^-\cap H_b^{\varepsilon_b}=\varnothing$.
Since any $\alpha$-$\beta$ surface intersects any $\alpha$-circle,
there exists 
$y\in(\mathcal{S}^+\setminus\mathcal{C}^+)\cap\Int(H_{i_1}^{\varepsilon_1})$.
According to Remark \ref{remarqueobjetsdynamiquessuitinverse},
$\mathcal{C}^+$, $\mathcal{S}^+$, $\mathcal{C}^-$ and $\mathcal{S}^-$
are respectively the repulsive and attractive 
circles and surfaces of the sequence $(\gamma_n^{-1})$
of unbalanced type $\beta$,
and 
we thus have $\mathcal{D}_{(\gamma_n^{-1})}(y)=\Calpha(y')$
with $y'\in\mathcal{C}^-$
according to Lemma \ref{lemmerepulsifX}.
Moreover 
$\Calpha(y')=\mathcal{D}_{(\gamma_n^{-1})}(y)\subset H_a^{\varepsilon_a}$
because $\gamma_n^{-1}=g_a^{-\varepsilon_a}\dots g_{i_1}^{-\varepsilon_1}$
and $y\in\Int(H_{i_1}^{\varepsilon_1})$.
Now $\mathcal{S}^-\cap H_b^{\varepsilon_b}\neq\varnothing$ since
any $\beta$-$\alpha$ surface meets any $\beta$-circle
but $\mathcal{S}^-\cap H_b^{\varepsilon_b}$ is disjoint 
from $\Calpha(y')$,
because $\Calpha(y')\subset H_a^{\varepsilon_a}$
and $H_a^{\varepsilon_a}$ is disjoint from $H_b^{\varepsilon_b}$.
In other words, 
denoting $p=\pi_\alpha(y')\in\RP{2}$,
the compact set $\pi_\beta(\mathcal{S}^-\cap H_b^{\varepsilon_b})$
of $\RP{2}_*$ is disjoint from
$p^*=\enstq{D\in\RP{2}_*}{D\not\ni p}$,
where $\pi_\alpha$ and $\pi_\beta$ denote the two coordinate
projections of $\X$ on $\RP{2}$ and $\RP{2}_*$.
As before, this allows us
to slightly modify $H_b^{\varepsilon_b}$
into a neighbourhood ${H'}_b^{\varepsilon_b}$ of $B_b^{\varepsilon_b}$
satisfying the assumptions of Fact \ref{faitchangementsdomainefondamentaux} 
and such that $\pi_\beta(\mathcal{S}^-\cap H_b^{\varepsilon_b})\subsetneq
\pi_\beta(\mathcal{S}^-\cap \Int({H'}_b^{\varepsilon_b}))$.
The sequence $(\gamma_n({H'}_b^{\varepsilon_b}))$ 
converges thus to a connected
component $K'$ of $\Lambda(\Gamma)$ containing
$\bigcup_{x\in\mathcal{S}^-\cap\Int({H'}_b^{\varepsilon_b})}\Cbeta(\phi(x))$
whereas $\Bplus(\gamma_\infty)\subset \mathcal{C}^+\cup
\bigcup_{x\in\mathcal{S}^-\cap H_b^{\varepsilon_b}}\Cbeta(\phi(x))$.
This shows that $K'\neq \Bplus(\gamma_\infty)$
since $\phi(x)$ does only depend on $\pi_\beta(x)$.
As before, $K'$ and $\Bplus(\gamma_\infty)$ should then be disjoints
which contradicts the fact that they both contain 
$\bigcup_{x\in B_b^{\varepsilon_b}}\mathcal{D}_{(\gamma_n)}(x)$.
This final contradiction concludes the proof 
that $(\gamma_n)$ is of balanced type.
\end{proof}
We use the notations of Lemma \ref{lemmecasmixteX} for the dynamical
objects of the sequence $(\gamma_n)$ of balanced type.
In particular, $\Bmoins$ and $\Bplus$ denote its attractive and repulsive
bouquet of circles.
Any $\alpha$-$\beta$ surface meeting any $\alpha$-circle of $\X$
there exists a point 
$y\in(\Salphabeta^+\setminus\Sbetaalpha^+)\cap
\Int(H_{i_1}^{-\varepsilon_1})$,
and since $\gamma_n^{-1}=g_a^{-\varepsilon_a}\dots g_1^{-\varepsilon_1}$,
$\mathcal{D}_{(\gamma_n^{-1})}(y)\subset H_a^{-\varepsilon_a}$.
Since $\Salphabeta^+$ and $\Sbetaalpha^+$ (respectively $\Calpha^-$) 
are the repulsive surfaces (resp. attractive $\alpha$-circle) of
$(\gamma_n^{-1})$, $\mathcal{D}_{(\gamma_n^{-1})}(x)=\Calpha^-$
and thus $\Calpha^-\subset H_a^{-\varepsilon_a}$.
Analog arguments show that $\Cbeta^-\subset H_a^{-\varepsilon_a}$ and
$\Bmoins$ is thus disjoint from $H_b^{\varepsilon_b}$.
Therefore $\mathcal{D}_{(\gamma_n)}(x)\subset \Bplus$
for any $x\in H_b^{\varepsilon_b}$
and thus 
$\lim \gamma_n(H_b^{\varepsilon_b})=\Bplus(\gamma_\infty)\subset \Bplus$ 
according to Lemma \ref{lemmeconsequenceconvergencecompacts}.
Conversely there exists
$y\in(\Salphabeta^-\setminus\Sbetaalpha^-)\cap\Int(H_b^{\varepsilon_b})$
and thus 
$\mathcal{D}_{(\gamma_n)}(y)=\Calpha^+\subset \Bplus(\gamma_\infty)$,
and there exists 
$z\in(\Sbetaalpha^-\setminus\Salphabeta^-)\cap\Int(H_b^{\varepsilon_b})$
and thus 
$\mathcal{D}_{(\gamma_n)}(z)=\Cbeta^+\subset \Bplus(\gamma_\infty)$.
This concludes the proof that $\Bplus(\gamma_\infty)$
is the attractive bouquet of circles of any subsequence going simply
to infinity
of the finite subwords of $\gamma_\infty$. \\
3. We now 
consider a sequence $\gamma_k\in\Gamma$ going simply to infinity
in $\PGL{3}$.
In particular $(\gamma_k)$ goes to infinity in $\Gamma$
for the word metric
and we consider an accumulation point $\delta_\infty\in\partial_{\infty}\Gamma$
of $(\gamma_k)$.
Passing to a subsequence we can assume
that $\gamma_k=\delta_{n_k}\mu_k$
with $(\delta_{n_k})$
a subsequence 
of the finite subwords of $\delta_\infty$
and $\mu_k\in\Gamma$ always finishing with the same letter 
$g_a^{\varepsilon_a}$.
We choose a letter $g_b^{\varepsilon_b}\neq g_a^{-\varepsilon_a}$
and passing to a subsequence again we can moreover assume that
$\gamma_k(H_b^{\varepsilon_b})$ 
converges for the Hausdorff topology to a compact subset
$K_\infty$ of $\X$.
We proved previously that $\Bplus(\delta_\infty)=\lim K(\delta_{n_k})$
is a bouquet of two circles.
Since $\gamma_k(H_b^{\varepsilon_b})\subset K(\delta_{n_k})$ for any $k$
we have $K_\infty\subset \Bplus(\delta_\infty)$.
Let us assume by contradiction that $(\gamma_k)$ 
is of unbalanced type $\alpha$ and denote by $\mathcal{C}^-\subset \mathcal{S}^-$ and
$\mathcal{C}^+\subset\mathcal{S}^+$ 
its repulsive and attractive circles and surfaces in $\X$
and by $\phi\colon\X\to\mathcal{C}^+$ the fibration introduced in 
Lemma \ref{lemmeattractifX}.
According to Lemma \ref{lemmeconsequenceconvergencecompacts},
$K_\infty$ contains
$\bigcup_{x\in(\mathcal{S}^-\setminus\mathcal{C}^-)\cap \Int(H_b^{\varepsilon_b})}\Cbeta(\phi(x))$,
but $(\mathcal{S}^-\setminus\mathcal{C}^-)\cap \Int(H_a^{\varepsilon_a})$
is a non-empty open subset of $\mathcal{S}^-\setminus\mathcal{C}^-$
since any $\beta$-$\alpha$ surface intersects any $\beta$-circle,
and $K_\infty$
contains thus a topological disc which contradicts
$K_\infty\subset \Bplus(\gamma_\infty)$.
We obtain a contradiction in the same way if we assume
$(\gamma_k)$ to be of unbalanced type $\beta$.
\par Hence $(\gamma_k)$ is of balanced type
and we denote by $\Salphabeta^-$, $\Sbetaalpha^-$
$\Calpha^+$ and $\Cbeta^+$ its repulsive surfaces and attractive
circles.
As before $K_\infty$ contains $\Calpha^+$ since
$(\Salphabeta^-\setminus\Sbetaalpha^-)\cap \Int(H_a^{\varepsilon_a})
\neq\varnothing$
and contains $\Cbeta^+$
since $(\Sbetaalpha^-\setminus\Salphabeta^-)\cap \Int(H_a^{\varepsilon_a})
\neq\varnothing$.
As $K_\infty\subset \Bplus(\delta_\infty)$
this proves that $\Bplus(\gamma_k)=\Calpha^+\cup\Cbeta^+=K_\infty=\Bplus(\delta_\infty)$.
By injectivity of $x\in\partial_{\infty}\Gamma\mapsto \Bplus(x)$,
this shows in particular that $\delta_\infty$ is the only
accumulation point of $(\gamma_k)$, 
which converges thus to $\delta_\infty$
by compacity of $\Gamma\cup\partial_{\infty}\Gamma$.
This concludes the proof of our claim. \\
4. This follows readily from the previous claims.
\end{proof}

We can now summarize our results as follows.
\begin{corollary}\label{corollarySchottky}
Let $\Gamma=\langle g_1,\dots,g_d \rangle$ be a Schottky subgroup of $\PGL{3}$.
\begin{enumerate}
\item With $\{H_i^{\pm}\}_{i=1}^d$ any set of separating handlebodies for the $g_i$,
$\X\setminus\bigcup_{i=1}^d(H^-_i\cup H^+_i)$ 
is a fundamental domain for the action of $\Gamma$ on 
its maximal open subset of discontinuity $\Omega(\Gamma)$.
\item Moreover, $\Gamma\backslash\Omega(\Gamma)$ is homeomorphic to a 
  closed three-manifold
 obtained from the flag space $\X$ after succesively performing $d$ times
 the following two operations:
 \begin{enumerate}[label=(\Alph*)]
  \item Remove the interior of two disjoint embedded genus two
  handlebodies $H^-$ and $H^+$.
  \item Glue the two boundary components 
  $\partial H^-$ and $\partial H^+$ of the resulting
  three-manifold with boundary
  by a diffeomorphism $f\colon \partial H^-\to \partial H^+$.
 \end{enumerate}
\end{enumerate}
\end{corollary}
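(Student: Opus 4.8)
The plan is to synthesize the two preceding propositions and then read off the topology of the quotient directly from the fundamental domain. Both claims are essentially a repackaging, so the work lies in assembling the pieces correctly and in checking that the boundary gluing is clean.

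For the first claim I would set $U=\X\setminus\bigcup_{i=1}^d(H_i^-\cup H_i^+)$, which is precisely the set $\bigcap_{i=1}^d(\X\setminus(H_i^-\cup H_i^+))$ of Proposition \ref{propositiondomainefondamentalSchottky}. The application of Klein's combination theorem in the proof of that proposition already yields that the translates $\gamma(U)$, for $\gamma\in\Gamma$, are pairwise disjoint -- which is exactly property (a) in the definition of a fundamental domain -- and that $\Omega=\bigcup_{\gamma\in\Gamma}\gamma(\bar{U})$ is open with $\Gamma$ acting freely, properly and cocompactly, giving property (b). Combining this with the fourth claim of Proposition \ref{propositiondescriptionensemblelimite}, which identifies $\Omega$ with the maximal open subset of discontinuity $\Omega(\Gamma)$, gives that $U$ is a fundamental domain for the action of $\Gamma$ on $\Omega(\Gamma)$.

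For the second claim I would realize $\Gamma\backslash\Omega(\Gamma)$ as the compact manifold with boundary $\bar{U}=\X\setminus\bigcup_{i=1}^d\Int(H_i^-\cup H_i^+)$ with its boundary faces glued by the face-pairing transformations. Topologically $\bar{U}$ is obtained from the flag space by removing, for each $i$, the interiors of the two disjoint genus two handlebodies $H_i^-$ and $H_i^+$, that is by performing operation (A) exactly $d$ times. The pairings are dictated by the generators: from $H_i^+=\X\setminus\Int(g_i(H_i^-))$ one gets $g_i(\partial H_i^-)=\partial H_i^+$, so $g_i$ restricts to a diffeomorphism $f_i\colon\partial H_i^-\to\partial H_i^+$ between the two corresponding genus two boundary surfaces, and this is precisely the gluing of operation (B). Carrying this out for each of the $d$ generators produces the asserted closed three-manifold.

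The step demanding the most care -- and the main obstacle -- is checking that these $d$ face-pairings are the \emph{only} identifications, so that the quotient is a genuine boundaryless manifold rather than a quotient with superfluous gluings or corners. This is where the Schottky structure is essential: the ping-pong estimates $g_i^{\varepsilon}(H_j^{\delta})\subset\Int(H_i^{\varepsilon})$ for $(j,\delta)\neq(i,-\varepsilon)$ show on one hand that $g_i(\bar{U})$ meets $\bar{U}$ exactly along the full face $\partial H_i^+$, and on the other hand that $\gamma(\bar{U})\subset\Int(H_{i_1}^{\varepsilon_1})$ for every reduced word $\gamma$ of length at least two starting with $g_{i_1}^{\varepsilon_1}$, so that such $\gamma$ contribute no identification at all. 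Consequently an interior point of $U$ is identified with no other point of $\bar{U}$, each $\partial H_i^-$ is glued to the single boundary component $\partial H_i^+$ by $f_i$ over its entire extent, and $\Gamma\backslash\Omega(\Gamma)$ is exactly the surgered closed three-manifold described in the statement.
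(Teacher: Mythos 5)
Your proof is correct, and it rests on the same mechanism as the paper's -- the ping-pong inclusions $g_i^{\varepsilon}(H_j^{\delta})\subset\Int(H_i^{\varepsilon})$ for $(j,\delta)\neq(i,-\varepsilon)$, which control all identifications on $\bar{U}$ -- but it is organized genuinely differently. The paper argues by induction on $d$: it first treats $\Gamma_1=\langle g_1\rangle$ via Proposition \ref{propositionSchottkyungenerateur}, then pushes $H_2^{\pm}$ forward by the projection $\pi_1\colon\Omega_1\to M_1$ (injective on $U_1$, which contains $H_2^{\pm}$) to obtain embedded handlebodies $H^{\pm}=\pi_1(H_2^{\pm})$ in the already-surgered manifold $M_1$, defines the gluing $f(\bar{x})=\pi_1\circ g_2(x)$ there, and concludes by finite recurrence. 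You instead remove all $2d$ handlebodies from $\X$ simultaneously and identify $\Gamma\backslash\Omega(\Gamma)$ with $\bar{U}/\!\sim$ in a single step, using the length filtration on reduced words to show that words of length at least two satisfy $\gamma(\bar{U})\subset\Int(H_{i_1}^{\varepsilon_1})$ and hence contribute nothing, so the only identifications are the $d$ face-pairings $g_i\colon\partial H_i^-\to\partial H_i^+$. This one-shot argument is legitimate and arguably cleaner: it avoids the repeated quotient bookkeeping and shows that all the surgery data can be located inside $\X$ itself. Two small points deserve to be made explicit. First, the statement phrases the surgeries as performed \emph{successively}, each pair of handlebodies embedded in the previously surgered manifold, whereas you perform them simultaneously; the two descriptions agree precisely because the $H_i^{\pm}$ are pairwise disjoint in $\X$, so each later pair embeds untouched in every partial quotient -- this is exactly the observation the paper's inductive step formalizes, and your write-up should record it in one sentence. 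Second, your identification of $\Gamma\backslash\Omega(\Gamma)$ with $\bar{U}/\!\sim$ uses that the continuous surjection $\bar{U}\to\Gamma\backslash\Omega(\Gamma)$, from a compact space to the Hausdorff quotient of the proper action furnished by Proposition \ref{propositiondomainefondamentalSchottky}, descends to a homeomorphism once it is checked to be injective on equivalence classes; this is routine but is the hinge on which the whole reading-off-the-topology argument turns, so it should be stated rather than left implicit.
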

\begin{proof}
1. This is a straightforward reformulation of Propositions 
\ref{propositiondomainefondamentalSchottky}
and \ref{propositiondescriptionensemblelimite}. \\
2. Let us assume that $d=2$, that is 
$\Gamma=\langle g_1,g_2 \rangle$.
Then $\Gamma\backslash\Omega$ is homeomorphic
 to $\sim\backslash \bar{U}$, 
 with $U=\X\setminus\cup_{i=1,2}(H^-_i\cup H^+_i)$
 and $\sim$ the equivalence relation generated by the relations
 $x\sim g_i(x)$ for $i=1,2$ and any $x\in \partial H^-_i$.
 Denoting $\Gamma_1=\langle g_1 \rangle$ and
 $\Omega_1=\X\setminus(B_1^-\cup B_1^+)$, 
 the topology of the quotient
 $M_1=\Gamma_1\backslash\Omega_1$ is obtained from the one of $\X$
 by performing the operations (A) and (B) described in the statement.
 Indeed $H_1^-$ and $H_1^+$ are disjoint embedded genus two handlebodies
 in $\X$ and 
 $M_1=\sim_1\backslash(\X\setminus(\Int H_1^-\cup \Int H_1^+))$,
 where $x\sim_1 g_1(x)$ for any $x\in\partial H_1^-$.
 If $\pi_1\colon\Omega_1\to M_1$ denotes the canonical projection,
 $H^-=\pi_1(H_2^-)$ and $H^+=\pi_1(H_2^+)$
 are two disjoint embedded genus two handlebodies in $M_1$.
 Moreover for any $\bar{x}\in\partial H^-$ there exists
 an unique $x\in\partial H_2^-$ such that $\bar{x}=\pi_1(x)$,
 since $\partial H_2^-\subset U_1$ and 
 $\pi_1\restreinta_{U_1}$ is injective.
 This allows to defines a diffeomorphism 
 $f\colon \partial H^-\to \partial H^+$ by
 $f(\bar{x})=\pi_1\circ g_2(x)$,
 for any $\bar{x}\in\partial H^-$ and 
 $x\in\partial H_2^-$ such that $\bar{x}=\pi_1(x)$.
 Now $\Gamma\backslash\Omega=\sim\backslash\bar{U}$
 is homeomorphic to 
 $\sim_2\backslash(M_1\setminus(\Int H^-\cup \Int H^+))$,
 where $x\sim_2 f(x)$ for any $x\in\partial H^-$,
 hence the topology of $\Gamma\backslash\Omega$ is obtained
 from the one of $\X$ after succesively performing two times 
 the operations (A) and (B).
 The claim immediately follows by a finite recurrence argument.
\end{proof}

\section{Path structures compactifications of geodesic flows}
\label{sectionTheoremeB}
Let $\overline{h}_1,\dots,\overline{h}_d$
be hyperbolic elements of $\PSL{2}$
having pairwise distincts repulsive and attractive fixed points
in the boundary $\partial\Hn{2}$ of the hyperbolic plane $\Hn{2}$,
and for which we choose representatives $h_i\in\SL{2}$ 
with positive eigenvalues.
We introduce the embedding
\begin{equation}\label{equationdefinitionj}
 j\colon h\in\GL{2}\mapsto
 \begin{bmatrix}
  h & 0 \\
  0 & 1
 \end{bmatrix}\in\PGL{3}
\end{equation}
and we define $g_i\coloneqq j(h_i)\in\PGL{3}$.
Each $g_i$ is then a loxodromic element of $\PGL{3}$ 
(having positive eigenvalues)
with repulsive and attractive fixed points $p_i^\pm$
parwise distincts on $[e_1,e_2]$
and with $[e_3]$ as a common saddle point
(see Example \ref{exemplemixteRP2dz} for these definitions).
In particular, the $g_i$ are in general position
and according to Proposition \ref{propdefSchottkydansX}
there exists for each $i$ some $r_i>0$,
such that $g_1^{r_1},\dots,g_d^{r_d}$ 
generates a Schottky subgroup of $\PGL{3}$.
We replace each $h_i$ by $h_i^{r_i}$, 
and we denote
$\overline{\Gamma}_0=\langle \overline{h}_1,\dots,\overline{h}_d \rangle$,
$\Gamma_0=\langle h_1,\dots,h_d \rangle$ 
(note that $h_i\mapsto\overline{h}_i$ defines an isomorphism
from $\Gamma_0$ to $\overline{\Gamma}_0$)
and
\[
\Gamma=j(\Gamma_0)=\langle g_1,\dots, g_d \rangle\subset j(\SL{2}).
\]
We work from now on with the hyperbolic surface 
$\Sigma=\overline{\Gamma}_0\backslash\Hn{2}$
whose geodesic flow on $\Fiunitan{\Sigma}$
is denoted by $(g^t)$.

\subsection{Hyperbolic surfaces and path structures}
\label{soussectionstructureLCaudessussurfaceshyperboliques}
We first define the path structure
$\Lm_\Sigma$ that we will study on $\Fiunitan{\Sigma}$.
Let us recall that a \emph{path structure} on a three-dimensional manifold
is a couple $\Lm=(E^\alpha,E^\beta)$
of one-dimensional distributions whose sum is a contact
distribution,
and that an \emph{isomorphism} between path structures is a diffeomorphism
sending $\alpha$-distribution on $\alpha$-distribution,
and $\beta$-distribution on $\beta$-distribution.

\subsubsection{An invariant path structure 
on $\Fiunitan{\Sigma}$}\label{soussoussectiondefLSigma}
Let us consider on $\SL{2}$ the left-invariant one-dimensional
distributions $E^\alpha$ and $E^\beta$ respectively generated by the
elements
\begin{equation*}\label{equationdefinitionLSL2PSL2}
 E=\begin{pmatrix}
  0 & 1 \\
  0 & 0
 \end{pmatrix}
 \text{~and~}
 F=\begin{pmatrix}
  0 & 0 \\
  1 & 0
 \end{pmatrix}
\end{equation*}
of its Lie algebra $\slR{2}$.
Then $\Lm_{\SL{2}}=(E^\alpha,E^\beta)$ 
is a left-invariant path structure on $\SL{2}$, and
the same construction defines on $\PSL{2}$ a left-invariant path 
structure $\Lm_{\PSL{2}}$ for which the 
two-sheeted covering $\SL{2}\to\PSL{2}$
is a local isomorphism.
We recall that, $\PSL{2}$ acting simply transitively 
on $\Fiunitan{\Hn{2}}$, 
we can identify $\Fiunitan{\Sigma}$ with 
$\overline{\Gamma}_0\backslash\PSL{2}$.
This quotient inherits from $\PSL{2}$
a natural path structure
and we denote by $\Lm_{\Sigma}$ the corresponding structure 
on $\Fiunitan{\Sigma}$.
If $\Sigma$ is compact, 
the geodesic flow is Anosov
and the same construction defines
a path structure $\Lm_\Sigma$
whose $\alpha$ (respectively $\beta$) direction is the stable 
(resp. unstable) distribution of the geodesic flow.
\begin{lemma}\label{lemmaLSigmainvarianteflotgeodesique}
 The path structure $(\Fiunitan{\Sigma},\Lm_{\Sigma})$ 
 is invariant by the geodesic flow of $\Sigma$.
\end{lemma}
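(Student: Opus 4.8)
The plan is to use the identification $\Fiunitan{\Sigma}=\overline{\Gamma}_0\backslash\PSL{2}$ and to realize the geodesic flow as right-multiplication by a diagonal one-parameter subgroup, whose effect on the left-invariant distributions $E^\alpha$ and $E^\beta$ is governed by the adjoint representation.

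First I would recall the classical dictionary for the geodesic flow on $\Fiunitan{\Hn{2}}\cong\PSL{2}$: under the simply transitive action of $\PSL{2}$ on $\Fiunitan{\Hn{2}}$, the flow $(g^t)$ corresponds to right-multiplication $R_{a^t}$ by $a^t=\Diag(\e^{t/2},\e^{-t/2})\in\PSL{2}$. Since right and left translations on a Lie group commute, $R_{a^t}$ commutes with the left action of $\overline{\Gamma}_0$, hence descends to the quotient $\overline{\Gamma}_0\backslash\PSL{2}=\Fiunitan{\Sigma}$ where it coincides with $(g^t)$. For the same reason the left-invariant distributions $E^\alpha$ and $E^\beta$ are $\overline{\Gamma}_0$-invariant and descend to the pair defining $\Lm_\Sigma$. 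It therefore suffices to prove that $R_{a^t}$ preserves $E^\alpha$ and $E^\beta$ on $\PSL{2}$ itself.

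For this I would invoke the general formula that the pushforward under right-translation $R_h$ of a left-invariant vector field $X$ with value $X_e$ at the identity is again left-invariant, with value $\Ad(h^{-1})X_e$. Applying this with $h=a^t$ reduces the whole statement to a one-line computation in $\slR{2}$, namely $\Ad(a^{-t})E=\e^{-t}E$ and $\Ad(a^{-t})F=\e^{t}F$. Consequently $R_{a^t}$ scales the generator $E$ of $E^\alpha$ by $\e^{-t}$ and the generator $F$ of $E^\beta$ by $\e^{t}$, so each of these two line fields is preserved as a distribution; the descended flow therefore preserves $\Lm_\Sigma=(E^\alpha,E^\beta)$, which is the assertion.

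The computations here are entirely routine, so the only point requiring care is the normalization of the geodesic flow, that is, checking that it is indeed right-multiplication by the diagonal subgroup at the stated speed; but this is the standard identification of $\Fiunitan{\Hn{2}}$ with $\PSL{2}$. I would moreover remark that the eigenvalue signs in $\Ad(a^{-t})$ — contraction of $E^\alpha$ and expansion of $E^\beta$ — are precisely what identifies $E^\alpha$ and $E^\beta$ with the stable and unstable distributions, in agreement with the Anosov picture recalled in the introduction.
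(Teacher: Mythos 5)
Your proposal is correct and follows essentially the same route as the paper: the paper's proof likewise identifies the geodesic flow with the right translations $R_{a^{t/2}}$ on $\overline{\Gamma}_0\backslash\PSL{2}$ and observes that the adjoint action of $a^t$ preserves the lines $\R E$ and $\R F$ in $\slR{2}$, so that $\Lm_{\SL{2}}$, and hence $\Lm_\Sigma$, is $(g^t)$-invariant. Your explicit eigenvalue computation $\Ad(a^{-t})E=\e^{-t}E$, $\Ad(a^{-t})F=\e^{t}F$ just makes the paper's ``preserves the lines'' step quantitative, and correctly recovers the stable/unstable identification.
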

\begin{proof}
We recall first that the geodesic flow
of $\Hn{2}$ is conjugated in $\PSL{2}$
to the right translations $R_{a^{t/2}}$,
where
\begin{equation*}\label{equationdefinitionht}
 a^t=
 \begin{bmatrix}
  \e^t & 0 \\
  0 & \e^{-t}
 \end{bmatrix},
\end{equation*}
and that the geodesic flow $(g^t)$ of $\Sigma$ is thus conjugated
to $R_{a^{t/2}}$ on $\overline{\Gamma}_0\backslash\PSL{2}$.
But the adjoint action of $a^t$ 
preserves for any $t$ the lines $\R E$ and $\R F$ in $\slR{2}$,
and $R_{a^t}$ preserves thus $\Lm_{\SL{2}}$, 
showing that $\Lm_\Sigma$ is invariant by the geodesic flow.
\end{proof}

\begin{remark}\label{remarkuniciteLSL2}
 It is in fact not difficult to show that, 
 modulo inversion of its distributions $E^\alpha$ and $E^\beta$,
 $\Lm_{\PSL{2}}$ is 
 the only $\PSL{2}$-invariant path structure of $\PSL{2}$
 which is also $(R_{a^t})$-invariant (see for instance \cite[Lemme 1.1.14]{mmthese}).
 In other words, $\Lm_{\Sigma}$ is the only path structure of $\Fiunitan{\Sigma}$
 invariant by the geodesic flow
 that comes from an invariant path structure on $\PSL{2}$,
 and is in this sense the most natural path structure that we could look at
 on $\Fiunitan{\Sigma}$.
\end{remark}

\subsubsection{A Kleinian path structure}
\label{soussoussectionlsigmaestkleinienne}
The algebraic construction that we made has in fact a natural geometrical counterpart.
$\SL{2}$ can indeed be identified with its only open orbit
\begin{equation*}
Y\coloneqq
  \X\setminus(\Sbetaalpha[e_1,e_2]\cup\Salphabeta[e_3])
\end{equation*}
in $\X$, which makes of $(\Fiunitan{\Sigma},\Lm_{\Sigma})$
the quotient of $Y$ by a discrete subgroup of $\PGL{3}$.
This is a particular instance of what is called
a \emph{Kleinian} path structure,
that is the quotient of an open subset of $\X$
by a discrete subgroup of $\PGL{3}$.
We introduce the following notations:
\begin{equation*}\label{equationdefinitiongzerohatGamma}
o'\coloneqq([1,0,1],[(1,0,1),e_2])\in\X,
 g_0=
 \begin{bmatrix}
  -1 & 0 & 0 \\
  0 & -1 & 0 \\
  0 & 0 & 1
 \end{bmatrix}=j(-\id),
  \hat{\Gamma}=
  \Gamma\cup g_0\Gamma.
\end{equation*}  
\begin{lemma}\label{lemmaproprietesYSL2}
 \begin{enumerate}
  \item $Y$ is the $j(\SL{2})$-orbit of $o'$,
  $j(\SL{2})$ acts simply transitively on $Y$ and 
  $\theta_{o'}\colon h\mapsto h\cdot o'$ is an
  isomorphism of path
structures from $(\SL{2},\Lm_{\SL{2}})$ to $(Y,\Lx\restreinta_Y)$.
\item $\hat{\Gamma}$ is a discrete subgroup of $j(\SL{2})$.
\item $\Lx$ induces a flat path structure on
$\hat{\Gamma}\backslash Y$ which is isomorphic to 
$(\Fiunitan{\Sigma},\Lm_\Sigma)$.
 \end{enumerate}
\end{lemma}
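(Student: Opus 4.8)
The three claims are tightly linked, and the whole argument rests on the first, so the plan is to prove Claim (1) carefully and then deduce (2) and (3) formally. First I would record that $j(\SL{2})$ preserves $Y$: every $j(h)$ fixes $[e_3]$ and stabilizes the line $[e_1,e_2]$, hence preserves both $\Salphabeta[e_3]=\pi_\beta^{-1}([e_3]^*)$ and $\Sbetaalpha[e_1,e_2]=\pi_\alpha^{-1}([e_1,e_2])$, and therefore $Y$; since $o'\in Y$, the orbit $j(\SL{2})\cdot o'$ lies in $Y$. Next I would compute $\Stab_{j(\SL{2})}(o')$ by writing out the two conditions that $j(h)$ fix the point $[1:0:1]$ and the line of $o'$: the first forces $h$ to be upper-triangular unipotent, and the second kills the remaining off-diagonal entry, so the stabilizer is trivial and the orbit map is an immersion. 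To see that the orbit is \emph{all} of $Y$, I would parametrize $(p,D)\in Y$ by $p=[x:y:1]$ (legitimate since $p\notin[e_1,e_2]$) and by the intersection point $[u:v:0]=D\cap[e_1,e_2]$ (which exists and is unique precisely because $[e_3]\notin D$). The key observation is the incidence identity $[e_3]\in D\iff xv-yu=0$, so on $Y$ one has $xv-yu\neq0$, which is exactly the nondegeneracy needed to solve $j(h)\cdot o'=(p,D)$ for a \emph{unique} $h\in\SL{2}$ (the columns of $h$ are $(x,y)$ and a scalar multiple of $(u,v)$, the scalar being fixed by $\det h=1$). This yields both surjectivity onto $Y$ and uniqueness, so $\theta_{o'}$ is a diffeomorphism from $\SL{2}$ onto $Y$ and $j(\SL{2})$ acts simply transitively.

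For the path-structure part of (1) I would exploit the equivariance $\theta_{o'}(gh)=j(g)\cdot\theta_{o'}(h)$, which shows that $\theta_{o'}$ intertwines left translations on $\SL{2}$ with the $j(\SL{2})$-action on $Y$. As $\Lm_{\SL{2}}$ is left-invariant and $\Lx$ is $\PGL{3}$-invariant, it suffices to check that the differential at the identity sends $E$ and $F$ into $\Exalpha$ and $\Exbeta$ at $o'$. I would compute the two one-parameter curves: $t\mapsto j(\exp(tE))\cdot o'$ keeps the point $[1:0:1]$ fixed while moving the line, hence traces the $\alpha$-circle $\Calpha(o')$, so its velocity lies in $\Exalpha$ at $o'$; dually $t\mapsto j(\exp(tF))\cdot o'$ keeps the line of $o'$ fixed while moving the point, tracing $\Cbeta(o')$, so its velocity lies in $\Exbeta$ at $o'$. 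This proves $\theta_{o'}$ is an isomorphism of path structures.

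Claim (2) is then quick: $g_0=j(-\id)$ and $-\id$ is central of order two in $\SL{2}$, so $\hat{\Gamma}=j(\tilde{\Gamma}_0)$ with $\tilde{\Gamma}_0:=\Gamma_0\cdot\{\pm\id\}$ a genuine subgroup of $\SL{2}$, equal to the full preimage of $\overline{\Gamma}_0$ under the covering $\SL{2}\to\PSL{2}$; since $\overline{\Gamma}_0$ is discrete its preimage $\tilde{\Gamma}_0$ is discrete, and as $j$ is an embedding, $\hat{\Gamma}$ is discrete in $j(\SL{2})$. For Claim (3), simple transitivity makes the action of $\hat{\Gamma}$ on $Y$ free, and discreteness makes it properly discontinuous, so $\hat{\Gamma}\backslash Y$ is a manifold carrying the path structure induced by $\Lx\restreinta_Y$; this is flat because the quotient charts take values in the model $(\X,\Lx)$ with transitions in $\PGL{3}$. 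By equivariance, $\theta_{o'}$ descends to an isomorphism of path structures $\tilde{\Gamma}_0\backslash\SL{2}\to\hat{\Gamma}\backslash Y$. Finally, since $\tilde{\Gamma}_0$ contains the kernel $\{\pm\id\}$ of $\SL{2}\to\PSL{2}$ with $\tilde{\Gamma}_0/\{\pm\id\}=\overline{\Gamma}_0$, and this covering is a local isomorphism of path structures, it identifies $\tilde{\Gamma}_0\backslash\SL{2}$ with $\overline{\Gamma}_0\backslash\PSL{2}=\Fiunitan{\Sigma}$ endowed with $\Lm_\Sigma$; composing gives the desired isomorphism $(\hat{\Gamma}\backslash Y,\Lx)\cong(\Fiunitan{\Sigma},\Lm_\Sigma)$.

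The main obstacle is the heart of Claim (1): identifying the orbit with $Y$ through the incidence condition $xv-yu\neq0$, and verifying the two tangent directions. Once simple transitivity and the equivariance of $\theta_{o'}$ are secured, Claims (2) and (3) reduce to bookkeeping about the two-sheeted covering $\SL{2}\to\PSL{2}$ and the central involution $-\id$.
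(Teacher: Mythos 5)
Your proposal is correct and takes essentially the same route as the paper: the paper disposes of claim (1) by citing the ``straightforward calculations'' of \cite[\S 4.2.2]{mionmouton}, which are exactly the orbit--stabilizer and tangent-direction checks you carry out explicitly (your incidence identity $xv-yu\neq0$ being the key nondegeneracy making $\theta_{o'}$ a bijection onto $Y$), and it then deduces (2) from discreteness of $\Gamma$ and (3) from freeness/properness plus the equivariant descent of $\theta_{o'}$ through the covering $\SL{2}\to\PSL{2}$, just as you do. Your only deviation is cosmetic: for (2) you identify $\hat{\Gamma}=j(\tilde{\Gamma}_0)$ with $\tilde{\Gamma}_0$ the full preimage of $\overline{\Gamma}_0$, where the paper argues by a sequence converging to $\id$ -- both are equivalent one-line arguments.
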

\begin{proof}
1. This follows from straightforward calculations
 (detailed for instance in \cite[\S 4.2.2]{mionmouton}). \\
2. Let us assume by contradiction that $\gamma_n\in\hat{\Gamma}$
is a non-stationnary sequence converging to $\id$.
Then either some subsequence of $(\gamma_n)$ is contained in $\Gamma$
or a subsequence of $(g_0^{-1}\gamma_n)$ does.
In both cases this contradicts the discreteness of $\Gamma$ or
the non-stationnary nature of $(\gamma_n)$, 
proving that $\hat{\Gamma}$ is indeed discrete. \\
3. Since $\hat{\Gamma}$ is discrete, it is closed in $j(\SL{2})$,
and the action of $\hat{\Gamma}$ is thus free and proper on $Y$.
Now $\theta_{o'}$ defines an isomorphism from
$\overline{\Gamma}_0\backslash\PSL{2}\simeq\Fiunitan{\Sigma}$ to
$\hat{\Gamma}\backslash Y$, proving our claim.
\end{proof}

\subsection{A first compactification}
\label{soussoussectioncompctificationrevetementindicedeux}
We saw in Proposition \ref{propositiondescriptionensemblelimite}
that for $\gamma_\infty\in\partial_{\infty}\Gamma$,
any subsequence going simply to infinity
of the sequence of finite subwords 
of $\gamma_\infty$
has balanced dynamics with
$\Bplus(\gamma_\infty)$
as attractive bouquet of circles.
Since $\Gamma\subset j(\SL{2})$,
the description of attractive circles in
Lemma \ref{lemmecasmixteX} moreover shows that,
with $p_+(\gamma_\infty)\in[e_1,e_2]$
the attractive point in $\RP{2}$
of the sequence of finite subwords of 
$\gamma_\infty$ 
(see Lemma \ref{lemmemixteRP2}),
we have
\begin{equation}\label{equationdescriptionBplusgammainfty}
 \Calpha^+(\gamma_\infty)=\Calpha(p_+(\gamma_\infty))
\text{~and~} 
\Cbeta^+(\gamma_\infty)=\Cbeta[e_3,p_+(\gamma_\infty)].
\end{equation}
Note that $p_+\colon\partial_\infty\Gamma\to[e_1,e_2]$
is an homeomorphism onto its image.
According to Proposition \ref{propositiondomainefondamentalSchottky},
$\Gamma$ acts freely, properly and cocompactly
on 
\[
\Omega=\X\setminus\Lambda
\text{~with~}
\Lambda=\bigcup_{\gamma_\infty\in\partial_{\infty}\Gamma}\Bplus(\gamma_\infty).
\]
In particular, since
$\Calpha(p_+(\gamma_\infty))\subset\Sbetaalpha[e_1,e_2]$ and
$\Cbeta[p_+(\gamma_\infty),e_3]\subset\Salphabeta[e_3]$
we obtain $Y\subset\Omega$,
wich directly provides us with a first compactification result.
\begin{proposition}\label{propositioncompactificationSL2surGamma}
 $\Gamma\backslash\Omega$ is a path structure compactification
 of the Kleinian structure $\Gamma\backslash Y$, 
 where $\Gamma\backslash Y$ embedds as an open and dense subset.
\end{proposition}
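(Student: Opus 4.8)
The plan is to assemble this statement from the structural results already in hand, the only genuine content being the inclusion $Y\subset\Omega$ recorded just above together with the observation that $\X\setminus Y$ is a union of surfaces. First I would note that $\Gamma\backslash\Omega$ is a closed three-manifold carrying a path structure: by Propositions \ref{propositiondomainefondamentalSchottky} and \ref{propositiondescriptionensemblelimite} the group $\Gamma$ acts freely, properly and cocompactly on the open set $\Omega=\Omega(\Gamma)=\X\setminus\Lambda$, so the quotient is a compact Hausdorff manifold without boundary; and since $\Lx$ is $\PGL{3}$-invariant (Paragraph \ref{soussoussectionobjetsgeomX}), hence $\Gamma$-invariant, it descends to a path structure on $\Gamma\backslash\Omega$ that I continue to denote $\Lx$.

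Next I would produce the open isomorphic embedding. Since $\Gamma\subset j(\SL{2})$ and $Y$ is the unique open $j(\SL{2})$-orbit (Lemma \ref{lemmaproprietesYSL2}), the set $Y$ is $\Gamma$-invariant, and the inclusion $Y\hookrightarrow\Omega$ is $\Gamma$-equivariant; it therefore descends to a continuous map $\bar\iota\colon\Gamma\backslash Y\to\Gamma\backslash\Omega$. Writing $\pi_Y$ and $\pi_\Omega$ for the two open quotient projections, I would check that $\bar\iota$ is injective (two points of $Y$ lie in the same $\Gamma$-orbit of $\Omega$ if and only if they do so in $Y$, because $Y$ is $\Gamma$-stable) and that it is an open map onto its image $\pi_\Omega(Y)$: for $W$ open in $\Gamma\backslash Y$ the set $\pi_Y^{-1}(W)$ is open in $Y$, hence in $\Omega$, so $\pi_\Omega(\pi_Y^{-1}(W))=\bar\iota(W)$ is open. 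Thus $\bar\iota$ is a homeomorphism onto the open subset $\pi_\Omega(Y)\subset\Gamma\backslash\Omega$; as it is the identity on representatives and both quotient path structures are induced by $\Lx$, it is an isomorphism of path structures. Combined with the identification of $(\Gamma\backslash Y,\Lx)$ as the Kleinian structure of the statement (Lemma \ref{lemmaproprietesYSL2}), this realizes $\Gamma\backslash Y$ as an open sub-path-structure of $\Gamma\backslash\Omega$.

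Finally, for density I would use that $\X\setminus Y=\Sbetaalpha[e_1,e_2]\cup\Salphabeta[e_3]$ is a union of two embedded surfaces (the Klein bottles of Paragraph \ref{soussoussectionobjetsgeomX}), hence has empty interior in $\X$; therefore $Y$ is dense in $\X$, and a fortiori in the open set $\Omega$. Since $\pi_\Omega$ is a continuous open surjection it preserves density, so $\pi_\Omega(Y)=\bar\iota(\Gamma\backslash Y)$ is dense in $\Gamma\backslash\Omega$, completing the proof. The only point requiring any care -- the \emph{main obstacle}, such as it is -- is verifying that $\bar\iota$ is an open topological embedding compatible with the path structures rather than a mere continuous injection; everything else reduces to the already-established proper cocompact action on $\Omega$, the inclusion $Y\subset\Omega$, and the codimension-one nature of $\X\setminus Y$.
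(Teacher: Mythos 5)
Your proposal is correct and follows essentially the same route as the paper, whose proof is just the two-sentence observation that the inclusion $Y\subset\Omega$ induces an embedding of path structures of $\Gamma\backslash Y$ into $\Gamma\backslash\Omega$, and that density of $Y$ in $\Omega$ gives density of the image. Your more detailed verification (the quotient being a closed manifold via the free, proper, cocompact action; $\Gamma$-invariance of $Y$ for injectivity; openness of the quotient projections; and the empty interior of $\X\setminus Y$ as a union of two surfaces) simply makes explicit the steps the paper leaves implicit.
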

\begin{proof}
 The inclusion $Y\subset \Omega$ induces an embedding $j$
 of path structures 
 of $\Gamma\backslash Y$ 
 in the closed three-manifold $\Gamma\backslash\Omega$.
 Moreover, $Y$ being dense in $\Omega$, $j(\Gamma\backslash Y)$
 is dense in $\Gamma\backslash\Omega$.
\end{proof}

According to Lemma \ref{lemmaproprietesYSL2},
$\Gamma\backslash\Omega$ is two-sheeted covering of 
$(\Fiunitan{\Sigma},\Lm_{\Sigma})$,
the non-trivial automorphism of this covering being induced
by $g_0$.
A naive way to obtain a compactification of 
$(\Fiunitan{\Sigma},\Lm_{\Sigma})$ should be to
take the quotient of $\Gamma\backslash\Omega$ by $g_0$.
But $g_0$ has a lot of fixed points on $\Omega$: 
it acts trivially on $\Calpha[e_1]\cup\Cbeta[e_1,e_2]$
and on  
$\mathcal{C}=\enstq{(p,D)}{p\in[e_1,e_2],D\ni[e_3]}$,
whose intersections with $\Omega$ are non-empty.
This prevents us from obtaining a smooth
quotient of $\Omega$ by $\Gamma$,
and leads us to consider a covering of $\X$
where $g_0$ will have no fixed points in the preimage of $\Omega$.

\subsection{A journey in a covering of $\X$}
\label{soussectionjourneyXhat}
Natural two-sheeted coverings of $\X$ are given by
the space $\mathbf{P}(\Fitan{\Sn{2}})$ of tangent lines of $\Sn{2}$
and the space $\mathbf{P}^+(\Fitan{\RP{2}})$ of tangent half-lines 
of $\RP{2}$, 
both endowed with natural actions of $\PGL{3}$
and natural path structures given by the pullbacks of 
$\Lx$.
But $g_0$ acts trivially on the $\alpha$-circle
defined by $e_3$ in $\mathbf{P}(\Fitan{\Sn{2}})$,
and on the $\beta$-circle defined by $(e_1,e_2)$ in 
$\mathbf{P}^+(\Fitan{\RP{2}})$,
whose intersections with the preimage of $\Omega$ are non-empty.
Hence these coverings are not enough
and we have to consider the next one, that is
the space 
$\Xhat=\mathbf{P}^+(\Fitan{\Sn{2}})$
of tangent half-lines of $\Sn{2}$.
We can also think to $\Xhat$ as the set of \emph{oriented flags}
$(d,P)$ of $\R^3$, $d$ being an oriented line of $\R^3$ 
contained in an oriented plane $P$.
For 
$(u,v)$ two non-conlinear vectors of $\R^3$, 
we will denote by
$(u,v)$ the plane $\Vect(u,v)$ oriented by its basis $(u,v)$,
and by
$(u,(u,v))$ the corresponding point $(\R^+u,(u,v))$ of $\Xhat$.
Note that $\Xhat$ is diffeomorphic to
$\Fiunitan{\Sn{2}}$.
In particular, $\Xhat$ is orientable and has $\Sn{3}$
as a double-cover.
$\Xhat$ is a four-sheeted covering of $\X$ through the projection
\[
\pi\colon (d,P)\in\Xhat \mapsto ([d],[P])\in\X,
\]
and we endow $\Xhat$ with the path structure
$\Lm_{\Xhat}=\pi^*\Lx$.
The $\alpha$ and $\beta$-leaves of $\Lm_{\Xhat}$
are circles that we denote 
by $\Chatalpha$ and $\Chatbeta$.
For $x\in\X$, 
$\pi^{-1}(\Calpha(x))$ (respectively $\pi^{-1}(\Cbeta(x))$)
is the disjoint union of two $\alpha$-circles in $\Xhat$
(resp. of two $\beta$-circles),
the restriction of $\pi$ to an $\alpha$ or $\beta$-circle 
of $\Xhat$
being a double covering $\Sn{1}\to\RP{1}$.
There is a natural action of $\GL{3}$ on $\Xhat$
and since the projection $g\mapsto[g]$
of $\SL{3}$ in $\PGL{3}$ is an isomorphism
we can define an action of $\PGL{3}$ on $\Xhat$ by the formula
\[
[g]\cdot x\coloneqq g\cdot x \text{~for any~} g\in\SL{3} 
\text{~and~} x\in\Xhat.
\]
This action preserve the orientation of $\Xhat$ and
makes $\pi\colon\Xhat\to\X$ 
equivariant for the respective actions of $\PGL{3}$. 
In particular, $\PGL{3}$ preserves $\Lm_{\Xhat}$.
Observe that the action of the special orthogonal group 
$\SO{3}$ is simply transitive on $\Xhat$.
\par To give a better picture of the covering $\Xhat$,
let us look more closely at the 
surfaces $\Shatalphabeta(x)=\cup_{y\in\Chatalpha(x)}\Chatbeta(y)$
and $\Shatbetaalpha(x)=\cup_{y\in\Chatbeta(x)}\Chatalpha(y)$
for $x\in\Xhat$.
\begin{lemma}\label{lemmesurfaceshatalphabeta}
 For any $x\in\Xhat$, $\Shatalphabeta(x)$ and $\Shatbetaalpha(x)$ are tori. Furthermore, 
 $\Xhat\setminus\Shatalphabeta(x)$ and $\Xhat\setminus\Shatbetaalpha(x)$
 have two connected components.
\end{lemma}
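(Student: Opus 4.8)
The plan is to realise each of the two surfaces as the preimage of a great circle under a natural $\Sn{1}$-fibration of $\Xhat$ over $\Sn{2}$, and to use the oriented-flag description of $\Xhat$ to trivialise that fibration by hand. Recall that a point of $\Xhat$ is an oriented flag $(d,P)$, where $d=\R^+u$ is the ray spanned by a unit vector $u$ and $P$ is an oriented plane containing $d$; such a $P$ is determined by its oriented unit normal $n(P)\in\Sn{2}$ (the vector for which a positively oriented basis of $P$, completed by $n(P)$, is positively oriented in $\R^3$). I would introduce the two smooth maps $\hat{\pi}_\alpha\colon (d,P)\mapsto u\in\Sn{2}$ and $\hat{\pi}_\beta\colon (d,P)\mapsto n(P)\in\Sn{2}$. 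Both are $\Sn{1}$-fibrations: the fibre of $\hat{\pi}_\alpha$ over $u$ is the circle of oriented planes through $\R u$, and the fibre of $\hat{\pi}_\beta$ over $n$ is the circle of rays contained in $n^\bot$.

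Next I would identify the surfaces. Writing $x=(d,P)$ with $d=\R^+u$ and $c=n(P)$, unwinding the definitions gives $\Shatalphabeta(x)=\{(d',P'):u\in P'\}$ and $\Shatbetaalpha(x)=\{(d',P'):d'\subset P\}$, each depending only on $u$, respectively only on $P$. Since $u\in P'\iff\prodscal{u}{n(P')}=0$, and $d'\subset P\iff\prodscal{u'}{c}=0$ where $u'$ is the unit vector of $d'$, these read $\Shatalphabeta(x)=\hat{\pi}_\beta^{-1}(C_u)$ and $\Shatbetaalpha(x)=\hat{\pi}_\alpha^{-1}(C_c)$, where $C_u=u^\bot\cap\Sn{2}$ and $C_c=c^\bot\cap\Sn{2}$ are great circles of $\Sn{2}$.

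To see that these are tori rather than Klein bottles, the key observation is that each of the two fibrations, restricted to its great circle, carries a global section furnished by the fixed datum of $x$. For $\Shatalphabeta(x)$, every plane $P'$ with normal $n\in C_u$ contains $u$, so $\{u,\,n\times u\}$ is an orthonormal basis of $P'$ depending smoothly on $n$, and $(n,\phi)\mapsto\bigl(\R^+(\cos\phi\,u+\sin\phi\,(n\times u)),\,n^\bot\bigr)$ is a diffeomorphism $C_u\times\Sn{1}\to\Shatalphabeta(x)$, the section $\phi=0$ being the constant ray $\R^+u$. Symmetrically, for $\Shatbetaalpha(x)$ the normal $c$ lies in $u'^\bot$ for every $u'\in C_c$, so $\{c,\,u'\times c\}$ is an orthonormal frame of the circle of normals of planes through $\R u'$, yielding a diffeomorphism $C_c\times\Sn{1}\to\Shatbetaalpha(x)$ whose section $\psi=0$ is the constant plane $P$. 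Both surfaces are thus diffeomorphic to $\Sn{1}\times\Sn{1}$.

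For the last assertion I would use that a great circle $C\subset\Sn{2}$ has complement $\Sn{2}\setminus C=D^+\sqcup D^-$, the two open hemispheres, each contractible and in particular connected. Then $\Xhat\setminus\Shatalphabeta(x)=\hat{\pi}_\beta^{-1}(D^+)\sqcup\hat{\pi}_\beta^{-1}(D^-)$ is a disjoint union of two open sets, and each $\hat{\pi}_\beta^{-1}(D^\pm)$ is an $\Sn{1}$-bundle over a contractible base, hence connected; so the complement has exactly two connected components, and likewise for $\Shatbetaalpha(x)$ via $\hat{\pi}_\alpha$. The only genuine subtlety is the torus-versus-Klein-bottle dichotomy, which is settled precisely by the global sections exhibited above; every other step is a direct reading of the oriented-flag picture of $\Xhat$.
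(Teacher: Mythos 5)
Your proof is correct, and its second half is essentially the paper's own argument: the paper also detects the two components of $\Xhat\setminus\Shatbetaalpha(x)$ by projecting to the sphere $\Snhat{2}$ of rays, where the complement of the great circle splits into two hemispheres whose preimages are connected (solid tori). Where you genuinely diverge is in the torus claim and in the bookkeeping. The paper first reduces to a single surface using the dual involution $\kappa$ of \eqref{equationdefinitionkappa} (which swaps $\alpha$-$\beta$ and $\beta$-$\alpha$ surfaces) together with the transitivity of $\PGL{3}$, and then exhibits $\Shatbetaalpha(e_1,e_2)$ as the orbit of the point $(e_1,(e_1,e_2))$ under the two commuting rotation groups $\left(\begin{smallmatrix}A&0\\0&1\end{smallmatrix}\right)$, $\left(\begin{smallmatrix}1&0\\0&B\end{smallmatrix}\right)$, $(A,B)\in\SO{2}^2$, i.e.\ as a homogeneous space of a $2$-torus. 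You instead treat the two surfaces symmetrically, identify each as the restriction of one of the two circle fibrations $\hat{\pi}_\alpha,\hat{\pi}_\beta\colon\Xhat\to\Sn{2}$ to a great circle, and settle the torus-versus-Klein-bottle dichotomy by an explicit global trivialization whose zero section is nothing but the circle $\Chatalpha(x)$ (resp.\ $\Chatbeta(x)$) itself, framed via the cross product. Both routes are sound; the paper's is shorter given the ambient homogeneous structure and the $\Theta$-equivariance of $\kappa$, while yours is self-contained, makes the $\Sn{1}$-bundle structure over $\Sn{2}$ explicit, and has the pedagogical merit of showing exactly where orientations enter: the framing $\{u,\,n\times u\}$ requires the vector $u$ rather than the line $[u]$, which is precisely what fails in $\X$, where the analogous surfaces are Klein bottles (as noted in Paragraph \ref{soussoussectionobjetsgeomX}).
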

\begin{proof}
Since the involution $\kappa$ defined in \eqref{equationdefinitionkappa}
 switches $\alpha$-$\beta$ and $\beta$-$\alpha$ surfaces,
 it is sufficient to prove it for $\Shatbetaalpha(x)$,
 and by transitivity of $\PGL{3}$, it is sufficient to prove it
 for $\Shatbetaalpha(e_1,e_2)=\enstq{(d,P)\in\Xhat}{d\in(e_1,e_2)}$.
 The equality
 \[
 \Shatbetaalpha(e_1,e_2)=\bigcup_{(A,B)\in\SO{2}^2}
 \begin{pmatrix}
  A & 0 \\
  0 & 1
 \end{pmatrix}
 \begin{pmatrix}
  1 & 0 \\
  0 & B
 \end{pmatrix}\cdot (e_1,(e_1,e_2))
 \]
 proves that this surface is a torus.
 Furthermore, 
 $\Xhat\setminus\Shatbetaalpha(e_1,e_2)=
 \enstq{(d,P)\in\Xhat}{d\notin(e_1,e_2)}$
 is disconnected since its projection $\Snhat{2}\setminus(e_1,e_2)$ 
 on $\Snhat{2}$ has two connected components $C_1$ and $C_2$.
 Since $\pi^{-1}(C_1)$ and $\pi^{-1}(C_2)$ are both connected
 (they are in fact solid tori), 
 $\Xhat\setminus\Shatbetaalpha(e_1,e_2)=\pi^{-1}(C_1)\cup\pi^{-1}(C_2)$
 has two connected components.
\end{proof}

The following lemma shows that the subgroup $\hat{\Gamma}$
acts as we wish on $\hat{\Omega}\coloneqq\pi^{-1}(\Omega)$.
We point out related results in \cite[\S 7.2]{stecker_domains_2018}, 
where the authors describe cocompact domains of discontinuity 
for \emph{purely hyperbolic generalized Schottky subgroups} of
$\PSL{2n+1}$ acting on oriented flag spaces.
\begin{lemma}\label{lemmaactionhatGamma}
\begin{enumerate}
 \item $\Gamma$ acts freely, properly and cocompactly on $\hat{\Omega}$.
 \item $g_0$ has no fixed points on $\Xhat$.
 \item $\hat{\Gamma}$ preserves $\Omega$ and $\hat{\Omega}$.
 \item $\hat{\Gamma}$ acts freely and properly on $\hat{\Omega}$.
 \item $M\coloneqq\hat{\Gamma}\backslash\hat{\Omega}$ is an
orientable and compact three-dimensional manifold.
\end{enumerate}
\end{lemma}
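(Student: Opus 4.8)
The plan is to derive claims (1), (3)--(5) from the already established good action of $\Gamma$ on $\Omega$ (Proposition \ref{propositiondomainefondamentalSchottky}) by transporting everything through the equivariant four-sheeted covering $\pi\colon\hat\Omega\to\Omega$, the only genuinely new ingredient being the fixed-point computation of claim (2). Throughout I would use that $\pi$ intertwines the $\PGL{3}$-actions, so $\pi(\hat\gamma\cdot\hat x)=\hat\gamma\cdot\pi(\hat x)$ for all $\hat\gamma\in\hat\Gamma$, and that $\pi$ is proper (a covering of compact manifolds), so preimages of compact sets are compact.

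For claim (1) I would check each property descends along $\pi$. \emph{Freeness:} a fixed point $\hat x$ of $\gamma\in\Gamma$ projects to a fixed point $\pi(\hat x)\in\Omega$, forcing $\gamma=\id$ by freeness on $\Omega$. \emph{Properness:} for compact $\hat K\subset\hat\Omega$ the set $\{\gamma:\gamma\hat K\cap\hat K\neq\varnothing\}$ is contained in $\{\gamma:\gamma\pi(\hat K)\cap\pi(\hat K)\neq\varnothing\}$, which is finite by properness on $\Omega$ and Lemma \ref{lemmepropretedynamiquementrelie}. \emph{Cocompactness:} if $\overline U$ is the closure of the fundamental domain $\X\setminus\bigcup_i(H_i^-\cup H_i^+)$ of Corollary \ref{corollarySchottky}, then $\overline U\subset\Omega$ (take the identity in property (b)), so $\pi^{-1}(\overline U)\subset\hat\Omega$ is compact and satisfies $\Gamma\cdot\pi^{-1}(\overline U)=\pi^{-1}(\Gamma\cdot\overline U)=\hat\Omega$.

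Claim (2) is the crux, and is the very reason for climbing up to the oriented flag space $\Xhat$ (it is exactly the defect that ruled out the two intermediate two-sheeted covers discussed above). Writing $g_0=\Diag(-1,-1,1)\in\SL{3}$, a fixed oriented flag $(d,P)$ would require an oriented line $d=\R^+u$ with $g_0u=\lambda u$, $\lambda>0$; since the only positive eigenvalue of $g_0$ is $1$, with eigenline $[e_3]$, the line $d$ must be one of the two orientations of $[e_3]$. Any plane $P\supset d$ then contains $e_3$ and meets $\Vect(e_1,e_2)$ in a line $\Vect(w)$, and in the basis $(e_3,w)$ the restriction $g_0|_P$ has matrix $\Diag(1,-1)$, of determinant $-1$. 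Thus $g_0$ reverses the orientation of every such $P$, so no oriented flag is fixed and $g_0$ acts without fixed points on $\Xhat$.

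For claim (3), since $-\id$ is central in $\SL{2}$, the element $g_0=j(-\id)$ centralizes $\Gamma=j(\Gamma_0)$, hence normalizes it and preserves the intrinsically defined $\Omega(\Gamma)$; concretely, $g_0$ fixes $[e_1,e_2]$ pointwise and fixes $[e_3]$, so by \eqref{equationdescriptionBplusgammainfty} it preserves each bouquet $\Calpha(p_+)\cup\Cbeta[e_3,p_+]$, hence $\Omega$, and $\hat\Omega=\pi^{-1}(\Omega)$ by equivariance. For claim (4), the relation $g_0^2=j((-\id)^2)=\id$ gives $(g_0\gamma)^2=\gamma^2$ for $\gamma\in\Gamma$; if $g_0\gamma$ fixed $\hat x\in\hat\Omega$, then $\gamma^2$ would fix $\hat x$, whence $\gamma^2=\id$ by claim (1), whence $\gamma=\id$ as $\Gamma$ is free (torsion-free), and then $g_0$ would fix $\hat x$, contradicting claim (2); together with freeness of $\Gamma$ this proves $\hat\Gamma$ acts freely. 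Properness of $\hat\Gamma$ follows from that of its index-two subgroup $\Gamma$ by applying the criterion to $\hat K\cup g_0\hat K$ (using $g_0=g_0^{-1}$). Finally, for claim (5), the free proper action makes $M=\hat\Gamma\backslash\hat\Omega$ a three-manifold; it is compact as a further quotient of the compact $\Gamma\backslash\hat\Omega$ by $\hat\Gamma/\Gamma\cong\Z/2\Z$, and orientable since $\hat\Gamma\subset\PGL{3}$ acts by orientation-preserving diffeomorphisms of the orientable manifold $\Xhat$. The main obstacle is claim (2) and the torsion/orientation bookkeeping feeding into claim (4); the remaining items are routine transport along $\pi$.
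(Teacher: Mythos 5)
Your proposal is correct, and its overall architecture coincides with the paper's: transport freeness, properness and cocompactness through the $\PGL{3}$-equivariant finite covering $\pi\colon\Xhat\to\X$, reduce the fixed-point analysis of $g_0$ on $\Xhat$ to the fixed directions $\pm e_3$ on $\Sn{2}$, use that $g_0$ fixes $[e_3]$ and acts trivially on $[e_1,e_2]$ to stabilize each bouquet $\Bplus(\gamma_\infty)$ and hence $\Omega$, and conclude orientability from the orientation-preserving action of $\PGL{3}$ on $\Xhat$. Two steps differ in execution, both validly. For claim (2), where you find the fixed oriented line must be $\R^+(\pm e_3)$ and then note that $g_0$ preserves every plane $P\ni e_3$ setwise while $g_0\restreinta_P=\Diag(1,-1)$ in a basis $(e_3,w)$ reverses its orientation, the paper instead identifies the $g_0$-action on the circles $\Chatalpha(\pm e_3)$ with $-\id$ on $\mathbf{P}^+(\R^2)$ (the antipodal map); these are the same computation in two guises. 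More substantively, for the freeness of $\hat{\Gamma}$ in claim (4) the paper runs a dynamical argument: from $g_0\gamma\cdot x=x$ it deduces $\gamma^{2n}\cdot\pi(x)=\pi(x)$ for all $n$ and derives a contradiction with the north--south dynamics of $(\gamma^{2n})$ toward its attractive bouquet in $\X\setminus\Omega$, whereas you simply invoke your claim (1) ($\gamma^2$ fixing a point of $\hat{\Omega}$ forces $\gamma^2=\id$), torsion-freeness of the free group $\Gamma$, and claim (2); your algebraic shortcut is shorter and avoids re-invoking the limit-set dynamics, at the harmless cost of leaning on freeness of $\Gamma$ on $\hat{\Omega}$, which both you and the paper have already established. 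Your properness criterion via $\hat{K}\cup g_0\hat{K}$ for the index-two extension is a routine variant of the paper's subsequence argument. No gaps.
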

\begin{proof}
1. Since $\pi$ is a $\Gamma$-equivariant covering,
$\Gamma$ acts as freely and properly on $\hat{\Omega}$ as it does on $\Omega$.
The covering
$\bar{\pi}\colon\Gamma\backslash\hat{\Omega}\to\Gamma\backslash\Omega$ 
induced by $\pi$ having finite fibers and $\Gamma\backslash\Omega$ being 
compact,
$\Gamma\backslash\hat{\Omega}$ is compact as well. \\
2. The only fixed points of the action of $g_0$ on $\Sn{2}$
are $e_3$ and $-e_3$,
so that fixed points of $g_0$ on $\Xhat$ are in 
$\Chatalpha(e_3)\cup\Chatalpha(-e_3)$.
But for $p=e_3$ or $-e_3$, the action of $g_0$ on $\Chatalpha(p)$
is conjugated to the action of $-\id$ on $\mathbf{P}^+(\R^2)$
and has thus no fixed point. \\
3. We saw in Paragraph 
\ref{soussoussectioncompctificationrevetementindicedeux} 
that the attractive bouquet of 
any $\gamma_\infty\in\partial_{\infty}\Gamma$ is of the form 
$\Bplus(\gamma_\infty)=\Calpha(p_+)\cup\Cbeta(D_+)$ 
with $p_+\in[e_1,e_2]$ and $[e_3]\in D_+$.
Since $g_0$ fixes $[e_3]$ and acts trivially on $[e_1,e_2]$
it stabilizes $\Bplus(\gamma_\infty)$
and stabilizes thus
$\Omega=\X\setminus\bigcup_{\delta_\infty\in\partial_{\infty}\Gamma}\Bplus(\delta_\infty)$
according to Proposition \ref{propositiondescriptionensemblelimite}.
Therefore, $\hat{\Gamma}$ stabilizes $\Omega$
and thus $\hat{\Omega}$ since $\pi$ is $\PGL{3}$-equivariant. \\
4. Since $\Gamma$ acts freely on $\hat{\Omega}$,
we only need to show that for any $\gamma\in\Gamma$, 
$g_0\gamma\neq\id$ has no fixed point on $\hat{\Omega}$
to prove that the action is free.
Let us assume by contradiction that $g_0\gamma\cdot x=x$
with $x\in\hat{\Omega}$.
Then for any $x\in\N$, $(g_0\gamma)^{2n}\cdot x=x$.
But $(g_0\gamma)^{2n}=\gamma^{2n}$
because $\gamma$ commutes with $g_0$ which is of order two,
hence $\gamma^{2n}\cdot x=x$.
Since $\pi(x)\in\Omega$ is outside the repulsive bouquet of circles of 
$(\gamma^{2n})$
(equal to $\Bplus(\gamma_\infty)$ with 
$\gamma_\infty=\gamma^{-2}\gamma^{-2}\gamma^{-2}\dots\in\partial_{\infty}\Gamma$),
some subsequence of 
$\gamma^{2n}\cdot\pi(x)$ converges
to a point of the attractive bouquet of circles of $(\gamma^{2n})$,
hence to $\X\setminus\Omega$.
This contradicts
$\gamma^{2n}\cdot\pi(x)=\pi(\gamma^{2n}\cdot x)=\pi(x)\in\Omega$
and shows that the action is free.
Passing to a subsequence and precomposing by $g_0^{-1}$, 
for any $\gamma_n\in\hat{\Gamma}$ going to infinity 
we can assume that $(\gamma_n)$ is contained in $\Gamma$.
Now for $(x_n)$ a converging sequence of $\hat{\Omega}$,
$(\gamma_n\cdot x_n)$ is not relatively compact
by property of the action of $\Gamma$
on $\hat{\Omega}$,
showing that the action of $\hat{\Gamma}$ is proper on 
$\hat{\Omega}$. \\
5. Since $\Xhat$ is compact and $\hat{\Gamma}\subset\PGL{3}$
preserves its orientation, $M$ is orientable
and compact as the image of the compact space 
$\Gamma\backslash\hat{\Omega}$
by the continuous projection
$\Gamma\cdot x\mapsto \hat{\Gamma}\cdot x$.
\end{proof}

\subsection{Compactification of the geodesic flow and 
proof of Theorem \ref{theoremintrosurfacehyperboliquedynamiqueflotgeod}}
We now come back to the path structure 
$(\Fiunitan{\Sigma},\Lm_\Sigma)$,
and denoting by $(g^t)$ its geodesic flow,
we describe the compactification of
$(\Fiunitan{\Sigma},\Lm_{\Sigma},g^t)$.

\subsubsection{Geometry of the compactification}
We denote by 
\[
 \Pi\colon\hat{\Omega}\to M=\hat{\Gamma}\backslash\hat{\Omega}
\]
the canonical projection,
and by $\Lm$ the path structure
of $M$ induced by $\Lm_{\Xhat}$.
We introduce
\[
\psi^t\coloneqq j(\e^t\id)=
\begin{bmatrix}
 \e^t & 0 & 0 \\
 0 & \e^t & 0 \\
 0 & 0 & 1
\end{bmatrix}.
\]
This is a flow of unbalanced type $\beta$ whose 
repulsive and attractive objects in $\X$
as described in Lemma \ref{lemmerepulsifX}
are denoted by
$\Calpha^-=\Calpha[e_3]$, $\Salphabeta^-=\Salphabeta[e_3]$,
$\Cbeta^+=\Cbeta[e_1,e_2]$ and $\Sbetaalpha^+=\Sbetaalpha[e_1,e_2]$.
We also introduce the circle
$\mathcal{C}=\enstq{(p,D)}{p\in[e_1,e_2],D\ni[e_3]}$ of $\X$.
We now define:
\begin{gather*}
 \Falphamoins=\Pi(\pi^{-1}(\Calpha^-\cap\Omega)),
 \Halphabetamoins=\Pi(\pi^{-1}(\Salphabeta^-\cap\Omega)),
 \Fbetaplus=\Pi(\pi^{-1}(\Cbeta^+\cap\Omega)),
 \Hbetaalphaplus=\Pi(\pi^{-1}(\Sbetaalpha^+\cap\Omega)), \\
 \Delta=\Pi(\pi^{-1}(\mathcal{C}\cap\Omega)).
\end{gather*}
$\Gamma$ acts freely, properly and cocompactly
on $[e_1,e_2]\setminus p_+(\partial_\infty\Gamma)$, and
$\Gamma\backslash([e_1,e_2]\setminus p_+(\partial_\infty\Gamma))$
is thus a finite union of circles.
We denote by $b\in\N^*$ the number of connected components
of this quotient,
which is the number of boundary components
of the topological compact surface with boundary whose interior is 
homeomorphic to $\Sigma$,
that is the number of funnels of the hyperbolic surface $\Sigma$
(considering the case of $d=1$ 
generators, that is the case where $\Sigma$ is a hyperbolic cylinder,
can be useful to understand these equalities).
\begin{proposition}\label{propositioncompactificationetflot}
\begin{enumerate}
 \item $\Falphamoins$ (respectively $\Fbetaplus$, 
 respectively $\Delta$)
 is the disjoint union of $b$ pairs of disjoint circles 
 $\{\Falphamoins_i\}_{i=1}^b$ 
 (resp. $\Fbetaplus_i$, resp. $\Delta_i$).
 $\Halphabetamoins$ (resp. $\Hbetaalphaplus$) is
 the disjoint union of $b$ tori $\{\Halphabetamoins_i\}_{i=1}^b$
 (resp. $\Hbetaalphaplus_i$). 
 Furthermore
 $\Falphamoins$, $\Fbetaplus$ and $\Delta$ are pairwise disjoint,
 $\Falphamoins_i\subset\Halphabetamoins_i$,
 $\Fbetaplus_i\subset\Hbetaalphaplus_i$ 
 and $\Halphabetamoins_i\cap\Hbetaalphaplus_i=\Delta_i$
 for each $i$,
 and 
 $\Halphabetamoins_i\cap\Hbetaalphaplus_j=\varnothing$ for any $i\neq j$.
\item There exists in $(M,\Lm)$ 
 four disjoint open sets $\{N_j\}_{j=1}^4$
 isomorphic to $(\Fiunitan{\Sigma},\Lm_\Sigma)$.
 Furthermore, $M\setminus\sqcup_{j=1}^4 N_j=
 \Halphabetamoins\cup\Hbetaalphaplus$.
 \item The flow $(\psi^t)$ defines on $M$ 
 a flow $(\varphi^t)$ of automorphisms
 of $\Lm$, conjugated on each of the $N_j$ to $(g^{2t})$,
 where $(g^t)$ denotes the geodesic flow of $\Fiunitan{\Sigma}$.
\end{enumerate}
\end{proposition}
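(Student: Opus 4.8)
The plan is to transport everything to the four-sheeted cover $\Xhat$ and to read off all three assertions from the geometry of the two $\psi^t$-invariant Klein bottles $\Salphabeta^-=\Salphabeta[e_3]$ and $\Sbetaalpha^+=\Sbetaalpha[e_1,e_2]$. First I would record the identifications $Y=\X\setminus(\Salphabeta^-\cup\Sbetaalpha^+)$ and $\mathcal{C}=\Salphabeta^-\cap\Sbetaalpha^+$, together with the observation from \eqref{equationdescriptionBplusgammainfty} that the part of $\Lambda$ lying in $\Salphabeta^-$ is exactly the union of the $\beta$-circles $\Cbeta[e_3,q]$ for $q\in p_+(\partial_{\infty}\Gamma)$. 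Thus $\Salphabeta^-\cap\Omega$ fibers over $[e_1,e_2]\setminus p_+(\partial_{\infty}\Gamma)$ with $\beta$-circle fibers (dually for $\Sbetaalpha^+\cap\Omega$), and $\Calpha^-$, $\Cbeta^+$, $\mathcal{C}$ are respectively a section, a section, and the common diagonal of these fibrations. Since $\psi^t=j(\e^t\id)$ fixes $[e_3]$ and acts trivially on $[e_1,e_2]$, it preserves $\Salphabeta^-$ and $\Sbetaalpha^+$, hence $Y$ and each $\Bplus(\gamma_\infty)$; so it preserves $\Omega$ and $\hat{\Omega}$ and descends to $M$.

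For the four copies (Claim~2) I would decompose $\pi^{-1}(Y)$ by two locally constant signs. A lift $(d,P)$ over $Y$ satisfies $e_3\notin P$ and $d\notin\{x_3=0\}$, so $\mathrm{sign}(d_z)$ and the sign of the projection $\mathrm{pr}\colon P\to\{x_3=0\}$ (orientation preserving or reversing) are well defined and locally constant; they separate the four sheets over each point of $Y$, whence $\pi^{-1}(Y)=Y_{++}\sqcup Y_{+-}\sqcup Y_{-+}\sqcup Y_{--}$, four components each mapped diffeomorphically onto $Y$ by $\pi$. A short computation with the block forms of $j(h)$ ($h\in\SL{2}$, $\det h=1$) and of $g_0=j(-\id)$ ($\det(-\id)=1$, bottom-right entry $+1$) shows both signs are $\hat{\Gamma}$-invariant, so $\hat{\Gamma}$ preserves each $Y_{\pm\pm}$. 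Then each $N_j:=\Pi(Y_{\pm\pm})=\hat{\Gamma}\backslash Y_{\pm\pm}$ is, through the path-structure isomorphism $\pi\colon(Y_{\pm\pm},\Lm_{\Xhat})\to(Y,\Lx)$ and Lemma~\ref{lemmaproprietesYSL2}, isomorphic to $(\Fiunitan{\Sigma},\Lm_\Sigma)$; the four are disjoint since $\hat{\Gamma}$ cannot merge distinct components, and $M\setminus\sqcup_j N_j=\Pi(\pi^{-1}((\Salphabeta^-\cup\Sbetaalpha^+)\cap\Omega))=\Halphabetamoins\cup\Hbetaalphaplus$.

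For the flow (Claim~3) the key point is that $\psi^t$ centralizes $j(\SL{2})$ in $\PGL{3}$, as $\e^t\id$ is central in $\GL{2}$; hence $(\psi^t)$ commutes with $\hat{\Gamma}$ and descends to a flow $(\varphi^t)$ of automorphisms of $\Lm$ (automorphisms because $\PGL{3}$ preserves $\Lm_{\Xhat}=\pi^*\Lx$). Under $\theta_{o'}\colon\SL{2}\to Y$ the centralizing property turns $\psi^t$ into a right translation: $\psi^t\theta_{o'}(h)=j(h)\psi^t o'=\theta_{o'}(h\,c(t))$, where $\psi^t o'=j(c(t))o'$. A one-line computation with $o'=([1,0,1],[(1,0,1),e_2])$ gives $c(t)=\Diag(\e^t,\e^{-t})$, and since the geodesic flow is right translation by $a^{t/2}=\Diag(\e^{t/2},\e^{-t/2})$ (Lemma~\ref{lemmaLSigmainvarianteflotgeodesique}), $\psi^t$ is conjugate to $g^{2t}$ on $Y$, hence on each $N_j$.

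The remaining bookkeeping, Claim~1, is where I expect the real work. I would compute the preimages in $\Xhat$ of the five objects and their $\hat{\Gamma}$-quotients. Because $j(h)$ and $g_0$ fix $\pm e_3$, the preimage $\pi^{-1}(\Calpha^-)$ is the two disjoint $\alpha$-circles $\Chatalpha(e_3),\Chatalpha(-e_3)$, each preserved by $\hat{\Gamma}$ and identified with the circle $\mathbf{P}^{+}(\R^2)$ of oriented planes through $e_3$, on which $g_0$ acts as the antipodal (deck) map; hence $\hat{\Gamma}\backslash(\Chatalpha(\pm e_3)\cap\hat{\Omega})\cong\overline{\Gamma}_0\backslash([e_1,e_2]\setminus p_+(\partial_{\infty}\Gamma))$ is $b$ circles, giving the $b$ pairs for $\Falphamoins$; the same count applied to the single torus $\pi^{-1}(\Salphabeta^-)$ yields $b$ tori for $\Halphabetamoins$, with $\Falphamoins_i\subset\Halphabetamoins_i$ since $\Calpha^-$ is a section of $\Salphabeta^-\to[e_1,e_2]$. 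The subtle case is $\Delta$: $\pi^{-1}(\mathcal{C})$ is again two circles, but now $g_0$ \emph{exchanges} them, so $\hat{\Gamma}\backslash\pi^{-1}(\mathcal{C}\cap\Omega)\cong\Gamma\backslash(\text{one circle})$, and here the positivity of the eigenvalues of the $h_i$ is essential: it forces the orientation double cover $\mathbf{P}^{+}(\R^2)\to[e_1,e_2]$ to be trivial over each funnel circle (the hyperbolic generators fix their oriented eigendirections), so this quotient is again $2b$ circles. The incidences $\Halphabetamoins_i\cap\Hbetaalphaplus_i=\Delta_i$ and $\Halphabetamoins_i\cap\Hbetaalphaplus_j=\varnothing$ for $i\neq j$ then follow from $\Salphabeta^-\cap\Sbetaalpha^+=\mathcal{C}$ and the funnel-wise organization of all fibrations, while orientability of $M$ (Lemma~\ref{lemmaactionhatGamma}) guarantees the quotient surfaces are tori rather than Klein bottles. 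The main obstacle throughout is exactly this orientation and monodromy accounting in $\Xhat$ — in particular tracking whether $g_0$ preserves or exchanges the components of each preimage, which is precisely what separates the circle-count of $\Falphamoins$ from that of $\Delta$.
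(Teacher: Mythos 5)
Your treatment of claims 2 and 3 is correct and is essentially the paper's own proof: the paper obtains the four copies as the $j(\SL{2})$-orbits of the four lifts $\hat{o}_i$ of $o'$, and your two locally constant signs are precisely a justification of the paper's (unjustified) assertion that these lifts lie in pairwise distinct orbits; likewise the paper proves the conjugacy with $(g^{2t})$ via the relation $j(ga^t)\cdot o'=\psi^t\cdot (j(g)\cdot o')$, which is exactly your computation $c(t)=\Diag(\e^t,\e^{-t})=a^t$.

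In claim 1, however, your $g_0$-bookkeeping contains a genuine error. The action of $g_0$ on $\Xhat$ is defined through its unique $\SL{3}$-representative $\Diag(-1,-1,1)$, so for $v$ in the plane $(e_1,e_2)$ one has $g_0\cdot(\R^+v,(v,\varepsilon e_3))=(\R^+(-v),(-v,\varepsilon e_3))$. Since the monodromy of $\pi$ around $\mathcal{C}$ is precisely $(\R^+v,(v,\varepsilon e_3))\mapsto(\R^+(-v),(-v,\varepsilon e_3))$, the two circles of $\pi^{-1}(\mathcal{C})$ are distinguished by the sign $\varepsilon$ for which $P$ is oriented by $(d,\varepsilon e_3)$, an invariant preserved by all of $j(\SL{2})\supset\hat{\Gamma}$. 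Hence $g_0$ \emph{preserves} each of the two circles and acts on each as the deck involution of its double cover of $\mathcal{C}$ --- exactly as in your (correct) treatment of $\Falphamoins$; it does not exchange them. (The exchange you describe is the action of the determinant $-1$ matrix $\Diag(1,1,-1)$, which is not the representative defining the action.) So the dichotomy you draw between $\Falphamoins$ and $\Delta$ is spurious: all three circle counts follow from the identical deck-involution mechanism, giving $2b$ circles each with no appeal to the positivity of the eigenvalues of the $h_i$ --- and this is how the paper argues, uniformly for $\Falphamoins$, $\Fbetaplus$ and $\Delta$ (four components above each interval $I_k$, two $g_0$-orbits among them). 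Where $g_0$ genuinely does exchange components is one level up: above each cylinder $\cup_{x\in I_k}\Cbeta[x,e_3]$ of $\Salphabeta^-\cap\Omega$, the $\beta$-circle monodromy $(d,P)\mapsto(-d,P)$ merges the four sheets into two cylinders distinguished by the absolute orientation of $P$, which $g_0$ reverses while fixing the base pointwise. This is exactly why $\Halphabetamoins$ has $b$ components while $\Falphamoins$ has $2b$ (the paper phrases this as: the preimage of an $\alpha$-$\beta$ surface in $\Xhat$ is connected while that of an $\alpha$-circle has two components), so ``the same count'' as for $\Falphamoins$ is not what yields the $b$ tori.

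A second flawed step is the inference ``orientability of $M$ guarantees the quotient surfaces are tori'': a closed orientable three-manifold can contain an embedded (one-sided) Klein bottle, for instance the double of the orientable twisted interval-bundle over the Klein bottle. The paper argues instead that $\Salphabeta^-\cap\Omega$ is a union of orientable cylinders whose orientation is preserved by all the identifications, so that each component of $\Halphabetamoins$ is the quotient of a cylinder by an orientation-preserving free $\Z$-action, hence a torus. Both gaps are repairable and your final counts ($b$ pairs of circles, $b$ tori, and the incidence relations) agree with the proposition, but as written the mechanism for $\Delta$ and the torus justification are wrong.
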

We emphasize that $(\varphi^t)$ is conjugated to $(g^{2t})$
and not to $(g^t)$.
We will however consider $(\varphi^t)$ rather than 
$(\varphi^{\frac{t}{2}})$ which would be quite inconvenient.
\begin{proof}[Proof or Proposition \ref{propositioncompactificationetflot}]
1. We first emphasize that $\Calpha^-$, $\Salphabeta^-$,
$\Cbeta^+$, $\Sbetaalpha^+$ and $\mathcal{C}$ are 
$\Stab[e_3]\cap\Stab[e_1,e_2]=j(\GL{2})$-invariant,
and thus $\hat{\Gamma}$-invariant.
Now, $\pi$ being $\PGL{3}$-equivariant
and $\hat{\Omega}$ being $\hat{\Gamma}$-invariant,
$\pi^{-1}(\Calpha^-\cap\Omega)$,
$\pi^{-1}(\Salphabeta^-\cap\Omega)$,
$\pi^{-1}(\Cbeta^+\cap\Omega)$,
$\pi^{-1}(\Sbetaalpha^+\cap\Omega)$
and $\pi^{-1}(\mathcal{C}\cap\Omega)$
are $\hat{\Gamma}$-invariant.
These are closed subsets of $\Omega$,
and their projections by $\Pi$ are thus closed in $M$,
hence compact,
which already proves that $\Falphamoins$, $\Fbetaplus$ and 
$\Delta$ are finite unions of circles.
For any connected component $I_k$ of 
$\Cbeta^+\cap\Omega$, 
$\pi^{-1}(I_k)$ has four connected components
and $g_0$ preserves $\pi^{-1}(I_k)$ and 
has two orbits on its space of connected components.
Since $\hat{\Gamma}=\langle\Gamma,g_0\rangle$,
this shows that the space of connected components of
$\Fbetaplus=\hat{\Gamma}\backslash\pi^{-1}(\Cbeta^+\cap\Omega)$
surjects with a fiber of cardinal two
onto the one of $\Gamma\backslash(\Cbeta^+\cap\Omega)$.
The same happens between $\Fbetaplus$ 
(respectively $\Delta$)
and $\Gamma\backslash(\Cbeta^+\cap\Omega)$
(resp. $\Gamma\backslash(\mathcal{C}\cap\Omega)$).
But $\Gamma\backslash(\Calpha^-\cap\Omega)$,
$\Gamma\backslash(\Cbeta^+\cap\Omega)$
and $\Gamma\backslash(\mathcal{C}\cap\Omega)$
have the same number of connected components
than $\Gamma\backslash([e_1,e_2]\setminus p_+(\partial_\infty\Gamma))$,
that is $b$,
which proves the claim concerning $\Falphamoins$, $\Fbetaplus$
and $\Delta$.
\par Since the compact surfaces $\Halphabetamoins$ and $\Hbetaalphaplus$ 
bear smooth one-dimensional distributions, they have Euler characteristic equal to zero
according to Poincar\'e-Hopf Theorem,
and we only have to check that they are indeed orientable to prove
that they are finite unions of tori.
We saw in Paragraph \ref{soussoussectioncompctificationrevetementindicedeux}
that
$\Lambda=\cup_{\gamma_\infty\in\partial_{\infty}\Gamma}
\Calpha(p_+(\gamma_\infty))\cup\Cbeta[p_+(\gamma_\infty),e_3]$
where $p_+(\gamma_\infty)\in[e_1,e_2]$,
and thus
$\Calpha^-\cap\Omega=\cup_k I_k$ with $\{I_k\}$ a collection
of disjoint intervals in the circle $\Calpha^-$.
Therefore $\Salphabeta^-\cap\Omega=\cup_k\cup_{x\in I_k}\Cbeta[x]$ 
is a union of cylinders, is thus orientable,
and $\pi^{-1}(\Salphabeta^-\cap\Omega)$ is orientable as well.
Since the action of $\PGL{3}$ 
preserves the orientation of these cylinders, their projections
in $M$ are orientable compact surfaces of Euler characteristic zero,
that is tori.
Finally $\Halphabetamoins$ is a finite union of tori,
and the same holds for $\Hbetaalphaplus$ for the same reasons.
The fact that the
number of connected components of $\Halphabetamoins$
(respectively $\Hbetaalphaplus$)
is half of the one of $\Falphamoins$ (resp. $\Fbetaplus$)
is deduced from the fact that for
any $x\in\X$, the preimage of $\Salphabeta(x)$
(resp. $\Sbetaalpha(x)$) in $\Xhat$ is connected,
whereas the preimage of $\Calpha(x)$
(resp. $\Cbeta(x)$) has two connected components.
\par The last claim directly follows from 
the fact that $\Calpha^-$, $\Cbeta^+$ and $\mathcal{C}$
are pairwise disjoint, 
and that $\Salphabeta^-\cap\Sbetaalpha^+=\mathcal{C}$. \\
2. We recall that $o'=([1,0,1],[(1,0,1),e_2])\in Y$
and we denote $\pi^{-1}(o')=\{\hat{o}_i\}_{i=1,\dots,4}$.
For any $i\neq k$, $\hat{o}_i$ and $\hat{o}_k$ are not in the same
$j(\SL{2})$-orbit.
Since $j(\SL{2})$ acts freely at $\hat{o}_i$,
the formula $\iota_i(g\cdot o')=g\cdot\hat{o}_i$
for any $g\in j(\SL{2})$ defines a
map $\iota_i\colon Y\to\hat{\Omega}$
descending for each $i=1,\dots,4$ to an embedding
$\bar{\iota}_i$ of 
$\hat{\Gamma}\backslash Y\simeq\Fiunitan{\Sigma}$
in the compact path structure $M$.
The images $N_i=\bar{\iota}_i(\hat{\Gamma}\backslash Y)$
of these embeddings
are disjoint as projections in $M$ of distinct
orbits of $j(\SL{2})$ and
the equality $M\setminus\cup_i N_i=\Halphabetamoins\cup\Hbetaalphaplus$ 
directly follows from 
$\X\setminus Y=\Salphabeta^-\cup\Sbetaalpha^+$. \\
3. We saw in Lemma \ref{lemmaLSigmainvarianteflotgeodesique}
that the geodesic flow of $\Sigma$ is conjugated
to $(R_{a^{t/2}})$ on $\bar{\Gamma}_0\backslash\PSL{2}$,
and the relation $j(ga^t)\cdot o'=j(\e^t\id)\cdot(j(g)\cdot o')$
for any $t\in\R$ and $g\in\SL{2}$ shows that 
$(R_{a^t})$ is itself conjugated in $Y$ to $(\psi^t)$.
Since $(\psi^t)$ acts trivially on $[e_1,e_2]$ and fixes $[e_3]$,
it preserves $\Omega$, and hence $\hat{\Omega}$.
Since $(\psi^t)$ commutes with $g_0$ and all the $g_i$, it
descends to
a flow of automorphisms of $M$ 
that we denote by $(\varphi^t)$,
conjugated to $(g^{2t})$
on each $N_i$.
\end{proof}

\subsubsection{Dynamics at infinity of the geodesic flow}\label{soussoussectiondynamiqueflotcompactifie}
We now describe the dynamics of $(\varphi^t)$ on $M$. 
The set of fixed points of $(\psi^t)$ on $\Sn{2}$
being $\{e_3\}\cup(e_1,e_2)$,
$\pi^{-1}(\Calpha^-\cup\Cbeta^+\cup\mathcal{C})$
is the set of fixed points of $(\psi^t)$ on $\Xhat$,
and each point of 
$\Falphamoins\cup\Fbetaplus\cup\Delta$
is thus a fixed point of $(\varphi^t)$.
\begin{lemma}\label{propositionpointsfixesvarphit}
 The set of fixed points of $(\varphi^t)$
 is precisely $\Falphamoins\cup\Fbetaplus\cup\Delta$.
\end{lemma}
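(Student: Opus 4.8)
The inclusion of $\Falphamoins\cup\Fbetaplus\cup\Delta$ into the fixed-point set of $(\varphi^t)$, together with the identification of the fixed locus of $(\psi^t)$ on $\Xhat$ with $\pi^{-1}(\Calpha^-\cup\Cbeta^+\cup\mathcal{C})$, is already recorded in the discussion preceding the statement. The plan is therefore to establish only the reverse inclusion, and the key observation I would use is that a \emph{common} fixed point of the flow on $M$ must lift to a genuine fixed point of $(\psi^t)$ on $\hat{\Omega}$.

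Concretely, I would write a fixed point as $\bar{x}=\Pi(\hat{x})$ with $\hat{x}\in\hat{\Omega}$, and consider the continuous curve $t\mapsto\psi^t(\hat{x})$. Since $\Pi\circ\psi^t=\varphi^t\circ\Pi$ and $\varphi^t(\bar{x})=\bar{x}$ for all $t$, this curve projects to the constant $\bar{x}$, hence takes values in the single fibre $\Pi^{-1}(\bar{x})=\hat{\Gamma}\cdot\hat{x}$. By Lemma \ref{lemmaactionhatGamma} the group $\hat{\Gamma}$ acts freely and properly on $\hat{\Omega}$, so this orbit is a discrete and closed subset of $\hat{\Omega}$. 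A continuous image of the connected set $\R$ contained in a discrete space is a single point; evaluating at $t=0$ then forces $\psi^t(\hat{x})=\hat{x}$ for every $t$. In other words $\hat{x}$ is a fixed point of $(\psi^t)$, so $\pi(\hat{x})\in(\Calpha^-\cup\Cbeta^+\cup\mathcal{C})\cap\Omega$, and therefore $\bar{x}=\Pi(\hat{x})\in\Falphamoins\cup\Fbetaplus\cup\Delta$ by definition of these three subsets.

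I expect the only delicate point to be the discreteness of the orbit $\hat{\Gamma}\cdot\hat{x}$, which is exactly where the properness of the action (Lemma \ref{lemmaactionhatGamma}) is indispensable, together with the observation that it is essential to use the whole flow $(\varphi^t)$ rather than a single time-$t_0$ map: it is the connectedness of the full orbit curve $t\mapsto\psi^t(\hat{x})$ that collapses it to one orbit point. Everything else reduces to the equivariance of $\pi$ and $\Pi$ and to the description of the fixed locus of $(\psi^t)$ on $\Xhat$, itself read off from the fixed points $\{e_3\}\cup(e_1,e_2)$ of $(\psi^t)$ on $\Sn{2}$.
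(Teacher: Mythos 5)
Your proof is correct and takes essentially the same route as the paper: both lift the fixed point to $\hat{\Omega}$ and play the connectedness of the flow line $t\mapsto\psi^t(\hat{x})$ against the discreteness of the free, proper $\hat{\Gamma}$-action of Lemma \ref{lemmaactionhatGamma} to force the lift to be a genuine fixed point of $(\psi^t)$. The only cosmetic difference is that the paper encodes this by the cocycle $\gamma^t\in\hat{\Gamma}$ defined by $\psi^t(\hat{x})=\gamma^t(\hat{x})$, showing via the commutation of $\hat{\Gamma}$ with $(\psi^t)$ that $t\mapsto\gamma^t$ is a one-parameter subgroup of the discrete group $\hat{\Gamma}$, whereas you observe directly that the curve lies in the discrete orbit $\hat{\Gamma}\cdot\hat{x}$ and is therefore constant.
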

\begin{proof}
1. Let $x\in\hat{\Omega}$ such that $\Pi(x)$ is a fixed point of $(\varphi^t)$.
For any $t\in\R$ there exists then $\gamma^t\in\hat{\Gamma}$ such that
$\psi^t(x)=\gamma^t(x)$, and such a $\gamma^t$ is unique since
$\hat{\Gamma}$ acs freely on $\hat{\Omega}$.
Moreover, $\gamma^{s+t}(x)=\psi^s(\gamma^t(x))=\gamma^t(\psi^s(x))
=\gamma^t\gamma^s(x)$ since $\hat{\Gamma}$ and $(\psi^t)$ commute,
hence $\gamma^{s+t}=\gamma^s\gamma^t$.
Finally $(\gamma^t)$ is a one-parameter subgroup of $\hat{\Gamma}$,
implying $\gamma^t=\id$ for any $t\in\R$ since $\hat{\Gamma}$ is discrete. 
Therefore $x$ is a fixed point of $(\psi^t)$, 
that is $x\in\pi^{-1}(\Calpha^-\cup\Cbeta^+\cup\mathcal{C})\cap\hat{\Omega}$,
which proves our claim.
\end{proof}

The dynamics of the flow $(\psi^t)$ 
described in Lemma \ref{lemmerepulsifX}
allow us to obtain an accurate picture of those
of $(\varphi^t)$.
We denote by $\phi_+\colon\X\to\Cbeta^+$ the application
associated in Lemma \ref{lemmerepulsifX} to the
flow $(\psi^t)$ of unbalanced type $\beta$,
and by $\phi_-\colon\X\to\Calpha^-$ the application
associated to its inverse $(\psi^{-t})$ of unbalanced type $\alpha$
in Lemma \ref{lemmeattractifX}.
\begin{proposition}\label{propositionproprietesprolongationflotgeod}
We introduce 
$\Upsilon^-=\Pi(\pi^{-1}(\phi_+^{-1}(\Lambda)\cap\Omega))$ and
$\Upsilon^+=\Pi(\pi^{-1}(\phi_-^{-1}(\Lambda)\cap\Omega))$.
\begin{enumerate}
 \item $\Upsilon^-$ and $\Upsilon^+$ are 
 contained
 in the union $\cup_{i=1}^4 N_i$ 
 of the copies of $\Fiunitan{\Sigma}$ in $M$.
 \item The closure of $\Upsilon^-$ (respectively $\Upsilon^+$)
 is equal to $\Upsilon^-\cup\Falphamoins$
 (resp. $\Upsilon^+\cup\Fbetaplus$).
 In particular,
 $M\setminus(\Halphabetamoins\cup\Upsilon^-)$
 and $M\setminus(\Hbetaalphaplus\cup\Upsilon^+)$
 are dense and open subsets of $M$.
 \item There exists two continuous applications 
 \[
 \Phi_+\colon 
 M\setminus(\Halphabetamoins\cup\Upsilon^-)\to\Fbetaplus
 \text{~and~} 
 \Phi_-\colon 
 M\setminus(\Hbetaalphaplus\cup\Upsilon^+)\to\Falphamoins
 \]
 such that $\mathcal{D}_{(\varphi^t)}(x)=\Phi_+(x)$ for any $x\in M\setminus(\Halphabetamoins\cup\Upsilon^-)$,
 and $\mathcal{D}_{(\varphi^{-t})}(x)=\Phi_-(x)$
 for any $x\in M\setminus(\Hbetaalphaplus\cup\Upsilon^+)$.
  \item For any $1\leq i\leq 4$, $\Upsilon^-\cap N_i$ (respectively $\Upsilon^+\cap N_i$)
 is (the image of)
 the subset of points of $\Fiunitan{\Sigma}$
 whose $\omega$-limit set (resp. $\alpha$-limit set)
 for the geodesic flow is non-empty.
\item Let $K\subset M\setminus(\Halphabetamoins\cup\Upsilon^-)$ 
(respectively $K\subset M\setminus(\Hbetaalphaplus\cup\Upsilon^+)$)
be a compact subset and $t_n\to+\infty$ (resp. $t_n\to-\infty$)
such that $\varphi^{t_n}(K)$ converges.
Then $\lim\varphi^{t_n}(K)\subset\Phi_+(K)$ (resp. $\lim\varphi^{t_n}(K)\subset\Phi_-(K)$).
If $K$ is the closure of its interior, then 
$\underset{+\infty}{\lim}\varphi^t(K)=\Phi_+(K)$
(resp. $\underset{-\infty}{\lim}\varphi^t(K)=\Phi_-(K)$).
\end{enumerate}
\end{proposition}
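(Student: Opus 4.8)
The plan is to describe everything through the flow $(\psi^t)$ on the cover $\Xhat$, whose $\X$-dynamics are completely given by Lemma~\ref{lemmerepulsifX} (applied to $(\psi^t)$, of unbalanced type $\beta$) and Lemma~\ref{lemmeattractifX} (applied to $(\psi^{-t})$, of unbalanced type $\alpha$), and then to push these dynamics down to $M=\hat{\Gamma}\backslash\hat{\Omega}$. First I would record the explicit form of the fibration of Lemma~\ref{lemmerepulsifX} for $(\psi^t)$: since $\psi^t=\Diag(\e^t,\e^t,1)$ has $\lambda_\infty=1$, one has $a_\infty=\id$, so that for $x=(p,D)\notin\Calpha^-$ the map reads $\phi_+(x)=([e_3,p]\cap[e_1,e_2],[e_1,e_2])$, i.e.\ the perspective projection from $[e_3]$ onto $[e_1,e_2]$; its fibers are the $(\psi^t)$-invariant cylinders $\Sbetaalpha[e_3,p]\setminus\Calpha^-$ (compare Remark~\ref{remarkgeometricinterpretation}). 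The application $\phi_-$ is defined symmetrically from Lemma~\ref{lemmeattractifX} applied to $(\psi^{-t})$. The map $\Phi_+$ will be obtained by pushing $\phi_+$ through $\pi$ and $\Pi$, and $\Phi_-$ dually; the very reason for removing $\Upsilon^-$ is to stay in the region where the forward $(\psi^t)$-limit $\phi_+(\bar x)$ still lies in $\Omega$.

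For (1) and (2) I would first prove the inclusion $\phi_+^{-1}(\Lambda)\cap\Omega\subset Y=\X\setminus(\Sbetaalpha[e_1,e_2]\cup\Salphabeta[e_3])$. As $\phi_+$ takes values in $\Cbeta^+$ and, by \eqref{equationdescriptionBplusgammainfty}, $\Lambda\cap\Cbeta^+=\{(p_+(\gamma_\infty),[e_1,e_2])\}$, a point $x=(p,D)$ of $\phi_+^{-1}(\Lambda)$ has $p\in[e_3,p_+(\gamma_\infty)]$; were $x\in\Sbetaalpha[e_1,e_2]$ then $p=p_+(\gamma_\infty)$ and $x\in\Calpha(p_+(\gamma_\infty))\subset\Lambda$, and were $x\in\Salphabeta[e_3]$ then $D=[e_3,p_+(\gamma_\infty)]$ and $x\in\Cbeta[p_+(\gamma_\infty),e_3]\subset\Lambda$, both contradicting $x\in\Omega$; this gives (1). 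For the closure in (2), $\phi_+^{-1}(\Lambda)=\bigcup_{\gamma_\infty}\big(\Sbetaalpha[e_3,p_+(\gamma_\infty)]\setminus\Calpha^-\big)$, and since $p_+\colon\partial_{\infty}\Gamma\to[e_1,e_2]$ is a homeomorphism onto a compact set, $\bigcup_{\gamma_\infty}\Sbetaalpha[e_3,p_+(\gamma_\infty)]$ is closed and equals $\phi_+^{-1}(\Lambda)\cup\Calpha^-$, the common axis $\Calpha^-=\Calpha[e_3]$ being the only points added. Intersecting with $\Omega$ and applying $\Pi\circ\pi^{-1}$, the proper cocompact action of $\hat{\Gamma}$ (Lemma~\ref{lemmaactionhatGamma}) yields $\Cl(\Upsilon^-)=\Upsilon^-\cup\Falphamoins$. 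As $\Falphamoins\subset\Halphabetamoins$ by Proposition~\ref{propositioncompactificationetflot}(1) and $\Halphabetamoins$ is compact, $\Halphabetamoins\cup\Upsilon^-$ is closed and its complement open; density follows since $\Upsilon^-=\phi_+^{-1}(\Lambda\cap\Cbeta^+)$ fibers over the nowhere-dense Cantor set $\Lambda\cap\Cbeta^+$ and is thus nowhere dense, as is the finite union of tori $\Halphabetamoins$.

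The heart of the argument is (3). For $x\in M\setminus(\Halphabetamoins\cup\Upsilon^-)$, any lift $\bar x=\pi(\hat x)\in\Omega$ satisfies $\bar x\notin\Salphabeta^-\supset\Calpha^-$ (as $x\notin\Halphabetamoins$) and $\phi_+(\bar x)\in\Omega$ (as $x\notin\Upsilon^-$). By Lemma~\ref{lemmerepulsifX}(2) one has $\D_{(\psi^n)}(\bar x)=\{\phi_+(\bar x)\}$; since the fibers of $\phi_+$ are $(\psi^t)$-invariant and $\phi_+$ is continuous off $\Calpha^-$, this single-point dynamic set persists for the continuous flow and lifts through the covering $\pi$ to the single point over $\phi_+(\bar x)$ towards which $\psi^t(\hat x)$ runs in $\hat{\Omega}$. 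I would then set $\Phi_+(x)=\Pi$ of that point, landing in $\Fbetaplus$ and continuous because $\phi_+$ is continuous on $\X\setminus\Calpha^-$. The delicate point is that the dynamic set of $(\varphi^t)$ at $x$ in the \emph{quotient} is no larger than $\{\Phi_+(x)\}$: given $x_n\to x$ and $\varphi^{t_n}(x_n)\to y$ with $t_n\to+\infty$, lift so $\hat x_n\to\hat x$; then $\psi^{t_n}(\hat x_n)$ converges in $\hat{\Omega}$, and by properness of the $\hat{\Gamma}$-action (Lemma~\ref{lemmaactionhatGamma}) no $\hat\gamma_n\to\infty$ in $\hat{\Gamma}$ can keep $\hat\gamma_n\psi^{t_n}(\hat x_n)$ inside $\hat{\Omega}$, so the only accumulation point downstairs is $\Phi_+(x)$. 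I expect this transfer of a single-point dynamic set through the covering $\pi$ and the quotient $\Pi$, simultaneously using invariance of the fibers and properness to exclude group-translate contributions, to be the main obstacle; the $(\varphi^{-t})$ statement with $\Phi_-$ is symmetric via Lemma~\ref{lemmeattractifX}.

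Finally (4) and (5) are comparatively formal. For (4) I would use the identification $[e_1,e_2]\simeq\partial\Hn{2}$ under which $p_+(\partial_{\infty}\Gamma)$ is exactly the limit set of $\overline{\Gamma}_0$ (the attractive points $p_+$ of elements of $\Gamma=j(\Gamma_0)$ being the attractive fixed points of $\overline{\Gamma}_0$): on $N_i\simeq\Fiunitan{\Sigma}$ one has $x\in\Upsilon^-$ if and only if $\phi_+(\bar x)\in\Lambda$, if and only if the forward endpoint of the associated geodesic lies in the limit set, which—$\Sigma$ having no cusps—is equivalent to the forward orbit not escaping to a funnel, that is to its $\omega$-limit set being non-empty. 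For (5) I would invoke Lemma~\ref{lemmeconsequenceconvergencecompacts}: its first claim gives $\lim\varphi^{t_n}(K)\subset\bigcup_{x\in K}\D_{(\varphi^t)}(x)=\Phi_+(K)$, and when $K=\Cl(\Int K)$ the continuity of $\Phi_+$ gives $\Phi_+(K)=\Cl\big(\Phi_+(\Int K)\big)=\Cl\big(\bigcup_{x\in\Int K}\D_{(\varphi^t)}(x)\big)$, so the second claim of Lemma~\ref{lemmeconsequenceconvergencecompacts} forces $\varphi^t(K)\to\Phi_+(K)$; the statements for $(\varphi^{-t})$ follow dually.
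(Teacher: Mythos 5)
Your reduction of claim (3) to the base dynamics of Lemma \ref{lemmerepulsifX} is correct as far as $\X$ is concerned, but the passage through the four-sheeted covering $\pi\colon\Xhat\to\X$ is precisely where the difficulty of this proposition sits, and your proposal elides it. The fibre $\pi^{-1}(\phi_+(\bar x))$ consists of \emph{four} points, not ``the single point over $\phi_+(\bar x)$''; all that Lemma \ref{lemmerepulsifX} and compactness of $\Xhat$ give is that every accumulation point of $\psi^{t_n}(\hat x_n)$ lies in this four-point set (in particular your assertion that ``$\psi^{t_n}(\hat x_n)$ converges in $\hat{\Omega}$'' is unjustified). If the accumulation points could spread over preimages lying in distinct $\hat{\Gamma}$-orbits, then $\D_{(\varphi^t)}(x)$ would contain two distinct points of $\Fbetaplus$ and $\Phi_+$ would not even be well defined. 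The paper resolves this with Lemma \ref{lemmesurfaceshatalphabeta}: $\Xhat\setminus\Shatalphabeta(e_3)$ has exactly two connected components, each preserved by $(\psi^t)$ (they contain the fixed points $((e_1),(e_1,e_2))$ and $((e_1),(e_1,-e_2))$ respectively); the hypothesis $x\notin\Halphabetamoins$ places $\hat x$ in one component $C$, so $\hat x_n\in C$ eventually, hence $\psi^{t_n}(\hat x_n)\in C$ and all accumulation points lie in $C\cap\pi^{-1}(\phi_+(\bar x))$, a two-point set $\{(\pm p,\varepsilon(e_1,e_2))\}$ whose two points are identified by $g_0\in\hat{\Gamma}$. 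Only then is $\Phi_+(x)=\Pi\bigl(C\cap\pi^{-1}(\phi_+(\bar x))\bigr)$ a single well-defined point. Your appeal to properness of the $\hat{\Gamma}$-action (Lemma \ref{lemmaactionhatGamma}) addresses a non-issue: group translates never enter, since $\varphi^{t_n}(x_n)=\Pi(\psi^{t_n}(\hat x_n))$ and continuity of $\Pi$ on $\hat{\Omega}$ handles the projection once the accumulation points are known to lie in $\hat{\Omega}$; the genuine issue --- \emph{which sheet} --- is left untouched. Note that this is also the real role of the hypothesis $x\notin\Halphabetamoins$ in claim (3): it is what puts $\hat x$ off $\Shatalphabeta(e_3)$ and makes the component argument run. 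Since (5) and your own argument for (4) rest on $\Phi_+$, this gap propagates.

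The remaining items are essentially sound. Your (1), (2) and (5) follow the paper's route (explicit description of $\phi_+^{-1}(\Lambda)\cap\Omega$, closure added along $\Calpha^-$, then Lemma \ref{lemmeconsequenceconvergencecompacts}), with one caveat in (2): the inclusion of $\Calpha^-\cap\Omega$ in the closed set $\bigcup_{\gamma_\infty}\Sbetaalpha[e_3,p_+(\gamma_\infty)]$ only yields $\Cl(\Upsilon^-)\subset\Upsilon^-\cup\Falphamoins$; for the reverse inclusion you must exhibit approximating points \emph{inside} $\Omega$, as the paper does with the sequence $(p_n,[p_n,p])$ for $p\in[e_1,e_2]\setminus p_+(\partial_\infty\Gamma)$. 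For (4) you take a genuinely different route: the dictionary identifying $\phi_+(\bar x)$ with the forward endpoint of the geodesic and $p_+(\partial_\infty\Gamma)$ with the limit set of $\overline{\Gamma}_0$ in $\partial\Hn{2}$, combined with the classical fact that on a convex-cocompact surface a geodesic has non-empty $\omega$-limit set if and only if its forward endpoint lies in the limit set. This is legitimate and shorter than the paper's internal argument (which uses cocompactness of $\Gamma$ on $\Omega$ and then Proposition \ref{propositiondescriptionensemblelimite} and Lemma \ref{lemmecasmixteX} to exclude $\bar x_\infty\in\Calpha^-$), but you assert the dictionary without verifying it under the identification $Y\simeq\SL{2}$; that verification should be supplied if you keep this route.
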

We recall the definition of the \emph{$\omega$-limit set} 
\[
\omega(x)=\enstq{\text{accumulation points of~}g^{t_n}(x)}{t_n\to+\infty}
\]
of $x$ for $g^t$, its $\alpha$-limit set being the corresponding subset for 
the sequences $t_n\to-\infty$.
\begin{proof}[Proof of Proposition \ref{propositionproprietesprolongationflotgeod}]
The arguments are formally the same in the past and in the future,
that is concerning $(\varphi^t)$ and $\Upsilon^-$,
and concerning $(\varphi^{-t})$ and $\Upsilon^+$.
We thus only write them for $(\psi^t)$. \\
1. and 2. Since $g_0$ acts trivially on $[e_1,e_2]$,
the $\Gamma$-invariant set $\Lambda$ 
is actually $\hat{\Gamma}$-invariant.
We saw in Lemma \ref{lemmerepulsifX}
that $\phi_+$ is equivariant with respect to a morphism
$\rho_\infty\colon\Stab[e_3]\to\Stab[e_3]\cap\Stab[e_1,e_2]$,
and the construction of $\rho_\infty$ in \eqref{equationrelationdefinitionrhoinfinirepulsif}
shows that $\rho_\infty$ is equal to the identity 
in restriction to $j(\GL{2})$ and thus
in restriction to $\hat{\Gamma}$.
Hence $\phi_+$ is $\hat{\Gamma}$-equivariant,
and $\phi_+^{-1}(\Lambda)$ is $\hat{\Gamma}$-invariant.
Now the description of $\phi_+$ in the proof of Lemma \ref{lemmerepulsifX}
and the description of $\Omega$ in Paragraph 
\ref{soussoussectioncompctificationrevetementindicedeux}
(see \eqref{equationdescriptionBplusgammainfty})
shows that
\[
\phi_+^{-1}(\Lambda)\cap\Omega=
\bigcup_{\gamma_\infty\in\partial_{\infty}\Gamma}
\Sbetaalpha[p_+(\gamma_\infty),e_3]\setminus
\Big(\Calpha[e_3]\cup\Cbeta[p_+(\gamma_\infty),e_3]\cup\Calpha(p_+(\gamma_\infty))\Big).
\]
In particular, $\phi_+^{-1}(\Lambda)\cap\Omega$ is disjoint
from $\Sbetaalpha^+$ and $\Salphabeta^-$
and $\Upsilon^-$ is thus disjoint from
$\Hbetaalphaplus$ and $\Halphabetamoins$,
hence contained in $\cup_{i=1}^4 N_i$.
Furthermore $\pi^{-1}(\phi_+^{-1}(\Lambda)\cap\Omega)$ 
is a $\hat{\Gamma}$-invariant subset of empty interior,
therefore $\Upsilon^-$ has empty interior.
Since $\phi_+$ is not continuous on $\Omega$,
$\phi_+^{-1}(\Lambda)\cap\Omega$ is not closed in $\Omega$.
However, $\phi_+$ being continous on $\X\setminus\Calpha^-$,
$\phi_+^{-1}(\Lambda)\cap\Omega$ is closed in $\Omega\setminus\Calpha^-$.
Hence $\Upsilon^-\setminus\Falphamoins$ is closed in $M\setminus\Falphamoins$, 
and the closure of $\Upsilon^-$ is contained 
in $\Upsilon^-\cup\Falphamoins$.
In particular, $\Cl(\Upsilon^-)$ has empty interior.
More precisely, let $\gamma_\infty\in\partial_\infty\Gamma$,
$p_n$ a sequence of $[p_+(\delta_\infty),e_3]$ 
converging to $[e_3]$,
and $p\in[e_1,e_2]\setminus p_+(\partial_\infty\Gamma)$.
Then $(p_n,[p_n,p])\in\phi_+^{-1}(\Lambda)\cap\Omega$
converges to $([e_3],[e_3,p])\in\Calpha^-\cap\Omega$.
This shows not only that the closure of $\Upsilon^-$
is equal to $\Upsilon^-\cup\Falphamoins$,
but that any connected component of $\Upsilon^-$
accumulate on one of the connected components of $\Falphamoins$.
In particular
$\Halphabetamoins\cup\Upsilon^-=
\Halphabetamoins\cup\Cl(\Upsilon^-)$
is a closed subset with empty interior,
and $M\setminus(\Halphabetamoins\cup\Upsilon^-)$
is an open and dense subset.\\
3. Let $x\in M\setminus(\Halphabetamoins\cup\Upsilon^-)$,
and let $x_n\in M$ converging to $x$ and $t_n\in\R$ to $+\infty$,
such that $\lim\varphi^{t_n}(x_n)=x_\infty\in\mathcal{D}_{(\varphi^t)}(x)$.
We choose $\hat{x}\in\Pi^{-1}(x)$, and there exists a sequence
$\hat{x}_n\in\Pi^{-1}(x_n)$ converging to $\hat{x}$.
Passing to a subsequence, we can furthermore assume that 
$\lim\psi^{t_n}(\hat{x}_n)=\hat{x}_\infty\in\Xhat$ 
by compacity of $\Xhat$.
Then $\bar{x}_n=\pi(\hat{x}_n)$ 
converges to $\bar{x}=\pi(\hat{x})\notin\Salphabeta^-$
and $\psi^{t_n}(\bar{x}_n)$ to 
$\bar{x}_\infty=\pi(\hat{x}_\infty)$.
According to Lemma \ref{lemmerepulsifX}
$\bar{x}_\infty=\phi_+(\bar{x})\in\Cbeta^+$
and since $x\notin\Upsilon^-$, 
$\bar{x}_\infty\in\Omega$ and we thus have 
$\hat{x}_\infty\in\pi^{-1}(\phi_+(\bar{x}))\subset\hat{\Omega}$.
In particular $\hat{x}_\infty\notin\Shatalphabeta(e_3)$.
Since $x\notin\Halphabetamoins$, 
$\hat{x}\in\Xhat\setminus\Shatalphabeta(e_3)$ which has two connected
components according to Lemma \ref{lemmesurfaceshatalphabeta}.
Since $((e_1),(e_1,e_2))$ and $((e_1),(e_1,-e_2))$
are not in the same connected component of 
$\Xhat\setminus\Shatalphabeta(e_3)$ and are fixed by $(\psi^t)$,
each of these components is preserved by $(\psi^t)$.
We denote by $C$ the connected component containing $\hat{x}$.
For $n$ large enough, $\hat{x}_n\in C$
and thus $\psi^{t_n}(\hat{x}_n)\in C$, 
showing that $\hat{x}_\infty\in C$.
We already saw that $\hat{x}_\infty\in\pi^{-1}(\phi_+(\bar{x}))$,
and $C\cap\pi^{-1}(\phi_+(\bar{x}))$ has cardinal two:
if $\phi_+(\bar{x})=(p,[e_1,e_2])$, 
then $C\cap\pi^{-1}(\phi_+(\bar{x}))=\{(\pm p,\varepsilon(e_1,e_2))\}$
with $\varepsilon$ the orientation corresponding to the connected 
component $C$.
Since $g_0\in\hat{\Gamma}$ identifies the two points
$(\pm p,\varepsilon(e_1,e_2))$,
$\Pi(C\cap\pi^{-1}(\phi_+(\bar{x})))$
is a point of $\Fbetaplus$ depending only on $x$,
that we denote by $\Phi_+(x)$.
We have shown that $\mathcal{D}_{(\varphi^t)}(x)\subset\{\Phi_+(x)\}$,
but $\mathcal{D}_{(\varphi^t)}(x)\neq\varnothing$ 
since $M$ is compact, and this inclusion is thus an equality.
The continuity of $\Phi_+$ on $M\setminus\Falphamoins$ follows
from the one of $\phi_+$ on $\X\setminus\Calpha^-$, proved
in Lemma \ref{lemmerepulsifX}. \\
4. Let $x\in\Fiunitan{\Sigma}$ whose $\omega$-limit set is non-empty, 
and $y$ be the corresponding point in one of the copies $N_i$,
with respect to an isomorphism
conjugating $(\varphi^t)$ with the geodesic flow.
Then if $y\notin\Upsilon^-$ by contradiction,
the $\omega$-limit set of $y$ for $(\varphi^t)$ would be
disjoint from $N_i$ according to the previous claim,
and the $\omega$-limit set of $x$ for the geodesic flow
would thus be empty.
Conversely, let $x\in\Upsilon^-$ contained in the copy $N_i$ of 
$\Fiunitan{\Sigma}$, and 
$y$ be the corresponding point of $\Fiunitan{\Sigma}$.
Let $t_n\to+\infty$ such that $\lim\varphi^{t_n}(x)=x_\infty$
in the $\omega$-limit set of $x$ for $(\varphi^t)$.
With $\hat{x}\in\Pi^{-1}(x)$ 
and $\bar{x}=\pi(\hat{x})$,
passing to a subsequence
we can assume that $\psi^{t_n}(\bar{x})$ converges in $\X$, 
and then $\lim\psi^{t_n}(\bar{x})=\phi_+(\bar{x})\in\Lambda$
by hypothesis. 
By cocompacity of the action of $\Gamma$ on $\Omega$,
there exists
a sequence $\gamma_n\in\Gamma$ such that
$\gamma_n\psi^{t_n}(\bar{x})$ is relatively compact in $\Omega$,
and we can assume that 
$\gamma_n\psi^{t_n}(\bar{x})$ converges to $\bar{x}_\infty\in\Omega$,
possibly taking a new subsequence.
Since $\phi_+^{-1}(\Lambda)\cap\Omega$ is invariant
by $\Gamma$ and by $(\varphi^t)$,
$\gamma_n\psi^{t_n}(\bar{x})\in\phi_+^{-1}(\Lambda)\cap\Omega$,
and
$\bar{x}_\infty\in\phi_+^{-1}(\Lambda)\cap\Omega\cup\Calpha^-$
since $\phi_+$ is continuous on $\X\setminus\Calpha^-$.
Let us temporarily assume 
that $\bar{x}_\infty\notin\Calpha^-$,
which will be proved thereafter.
Then $x_\infty\in\Upsilon$, which shows
that the $\omega$-limit set of $y$ for $(\varphi^t)$
is contained in $\Upsilon$, and thus in $N_i$.
Therefore the $\omega$-limit set of $x$ 
for the geodesic flow is non-empty, finishing the proof
\par It only remains to prove that $\bar{x}_\infty\notin\Calpha^-$.
Since $\psi^{t_n}(\bar{x})$ goes to infinity in $\Omega$,
$\gamma_n$ goes to infinity in $\Gamma$,
and passing to a subsequence we can assume that $\gamma_n$ goes simply to infinity.
According to Proposition \ref{propositiondescriptionensemblelimite},
$\gamma_n$ converges then 
to a point $\gamma_\infty\in\partial_\infty\Gamma$,
is of balanced type, and has $\Bplus(\gamma_\infty)$ as attractive
bouquet of circles.
Since $\bar{x}_\infty=\lim\gamma_n\psi^{t_n}(\bar{x})\notin\Lambda$,
in particular $\bar{x}_\infty\notin \Bplus(\gamma_n)$,
which implies 
$\lim\psi^{t_n}(\bar{x})=\phi_+(\bar{x})\in \Bmoins(\gamma_n)$
according to Lemma \ref{lemmecasmixteX}.
Since the repulsive point $x_-(\gamma_n)$ of $(\gamma_n)$ is of the form
$(p_+(\delta_\infty),[p_+(\delta_\infty),e_3])$
for some $\delta_\infty\in\partial_\infty\Gamma$,
we have more precisely $\lim\psi^{t_n}(\bar{x})\in
\Calpha^-(\gamma_n)\setminus\{x_-(\gamma_n)\}$
and thus $\bar{x}_\infty\in\Salphabeta^+(\gamma_n)$
according to Lemma \ref{lemmecasmixteX} again.
Since $\bar{x}_\infty\in\Omega$ and
$\Salphabeta^+(\gamma_n)\cap\Calpha^-=([e_3],[e_3,p_+(\gamma_n)])
\in\Cbeta^+(\gamma_n)\subset\Lambda$,
this shows that $\bar{x}_\infty\notin\Calpha^-$ and concludes
the proof. \\
5. Let $t_n\in\R$ be a sequence 
such that $t_n\to+\infty$.
According to the third claim of this proposition
$\cup_{x\in K}\mathcal{D}_{(\varphi^{t_n})}(x)=\Phi_+(K)$,
which proves that $\lim\varphi^{t_n}(K)\subset\Phi_+(K)$
according to Lemma \ref{lemmeconsequenceconvergencecompacts}.
If moreover $K=\Cl(\Int K)$, then
$\Phi_+(K)=
\Cl(\cup_{x\in\Int K}\mathcal{D}_{(\varphi^{t_n}}(x))$
by continuity of $\Phi_+$ and thus $\lim\varphi^{t_n}(K)=\Phi_+(K)$
according to Lemma \ref{lemmeconsequenceconvergencecompacts} again,
proving $\underset{t\to+\infty}{\lim}\varphi^t(K)=\Phi_+(K)$
since this is true for any sequence
$t_n\to+\infty$.
\end{proof}

We endow $M$ with a Riemannian metric, and
denoting $\Lm=(E^\alpha,E^\beta)$ and $E^c=\R\frac{d\varphi^t}{dt}$ 
the direction of the flow on $M\setminus(\Falphamoins\cup\Delta\cup\Fbetaplus)$,
we say that $x\in M$ is a \emph{positively regular} point
if the following \emph{exponential growth rates} exist:
\begin{equation}\label{equationLyapunov}
\lambda_c^+(x)=\underset{t\to+\infty}{\lim}
\frac{\ln\norme{\Diff{x}{\varphi^t}\restreinta_{E^c}}}{t},
 \lambda_\alpha^+(x)=
 \underset{t\to+\infty}{\lim}
\frac{\ln\norme{\Diff{x}{\varphi^t}\restreinta_{E^\alpha}}}{t},
\lambda_\beta^+(x)=
 \underset{t\to+\infty}{\lim}
\frac{\ln\norme{\Diff{x}{\varphi^t}\restreinta_{E^\beta}}}{t}.
\end{equation}
Likewise, we say that $x$ is \emph{negatively regular}
if the limits
\[
\lambda_c^-(x)=\underset{t\to+\infty}{\lim}
\frac{\ln\norme{\Diff{x}{\varphi^{-t}}\restreinta_{E^c}}}{-t},
 \lambda_\alpha^-(x)=
 \underset{t\to+\infty}{\lim}
\frac{\ln\norme{\Diff{x}{\varphi^{-t}}\restreinta_{E^\alpha}}}{-t},
\lambda_\beta^-(x)=
 \underset{t\to+\infty}{\lim}
\frac{\ln\norme{\Diff{x}{\varphi^{-t}}\restreinta_{E^\beta}}}{-t}.
\]
exist.
Note that the set of positively (respectively negatively) regular points
is preserved by $(\varphi^t)$
and that each function $\lambda_{c/\alpha/\beta}^\pm$ 
is $(\varphi^t)$-invariant.
Moreover, these real numbers are independent of the Riemannian metric
since $M$ is compact.
A linear reparametrization of the flow by dilation of the time by $\lambda>0$
multiplies these exponential rates by $\lambda$,
and we recall that $(\varphi^t)$ is conjugated to $(g^{2t})$ 
on each copy of $\Fiunitan{\Sigma}$ in $M$, with $(g^t)$ the geodesic flow of $\Sigma$
(hence the difference  of a factor $\frac{1}{2}$ with the corresponding claims of Theorem
\ref{theoremintrosurfacehyperboliquedynamiqueflotgeod}).
\begin{proposition}\label{propositionLyapunovcompactification}
Any $x\in M\setminus(\Halphabetamoins\cup\Upsilon^-)$ is positively regular, and
$\lambda_c^+(x)=-1$, $\lambda_\alpha^+(x)=-1$, $\lambda_\beta^+(x)=0$.
Any $x\in M\setminus(\Hbetaalphaplus\cup\Upsilon^+)$ is negatively regular, and
$\lambda_c^-(x)=1$, $\lambda_\alpha^-(x)=0$, $\lambda_\beta^-(x)=1$.
\end{proposition}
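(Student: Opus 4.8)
The plan is to compute all six rates from the explicit linear flow $\psi^t=\Diag(\e^t,\e^t,1)$ by reducing to its linearization at the fixed points it attracts. Fix $x\in M\setminus(\Halphabetamoins\cup\Upsilon^-)$, a lift $\hat x\in\Pi^{-1}(x)\subset\hat\Omega$, and set $\bar x=\pi(\hat x)\in\Omega$. The hypothesis $x\notin\Halphabetamoins$ gives $\bar x\notin\Salphabeta^-\supset\Calpha^-$, so Lemma \ref{lemmerepulsifX} yields $\lim_{t\to+\infty}\psi^t(\bar x)=\phi_+(\bar x)\in\Cbeta^+$; the hypothesis $x\notin\Upsilon^-$ means exactly that $\phi_+(\bar x)\notin\Lambda$, hence $\phi_+(\bar x)\in\Omega$. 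Lifting this convergent orbit through the covering $\pi$, I would deduce that $\psi^t(\hat x)$ converges in $\hat\Omega$ to a fixed point $\hat x_\infty$ of $(\psi^t)$ lying over $\Fbetaplus$, so that the whole forward orbit of $\hat x$ is contained in a compact subset $\hat K\subset\hat\Omega$.

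Next I would remove the dependence on the $\hat\Gamma$-action. As $M$ is compact the rates \eqref{equationLyapunov} do not depend on the chosen metric, so I pull a metric back from $M$ to a $\hat\Gamma$-invariant metric on $\hat\Omega$ for which $\Pi$ is a local isometry, giving $\norme{\Diff{x}{\varphi^t}\restreinta_E}=\norme{\Diff{\hat x}{\psi^t}\restreinta_{\hat E}}$. On the compact set $\hat K$ this metric is uniformly comparable to the fixed $\SO{3}$-invariant metric of $\Xhat$, and since the logarithm of a bounded comparison constant divided by $t$ tends to $0$, the rates may be computed with that fixed metric. Finally, as $\psi^t(\hat x)\to\hat x_\infty$ and the one-dimensional distributions $E^\alpha$, $E^\beta$ are continuous and $(\psi^t)$-invariant, each of the two corresponding rates equals $\lim_{t\to+\infty}\tfrac1t\ln\lambda(t)$, where $\lambda(t)$ is the eigenvalue of $\Diff{\hat x_\infty}{\psi^t}$ on the relevant eigenline.

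The heart of the argument is a single chart computation, which I may carry out after projecting by the local diffeomorphism $\pi$ to $\X$ and at the representative fixed flag $([e_1],[e_1,e_2])\in\Cbeta^+$ (all points of $\Cbeta^+$ being conjugate under the centralizer of $\psi^t$, which contains the rotations of the $(e_1,e_2)$-plane). In the affine coordinates $(a,b,\eta)$ in which a nearby flag is $p=[1:a:b]$ and $D=\ker(\eta_1 x+\eta y+z)$ with $\eta_1=-\eta a-b$, a direct calculation shows that $\psi^t$ acts by $(a,b,\eta)\mapsto(a,\e^{-t}b,\e^{-t}\eta)$. Here $\Cbeta^+$ is the $a$-axis and the $\alpha$-circle is the $\eta$-axis, so $E^\beta$ and $E^\alpha$ are the eigenlines with eigenvalues $1$ and $\e^{-t}$; this gives $\lambda_\beta^+(x)=0$ and $\lambda_\alpha^+(x)=-1$. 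For the flow direction I would use that the generator $X$ of $(\varphi^t)$ satisfies $\Diff{x}{\varphi^t}(X)=X(\varphi^t x)$, so $\norme{\Diff{x}{\varphi^t}\restreinta_{E^c}}=\norme{X(\varphi^t x)}/\norme{X(x)}$; in the chart $X=(0,-b,-\eta)$ has no $a$-component and is scaled exactly by $\e^{-t}$ along the orbit, whence $\lambda_c^+(x)=-1$ (at the non-fixed points of the domain, where $E^c$ is defined).

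The negative-time statement is obtained by applying the same scheme to $\psi^{-t}=\Diag(\e^{-t},\e^{-t},1)$, which is of unbalanced type $\alpha$ and whose forward orbits converge into $\Calpha^-=\Falphamoins$ once $x\notin\Hbetaalphaplus\cup\Upsilon^+$, using $\phi_-$ and Lemma \ref{lemmeattractifX} in place of $\phi_+$ and Lemma \ref{lemmerepulsifX}. Equivalently one conjugates by the dual involution $\kappa$ of \eqref{equationdefinitionkappa}, which intertwines $\psi^t$ with $\psi^{-t}$ and exchanges $E^\alpha$ with $E^\beta$; combined with the sign convention $\lambda^-(x)=\lim_{t\to+\infty}\tfrac{1}{-t}\ln\norme{\Diff{x}{\varphi^{-t}}\restreinta_E}$ this yields $\lambda_c^-(x)=1$, $\lambda_\alpha^-(x)=0$, $\lambda_\beta^-(x)=1$. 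I expect the only delicate point to be the passage from the $\hat\Gamma$-invariant metric to the fixed background metric, that is, checking that the forward orbit really stays in a compact part of $\hat\Omega$ so that the two metrics are uniformly comparable; this is precisely where the hypotheses $x\notin\Upsilon^\mp$ (ensuring the limit flag lies in $\Omega$ rather than in $\Lambda$) enter in an essential way.
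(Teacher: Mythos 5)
Your proposal is correct and takes essentially the same approach as the paper's proof: lift to $\hat{\Omega}$, use Lemma \ref{lemmerepulsifX} (packaged in the paper as Proposition \ref{propositionproprietesprolongationflotgeod}) to get convergence of the forward orbit to a fixed point lying over $\Fbetaplus$, reduce to a fixed metric near the limit by compactness, and compute the three rates in an affine chart where $(\psi^t)$ becomes the linear flow $\Diag(1,\e^{-t},\e^{-t})$ with invariant vector fields spanning $E^\alpha$, $E^\beta$ and the flow direction. Your $(a,b,\eta)$-coordinates are the paper's chart \eqref{equationchart1} up to relabeling, and your $\kappa$-duality shortcut for the negative-time case simply replaces the paper's symmetric computation in the second chart \eqref{equationchart2}.
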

\begin{proof}
Let $x\in M\setminus(\Halphabetamoins\cup\Upsilon^-)$ and
$\hat{x}\in\Pi^{-1}(x)$.
According to Proposition \ref{propositionproprietesprolongationflotgeod}
there exists $\hat{x}_\infty\in\Pi^{-1}(\Phi_+(x))$
such that $\underset{t\to+\infty}{\lim}\psi^t(\hat{x})=
\hat{x}_\infty$.
Denoting by $\norme{\cdot}'$ the pullback of $\norme{\cdot}$
on $\hat{\Omega}$, for $\varepsilon=\alpha$ or $\beta$ we have
$\lim
\norme{\Diff{x}{\varphi^t}\restreinta_{E^\varepsilon}}
=\lim
\norme{\Diff{\hat{x}}{\psi^t}
\restreinta_{\hat{\mathcal{E}}_\varepsilon}}'$.
Furthermore for any $g\in j(\GL{2})$,
denoting by $\norme{\cdot}''$ the pushforward of $\norme{\cdot}'$
by $g$ on $g(\hat{\Omega})$ we have
$\lim
\norme{\Diff{\hat{x}}{\psi^t}
\restreinta_{\hat{\mathcal{E}}_\varepsilon}}'
=\lim
\norme{\Diff{g(\hat{x})}{g\psi^t g^{-1}}
\restreinta_{\hat{\mathcal{E}}_\varepsilon}}''
=\lim
\norme{\Diff{g(\hat{x})}{\psi^t}
\restreinta_{\hat{\mathcal{E}}_\varepsilon}}''$,
because $j(\GL{2})$ centralizes $(\psi^t)$.
We can thus assume that $\hat{x}_\infty=((e_1),(e_1,e_2))$
and make the calculations for a Riemannian metric defined around
$\hat{x}_\infty$.
The claim that we want to prove being $(\psi^t)$-invariant,
we can moreover assume that $\hat{x}$ is as close to $\hat{x}_\infty$ as we want,
and we will assume that $\hat{x}=\phi_1(p)$ 
with $p\in\R^3$ and $\phi_1\colon\R^3\to\Xhat$ the chart defined around 
$\hat{x}_\infty$ by
\begin{equation}\label{equationchart1}
 \phi_1(x,y,z)=\left(
 \left(\begin{smallmatrix}
  1 \\ x \\ y
 \end{smallmatrix}\right),
 \left(
 \left(\begin{smallmatrix}
  1 \\ x \\ y
 \end{smallmatrix}\right),
 \left(\begin{smallmatrix}
  0 \\ 1 \\ z
 \end{smallmatrix}
 \right)
 \right)
 \right).
\end{equation} 
In this chart $\hat{x}_\infty=\phi_1(0,0,0)$,
and our claim is then reduced to the study of 
$\Psi_1^t\coloneqq\phi_1\circ\psi^t\circ\phi_1^{-1}$
in the neighbourhood of $(0,0,0)$,
with respect to 
a Riemannian metric defined around $(0,0,0)$
that we denote by $\norme{\cdot}$ to simplify the notations.
Since $\Psi_1^t=\Diag(1,\e^{-t},\e^{-t})$ is linear,
$(\psi^t)$ is thus linearizable in the neighbourhood of $\Phi_+(x)$.
We have $E^\alpha_1\coloneqq\phi_1^*\Extildealpha=\R X^\alpha_1$
and $E^\beta_1\coloneqq\phi_1^*\Extildebeta=\R X^\beta_1$
with $X^\alpha_1$ the constant vector field $(0,0,1)$
and $X^\beta_1(x,y,z)=(1,z,0)$.
Denoting by $X^c_1=\frac{d\Psi_1^t}{dt}$ the derivative of the flow,
since $X^\beta_1$ and $X^c_1$
are preserved by $(\Psi^t)$
and $\Diff{p}{\Psi_1^t}(X^\alpha_1(p))=
\e^{-t}X^\alpha_1(\Psi_1^t(p))$, we obtain
\[
\norme{\Diff{p}{\Psi_1^t}\restreinta_{E^c_1}}
=\frac{\norme{X^c_1(\Psi_1^t(p))}}{\norme{X^c_1(p)}},
\norme{\Diff{p}{\Psi_1^t}\restreinta_{E^\alpha_1}}
=\e^{-t}\frac{\norme{X^\alpha_1(\Psi_1^t(p))}}{\norme{X^\alpha_1(p)}},
\norme{\Diff{p}{\Psi_1^t}\restreinta_{E^\beta_1}}
=\frac{\norme{X^\beta_1(\Psi_1^t(p))}}{\norme{X^\beta_1(p)}}.
\]
This concludes our claim
concerning points of $M\setminus(\Halphabetamoins\cup\Upsilon^-)$ and positive times,
since 
$\norme{X^\alpha_1}$ and $\norme{X^\beta_1}$ are non-zero and
bounded at the neighbourhood of $(0,0,0)$,
and since $\norme{X^c_1(\Psi_1^t(p))}=\e^{-t}\norme{X^c_1(p)}$.
\par For $x\in M\setminus(\Hbetaalphaplus\cup\Upsilon^+)$ 
and negative times, we elaborate on the same arguments,
assuming that 
$\underset{t\to+\infty}{\lim}\psi^{-t}(\hat{x})=((e_3),(e_3,e_2))$
with $\hat{x}\in\Pi^{-1}(x)$
and that $\hat{x}=\phi_2(p)$ with $p\in\R^3$ and $\phi_2\colon\R^3\to\Xhat$
the chart defined around $((e_3),(e_3,e_2))$
by
\begin{equation}\label{equationchart2}
 \phi_2(x,y,z)=\left(
 \left(\begin{smallmatrix}
  x \\ y \\ 1
 \end{smallmatrix}\right),
 \left(
 \left(\begin{smallmatrix}
  x \\ y \\ 1
 \end{smallmatrix}\right),
 \left(\begin{smallmatrix}
  z \\ 1 \\ 0
 \end{smallmatrix}
 \right)
 \right)
 \right).
\end{equation}
Then $\Psi_2^{-t}\coloneqq\phi_2^{-1}\circ\psi^{-t}\circ\phi_2
=\Diag(\e^{-t},\e^{-t},1)$,
$\phi_2^*\Extildealpha=\R X^\alpha_2$
and $\phi_2^*\Extildebeta=\R X^\beta_2$,
with $X^\alpha_2=(0,0,1)$
and $X^\beta_2(x,y,z)=(z,1,0)$.
Since $X^\alpha_2$ is preserved by $(\Psi_2^{-t})$
and $\Diff{p}{\Psi_2^{-t}}(X^\beta_2(p))=
\e^{-t}X^\beta_2(\Psi_2^{-t}(p))$,
this concludes our claim in the same way than before.
\end{proof}
This concludes the proof of Theorem 
\ref{theoremintrosurfacehyperboliquedynamiqueflotgeod}.

\subsubsection{New essential automorphisms of path structures}
In particular, we deduce from the previous results
the following properties of the flow $(\varphi^t)$.
We say that a diffeomorphism $f$ is \emph{non-conservative} if $f$
does not preserves any absolutely continuous measure of total support.
\begin{proposition}\label{corollarydynamique}
 For any $t\neq0$: 
  \begin{enumerate}
  \item the group generated by $\varphi^t$ is not
relatively compact for the compact-open topology;
  \item $\varphi^t$ has a dense subset of wandering points,
  and is thus non-conservative;
    \item $\varphi^{t}$ is not a partially hyperbolic
  diffeomorphism of $M$.
  \end{enumerate}
\end{proposition}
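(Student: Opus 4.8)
The plan is to read off all three properties from the precise forward/backward dynamics of $(\varphi^t)$ established in Propositions \ref{propositionproprietesprolongationflotgeod} and \ref{propositionLyapunovcompactification}, together with the description of the fixed-point set $\Falphamoins\cup\Delta\cup\Fbetaplus$ in Lemma \ref{propositionpointsfixesvarphit}. Since $\langle\varphi^t\rangle=\langle\varphi^{-t}\rangle$ and since the wandering set and the property of being partially hyperbolic are shared by a map and its inverse, I would fix $t\neq 0$ and assume $t>0$ throughout, the case $t<0$ being obtained by exchanging the roles of the attracting set $\Fbetaplus$ and the repelling set $\Falphamoins$ (i.e.\ replacing $\Phi_+,\Upsilon^-,\Halphabetamoins$ by $\Phi_-,\Upsilon^+,\Hbetaalphaplus$). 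Write $V=M\setminus(\Halphabetamoins\cup\Upsilon^-)$ for the dense open set of Proposition \ref{propositionproprietesprolongationflotgeod}, on which $\mathcal{D}_{(\varphi^t)}(x)=\Phi_+(x)\in\Fbetaplus$; as $M$ is compact, this forces $\varphi^s(x)\to\Phi_+(x)$ when $s\to+\infty$, for every $x\in V$.

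For the first claim I would argue by contradiction. If $\langle\varphi^t\rangle$ were relatively compact for the compact-open topology, then Arzel\`a--Ascoli would provide a sequence $n_k\to+\infty$ and a homeomorphism $h$ of $M$ with $\varphi^{n_kt}\to h$ uniformly. For each $x\in V$ one has $\varphi^{n_kt}(x)\to\Phi_+(x)$ (as $n_kt\to+\infty$), so by uniqueness of pointwise limits $h=\Phi_+$ on $V$, whence $h(M)=h(\Cl V)\subset\Cl(\Phi_+(V))\subset\Fbetaplus$. This is absurd, since $h$ is a homeomorphism of the three-manifold $M$ while $\Fbetaplus$ is a finite union of circles, hence nowhere dense.

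For the second claim, let $x\in V\setminus\Fbetaplus$, which is a dense open subset of $M$. I would pick a compact neighbourhood $K=\Cl(\Int K)$ of $x$ contained in $V$ and disjoint from $\Fbetaplus$. The fifth item of Proposition \ref{propositionproprietesprolongationflotgeod} then gives $\lim_{s\to+\infty}\varphi^s(K)=\Phi_+(K)\subset\Fbetaplus$, a compact set disjoint from $K$; consequently $\varphi^{nt}(\Int K)\cap\Int K=\varnothing$ for all large $n$, so $x$ is a wandering point of $\varphi^t$. Thus the wandering set is open and dense. If $\mu$ were a finite $\varphi^t$-invariant measure and $U$ any such wandering open set, choosing $N$ with $\varphi^{nt}(U)\cap U=\varnothing$ for $n\geq N$ makes the sets $\varphi^{Nkt}(U)$ ($k\geq 0$) pairwise disjoint and of equal $\mu$-mass, forcing $\mu(U)=0$; covering the dense open wandering set by countably many such $U$ yields $\mu$ zero mass there, so $\mu$ cannot have full support and $\varphi^t$ is non-conservative.

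The third claim is the main point, and the expected obstacle; it rests on the derivative of $\varphi^t$ at its fixed circles, which is exactly the content of the explicit linearisations performed in the proof of Proposition \ref{propositionLyapunovcompactification}. Suppose $\varphi^t$ were partially hyperbolic, with continuous invariant splitting $TM=E^s\oplus E^c\oplus E^u$ of rank constant on the connected manifold $M$. Near a fixed point $p\in\Fbetaplus$ the flow is linearised as $\Diag(1,\e^{-t},\e^{-t})$, so $\Diff{p}{\varphi^t}$ has no eigenvalue of modulus $>1$ and the unstable space vanishes at $p$; by constancy of rank, $E^u=\{0\}$ on all of $M$. Symmetrically, near a fixed point $q\in\Falphamoins$ the flow is linearised as $\Diag(\e^{t},\e^{t},1)$, so $\Diff{q}{\varphi^t}$ has no eigenvalue of modulus $<1$ and $E^s=\{0\}$ on all of $M$. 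This leaves $TM=E^c$, contradicting the non-triviality of a partially hyperbolic splitting. The delicate ingredient is precisely the propagation of these pointwise vanishings to the whole manifold, which is where the connectedness of $M$ is used; granting it, the attracting and repelling circles alone already obstruct partial hyperbolicity (the saddle circles $\Delta$ are not needed).
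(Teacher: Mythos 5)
Your proposal is correct and follows essentially the same route as the paper: the paper deduces (1) directly from the attraction statement of Proposition \ref{propositionproprietesprolongationflotgeod}, proves (2) with exactly your wandering-ball argument (a compact ball $K=\Cl(\Int K)$ disjoint from $\Fbetaplus$ whose images converge to $\Phi_+(K)\subset\Fbetaplus$), and proves (3) from the derivative $\Diag(1,\e^{-t},\e^{-t})$ at a fixed point of $\Fbetaplus$, as you do. The only divergence is that the paper stops at this single fixed point (under the convention that a partially hyperbolic splitting has both $E^s$ and $E^u$ nontrivial), whereas your additional use of the repelling circles $\Falphamoins$ with derivative $\Diag(\e^t,\e^t,1)$ makes the argument slightly more robust, ruling out even the weak variants of partial hyperbolicity where only one hyperbolic bundle is required.
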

\begin{proof}
1. This is a direct consequence of
Proposition \ref{propositionproprietesprolongationflotgeod}. \\
2. This is also a direct consequence of 
Proposition \ref{propositionproprietesprolongationflotgeod},
but we give a detailed argument.
We denote $f=\varphi^t$.
Let $x\in M\setminus(\Halphabetamoins\cup\Upsilon^-\cup\Fbetaplus)$
and $K\subset M\setminus(\Halphabetamoins\cup\Upsilon^-\cup\Fbetaplus)$ 
be a closed topological ball centered at $x$.
For any sequence $(x_n)$ converging to $x$
and $k_n\to+\infty$,
$x_n\in K$ for $n$ large enough.
According to Proposition \ref{propositionproprietesprolongationflotgeod},
$f^{n_k}(K)$ converges to a compact subset of $\Fbetaplus$,
therefore $f^{k_n}(x_n)\notin K$ for $n$ large enough and
$f^{k_n}(x_n)$ does not converge to $x$.
This shows that any point of 
$M\setminus(\Halphabetamoins\cup\Upsilon^-\cup\Fbetaplus)$
is a wandering point
and proves our claim since
$\Halphabetamoins\cup\Upsilon^-$ has empty
interior according to Proposition 
\ref{propositionproprietesprolongationflotgeod},
and $\Fbetaplus$ is a finite union of circles according
to Proposition \ref{propositioncompactificationetflot}. \\
3. Up to conjugation of $\Gamma$ in $j(\SL{2})$
we can assume that $x_0\coloneqq((e_1,),(e_1,e_2))\in\hat{\Omega}$.
Then $f(x_0)=x_0$, and
we saw in the proof of Proposition \ref{propositionLyapunovcompactification} 
that $\Diff{x_0}{\varphi^t}$ is conjugated in the chart
$\phi_1$ (see \eqref{equationchart1}) to
the diagonal matrix $\Diag(1,\e^{-t},\e^{-t})$.
This matrix being not partially hyperbolic,
this shows that $f$ is not partially hyperbolic.
\end{proof}

We recall that an automorphism flow $(\varphi^t)$ of a path structure
$(E^\alpha,E^\beta)$ is said to be \emph{strongly essential}
if it does not preserve any continuous one-dimensional 
distribution transverse
to $E^\alpha\oplus E^\beta$.
\begin{proposition}\label{propositionLmessentielle}
$(\varphi^t)$ is a strongly essential flow
of the path structure $(M,\Lm)$.
\end{proposition}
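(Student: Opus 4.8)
\emph{Proof proposal.}
Suppose for contradiction that $\ell$ is a continuous $(\varphi^t)$-invariant line field on $M$ transverse to the contact distribution $E^\alpha\oplus E^\beta$. The plan is, first, to identify $\ell$ with the flow direction $E^c=\R\frac{d\varphi^t}{dt}$ on the open dense union $\bigcup_{i=1}^4 N_i$ of the copies of $\Fiunitan{\Sigma}$ (recall $M\setminus\bigcup_i N_i=\Halphabetamoins\cup\Hbetaalphaplus$ is a finite union of tori, hence has empty interior), and then to contradict the continuity of $\ell$ at the fixed circles $\Fbetaplus$.

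\emph{Step 1 (trivialisation on the copies).} On each copy $N_i\simeq\hat{\Gamma}\backslash Y\simeq\hat{\Gamma}\backslash\SL{2}$, pull back $\ell$ to a right-$(a^t)$-invariant and left-$\hat{\Gamma}$-invariant line field on $\SL{2}$, using that $(\varphi^t)$ corresponds to $(R_{a^t})$ (Lemma \ref{lemmaLSigmainvarianteflotgeodesique}). In the left-invariant framing of $\slR{2}$ given by $E$, $F$ and $H_0=\Diag(1,-1)=[E,F]$, one has $E^\alpha=\R E$, $E^\beta=\R F$, and the flow generator is the left-invariant field $H_0$, so $E^c=\R H_0$. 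Transversality lets us write $\ell=\R(H_0+aE+cF)$ for continuous functions $a,c$ on $N_i$ (these descend because $\Diff{}{L_\gamma}$ is the identity in the left framing). Since $\Diff{}{R_{a^t}}$ acts on the framing by $\Ad(a^{-t})$, fixing $H_0$ and scaling $E,F$ by $e^{\mp 2t}$, the invariance of $\ell$ forces $a(\varphi^t x)=e^{-2t}a(x)$ and $c(\varphi^t x)=e^{2t}c(x)$.

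\emph{Step 2 ($\ell=E^c$ on the copies).} Fix $x\in N_i\setminus(\Halphabetamoins\cup\Upsilon^-)$, a dense subset. By Proposition \ref{propositionproprietesprolongationflotgeod}, $\varphi^t(x)\to p\in\Fbetaplus$ as $t\to+\infty$. In the chart $\phi_1$ of \eqref{equationchart1} the flow is linear, $\Psi_1^t=\Diag(1,\e^{-t},\e^{-t})$, the smooth field $E^\beta$ is non-zero at $p$, while the flow vector $X^c_1=(0,-y,-z)$ vanishes at $p$ to order $\e^{-t}$ along the orbit. Expressing $\Diff{x}{\varphi^t}\ell(x)=\R\bigl(\tfrac{d\varphi^t}{dt}+\e^{-2t}a(x)E+\e^{2t}c(x)F\bigr)$ at $\varphi^t(x)$ and dividing by $\e^{2t}$, the term $c(x)F$ dominates both the flow term (of size $\e^{-t}$) and the $E^\alpha$-term (of size $\e^{-4t}$); hence if $c(x)\neq 0$ then $\ell(\varphi^t(x))\to E^\beta(p)$, contradicting the transversality of $\ell(p)$. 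Thus $c\equiv 0$ on a dense set, hence on $N_i$ by continuity. The symmetric argument for $t\to-\infty$, with $\varphi^{-t}(x)\to\Falphamoins$ and the chart $\phi_2$ of \eqref{equationchart2} (where $\Psi_2^{-t}=\Diag(\e^{-t},\e^{-t},1)$ and $E^\alpha$ stays non-zero at the limit), yields $a\equiv 0$. Therefore $\ell=\R H_0=E^c$ on $\bigcup_i N_i$.

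\emph{Step 3 (non-extendability, and the main obstacle).} Near $p=((e_1),(e_1,e_2))\in\Fbetaplus$ we work in $\phi_1$, where $\bigcup_i N_i=\{y\neq 0\}$ and $E^c=\R X^c_1=\R(0,y,z)$. Approaching $p=(x_0,0,0)$ inside $\{y\neq 0\}$ along $(x_0,s,s)$ and along $(x_0,s,2s)$ as $s\to 0$, the lines $\R(0,1,1)$ and $\R(0,1,2)$ are distinct, so $E^c$ admits no continuous extension to $p$; since $\ell=E^c$ on $\{y\neq 0\}$ and $\ell$ is continuous at $p$, this is the desired contradiction, proving that $(\varphi^t)$ is strongly essential. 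The delicate point of the whole argument is Step 2: one must correctly read off the degeneration of the left-invariant framing at the boundary circle $\Fbetaplus$. What makes it work is that, although $\frac{d\varphi^t}{dt}$ vanishes at $p$, it does so only at rate $\e^{-t}$, which is negligible against the intrinsic expansion $\e^{2t}$ of $E^\beta$, so the blow-up of the $F$-coefficient is geometrically visible as convergence of $\ell$ towards the smooth non-vanishing line field $E^\beta$.
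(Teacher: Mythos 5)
Your proof is correct, but it takes a genuinely different route from the paper's. The paper never identifies the invariant field with the flow direction: it works directly in the chart $\phi_1$ around a point of $\Fbetaplus$, where the whole chart domain is the stable set of the fixed segment $\{(x,0,0)\}$, and shows that $(\Psi_1^t)$-invariance plus continuity force any transverse invariant field to be of the form $\R(0,1,\lambda(x))$ with $\lambda$ depending only on $x$; it then normalizes $\lambda(0)\neq 1$ by conjugating in the stabilizer $\{\Diag(\lambda,\mu,1)\}$ (using \eqref{equationconjugaisonflotdanscarte}), and transports the field through the transition maps to the chart $\phi_2$, where along the curve $(x,x^2,1)$ it degenerates into $E^\alpha$ at a point of $\hat{\Omega}$, contradicting transversality. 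You instead exploit the locally homogeneous structure of the copies: the $\Ad(a^{-t})$-weights make the coefficients $a,c$ of $\ell=\R(H_0+aE+cF)$ into flow-cocycles with rates $\e^{\mp 2t}$, continuity of $\ell$ at the attracting circles $\Fbetaplus$ kills $c$, continuity at $\Falphamoins$ kills $a$, so $\ell=E^c$ on the dense union of the copies, and the flow direction $\R(0,y,z)$ visibly admits no continuous extension at $\Fbetaplus$. Your route buys the cleaner intermediate statement that any invariant transverse field must coincide with the flow direction on the homogeneous part, and it avoids the transition-map computation; the paper's route avoids any identification with the left-invariant structure on $\SL{2}$ and stays within two explicit charts. (Note the two arguments reach mutually incompatible normal forms for $\ell$ — flow direction versus $\R(0,1,\lambda(x))$ — which is consistent only because no such $\ell$ exists.)

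Two small repairs. First, your rate bookkeeping in Step 2 conflates coefficient scalings with the degeneration of the left-invariant framing in the chart: along the forward orbit one has $H_0(\varphi^t\hat{x})=\rho(x)\,X^c_1(\varphi^t\hat{x})$ of chart size $\e^{-t}$, $E(\varphi^t\hat{x})=\e^{t}\nu(x)\,X^\alpha_1$ (the framing field $E$ \emph{grows} in the chart, since $\Diff{}{\Psi_1^t}$ contracts $X^\alpha_1$ by $\e^{-t}$ while the $\Ad$-weight on $E$ is $\e^{2t}$), and $F(\varphi^t\hat{x})=\e^{-2t}\mu(x)\,X^\beta_1(\varphi^t\hat{x})$. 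Hence the three terms of $\ell(\varphi^t\hat{x})$ have chart sizes of order $\e^{-t}$, $\e^{-t}\abs{a(x)}$ and $\abs{c(x)\mu(x)}$ respectively: the $\alpha$-term decays like $\e^{-t}$, not $\e^{-4t}$. The domination you need — constant $\beta$-term against two $o(1)$ terms — holds, so the conclusion $\ell(\varphi^t\hat x)\to E^\beta$ when $c(x)\neq 0$ stands, but the exponents should be fixed. Second, in Step 3 you should, as the paper does, conjugate $\Gamma$ in $j(\GL{2})$ (which centralizes $(\psi^t)$) so that $((e_1),(e_1,e_2))\in\hat{\Omega}$, or run the argument at an arbitrary point of $\Fbetaplus$; and the boundary point $(x_0,0,0)$ you approach lies in $\hat{\Omega}$ only for $x_0\notin K$ (in the paper's notation), which holds for $x_0$ close to $0$.
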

\begin{proof}
We assume by contradiction that a continuous transverse 
 distribution
does exist, and we consider its pullback on $\hat{\Omega}$ by $\Pi$.
This is a $(\psi^t)$-invariant continuous
one-dimensional distribution on $\hat{\Omega}$
denoted by $E^c$,
transverse
to the contact distribution of $\Lm_{\Xhat}=(E^\alpha,E^\beta)$.
We saw in Paragraph \ref{soussoussectioncompctificationrevetementindicedeux}
that $\Omega=\X\setminus\cup_{\gamma_\infty\in\partial_{\infty}\Gamma}
(\Calpha(p_+(\gamma_\infty))\cup\Cbeta[p_+(\gamma_\infty),e_3])$.
In particular, up to conjugation of $\Gamma$
in $j(\GL{2})$ we can assume in this proof that both
$x_0\coloneqq(e_1,(e_1,e_2))$ and $y_0\coloneqq(e_3,(e_3,e_2))$
are points of $\hat{\Omega}$.
We consider the chart $\phi_1\colon\R^3\to\Xhat$ 
defined in \eqref{equationchart1} around $x_0=\phi_1(0,0,0)$.
The closed subset
$K=\enstq{x\in\R}{[1,x,0]\in\cup_{\gamma_\infty\in\partial_{\infty}\Gamma}
p_+(\gamma_\infty)}$ verifies $0\notin K$ 
(since $x_0\in\hat{\Omega}$)
and $\phi_1^{-1}(\hat{\Omega})=\R^3\setminus
(K\times\{0\}\times\R)$.
We have $\phi_1^*(E^\alpha\oplus E^\beta)=\Ker(zdx-dy)$,
and we denote $E^c_1=\phi_1^*E^c$.
For any $(\lambda,\mu)\in(\R^*)^2$:
\begin{equation}\label{equationconjugaisonflotdanscarte}
 \phi_1^{-1}\circ\Diag(\lambda,\mu,1)\circ\phi_1
 =\Diag(\lambda^{-1}\mu,\lambda^{-1},\mu^{-1}).
\end{equation}
We can conjugate $\Gamma$ in the stabilizer of 
$(x_0,y_0)$ in $j(\GL{2})$, equal to
$\{\Diag(\lambda,\mu,1)\}$.
But \eqref{equationconjugaisonflotdanscarte} shows
that this stabilizers acts transitively on the tangent directions $D$
at $(0,0,0)$ transverse to 
$\Vect(e_1,e_3)=\phi_1^*(E^\alpha\oplus E^\beta)(0,0,0)$
and distinct from $\R e_2$.
Up to conjugation in $j(\GL{2})$, 
we can thus assume that $E^c_1(0,0,0)\neq\R(0,1,1)$,
which will be important later.
\par We denote $\Psi_1^t\coloneqq\phi_1^{-1}\circ\psi^t\circ\phi_1
 =\Diag(1,\e^{-t},\e^{-t})$.
For any $(x,y,z)\in O\coloneqq\enstq{(x,y,z)\in\R^3}{x\notin K}$,
denoting 
$E^c_1(x,y,z)=\R(a,b,c)$ we have
$E^c_1(x,\e^{-t}y,\e^{-t}z)=\Diff{(x,y,z)}{\Psi_1^t}(E^c_1)
=\R(a,\e^{-t}b,\e^{-t}c)$ by $\Psi_1^t$-invariance
(note that $O$ is $(\Psi_1^t)$-invariant).
If $a\neq0$, then 
$E^c_1(x,0,0)=\R e_1$ by continuity of $E_1^c$,
which contradicts the transversality 
with $\phi_1^*(E^\alpha\oplus E^\beta)$.
Hence $a=0$, $b\neq0$ by transversality 
with $\phi_1^*(E^\alpha\oplus E^\beta)$,
$E^c_1(x,\e^{-t}y,\e^{-t}z)$ is equal to $\R(0,1,\frac{c}{b})$
for any $t\in\R$
and $E^c_1(x,0,0)=\R(0,1,\frac{c}{b})$ by continuity of $E^c_1$.
There exists thus a continuous $\R$-valued function $\lambda$
on $\R\setminus K$
such that
$E^c_1(x,y,z)=\R(0,1,\lambda(x))$ 
for any $(x,y,z)\in O$.
Furthermore, $\lambda(0)\neq1$ since
$E_1^c(0,0,0)\neq\R(0,1,1)$.
\par We now consider 
the chart $\phi_2\colon\R^3\to\Xhat$ around 
$y_0=\phi_2(0,0,0)$ defined in \eqref{equationchart2}.
With 
$U\coloneqq\phi_2^{-1}(\phi_1(\R^3))=\{
x\neq0,zyx^{-1}\neq1\}$ and
$V\coloneqq\phi_1^{-1}(\phi_2(\R^3))=\{
y\neq0,zxy^{-1}\neq1\}$,
the transition maps $\phi_2^{-1}\circ\phi_1\colon V\to U$
and $\phi_1^{-1}\circ\phi_2\colon U\to V$ are given by
\[
 \phi_2^{-1}\circ\phi_1(x,y,z)=
 \left(y^{-1},xy^{-1},\frac{zy^{-1}}{zxy^{-1}-1}\right),
 \phi_1^{-1}\circ\phi_2(x,y,z)=
 \left(yx^{-1},x^{-1},\frac{zx^{-1}}{zyx^{-1}-1}\right).
\]
Denoting $E^c_2=\phi_2^*E^c$,
since $E^c_1(x,y,z)=\R(0,1,\lambda(x))$
a straightforward calculation gives
\[
E^c_2(x,y,z)=\R(x^2,xy,x(z-\lambda(yx^{-1}))(zyx^{-1}-1)^2)
\]
for any $(x,y,z)\in \phi_2^{-1}\circ\phi_1(O)$.
Now for $0<x<1$ small enough, 
$(x,x^2,1)\in\phi_2^{-1}\circ\phi_1(O)$,
and $E^c_2(x,x^2,1)=\R(x^2,x^3,x(1-\lambda(x))(x-1)^2)
=\R(x,x^2,(1-\lambda(x))(x-1)^2)$
converges at $x=0$ to $\R(0,0,(1-\lambda(0)))=\R e_3$ 
since $\lambda(0)\neq1$.
Hence $E^c_2(0,0,1)=\R e_3$ by continuity,
but $\R e_3=(\phi_2^*E^\alpha)(0,0,1)$.
This contradicts
the transversality of $E^c$ and $E^\alpha\oplus E^\beta$
and concludes the proof.
\end{proof}

\subsection{About other compactifications of $\Fiunitan{\Sigma}$}
\label{soussectionautrescompactifications}
We conclude this paper by describing two other geometrical compactifications
of $\Fiunitan{\Sigma}$.

\subsubsection{A second path structure on $\Fiunitan{\Sigma}$}
\label{soussoussectionsecondepathstructure}
We defined in Paragraph \ref{soussoussectionautrescompactifications}
of the introduction a natural path structure $\Lm_S^{proj}$
defined on the unitary tangent bundle of any Riemannian surface $S$.
The \emph{projective class} of the metric of $S$,
that is its set of unparametrized geodesics,
is actually sufficient to define an analog path structure
on the projectivization $\PFitan{S}$ of the tangent bundle of $S$
(that we will 
still denote $\Lm_S^{proj}$ by a slight misuse of notations).
The natural two-sheeted covering $\Fiunitan{S}\to\PFitan{S}$ 
is a local isomorphism between these path structures.
\par If $\Sigma$ is a non-compact hyperbolic surface,
finding a compactification of $(\PFitan{\Sigma},\Lm_\Sigma^{proj})$
is thus equivalent to find a \emph{projective compactification} of $\Sigma$,
that is a compact projective surface $S$ with a \emph{projective copy} of $\Sigma$ --
an open subset $U\subset S$ and a diffeomorphism between $U$ and $\Sigma$
mapping unparametrized geodesics to unparametrized geodesics.
Such a projective compactification is given by \cite[Theorem 1.1]{choi_topological_2017}.
It is interesting to note that the projective compactification $S$ constructed
by Choi-Goldman contains two disjoint projective copies of 
the surface $\Sigma$
and that $(\PFitan{S},\Lm_S^{proj})$ contains thus two disjoint copies
of $(\PFitan{\Sigma},\Lm_\Sigma^{proj})$,
which is reminiscent of the four copies of 
$(\Fiunitan{\Sigma},\Lm_\Sigma)$ appearing 
in Theorem \ref{theoremintrosurfacehyperboliquedynamiqueflotgeod}
and raises the following:
\begin{questionintro}\label{questionprojective}
 Does there exist a projective compactification containing 
 a dense copy of the complete non-compact
 hyperbolic surface $\Sigma$ ?
\end{questionintro}

\subsubsection{A conformal Lorentzian compactification of $\Fiunitan{\Sigma}$} 
We recall from Paragraph \ref{soussoussectionautrescompactifications} that unlike the path structure
$\Lm_\Sigma=(E^s,E^u)$ that we have studied in the whole section \ref{sectionTheoremeB},
the previous path structure $\Lm_\Sigma^{proj}$
is \emph{not} invariant by the geodesic flow $(g^t)$ of a complete hyperbolic surface $\Sigma$.
Now consider the one-form $\theta$ defined on $\Fiunitan{\Sigma}$ by
$\theta\restreinta_{E^s\oplus E^u}\equiv 0$ and $\theta(X^c)\equiv 1$,
where $X^c=\frac{d g^t}{dt}$.
Then $h_\Sigma(v^s,v^u)=h_\Sigma(v^u,v^s)\coloneqq d\theta(v^s,v^u)$ 
for any $(v^s,v^u)\in E^s\times E^u$ and 
$h_\Sigma(E^s\oplus E^u,X^c)=0$
defines on $\Fiunitan{\Sigma}$ a \emph{Lorentzian metric} $h_\Sigma$.
This third geometric structure on $\Fiunitan{\Sigma}$
is actually closer to the focus of the present work,
as the geodesic flow $(g^t)$ acts by isometries of $h_\Sigma$.
One can weaken this structure by considering its \emph{conformal class} $[h_\Sigma]$,
that is the set of all Lorentzian metrics 
$\e^f h_\Sigma$ for $f\colon\Fiunitan{\Sigma}\to\R$ a smooth function.
If $\Sigma$ is non-compact, 
Frances describes in \cite[\S 4.6]{frances_sur_2005} 
(as a consequence of a more general work about conformal Lorentzian structures)
a \emph{conformal compactification} 
of $(\Fiunitan{\Sigma},[h_\Sigma],g^t)$, where the geodesic flow extends
to a flow of conformal automorphisms (diffeomorphisms preserving the conformal class).
We emphasize that 
unlike the compactifications 
for the path
structures $\Lm_\Sigma$ and $\Lm_\Sigma^{proj}$,
the image of $\Fiunitan{\Sigma}$ is
a dense subset of its Lorentzian conformal compactification.

\bibliographystyle{alpha}
\bibliography{bibliocompactifications}

\begin{thebibliography}{GGKW17}

\bibitem[Ale21]{alexandre_nilclosed_2021}
Raphaël Alexandre.
\newblock Closed ray nil-affine manifolds and parabolic geometries.
\newblock {\em {arXiv}:2111.09586 [math]}, November 2021.

\bibitem[Bar01]{barbot_flag_2001}
Thierry Barbot.
\newblock Flag {Structures} on {Seifert} {Manifolds}.
\newblock {\em Geometry \& Topology}, 5(1):227--266, March 2001.

\bibitem[Bar10]{barbot}
Thierry Barbot.
\newblock Three-dimensional {Anosov} flag manifolds.
\newblock {\em Geometry \& Topology}, 14(1):153--191, 2010.

\bibitem[BPS19]{bochi_anosov_2019}
Jairo Bochi, Rafael Potrie, and Andrés Sambarino.
\newblock Anosov representations and dominated splittings.
\newblock {\em Journal of the European Mathematical Society (JEMS)},
  21(11):3343--3414, 2019.

\bibitem[Car24]{cartan_sur_1924}
\'Elie Cartan.
\newblock Sur les variétés à connexion projective.
\newblock {\em Bulletin de la Société Mathématique de France}, 52:205--241,
  1924.

\bibitem[CG17]{choi_topological_2017}
Suhyoung Choi and William Goldman.
\newblock Topological tameness of {Margulis} spacetimes.
\newblock {\em American Journal of Mathematics}, 139(2):297--345, 2017.

\bibitem[{\v C}S09]{capslovak}
Andreas {\v C}ap and Jan Slov\'ak.
\newblock {\em Parabolic geometries {I} Background and general theory}, volume
  154 of {\em Mathematical Surveys and Monographs}.
\newblock American Mathematical Society, Providence, RI, 2009.

\bibitem[FMMV21]{falbel_cartan_2021}
E.~Falbel, M.~Mion-Mouton, and J.~M. Veloso.
\newblock Cartan connections and path structures with large automorphism
  groups.
\newblock {\em International Journal of Mathematics}, page 2140016, 2021.

\bibitem[Fra04]{franceslorentziankleinian}
Charles Frances.
\newblock Lorentzian {Kleinian} groups.
\newblock {\em Commentarii Mathematici Helvetici}, 80, January 2004.

\bibitem[Fra05]{frances_sur_2005}
Charles Frances.
\newblock Sur les variétés lorentziennes dont le groupe conforme est
  essentiel.
\newblock {\em Mathematische Annalen}, 332(1):103--119, May 2005.

\bibitem[FT15]{falbel}
Elisha Falbel and Rafael Thebaldi.
\newblock A flag structure on a cusped hyperbolic 3-manifold.
\newblock {\em Pacific Journal of Mathematics}, 278(1):51--78, September 2015.

\bibitem[GdlH90]{ghysdlH}
\'Etienne Ghys and Pierre de~la Harpe.
\newblock {\em Sur les {Groupes} {Hyperboliques} d’après {Mikhael}
  {Gromov}}.
\newblock Progress in {Mathematics}. Birkhäuser Basel, 1990.

\bibitem[GGKW17]{guichard}
François Guéritaud, Olivier Guichard, Fanny Kassel, and Anna Wienhard.
\newblock Anosov representations and proper actions.
\newblock {\em Geometry \& Topology}, 21(1):485--584, 2017.

\bibitem[Ghy87]{ghys}
Étienne Ghys.
\newblock Flots d'{Anosov} dont les feuilletages stables sont différentiables.
\newblock {\em Annales Scientifiques de l'École Normale Supérieure.
  Quatrième Série}, 20(2):251--270, 1987.

\bibitem[Gol87]{goldman_projective_1987}
William~M. Goldman.
\newblock Projective structures with {Fuchsian} holonomy.
\newblock {\em Journal of Differential Geometry}, 25:297--326, 1987.

\bibitem[GW12]{guichard_anosov_2012}
Olivier Guichard and Anna Wienhard.
\newblock Anosov representations: domains of discontinuity and applications.
\newblock {\em Inventiones mathematicae}, 190(2):357--438, November 2012.

\bibitem[IL]{ivey_cartan_2016}
Thomas~A. Ivey and Joseph~M. Landsberg.
\newblock {\em Cartan for beginners. Differential geometry via moving frames
  and exterior differential systems. 2nd edition}, volume 175 of {\em Graduate
  Studies in Mathematics}.
\newblock American Mathematical Society ({AMS}).

\bibitem[KLP14]{kapovich_morse_2014}
Michael Kapovich, Bernhard Leeb, and Joan Porti.
\newblock Morse actions of discrete groups on symmetric space.
\newblock {\em arXiv:1403.7671 [math]}, March 2014.

\bibitem[KLP16]{kapovich_recent_2016}
Michael Kapovich, Bernhard Leeb, and Joan Porti.
\newblock Some recent results on {Anosov} representations.
\newblock {\em Transformation Groups}, 21(4):1105--1121, 2016.

\bibitem[KLP17]{kapovich_dynamics_2017}
Michael Kapovich, Bernhard Leeb, and Joan Porti.
\newblock Dynamics on flag manifolds: domains of proper discontinuity and
  cocompactness.
\newblock {\em Geometry \& Topology}, 22(1):157--234, October 2017.

\bibitem[KLP18]{kapovich_morse_2018}
Michael Kapovich, Bernhard Leeb, and Joan Porti.
\newblock A {Morse} lemma for quasigeodesics in symmetric spaces and
  {Euclidean} buildings.
\newblock {\em Geometry \& Topology}, 22(7):3827--3923, 2018.

\bibitem[Lab06]{labourie_anosov_2006}
François Labourie.
\newblock Anosov flows, surface groups and curves in projective space.
\newblock {\em Inventiones Mathematicae}, 165(1):51--114, 2006.

\bibitem[Mas88]{maskit_kleinian_1988}
Bernard Maskit.
\newblock {\em Kleinian {Groups}}.
\newblock Grundlehren der mathematischen {Wissenschaften}. Springer-Verlag,
  Berlin Heidelberg, 1988.

\bibitem[MM20]{mmthese}
Martin Mion-Mouton.
\newblock {\em {Quelques propri{\'e}t{\'e}s g{\'e}om{\'e}triques et dynamiques
  globales des structures Lagrangiennes de contact}}.
\newblock Thesis, {Universit{\'e} de Strasbourg}, December 2020.

\bibitem[MM21]{mionmouton}
Martin Mion-Mouton.
\newblock Partially hyperbolic diffeomorphisms and lagrangian contact
  structures.
\newblock {\em Ergodic Theory and Dynamical Systems}, pages 1--47, 2021.

\bibitem[ST18]{stecker_domains_2018}
Florian Stecker and Nicolaus Treib.
\newblock Domains of discontinuity in oriented flag manifolds.
\newblock {\em {arXiv}:1806.04459 [math]}, 2018.

\bibitem[Tak94]{takeuchi}
Masaru Takeuchi.
\newblock Lagrangean contact structures on projective cotangent bundles.
\newblock {\em Osaka Journal of Mathematics}, 31(4):837--860, 1994.

\end{thebibliography}

\end{document}